\newcounter{i}
\theoremstyle{plain}
\newtheorem{thm}{Theorem}[section]
\newtheorem{lem}[thm]{Lemma}
\newtheorem{proposition}[thm]{Proposition}
\newtheorem{fact}[thm]{Fact}
\newtheorem{cor}[thm]{Corollary}
\newtheorem{conj}[thm]{Conjecture}
\newenvironment{proofclaim}[1][]%
{\noindent \emph{Proof.} {}{#1}{}}{\hfill
	$\Diamond$\vspace{1em}}
\declaretheorem[
  style=plain,
  name=Claim,
  within=theorem,
]{claim}
\newenvironment{lateproof}[1]
 {%
  \begin{proof}[Proof of~\cref{#1}]%
 }
 {\end{proof}}
\theoremstyle{plain} % just in case the style had changed
\newcommand{\thistheoremname}{}
\newtheorem{genericthm}{\thistheoremname}
\theoremstyle{definition}
\newtheorem{definition}[thm]{Definition}
\newtheorem{remark}[thm]{Remark}
\newcommand{\Prob}[1]{\ensuremath{%
\mathbb P\left[#1\right]
}}
\newcommand{\Expect}[1]{\ensuremath{%
\mathbb E\left[#1\right]
}}
\title{Refined Absorption: A New Proof of the Existence Conjecture}
\author{
Michelle Delcourt
\thanks{Department of Mathematics, Toronto Metropolitan University (formerly named Ryerson University),
Toronto, Ontario M5B 2K3, Canada {\tt mdelcourt@torontomu.ca}. Research supported by NSERC under Discovery Grant No. 2019-04269.}
\and
Luke Postle
\thanks{Combinatorics and Optimization Department,
University of Waterloo, Waterloo, Ontario N2L 3G1, Canada {\tt lpostle@uwaterloo.ca}. Partially supported by NSERC
under Discovery Grant No. 2019-04304.}}
\date{February 29, 2024}
\begin{document}

\maketitle

\begin{abstract} 
The study of combinatorial designs has a rich history spanning nearly two centuries.  In a recent breakthrough, the notorious Existence Conjecture for Combinatorial Designs dating back to the 1800s was proved in full by Keevash via the method of randomized algebraic constructions. Subsequently Glock, K\"{u}hn, Lo, and Osthus provided an alternate purely combinatorial proof of the Existence Conjecture via the method of iterative absorption.  We introduce a novel method of \emph{refined absorption} for designs; here as our first application of the method we provide a new alternate proof of the Existence Conjecture (assuming the existence of $K_q^r$-absorbers by Glock, K\"{u}hn, Lo, and Osthus). 
\end{abstract}

\section{Introduction}

\subsection{The Existence Conjecture for Combinatorial Designs}

A \emph{Steiner system} with parameters $(n,q,r)$ is a set $S$ of $q$-subsets of an $n$-set $X$ such that every $r$-subset of $X$ belongs to exactly one element of $S$. More generally, a \emph{design} with parameters $(n,q,r,\lambda)$ is a set $S$ of $q$-subsets of an $n$-set $X$ such that every $r$-subset of $X$ belongs to exactly $\lambda$ elements of $S$. 

The notorious Existence Conjecture originating from the mid-1800's asserts that designs exist for large enough $n$ provided the obvious necessary divisibility conditions are satisfied as follows.

\begin{conj}[Existence Conjecture]\label{conj:Existence}
Let $q > r \ge 2$ and $\lambda \geq 1$ be integers. If $n$ is sufficiently large and $\binom{q-i}{r-i}~|~\lambda \binom{n-i}{r-i}$ for all $0\le i \le r-1$, then there exists a design with parameters $(n,q,r,\lambda)$.
\end{conj}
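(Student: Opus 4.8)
The plan is to recast \cref{conj:Existence} as a hypergraph decomposition problem and attack it by absorption. A design with parameters $(n,q,r,\lambda)$ is exactly a partition of the edge multiset of the $r$-uniform multigraph $\lambda K_n^r$ into copies of $K_q^r$; call such a partition a $K_q^r$-\emph{decomposition}, and call an $r$-multigraph $K_q^r$-\emph{divisible} if for every $i$-set $S$ with $0 \le i \le r-1$ the number of edges containing $S$ is divisible by $\binom{q-i}{r-i}$ --- these are precisely the conditions in the conjecture, now read locally. A routine arithmetic reduction disposes of $\lambda$: the admissible multiplicities form exactly the multiples of a single integer $D(n,q,r)$ that is bounded above by a function of $q$ and $r$ alone, so it suffices to prove a $K_q^r$-decomposition theorem for $r$-multigraphs of bounded multiplicity; concretely, that for all large $n$ every $K_q^r$-divisible $r$-multigraph on $n$ vertices with bounded multiplicity and near-complete codegrees --- in particular $\lambda K_n^r$ for bounded admissible $\lambda$ --- has a $K_q^r$-decomposition.

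The skeleton is the usual absorption scheme. First we need an \emph{absorber}: a sparse $K_q^r$-divisible subgraph $A$ of $\lambda K_n^r$ such that $A \cup L$ has a $K_q^r$-decomposition for \emph{every} sufficiently sparse $K_q^r$-divisible $r$-graph $L$ edge-disjoint from $A$ inside $\lambda K_n^r$. Refined absorption constructs such an $A$ (discussed below) out of the $K_q^r$-absorber gadgets of Glock, K\"{u}hn, Lo, and Osthus, which --- as announced in the abstract --- we take as a black box. Grant $A$ for now; set it aside, together with a second sparse reservoir, and run the semi-random method --- the R\"{o}dl nibble, or Pippenger--Spencer for near-perfect matchings in nearly-regular hypergraphs --- on what remains to produce a $K_q^r$-packing covering all but an $o(1)$-fraction of the edges, with sparse uncovered leftover $L_0$. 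Using the reserved reservoir, perform a \emph{cover-down} step repairing the divisibility of the leftover at all $r$ levels $0 \le i \le r-1$ simultaneously, to obtain a $K_q^r$-divisible leftover $L$; feed $L$ to the absorber, and take the union of the nibble packing, the decomposition of the cover-down, and the $K_q^r$-decomposition of $A \cup L$. By the reduction this is the desired design.

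The delicate point --- and where \emph{refined absorption} does its work --- is the gap between ``$L_0$ is sparse'' and ``$L$ has the exact shape an absorber built from bounded gadgets can swallow.'' In the iterative-absorption proof the gap is bridged by a \emph{vortex}: a nested chain $V = V_0 \supseteq V_1 \supseteq \cdots \supseteq V_\ell$ of vertex sets with $|V_\ell|$ bounded, with cover-down lemmas driving the current leftover into $V_{j+1}$ while restoring divisibility at each step, so that a finite absorber on $V_\ell$ finishes. Refined absorption instead builds, once and in advance, a \emph{single} global absorber $A$ engineered to digest any sufficiently sparse $K_q^r$-divisible leftover directly: it is assembled from many Glock--K\"{u}hn--Lo--Osthus absorber gadgets, and the pieces of the leftover are matched to gadgets of the appropriate type by a Hall-/matching-type argument exploiting the robustness and quasirandomness of the reserved structure (the role of the $\cOK$-type configurations and the auxiliary matching $\newmatching$ developed in this paper). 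The payoff is that no error compounds across an iteration.

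I expect the main obstacle to be exactly this: controlling the leftover sharply enough --- simultaneously sparse, of bounded degree, and repairable to $K_q^r$-divisibility at \emph{every} level $i$ --- that it lands inside the absorber's ``sink,'' while keeping $A$ and all reserved structures sparse enough to disturb neither the nibble nor one another, and ensuring that enough matchable gadgets of each type are available. Getting one refined absorber to shoulder all of this, in place of an iterated family, is the heart of the proof; by comparison the arithmetic across the $r$ divisibility levels and the probabilistic nibble are standard.
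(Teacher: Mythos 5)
Your high-level skeleton does match the paper's: one globally pre-built absorber, a nibble, and a single cover-down step into an edge reservoir, with the Glock--K\"uhn--Lo--Osthus absorber gadgets taken as a black box. But the key lemma as you state it is not the one the paper proves, and the difference is not cosmetic. You ask for a sparse $K_q^r$-divisible $A$ such that $A\cup L$ decomposes for \emph{every} sufficiently sparse divisible $L$ edge-disjoint from $A$. The paper's omni-absorber (Theorem~\ref{thm:Omni}) is relative to a \emph{fixed} sparse reservoir $X$ chosen first (Lemma~\ref{lem:RandomX}): it absorbs exactly the $K_q^r$-divisible subgraphs $L\subseteq X$, and its decomposition family $\mathcal{H}$ consists of boundedly many cliques per edge, each meeting $X$ in at most one edge. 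This relativization is essential twice over. First, it is what makes the construction possible at all: the refiner machinery decomposes $L\cup R$ into pieces drawn from one fixed bounded family indexed against $X$, each piece getting a private gadget; there is no analogous finite family serving the class of \emph{all} sparse divisible leftovers, and no indication your stronger object exists at the claimed sparsity. Second, it dictates the order of operations (reservoir first, absorber built for it second) and makes ``nibble with reserves'' (Theorem~\ref{thm:NibbleReserves}) load-bearing: its job is precisely to guarantee the leftover lands inside $X$, the only place the absorber can act. In your write-up the absorber is constructed before and independently of the reservoir, and nothing ties the final leftover to a set the absorber was designed for.

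Two smaller points. Your ``cover-down step repairing the divisibility at all $r$ levels'' does not occur in the paper: the reserves cover all edges of $G\setminus(X\cup A)$ exactly, and the leftover $L=X\setminus\bigcup\mathcal{Q}_1$ is then \emph{automatically} $K_q^r$-divisible because $G$, the packing, and $A$ all are --- no divisibility repair is performed, and building one in would be a different (and harder) argument. And your account of the absorber's internals --- a Hall-type matching of leftover pieces to gadgets via auxiliary configurations --- does not correspond to the paper's construction, which proceeds by induction on the uniformity through refiners, robustly matchable hypergraphs, edge sparsification, and refine-down; that induction is where essentially all of the work lies, and it is the part your proposal leaves unspecified.
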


In 1847, Kirkman~\cite{K47} proved this when $q=3$, $r=2$ and $\lambda=1$. In the 1970s, Wilson~\cite{WI, WII, WIII} proved the Existence Conjecture for graphs, i.e.~when $r=2$ (for all $q$ and $\lambda$). In 1985, R\"{o}dl~\cite{R85} introduced his celebrated ``nibble method'' to prove that there exists a set $S$ of $q$-subsets of an $n$-set $X$ with $|S| = (1-o(1))\binom{n-r}{q-r}$ such that every $r$-subset is in at most one element of $S$, thereby settling the approximate version of the Existence Conjecture (known as the Erd\H{o}s-Hanani Conjecture~\cite{EH63}). Only in the last decade was the Existence Conjecture fully resolved as follows.

\begin{thm}[Keevash~\cite{K14}]\label{thm:Existence}
Conjecture~\ref{conj:Existence} is true.   
\end{thm}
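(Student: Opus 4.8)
The plan is to recast the Existence Conjecture as a hypergraph decomposition statement and then attack it by \emph{absorption}. A design with parameters $(n,q,r,\lambda)$ is exactly a decomposition of $\lambda K_n^r$ --- the $r$-uniform multi-hypergraph on an $n$-set in which every $r$-subset has multiplicity $\lambda$ --- into copies of the complete $r$-graph $K_q^r$, and the hypotheses $\binom{q-i}{r-i}\mid\lambda\binom{n-i}{r-i}$ for $0\le i\le r-1$ are precisely the statement that $\lambda K_n^r$ is \emph{$K_q^r$-divisible}, i.e.\ that all the obvious local clique counts are integral. So it suffices to prove: whenever $n$ is large and $\lambda K_n^r$ is $K_q^r$-divisible, it admits a $K_q^r$-decomposition (and I would carry general $\lambda$ through the argument directly rather than trying to reduce to $\lambda=1$). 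Following the absorption paradigm, I would (i) reserve inside $\lambda K_n^r$ a small but carefully engineered \emph{absorber} $A$ that is itself $K_q^r$-divisible and $K_q^r$-decomposable; (ii) find a near-perfect $K_q^r$-packing of the rest; and (iii) use $A$ to swallow the small leftover, producing a full decomposition.

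For step (ii) I would apply the R\"odl nibble in its $K_q^r$-packing form (equivalently, Pippenger--Spencer applied to the ``design hypergraph'' whose vertices are $r$-sets and whose edges are $q$-cliques): after deleting the $r$-edges used by $A$, what remains is still dense and essentially regular, so it has a $K_q^r$-packing whose uncovered $r$-graph $L$ satisfies $\Delta(L)=o(n^{r-1})$. Because $A$ is $K_q^r$-divisible and a packing is trivially $K_q^r$-divisible, $L$ is automatically $K_q^r$-divisible, so divisibility of the leftover comes for free. With a little additional care --- running the nibble with a reserved sliver of randomness, or one extra random sparsification step --- one also arranges that $L$ is suitably quasirandom and ``spread out'', so that $L$ falls into a prescribed family $\mathcal{L}$ of admissible leftovers.

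The heart of the argument, and where \emph{refined absorption} enters, is step (i): constructing a single absorber $A$ that simultaneously absorbs \emph{every} $L\in\mathcal{L}$. Here I would treat the Glock--K\"uhn--Lo--Osthus $K_q^r$-absorbers as a black box --- each such gadget absorbs one fixed sparse $K_q^r$-divisible leftover, meaning that both it and its union with that leftover are $K_q^r$-decomposable --- and assemble $A$ modularly: fix a template on a reserved vertex set, cut it into many small pieces, and for each piece and each local configuration a leftover from $\mathcal{L}$ could present there, greedily (or randomly) reserve a private GKLO absorber inside $\lambda K_n^r$, keeping all of them pairwise $r$-edge-disjoint and maintaining $K_q^r$-divisibility of the growing union. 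For a concrete leftover $L\in\mathcal{L}$ one then routes $L$ through the matching pieces so that $A\cup L$ decomposes piece by piece. The novelty I would aim for is that this absorber is \emph{universal over $\mathcal{L}$ in one shot}, so that the iterative ``vortex'' of the Glock--K\"uhn--Lo--Osthus proof is not needed.

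The main obstacle is precisely the interface between these three pieces. One must choose $\mathcal{L}$ wide enough to contain \emph{any} leftover a nibble can leave --- which, after a single pass, is only $o(1)$-dense rather than of bounded degree --- yet rigid enough that a universal absorber can exist; equivalently, the absorber (or a short preliminary cleanup feeding it) must cope with a genuinely $o(1)$-dense leftover without resorting to an iteratively shrinking reserve. One must also keep $K_q^r$-divisibility intact for $A$ (and hence for $L$ and $A\cup L$) throughout the modular construction, avoiding parity obstructions when the piecewise decompositions are glued. And one must prove, via a probabilistic deletion argument, that $\lambda K_n^r$ really does contain the required system of pairwise $r$-edge-disjoint GKLO absorbers while still leaving a dense, near-regular remainder for the nibble. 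By contrast, the reduction of the first paragraph and the nibble of the second are essentially routine; I would expect all of the genuinely new work --- and the only plausible failure points --- to lie in the construction and verification of the refined absorber and in the divisibility bookkeeping that makes it mesh with the nibble.
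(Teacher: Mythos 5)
Your high-level architecture (single-step absorption with one universal absorber, nibble for the bulk, GKLO absorbers as black-box gadgets) matches the paper's, and you have correctly located the crux: the interface between what a single nibble pass leaves behind and what a one-shot absorber can swallow. But the proposal leaves a genuine gap at exactly that point, and it is the gap the entire paper exists to close. You propose to let the nibble run and then hope the leftover $L$, which is $o(1)$-dense but supported on \emph{all} of $E(K_n^r)$, lands in some family $\mathcal{L}$ that a universal absorber can handle. No absorber of subconstant density can absorb an arbitrary $o(1)$-dense leftover spread over the whole edge set; this is why GKLO needed the iterative Cover Down. The paper's resolution inverts the order of quantifiers: it first reserves a random sparse $X$ with $\Delta(X)\approx n^{1-\sigma}$ (\cref{lem:RandomX}), builds an omni-absorber $A$ whose decomposition family handles \emph{every} $K_q^r$-divisible subgraph of that fixed $X$, and then runs \emph{nibble with reserves} (\cref{thm:NibbleReserves}): the nibble is required to cover \emph{all} of $G\setminus(X\cup A)$ exactly, using cliques with one edge outside $X$ and the remaining edges inside $X$ to complete the almost-perfect matching. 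The leftover is then a divisible subgraph of $X$ by construction, not by a quasirandomness argument. Your ``reserved sliver of randomness'' gestures at this but does not commit to it, and without it step (iii) cannot start.

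The second missing ingredient is quantitative. Because every edge of $G\setminus(X\cup A)$ must lie in many copies of $K_q^r$ whose other edges are all in $X$, one is forced to take $X$ polynomially dense ($p\ge n^{-\varepsilon}$, so $e(X)\approx n^{r-\varepsilon}$). The modular assembly you describe --- bounded-size pieces, a private GKLO absorber for each divisible configuration a leftover could present on a piece --- yields at best $e(A)=O(e(X)^{C})$ for some constant $C>1$ (the ``high-efficiency'' regime), which already exceeds $\binom{n}{r}$ and in any case destroys the near-regularity the nibble needs. What is actually required, and what \cref{thm:Omni} delivers, is $\Delta(A)=O(\Delta(X))$, i.e.\ linear efficiency; achieving this is not a greedy bookkeeping exercise but requires the machinery of refiners, the Edge Sparsification and Refine Down Lemmas, and an induction on the uniformity $r$ built on the robustly matchable hypergraph construction. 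So the proposal is a correct map of the territory, but the two bridges it needs --- forcing the leftover into a pre-reserved sparse $X$ via nibble with reserves, and constructing an omni-absorber for $X$ of maximum degree $O(\Delta(X))$ --- are asserted rather than supplied, and neither is routine.
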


Namely in 2014, Keevash~\cite{K14} proved the Existence Conjecture using \emph{randomized algebraic constructions}. Thereafter in 2016, Glock, K\"{u}hn, Lo, and Osthus~\cite{GKLO16} gave a purely combinatorial proof of the Existence Conjecture via \emph{iterative absorption}. Both approaches have different benefits and each has led to subsequent work using these approaches. We refer the reader to~\cite{W03, K14, GKLO16} for more history on the conjecture.

Our main purpose of this paper is to introduce our method of \emph{refined absorption} and then to provide a new alternate proof of the Existence Conjecture via said method.  We note that our proof assumes the existence of $K_q^r$-absorbers (as established by Glock, K\"{u}hn, Lo, and Osthus~\cite{GKLO16}) but sidesteps the use of iterative absorption. Our approach has some of the benefits of both of the previous proofs, namely we provide a purely combinatorial approach but one that utilizes single-step absorption and has the potential to be useful for a number of applications. For example, in a follow-up paper~\cite{DPII}, we use this new approach to prove the existence of high girth Steiner systems.

\subsection{Refined Absorbers}

A transformative concept that upended design theory and many similar exact structural decomposition problems is the \emph{absorbing method}, a method for transforming almost perfect decompositions into perfect ones, wherein a set of absorbers are constructed with the ability to `absorb' any particular uncovered `leftover' of a random greedy or nibble process into a larger decomposition. One of the original applications of the absorbing method by R\"odl, Ruci\'nski, and Szemer\'edi~\cite{RRS06} involved a single absorbing step via the so-called Absorbing Lemma, where a potential leftover is absorbed in one step. For embedding problems (e.g. perfect matching, $K_r$-factors, Hamilton cycles) where one wishes to decompose the vertices of a graph, the Absorbing Lemma has been utilized to provide absorbing proofs of old results (such as the Corr\'adi-Hajnal~\cite{CH63} and Hajnal-Szemer\'edi~\cite{HS70} Theorems) as well as many new ones. In cases of embedding problems where the Absorbing Lemma fails to apply, other absorbing techniques have been developed such as Montgomery's~\cite{M19b} method of ``absorber-templates''. Our refined absorbers (see below) may in some sense be viewed as ``absorber-templates'' for decomposition problems. 

The Existence Conjecture may be rephrased in graph theoretic terms. We refer the reader to the notation section (Section~\ref{ss:Notation} below) for the relevant background definitions and notation. For $K_q^r$ decompositions, the key definition of an absorber is as follows.

\begin{definition}[Absorber]
Let $L$ be a $K_q^r$-divisible hypergraph. A hypergraph $A$ is a \emph{$K_q^r$-absorber} for $L$ if $V(L)\subseteq V(A)$ is independent in $A$ and both $A$ and $L\cup A$ admit $K_q^r$ decompositions.
\end{definition}

Glock, K\"uhn, Lo, and Osthus~\cite{GKLO16} proved that a $K_q^r$-absorber $A_L$ exists for any $K_q^r$-divisible graph $L$ as follows.  

\begin{thm}[Glock, K\"uhn, Lo, and Osthus~\cite{GKLO16}]\label{thm:AbsorberExistence}
Let $q > r\ge 1$ be integers. If $L$ is a $K_q^r$-divisible hypergraph, then there exists a $K_q^r$-absorber for $L$.
\end{thm}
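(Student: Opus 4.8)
The plan is to build the absorber $A$ for a given $K_q^r$-divisible hypergraph $L$ in a sequence of reductions, each step trading $L$ for a ``simpler'' leftover that is easier to absorb, culminating in a configuration that manifestly has a $K_q^r$-decomposition both with and without $L$. First I would introduce the notion of a \emph{transformer}: given two $K_q^r$-divisible hypergraphs $L$ and $L'$ on disjoint vertex sets (together with a common ``boundary''), a transformer $T$ from $L$ to $L'$ is a hypergraph, vertex-disjoint from both except at prescribed identifications, such that both $T \cup L$ and $T \cup L'$ admit $K_q^r$-decompositions. Transformers compose: a transformer from $L$ to $L'$ and one from $L'$ to $L''$ (on fresh vertices) concatenate to a transformer from $L$ to $L''$, and if $L''$ itself has a $K_q^r$-decomposition then the concatenation is an absorber for $L$. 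So the whole problem reduces to (i) a composition/concatenation lemma and (ii) producing transformers that drive an arbitrary $L$ down to a trivially decomposable target.

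The key steps, in order, would be: (1) \textbf{Reduction to cliques.} Show that every $K_q^r$-divisible $L$ admits a transformer to a \emph{vertex-disjoint union of copies of $K_q^r$}; the divisibility conditions are exactly what is needed to match up edge counts and degrees so that the required auxiliary decompositions exist. This is where one uses that $\binom{q-i}{r-i}$ divides the relevant local edge counts of $L$. (2) \textbf{Booster / clique-absorber step.} Observe that a vertex-disjoint union of copies of $K_q^r$ trivially has a $K_q^r$-decomposition, so by the concatenation lemma the transformer from (1) is already essentially an absorber --- modulo making the ``$\cup$'' in the definition literal by embedding everything into one host hypergraph with $V(L)$ independent. (3) \textbf{Independence of $V(L)$.} Re-route all the auxiliary gadgets onto disjoint fresh vertices so that no edge of $A$ lies inside $V(L)$; this is a bookkeeping step but must be done carefully so the decompositions in (1)--(2) survive. (4) Assemble: $A := $ (transformer to the clique-union) $\cup$ (the clique-union itself), check $A$ and $L \cup A$ both decompose, done.

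I expect the main obstacle to be step (1), constructing the transformer from an arbitrary divisible $L$ to a union of cliques. The naive approach --- cover the edges of $L$ greedily by partial $K_q^r$'s and ``complete'' each partial copy using new vertices --- runs into the problem that the leftover degrees after such a process need not themselves satisfy the divisibility conditions, so one cannot simply recurse. The resolution is to work with a sufficiently rich family of small ``basic'' transformers (built by hand, e.g. transformers between $K_q^r$-divisible hypergraphs that differ by one ``elementary move''), prove that these basic moves generate --- under concatenation --- transformers between \emph{any} two $K_q^r$-divisible hypergraphs with the same ``type'', and then invoke a counting/linear-algebra argument over $\mathbb{Z}$ (the space of $K_q^r$-divisible leftovers modulo decomposable ones is spanned by the elementary moves) to conclude that $L$ and a clique-union are connected by a chain of basic transformers. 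Getting the elementary transformers to actually exist --- i.e. exhibiting explicit small hypergraphs with the two required decompositions --- is the delicate combinatorial core; everything else is concatenation bookkeeping and divisibility arithmetic. A secondary technical point is ensuring all the gadgets can be taken vertex-disjoint and of bounded size, so that the final absorber $A$ is a genuine (finite) hypergraph with $V(L)$ independent, which I would handle by always introducing the auxiliary vertices of each gadget as fresh.
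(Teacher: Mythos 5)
The first thing to note is that this paper does not prove Theorem~\ref{thm:AbsorberExistence} at all: it is imported verbatim from Glock, K\"uhn, Lo, and Osthus~\cite{GKLO16} and used as a black box (the authors state explicitly that their proof of the Existence Conjecture ``assumes the existence of $K_q^r$-absorbers''). So there is no in-paper proof to compare yours against; the relevant benchmark is the construction in~\cite{GKLO16} itself, together with the discussion in this paper's final section of which extra properties of that construction are actually needed (see Theorem~\ref{thm:CogirthAbsorber}).

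Measured against that benchmark, your architecture is the right one: transformers, a concatenation lemma, reduction of an arbitrary divisible $L$ to a canonical decomposable target, and the Graver--Jurkat/Wilson characterization of the integer lattice generated by copies of $K_q^r$ to know that a chain of elementary moves connects $L$ to that target. This is genuinely how~\cite{GKLO16} proceed. But your write-up defers exactly the step that carries all of the content: you acknowledge that ``getting the elementary transformers to actually exist \dots is the delicate combinatorial core'' and then do not construct them. Without explicit gadgets realizing each elementary move --- which in~\cite{GKLO16} occupy the bulk of the absorber construction, proceed by induction on the uniformity $r$, and require a strengthened form of the lower-uniformity statement (decompositions whose cliques pairwise meet in at most $r$ vertices, which is precisely why the present paper includes Theorem~\ref{thm:CogirthAbsorber}) --- the argument is a plan rather than a proof. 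A secondary inaccuracy: in your step (1) you assert that divisibility of $L$ ``is exactly what is needed'' for the transformer to a clique-union to exist; divisibility alone does not hand you the two auxiliary decompositions a transformer must carry, and supplying them is exactly what the missing elementary constructions are for.
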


The main use of absorbers is to absorb the potential `leftover' $K_q^r$-divisible graph of a specific set/graph $X$ after a nibble/random-greedy process constructs a $K_q^r$-decomposition covering all edges outside of $X$. To that end, we make the following definition.

\begin{definition}[Omni-Absorber]
Let $q > r\ge 1$ be integers. Let $X$ be a hypergraph. We say a hypergraph $A$ is a \emph{$K_q^r$-omni-absorber} for $X$ with \emph{decomposition family} $\mathcal{H}$ and \emph{decomposition function} $\mathcal{Q}_A$ if $V(X)=V(A)$, $X$ and $A$ are edge-disjoint, $\mathcal{H}$ is a family of subgraphs of $X\cup A$ each isomorphic to $K_q^r$ such that $|H\cap X|\le 1$ for all $H\in\mathcal{H}$, and for every $K_q^r$-divisible subgraph $L$ of $X$, there exists $\mathcal{Q}_A(L)\subseteq \mathcal{H}$ that are pairwise edge-disjoint and such that $\bigcup \mathcal{Q}_A(L)=L\cup A$. 
\end{definition}

In particular note that since the empty subgraph is $K_q^r$-divisible, the definition above implies that $A$ admits a $K_q^r$-decomposition. Moreover, since an absorber $A_L$ exists for every $L$, we have that a $K_q^r$-omni-absorber for $X$ exists provided that $X$ has enough isolated vertices compared to its number of edges; the construction is simply taking $A$ to be the disjoint union of the $A_L$ for every $K_q^r$-divisible subgraph $L$ of $X$.

A natural question then is how efficient an omni-absorber can we build? Taking disjoint absorbers for every $L$ yields an omni-absorber $A$ with $e(A) = 2^{\Omega(e(X))}$. In their paper proving the existence of high girth Steiner triple systems, Kwan, Sah, Sawhney, and Simkin~\cite{KSSS22} constructed so-called \emph{high-efficiency absorbers} for triangle decompositions, which yields a $K_3^2$-omni-absorber for $X$ with $e(A) = O(e(X))^{15}$. This raises the question whether such high efficiency absorbers exist for $K_q^r$ decompositions and whether even more efficient omni-absorbers could exist. 

One main purpose of this paper is to show that such an extremely efficient $K_q^r$-omni-absorber $A$ for $X$ exists, in particular such that $\Delta(A)=O(\Delta(X))$ provided that $\Delta(X)$ is large enough (where $\Delta(A)$ denotes the $(r-1)$-degree of $A$, also written as $\Delta_{r-1}(A)$, that is defined as the maximum over all $(r-1)$-sets $S$ of the number of edges of $A$ containing $S$). 

Here is the key definition for proving their existence.

\begin{definition}[Refined Omni-Absorber]
Let $\mathcal{H}$ be a family of subgraphs of a hypergraph $G$. For $e\in G$, we define $\mathcal{H}(e):= \{H\in \mathcal{H}: e\in H\}$. 

Let $C\ge 1$ be real. We say a $K_q^r$-omni-absorber $A$ for a hypergraph $X$ with decomposition family $\mathcal{H}$ is \emph{$C$-refined} if $|\mathcal{H}(e)|\le C$ for every edge $e\in X\cup A$.
\end{definition}

Our main result for omni-absorbers is the following. 

%min degree version
\begin{thm}[Refined Omni-Absorber Theorem]\label{thm:Omni}
For all integers $q > r\ge 1$, there exist an integer $C\ge 1$ and real $\varepsilon\in (0,1)$ such that the following holds: Let $G$ be an $r$-uniform hypergraph on $n$ vertices with $\delta(G)\ge (1-\varepsilon)n$. If $X$ is a spanning subhypergraph of $G$ with $\Delta(X) \le \frac{n}{C}$ and we let $\Delta:= \max\left\{\Delta(X),~n^{1-\frac{1}{r}}\cdot \log n\right\}$, then there exists a $C$-refined $K_q^r$-omni-absorber $A\subseteq G$ for $X$ such that $\Delta(A)\le C \cdot \Delta$. 
\end{thm}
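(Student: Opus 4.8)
The plan is to build the omni-absorber $A$ in two stages: a \emph{cover} stage that attaches to each edge $e\in X$ a bounded collection of copies of $K_q^r$ (the ``cherries'' or local gadgets) through which any divisible $L\subseteq X$ can be routed, followed by an \emph{absorber} stage that installs, for a carefully chosen bounded family of small divisible ``residue'' graphs, the $K_q^r$-absorbers guaranteed by \cref{thm:AbsorberExistence}. The key conceptual point — and the reason refinement (bounded $|\mathcal H(e)|$) is achievable — is that we must not build one absorber per divisible subgraph $L$ (that gives the $2^{\Omega(e(X))}$ blow-up); instead we reduce, via the cover stage, the absorption of an arbitrary $L$ to the absorption of one of only \emph{boundedly many} canonical small configurations, each handled by a single fixed absorber gadget, and we spread many disjoint copies of each gadget through $G$ so that no $(r-1)$-set lies in too many.

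First I would set up the cover stage. Greedily, and using $\delta(G)\ge(1-\varepsilon)n$ together with $\Delta(X)\le n/C$, for each edge $e\in X$ I select inside $G$ a copy $K_e$ of $K_q^r$ containing $e$ and otherwise using $q-r$ fresh vertices, doing this so that the edge sets of the $K_e$ (minus $e$ itself) are pairwise edge-disjoint and so that each $(r-1)$-set is used by $O(\Delta)$ of them; a standard greedy/random argument works because each new copy forbids only $O(1)$ choices per already-placed copy and there are $O(n^{r-1}\Delta)$ copies to place among $\Omega(n^r)$ slots per $(r-1)$-set — here one uses the pseudorandomness $\delta(G)\ge(1-\varepsilon)n$ to guarantee that the required copies of $K_q^r$ extending a given $(r-1)$-set exist in abundance. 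The point of $K_e$ is that $e$ and $K_e\setminus e$ are two edge-disjoint $K_q^r$-decomposable ``options'' covering complementary parts: routing through these flips we can transform any $K_q^r$-divisible $L\subseteq X$ into a $K_q^r$-divisible graph supported on the fresh vertices, of bounded maximum degree, belonging to a fixed finite list $\mathcal R$ of ``local patterns'' (up to isomorphism). This is where I would do the bookkeeping to keep $|\mathcal H(e)|\le C$: each edge of $X$ lies in only its own $K_e$, and each fresh/absorber edge should be charged to $O(1)$ members of the decomposition family by construction.

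Second, the absorber stage: for each pattern $R\in\mathcal R$ fix once and for all a $K_q^r$-absorber $A_R$ from \cref{thm:AbsorberExistence}, of bounded size, and embed into $G$ a large collection of vertex-disjoint (or at least $(r-1)$-degree-bounded) copies of each $A_R$, again by a greedy embedding that is feasible because $A_R$ has bounded order and $\delta(G)\ge(1-\varepsilon)n$. The decomposition function $\mathcal Q_A(L)$ is then: use the appropriate flips among $\{K_e: e\in L\}$ to reduce $L$ to a disjoint union of patterns, assign each pattern occurrence to a private unused copy of the matching $A_R$, and take the union of the prescribed $K_q^r$-decompositions of $A_R$ and $R\cup A_R$. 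One must verify edge-disjointness across these choices (guaranteed by privateness of the assigned absorber copies and disjointness of the $K_e\setminus e$'s) and that $\bigcup\mathcal Q_A(L)=L\cup A$, which holds because every absorber copy not used by $L$ still contributes its own $K_q^r$-decomposition and every $K_e\setminus e$ likewise — this is exactly the reason the empty graph is handled and $A$ itself is $K_q^r$-decomposable. The degree bound $\Delta(A)\le C\Delta$ follows by summing the $O(\Delta)$ contribution from the cover stage and the $O(1)$-per-$(r-1)$-set contribution of the absorber copies, using $\Delta\ge n^{1-1/r}\log n$ to absorb the error terms from the probabilistic embedding.

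The main obstacle I expect is the simultaneous control of three competing demands in the greedy constructions: (i) edge-disjointness of all the gadgets (cover copies and absorber copies) with $X$ and with each other, (ii) the $(r-1)$-degree ceiling $O(\Delta)$, and (iii) the refinement bound $|\mathcal H(e)|\le C$, i.e. that the \emph{same} edge is not reused as the ``body'' of many different local gadgets. Demand (iii) is the genuinely new difficulty compared to classical absorber constructions, and it is what forces the canonical-pattern reduction rather than a per-$L$ absorber; getting it requires that the reduction of $L$ to patterns be \emph{oblivious}, i.e. that which flips are used for edge $e$ depend only on $e$ (and local data), not on global features of $L$, so that each edge participates in a bounded menu of decomposition-family members. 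Handling the threshold regime $\Delta(X)<n^{1-1/r}\log n$, where one cannot hope to beat $n^{1-1/r}\log n$ and must pad up to it, is a minor but necessary technical wrinkle. I would structure the write-up as: (1) pseudorandom extension lemmas for finding many $K_q^r$'s and many copies of a fixed bounded hypergraph in $G$; (2) the cover construction and the oblivious reduction to a finite pattern set; (3) the absorber placement; (4) assembling $\mathcal H$, $\mathcal Q_A$ and checking the omni-absorber axioms, the refinement bound, and the degree bound.
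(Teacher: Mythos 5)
Your high-level architecture matches the paper's: a ``refining'' stage that reduces the absorption of an arbitrary divisible $L\subseteq X$ to boundedly many canonical bounded-size divisible pieces (the paper's refiner with its refinement family $\mathcal H$), followed by embedding one private absorber from \cref{thm:AbsorberExistence} per piece via a degree-controlled greedy embedding (the paper's \cref{lem:EmbedMinDegree} and Section~\ref{s:Embedding}). The final stage of your proposal is essentially correct and is what the paper does.

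The genuine gap is the sentence ``routing through these flips we can transform any $K_q^r$-divisible $L\subseteq X$ into a $K_q^r$-divisible graph \ldots belonging to a fixed finite list $\mathcal R$ of local patterns.'' This is precisely the hard content of the theorem, and as stated it fails. After attaching a private $K_e$ to each $e\in X$, the leftover for a given $L$ is $\bigcup_{e\in X\setminus L}(K_e\setminus e)$: an edge-disjoint union of $\Theta(e(X))$ ``anti-edges,'' one per edge of $X\setminus L$. A single anti-edge is \emph{not} $K_q^r$-divisible (its degrees are $\equiv -1$ modulo the relevant binomials, cf.\ Fact~\ref{fact:Div}), so you cannot hand each one to a private absorber; you must group them into divisible chunks, and roughly $M(q,r)$ aligned anti-edges are needed per chunk. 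No \emph{fixed} grouping works, because which anti-edges survive depends on $L$, and a group with only some of its members present is again non-divisible. What is needed is a structure that, for \emph{every} subset of the anti-edges, admits a partition into bounded divisible pieces drawn from a single pre-declared bounded-degree family $\mathcal H$ --- a robustly matchable hypergraph. The paper builds exactly this: \cref{prop:RMHG} (the RMH construction) and the Multiplicity Reduction Lemma~\ref{lem:MRL} handle parallel anti-edges, the Edge Sparsification Lemma~\ref{lem:Sparsify} deliberately makes many edges share the \emph{same} $(q-r)$ auxiliary vertices so the leftover focuses onto few $r$-sets (your ``fresh vertices per $e$'' does the opposite and spreads the leftover everywhere), and the Refine Down Lemma~\ref{lem:RefineDown} iterates this with induction on the uniformity $r$ via link graphs and local refiners. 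Your ``obliviousness'' requirement is a correct intuition for why refinement is achievable, but obliviousness alone does not produce divisibility of the chunks; without the robust-matching machinery the reduction to a finite pattern set is an assertion, not a proof.
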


We note that Theorem~\ref{thm:Omni} with its extremely efficient omni-absorbers is the key to our new proof of the Existence Conjecture. Namely, we will take $X$ to be a random subset of the edges of $K_n^r$ (taken independently with some well-chosen small probability $p$), apply Theorem~\ref{thm:Omni} to find a $K_q^r$-omni-absorber $A$ for $X$, and then use the Boosting Lemma of Glock, K\"{u}hn, Lo, and Osthus~\cite{GKLO16} combined with the nibble method to find a $K_q^r$-packing of $K_n^r\setminus A$ that covers all edges in $K_n^r\setminus (X\cup A)$. By the definition of omni-absorber then, any leftover $L$ of $X$ can be absorbed into $L\cup A$, thereby completing the $K_q^r$ decomposition. See Section~\ref{s:ProofOverview} for a more formal proof overview. 

Phrased in other terms, we have replaced iterative absorption (and the use of a so-called vertex vortex) with a single-step absorption (and the use of a one layer edge vortex). See Section~\ref{ss:Vortex} for more discussion on this but crucially this strategy only works when we have near linear efficiency in omni-absorbers.

The inquiring reader may wonder then whether it is necessary to prove that the $K_q^r$-omni-absorber in Theorem~\ref{thm:Omni} is $C$-refined for our proof of Theorem~\ref{thm:Existence}. Technically, the refinedness is not needed and only the high efficiency ($\Delta(A)\le C\cdot \Delta$) is required. However, the property of being $C$-refined is key to the other applications in our series (see the discussion of the next paragraph); furthermore, it is necessary for our proof of Theorem~\ref{thm:Omni} which proceeds by induction on the uniformity and uses the $C$-refinedness inductively. Indeed, this refined concept is a key innovation driving the proof as will become evident from the introduction of \emph{refiners} in the next subsection, but also through the proof overview of Theorem~\ref{thm:Omni} and on to its use in the proof of the Refine Down Lemma.

The true benefit of our new proof of Theorem~\ref{thm:Existence} is in the robustness of the proof structure. One may use Theorem~\ref{thm:Omni} as a structural black box - a template that one can modify to prove various generalizations or variations of the Existence Conjecture. By embedding various gadgets on to the cliques in the decomposition family of the omni-absorber (what we generally call \emph{boosters}), we can build omni-absorbers suited to other settings. Note this is only possible since our omni-absorbers are $C$-refined and hence the number of cliques in the decomposition family is also small. Indeed, we proceed with some of these applications in the remainder of our series, namely to high girth designs and to graph designs in random settings. See our conclusion, Section~\ref{s:Conclusion}, for discussion of these applications.

The informed reader may wonder whether Theorem~\ref{thm:Omni} follows from known results about the Existence Conjecture. Keevash~\cite{K14} in fact proved a typicality generalization of the Existence Conjecture (Theorem 1.4 in~\cite{K14}, the main theorem of said paper); this in turn yields a weaker version of Theorem~\ref{thm:Omni}, namely by using a typical $K_q^r$-divisible graph $A$ much denser than $X$ (say at random but then deleting some edges to fix divisibility) and then arguing that $L\cup A$ is typical for any $K_q^r$-divisible subgraph $L$ of $X$ and hence admits a $K_q^r$-decomposition. This version would have worse parameters (specifically $\Delta := \max \left\{ \left(\frac{\Delta(X)}{n}\right)^{1/h}\cdot n,~n^{1-\varepsilon} \right\}$ for some large constants $h$ and $1/\varepsilon$, both at least exponential in $q$). More importantly, the decomposition family $\mathcal{H}$ might consist of all the cliques in $X\cup A$ since the various decompositions need not align and so every edge may be in $\Omega(n^{q-r})$ members of $\mathcal{H}$ as opposed to the constantly many of Theorem~\ref{thm:Omni}, a property which is crucial to the applications described above. 

\subsection{Refiners}\label{ss:Refiner}

In order to build extremely efficient and refined omni-absorbers (that is, prove Theorem~\ref{thm:Omni}), we introduce the following key definition which is analogous to omni-absorbers except it allows decomposing into $K_q^r$-divisible subgraphs instead of cliques.

\begin{definition}[Refiner]
Let $q > r\ge 1$ be integers. Let $X$ be an $r$-uniform hypergraph. We say an $r$-uniform hypergraph $R$ is a \emph{$K_q^r$-refiner} of $X$ with \emph{refinement family} $\mathcal{H}$ and \emph{refinement function} $\mathcal{Q}_R$ if $V(X)=V(R)$, $X$ and $R$ are edge-disjoint and $\mathcal{H}$ is a family of $K_q^r$-divisible subgraphs of $X\cup R$ such that $|H\cap X|\le 1$ for all $H\in\mathcal{H}$ and for every $K_q^r$-divisible subgraph $L$ of $X$, there exists $\mathcal{Q}_R(L)\subseteq \mathcal{H}$ that are pairwise edge-disjoint and such that $\bigcup \mathcal{Q}_R(L)=L\cup R$. 
\end{definition}

We note that it follows from the definition that a $K_q^r$-refiner is $K_q^r$-divisible since $R = \bigcup \mathcal{Q}_R(\emptyset)$. We also note that if every element of $\mathcal{H}$ is a $K_q^r$ then the refiner is in fact an omni-absorber. 

We also need an analogous notion of $C$-refined for refiners. Since the members of a refinement family need not be cliques, it is useful to impose that they have bounded size. Hence we introduce the following definition.

\begin{definition}[$C$-Refined Family]
Let $C\ge 1$ be real. Let $\mathcal{H}$ be a family of subgraphs of a hypergraph $G$.  We say $\mathcal{H}$ is \emph{$C$-refined} if $|\mathcal{H}(e)|\le C$ for every edge $e\in G$ and $\max\{v(H),e(H)\}\le C$ for every $H\in \mathcal{H}$.
\end{definition}

We note that the second condition that $\max\{v(H),e(H)\}\le C$ for every $H\in \mathcal{H}$ is trivially satisfied for a decomposition family $\mathcal{H}$ of an omni-absorber (at least for $C\ge \binom{q}{r}$). This is why we defined a $C$-refined omni-absorber as we did. Here then is our notion of $C$-refined for a refiner.

\begin{definition}[$C$-Refined Refiner]
Let $C\ge 1$ be a real number. We say a $K_q^r$-refiner $R$ with refinement family $\mathcal{H}$ is \emph{$C$-refined} if $\mathcal{H}$ is $C$-refined. Let $S\in \binom{V(R)}{r-1}$. The \emph{refinement degree} of $S$ in $\mathcal{H}$, denoted $\Delta_{\mathcal{H}}(S)$, is $|\{H\in \mathcal{H}: S\subseteq V(H)\}|$. The \emph{refinement degree} of $\mathcal{H}$, denoted $\Delta(\mathcal{H})$, is $\max \left\{ \Delta_{\mathcal{H}}(S): S\in \binom{V(R)}{r-1} \right\}$.
\end{definition}

If $R$ is a $K_q^r$-refiner of $X$, we note that only requiring the second condition of $C$-refined, or equivalently decomposing $L\cup R$ into bounded-size $K_q^r$-divisible graphs for every $K_q^r$-divisible subgraph $L$ of $X$, is enough to prove the existence of \emph{high-efficiency omni-absorbers}.  Namely taking $A$ to be the union of $R$ and edge-disjoint private absorbers for every $K_q^r$-divisible subgraph of $X\cup R$ on at most $C$ vertices would yield a $K_q^r$-omni-absorber $A$ of $X$ with $e(A) = O\left( ( v(X)^C\right)$ (where here of course one would have to add enough new isolated vertices to $X$ so as to embed these absorbers).  

However, to construct \emph{extremely efficient omni-absorbers}, that is with $\Delta(A) = O(\Delta(X))$, we require the first condition of $C$-refined, namely that every edge of $X\cup R$ is in at most $C$ of the $K_q^r$-divisible subgraphs in $\mathcal{H}$. Then the existence of a refined absorber will follow by embedding a private absorber for each element of $\mathcal{H}$ (in such a way that the absorbers are edge-disjoint and have low overall maximum degree). 

Here is our main result for refiners. Note it is a weaker version of Theorem~\ref{thm:Omni} which produces a refiner instead of omni-absorber, but from which we will derive Theorem~\ref{thm:Omni} as described in the paragraph above.

\begin{thm}[Refiner Theorem]\label{thm:Refiner}
 For all integers $q> r\ge 1$, there exist an integer $C\ge 1$ and real $\varepsilon \in (0,1)$ such that the following holds: Let $G$ be an $r$-uniform hypergraph on $n$ vertices with $\delta(G)\ge (1-\varepsilon)n$. If $X$ is a spanning subhypergraph of $G$ such that $\Delta(X) \le \frac{n}{C}$ and we let $\Delta:= \max\left\{\Delta(X),~n^{1-\frac{1}{r}}\cdot \log n\right\}$, then there exists a $C$-refined $K_q^r$-refiner $R\subseteq G$ of $X$ with refinement family $\mathcal{H}$ such that $\Delta(\mathcal{H}) \le C \cdot \Delta$.
\end{thm}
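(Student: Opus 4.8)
The plan is to prove the statement by induction on the uniformity $r$. The base case $r=1$ is essentially trivial: a $K_q^1$-divisible hypergraph is just a set of vertices whose size is divisible by $q$, and one can take $R=\emptyset$ with $\mathcal{H}$ the collection of all $q$-subsets of $V(X)$ meeting $X$ in at most one vertex (here one needs to be slightly careful but the degrees are clearly controlled). The substance is the inductive step, and this is where the $C$-refinedness of the hypothesis must be exploited. The idea is to fix a vertex $v$ (or a small set of vertices) and look at the link $X_v$, the $(r-1)$-uniform hypergraph on $V(X)\setminus\{v\}$ whose edges are $\{e\setminus\{v\}: v\in e\in X\}$. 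Since $\Delta_{r-1}(X)\le \Delta$, the link $X_v$ has $\Delta_{r-2}(X_v)\le \Delta$ as well, and $X_v$ lives inside the link $G_v$ which still has very high minimum degree; so the inductive hypothesis applies in uniformity $r-1$ to produce a $C'$-refined $K_q^{r-1}$-refiner $R_v$ of $X_v$ with small refinement degree.

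The key step is then to \emph{lift} the $(r-1)$-uniform refiner structure back to uniformity $r$. Given the decomposition $\bigcup \mathcal{Q}_{R_v}(L_v) = L_v\cup R_v$ of each link into bounded-size $K_q^{r-1}$-divisible pieces, one wants to "cone" each such piece over a carefully chosen set of apex vertices to build $K_q^r$-divisible gadgets in uniformity $r$; the divisibility bookkeeping (that every $i$-set for $0\le i\le r-1$ is covered the right number of times) is exactly what forces the pieces in the link to be $K_q^{r-1}$-divisible, which is why we need a refiner and not merely an omni-absorber at the previous level. One handles all the $(r-1)$-sets $S$ by iterating this vertex-by-vertex (or by processing an appropriate "edge vortex" of small sets), taking care that the edges newly introduced at each stage are spread out so that the cumulative $(r-1)$-degree of $R$ and the cumulative refinement degree of $\mathcal{H}$ remain $O(\Delta)$. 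The high minimum degree $\delta(G)\ge(1-\varepsilon)n$ is used throughout to guarantee that there is always enough room in $G$ to place the coning apexes and the link-level refiners with the required edge-disjointness; the bound $\Delta(X)\le n/C$ ensures the error terms from overlaps stay below the main term; and the $n^{1-1/r}\log n$ floor on $\Delta$ is what makes the nibble-type / random-greedy sub-arguments (used to partition the work across the $(r-1)$-sets and to keep degrees balanced) concentrate.

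The main obstacle I expect is precisely the degree accounting in the lifting step: when we cone link-level gadgets back up, each vertex $v$ (and each $(r-1)$-set) is touched not only by its own link's refiner but also as an apex for the gadgets coming from \emph{other} links, so a naive union would blow the refinement degree up by a factor of $n$. Controlling this requires a genuinely global argument — either a clever deterministic scheme that assigns apex vertices so that the load is evenly distributed, or a probabilistic assignment (choosing apexes at random and applying a concentration inequality, using the $\Delta\ge n^{1-1/r}\log n$ hypothesis) followed by a small deterministic fix-up using the high minimum degree of $G$. Showing that one can simultaneously achieve (i) edge-disjointness of all the gadgets, (ii) $|\mathcal{H}(e)|\le C$ for every edge, (iii) $\max\{v(H),e(H)\}\le C$, and (iv) $\Delta(\mathcal{H})\le C\cdot\Delta$, all while the $L$-dependence of $\mathcal{Q}_R(L)$ remains coherent across the recursion, is the technical heart of the proof. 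Once the refiner structure is in place with these bounds, Theorem~\ref{thm:Refiner} follows, and Theorem~\ref{thm:Omni} is then obtained (as indicated in the text) by attaching a private $K_q^r$-absorber — guaranteed by Theorem~\ref{thm:AbsorberExistence} — to each member of the (small, $C$-refined) family $\mathcal{H}$, distributing these absorbers within $G$ so that the total $(r-1)$-degree remains $O(\Delta)$.
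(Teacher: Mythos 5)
Your overall strategy---induction on the uniformity via links, followed by a ``lifting''/coning step---is indeed the skeleton of the paper's argument (the Local Multi-Refiner Theorem extends a lower-uniformity absorber of a link $X(S)$ back up to uniformity $r$). But as written the proposal has three genuine gaps, and the one you yourself flag as ``the technical heart'' is left entirely unresolved.

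First, the base case is wrong. For $r=1$ you cannot take $R=\emptyset$: if $|\mathcal{H}(x)|\le C$ and every $H\in\mathcal{H}$ has $|H|\le C$, then choosing $L$ to contain $x$ but avoid the at most $C^2$ other vertices appearing in members of $\mathcal{H}(x)$ leaves $x$ uncoverable, so no decomposition of $L$ exists. The $r=1$ case genuinely requires adding $\Theta(q\cdot v(X))$ new vertices and building a bounded-degree $q$-uniform hypergraph with the ``robustly matchable'' property (the paper's Proposition~\ref{prop:RMHG}); this construction is also reused throughout to reduce multiplicities, so it is not a dismissible technicality. Second, the load-balancing problem in the lifting step is exactly where all the work lies, and ``either a clever deterministic scheme or a probabilistic assignment plus fix-up'' is not an argument. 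The paper's resolution is an iterative \emph{refine down}: a nested sequence $Y_r\subseteq\cdots\subseteq Y_0\subseteq Y$ of vertex sets, where at each stage only edges with a prescribed intersection pattern with $Y_i$ are refined, local refiners are embedded via random permutations of $Y_{i-1}\setminus Y_i$, and a ``flatness'' invariant ($|e\setminus Y_i|$ bounded for all edges of the remainder) is carried through the induction precisely so that the expected refinement degrees in Claim~\ref{claim:Z} come out right; an Edge Sparsification Lemma is interleaved so the degree losses at each stage offset the $(v(X)/|Y|)^{\binom{r}{2}}$ gains. None of this is recoverable from the sketch. Third, you never address simplicity: the natural lifted construction demands the same $r$-set many times over, so it produces a multi-hypergraph, not a subgraph $R\subseteq G$. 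The paper proves multigraph versions of the theorems first and only then replaces multi-edges by ``fake edge'' gadgets (partial cliques with the divisibility signature of a single edge), embedded edge-disjointly in $G$ via a partial-clique embedding lemma; without some such device the statement as claimed does not follow from the lifted construction.
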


\subsection{Single-Step Absorption (and Edge Vortex) vs Iterative Absorption}\label{ss:Vortex}

The purely combinatorial proof of the Existence Conjecture by Glock, K\"{u}hn, Lo, and Osthus~\cite{GKLO16}, as well as many other recent breakthroughs in design theory and hypergraph decompositions, proceeds via \emph{iterative absorption}, in which the potential leftovers of a decomposition problem are iteratively reduced until one final leftover is completely absorbed. The method of iterative absorption was first introduced by Knox, K\"{u}hn, and Osthus in~\cite{KKO15} and first used for decompositions by K\"{u}hn and Osthus in~\cite{KO13}. There are two natural approaches to utilizing iterative absorption for graph designs as follows. 

The first approach that we mention (and arguably the simpler and more natural one for this setting) is to use a so-called \emph{edge vortex} $E(K_n^r)=E_0 \supset E_1 \supset E_2 \ldots \supset E_k = E(X)$, where $E_{i+1}$ is a small randomly chosen subset of $E_{i}$. Then a nibble or random-greedy process is iteratively applied to decompose almost all the edges of $E_i$ into $K_q^r$'s and the remaining edges of $E_i$ are then decomposed into $K_q^r$'s whose other edges lie only in $E_{i+1}$. The benefit of this approach is its simplicity.  However, the problem is that for the penultimate step to succeed, we require that $E_{k-1}$ contains $K_q^r$'s; even for triangle decompositions the probability that $E_{k-1}$ contains a triangle is effectively zero when $|E_{k-1}| \ll n^{3/2}$, and hence we require $|E_{k-1}| \ge n^{3/2}$. %(for general $r$, $|E_{k-1}| \ge n^{2r/(r+1)}$). 
Therefore historically without access to extremely efficient omni-absorbers, this approach was not feasible.

A second approach is to use a so-called \emph{vertex vortex}, $V(K_n^r) = V_0 \supset V_1 \supset V_2 \ldots \supset V_k = V(X)$, where $V_{i+1}$ is a small random subset of $V_i$. Then a nibble or random-greedy process is employed to decompose almost all the edges of $G[V_i]\setminus G[V_{i+1}]$. The benefit here is that $v(X)$ may be small, even constant sized, and hence the use of inefficient omni-absorbers is permitted. The cost is that the remaining edges from $V_i\setminus V_{i+1}$ to $V_{i+1}$ must be decomposed and these edges must be decomposed via $K_{q-j}^{r-j}$ factors inside $V_{i+1}$ where $j$ is the number of vertices of the edge in $V_i\setminus V_{i+1}$; this is the technical heart of the Cover Down Lemma of Glock, K\"{u}hn, Lo, and Osthus~\cite{GKLO16}. One issue with the cover down approach is that it does not produce a black box theorem (as in Theorem~\ref{thm:Omni}) and so the whole proof and method must be modified for various variants. 

The first approach has not been generally used for designs due to only having access to inefficient omni-absorbers. Aided now by our efficient omni-absorbers, we use the first approach to provide our new proof of the Existence Conjecture. Indeed, since our omni-absorbers are so efficient, we only use one step; that is, we only need to choose one random subset $E_1$ of $E_0=E(K_n^r)$. 

Thus we no longer need to iterate the cover down. Indeed, instead of iteratively \emph{covering} down to $X$ what needs to be absorbed via random processes, our refiners (in their construction which also employs a vertex vortex of sorts) can be viewed as iteratively \emph{refining} down  what needs to be absorbed via structural methods.

\subsection{Outline of Paper}

In Section~\ref{s:ProofOverview}, we overview our new proof of the Existence Conjecture; we also provide more details on how our refiners are constructed (of special importance is the construction in the $r=1$ case) and also on what nibble result we need.  In Section~\ref{s:Refined}, we introduce multigraph versions of Theorems~\ref{thm:Omni} and~\ref{thm:Refiner}, namely Theorems~\ref{thm:MultiOmni} and~\ref{thm:MRT}. We then prove Theorem~\ref{thm:MRT} modulo the proof of the Refine Down Lemma (Lemma~\ref{lem:RefineDown}). In Section~\ref{s:RefineDown}, we prove Lemma~\ref{lem:RefineDown}. In Section~\ref{s:Embedding}, we embed fake edges and absorbers to prove Theorems~\ref{thm:MultiOmni},~\ref{thm:Refiner}, and~\ref{thm:Omni} in that order.  In Section~\ref{s:Existence}, we provide a formal proof of the Existence Conjecture using refined absorption following the outline we gave in Section~\ref{s:ProofOverview}.  Finally in Section~\ref{s:Conclusion}, we provide concluding remarks and discuss further directions.

\subsection{Notation and Standard Definitions}\label{ss:Notation}

A \emph{hypergraph} $H$ consists of a pair $(V, E)$ where $V$ is a set whose elements are called \emph{vertices} and $E$ is a
set of subsets of $V$ called \emph{edges}; we also write $H$ for its set of edges $E(H)$ for brevity. Similarly, we write $v(H)$ for the number of vertices of $H$ and either $e(H)$ or alternatively $|H|$ for the number of edges of $H$. A \emph{multi-hypergraph} is a pair $(V,E)$ where $V$ is a set and $E$ is a multi-set of subsets of $V$. (Note we do not allow sub-multisets here, i.e.~no `loops').

For an integer $r\ge 1$, a (multi-)hypergraph $H$ is said to be \emph{$r$-bounded} if every edge of $H$ has size at most $r$ and  \emph{$r$-uniform} if every edge has size exactly $r$; an \emph{$r$-uniform hypergraph} is called an \emph{$r$-graph} for short.

Let $F$ and $G$ be hypergraphs. An \emph{$F$ packing} of $G$ is a collection of copies of $F$ such that every edge of $G$ is in at most one element of $\mathcal{F}$.  An \emph{$F$ decomposition} of $G$ is an $F$ packing such that every edge of $G$ is in exactly one element of $\mathcal{F}$. For a (multi-)hypergraph $G$ and subset $S\subseteq V(G)$, $G(S)$ denotes the hypergraph with $V(G(S)):=V(G)\setminus S$ and $E(G(S))$ be the multiset $\{e\setminus S: e\in E(G), S\subseteq e\}$ (this is often called the \emph{link graph} of $S$). If $G$ is an $r$-graph and $|S|=r$, then $|G(S)|$ is called the \emph{multiplicity} of $S$. For $L \subseteq G(S)$, let $S \uplus L := \{S \cup e : e \in L\}$.

The complete $r$-graph on $q$ vertices, denoted $K_q^r$ is the $r$-graph with vertex set $[q]$ and edge set $\binom{[q]}{r}$. Note that a Steiner system with parameters $(n,q,r)$ is equivalent to a $K_q^r$ decomposition of $K_n^r$. A necessary condition for an $r$-graph $G$ to admit a $K_q^r$ decomposition is that $G$ is \emph{$K_q^r$-divisible}, that is, $\binom{q-i}{r-i}~|~|G(S)|$ for all $0\le i \le r-1$ and $S\subseteq V(G)$ with $|S|=i$. 

For our probabilistic arguments, we assume familiarity with the Chernoff bounds (e.g.~see Alon and Spencer~\cite{AS16}).

\section{Proof Overview}\label{s:ProofOverview}

In this section, we provide an overview of our proof of Theorem~\ref{thm:Existence}, with an eye towards discussing the main proof ideas and their novelty. First we provide a bird's-eye view of our new proof of the Existence Conjecture as follows.

Reformulating a $K_q^r$ decomposition problem in terms of a finding perfect matching of an auxiliary hypergraph is useful. To that end, we include the following definition.

\begin{definition}[Design Hypergraph]
Let $F$ be a hypergraph. If $G$ is a hypergraph, then the \emph{$F$-design hypergraph of $G$}, denoted ${\rm Design}_F(G)$, % or ${\rm Design}(G,F)$?
is the hypergraph $\mathcal{D}$ with $V(\mathcal{D})=E(G)$ and $E(\mathcal{D}) = \{S\subseteq E(G): S \text{ is isomorphic to } F\}$.
\end{definition}

\noindent
Here is a concise description of the proof steps of our new proof of the Existence Conjecture (where $G=K_n^r$ but later will be an $r$-uniform hypergraph of large minimum degree):
\begin{enumerate}
    \item[(1)] {\bf `Reserve' a random subset $X$ of $E(G)$ independently with some small probability $p$.}
    
    Namely, we use the following lemma (which we prove in Section~\ref{s:Existence} but follows easily from the Chernoff bounds):
    
    \begin{lem}\label{lem:RandomX}
    For all integers $q> r\ge 1$, there exists a real $\varepsilon \in (0,1)$ such that the following holds: Let $G$ be an $r$-uniform hypergraph with $\delta(G)\ge (1-\varepsilon)\cdot v(G)$ and let $p$ be a real number with $v(G)^{-\varepsilon} \le p \le 1$. If $v(G)\ge \frac{1}{\varepsilon}$, then there exists a spanning subhypergraph  $X\subseteq G$ such that $\Delta(X)\le 2p\cdot v(G)$ and for all $e\in G\setminus X$, there exist at least $\varepsilon\cdot p^{\binom{q}{r}-1}\cdot v(G)^{q-r}$ $K_q^r$'s in $X\cup \{e\}$ containing $e$.
    \end{lem}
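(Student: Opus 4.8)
The plan is to take $X$ to be a random spanning subhypergraph of $G$, including each edge of $G$ independently with probability $p$, and to show that both required properties hold with probability $1-o(1)$ as $n:=v(G)\to\infty$; since we only need \emph{existence}, this suffices once $n\ge 1/\varepsilon$ for $\varepsilon$ small enough (in terms of $q,r$).

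For the degree bound, fix $S\in\binom{V(G)}{r-1}$. Then $d_X(S)$ is binomial with mean $p\,d_G(S)$, and $d_G(S)\ge\delta(G)\ge(1-\varepsilon)n$ together with $p\ge n^{-\varepsilon}$ gives $\Expect{d_X(S)}\ge(1-\varepsilon)n^{1-\varepsilon}\to\infty$. Since $\{d_X(S)\ge 2pn\}\subseteq\{d_X(S)\ge 2\Expect{d_X(S)}\}$ (as $d_G(S)\le n$), Chernoff gives $\Prob{d_X(S)\ge 2pn}\le\exp(-\Omega(n^{1-\varepsilon}))$, and a union bound over the at most $n^{r-1}$ choices of $S$ yields $\Delta(X)\le 2pn$ with probability $1-o(1)$.

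For the clique count, first observe that the codegree hypothesis already forces many copies of $K_q^r$ through each edge of $G$. Fix $e\in G$ and call a $(q-r)$-set $Q$ disjoint from $e$ a \emph{candidate} if $e\cup Q$ spans a copy of $K_q^r$ in $G$. Since $\overline{G}$ has maximum $(r-1)$-degree at most $\varepsilon n$, each $r$-subset $T\ne e$ of $e\cup Q$ that might fail to lie in $G$ rules out at most $\varepsilon n\cdot n^{q-r-1}$ sets $Q$, so there are at least $\binom{n-r}{q-r}-\binom qr\varepsilon n^{q-r}\ge c_0 n^{q-r}$ candidates for a constant $c_0=c_0(q,r)>0$ (taking $\varepsilon$ small). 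Let $N_e$ count the candidates $Q$ all of whose $D:=\binom qr-1$ edges other than $e$ lie in $X$; then $\Expect{N_e}\ge c_0 n^{q-r}p^D$. Expose the indicators $(\mathbb{1}[f\in X])_{f\in G}$ one edge at a time and consider the Doob martingale for $N_e$. The crucial point is that the one-step Lipschitz constant $c_f$ for revealing $f\ne e$ is at most the number of candidates through both $e$ and $f$, namely $O(n^{q-r-|f\setminus e|})$; because only $O(n)$ edges $f$ satisfy $|f\setminus e|=1$ and all others contribute a strictly smaller power of $n$, we get $\sum_f c_f^2=O(n^{2(q-r)-1})$ rather than the naive $O(n^{2q-r-2})$. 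Azuma--Hoeffding with deviation $\tfrac12\Expect{N_e}$ then gives
\[
\Prob{N_e<\tfrac12\Expect{N_e}}\le 2\exp\!\left(-\Omega\!\left(np^{2D}\right)\right)\le 2\exp\!\left(-\Omega\!\left(n^{1-2D\varepsilon}\right)\right).
\]

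Choosing $\varepsilon<\tfrac1{2D}=\tfrac1{2(\binom qr-1)}$ (and small enough for the estimates above), this probability beats the $r\log n$ cost of a union bound over the at most $n^r$ edges $e\in G$, so with probability $1-o(1)$ every $e\in G\setminus X$ satisfies $N_e\ge\tfrac12 c_0 n^{q-r}p^D\ge\varepsilon p^{\binom qr-1}n^{q-r}$ (taking also $\varepsilon\le c_0/2$). Together with the degree estimate, both events hold simultaneously with positive probability once $n\ge1/\varepsilon$, producing the required $X$. The only delicate point — and precisely what makes the statement "follow easily" from Chernoff-type inequalities rather than needing Janson-type tools — is the bound $\sum_f c_f^2=O(n^{2(q-r)-1})$, i.e.\ the observation that the edges with large influence on $N_e$ (those meeting $e$ in $r-1$ vertices) number only $O(n)$.
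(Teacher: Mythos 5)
Your proof is correct, and the first half (the degree bound via Chernoff plus a union bound over $(r-1)$-sets) is essentially identical to the paper's. For the clique count, however, you take a genuinely different route. The paper never concentrates the random variable $N_e$ directly: instead, for each set $T$ with $r-1\le|T|\le q-1$ it considers the ``common neighbourhood'' $N_X(T)$ of vertices $v$ with $v\cup S\in X$ for all $S\in\binom{T}{r-1}$, observes that $|N_X(T)|$ is a sum of \emph{independent} Bernoulli indicators (the witnessing edge sets for distinct $v$ are disjoint), applies plain Chernoff to each such $T$, and then deterministically builds the $K_q^r$'s through $e$ one vertex at a time, obtaining the lower bound $\frac{1}{(q-r)!}\prod_{i=r}^{q-1}\frac{\mu_i}{2}$ with $\mu_i=p^{\binom{i}{r-1}}v(G)/2$; note $\sum_{i=r}^{q-1}\binom{i}{r-1}=\binom qr-1$, recovering the exponent of $p$. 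This is why the paper can honestly say the lemma ``follows easily from the Chernoff bounds.'' You instead concentrate $N_e$ itself via McDiarmid/Azuma, which requires your key observation that $\sum_f c_f^2=O(n^{2(q-r)-1})$ because only $O(n)$ edges meet $e$ in $r-1$ vertices; this works here precisely because $p\ge n^{-\varepsilon}$ with $\varepsilon<1/(2(\binom qr-1))$, so the exponent $np^{2D}$ is still polynomial in $n$ (for genuinely small $p$ this martingale bound would fail, whereas the paper's product trick degrades more gracefully). Both arguments are valid; the paper's is more elementary, yours is more direct but leans on a sharper tool and a Lipschitz computation that must be done carefully.
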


    \item[(2)] {\bf Construct a refined omni-absorber $A$ of $X$}.
    
    Namely apply Theorem~\ref{thm:Omni} to $X$ and $G$ to find a refined omni-absorber $A$ for $X$ in $G$.
        
    \item[(3)] {\bf ``Regularity boost'' $G\setminus (X\cup A)$.}
    
    In particular, we apply the following special case of the ``Boost Lemma'' (Lemma 6.3 in~\cite{GKLO16}) of Glock, K\"{u}hn, Lo, and Osthus~\cite{GKLO16} to $J:= G\setminus (X\cup A)$.

\begin{lem}\label{lem:RegBoost}
For all integers $q> r \ge 1$, there exists a real $\varepsilon \in (0,1)$ and integer $C\ge 1$  such that the following holds: If $J$ is a $K_q^r$-divisible $r$-uniform hypergraph with $v(J)\ge C$ and $\delta(J) \ge (1-\varepsilon)\cdot v(J)$, then there exists a subhypergraph $H$ of ${\rm Design}_{K_q^r}(J)$ such that $d_H(v) = (1 \pm v(J)^{-(q-r)/3}) \cdot \frac{1}{2(q-r)!} \cdot v(J)^{q-r}$ for all $v\in V(H)$.
\end{lem}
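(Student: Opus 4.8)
The plan is to deduce this directly from the Boost Lemma (Lemma~6.3 of~\cite{GKLO16}) applied to the design hypergraph $\mathcal{D} := {\rm Design}_{K_q^r}(J)$, which is the $\binom{q}{r}$-uniform hypergraph with vertex set $E(J)$ whose edges are the vertex sets of the copies of $K_q^r$ in $J$. Writing $n := v(J)$, I would invoke the Boost Lemma with target degree $d^\ast := \frac{1}{2(q-r)!}\,n^{q-r}$: the subhypergraph $H \subseteq \mathcal{D}$ it returns, with $d_H(v) = (1\pm\mu)\,d^\ast$ for every $v \in V(\mathcal{D}) = E(J)$ and $\mu$ small, is exactly the object required, since a subhypergraph of $\mathcal{D}$ is the same thing as a subhypergraph of ${\rm Design}_{K_q^r}(J)$.

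The content is in checking the hypotheses of Lemma~6.3 for $\mathcal{D}$, which comes down to two estimates. First a \emph{degree estimate}: every $e \in E(J)$ lies in $\left(1 \pm c_1\varepsilon\right)\binom{n-r}{q-r}$ copies of $K_q^r$, for a constant $c_1 = c_1(q,r)$. The upper bound is trivial; for the lower bound I would observe that a choice of $q-r$ further vertices fails to complete $e$ to a $K_q^r$ only when some $r$-subset $f \neq e$ of the resulting $q$-set is a non-edge, in which case $f$ meets the new vertices in some vertex $u$, and applying $\delta(J) \ge (1-\varepsilon)n$ to the $(r-1)$-set $f\setminus\{u\}$ bounds by $\varepsilon n$ the number of completions of $f\setminus\{u\}$ to a non-edge, so summing over the $O(1)$ possible ``shapes'' of a bad $(f, U)$ pair gives at most $c_1\varepsilon\,n^{q-r}$ bad extensions. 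Second a \emph{codegree estimate}: two distinct edges of $J$ span at least $r+1$ vertices, hence lie together in at most $\binom{n-r-1}{q-r-1} \le n^{q-r-1}$ copies of $K_q^r$ (and in none when their union exceeds $q$ vertices), so every codegree of $\mathcal{D}$ is $O(n^{q-r-1}) = O\!\left(d_{\mathcal{D}}(e)/n\right)$, comfortably small relative to the degree.

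With these in hand I would choose $\varepsilon$ small enough that $c_1\varepsilon$ is below the irregularity threshold demanded by Lemma~6.3, and $C$ large enough that for $n \ge C$ the degree $\binom{n-r}{q-r} = \Theta(n^{q-r})$ exceeds the polynomial lower bound there while $n^{q-r-1}$ is suitably small against it. Since $\binom{n-r}{q-r} = (1 - O(1/n))\frac{n^{q-r}}{(q-r)!}$, the target $d^\ast = \tfrac12\cdot\tfrac{n^{q-r}}{(q-r)!}$ sits below $(1 - c_1\varepsilon)\binom{n-r}{q-r} \le d_{\mathcal{D}}(e)$ for all $e$ with a constant factor of room, so it is an admissible target, and Lemma~6.3 returns the desired $H$ with relative error $\mu$ of the order natural for binomial sparsification at density $\Theta(n^{q-r})$, namely $n^{-(q-r)/2}$ up to polylogarithmic factors, which is at most $n^{-(q-r)/3}$ once $C$ is large.

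The only step with any real content is the degree estimate, and even there it is just the bookkeeping for the bad $(q-r)$-sets; the codegree bound is essentially immediate. The one place to be careful when invoking Lemma~6.3 is matching its error term against the required $v(J)^{-(q-r)/3}$, which reduces to identifying the relevant density as $\Theta(n^{q-r})$ so that the sparsification error has exponent $-(q-r)/2 < -(q-r)/3$. I also note that the $K_q^r$-divisibility hypothesis on $J$ is not used in this argument and is carried along only for consistency with the settings in which the lemma is later applied.
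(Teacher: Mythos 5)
Your proposal matches the paper's treatment: the paper gives no independent proof of this lemma, presenting it verbatim as a special case of the Boost Lemma (Lemma~6.3 of Glock, K\"uhn, Lo, and Osthus), which is exactly the reduction you carry out. Your added verification that the minimum-degree hypothesis forces every edge of $J$ to lie in $(1\pm O(\varepsilon))\binom{v(J)-r}{q-r}$ copies of $K_q^r$ is the correct (and only substantive) step needed to match the hypotheses of that lemma; note only that the Boost Lemma does not actually require a codegree condition (its proof keeps each clique independently with a suitably weighted probability, so concentration comes from independence), so that part of your checking is harmless but superfluous.
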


    \item[(4)] {\bf Run ``nibble with reserves''} (i.e.~apply Theorem~\ref{thm:NibbleReserves} below) on $H$ to find a $K_q^r$ packing of $G\setminus A$ covering $G\setminus (X\cup A)$ where the \emph{reserves} are the $K_q^r$'s with one edge in $G-(X\cup A)$ and remaining edges in $X$; then for the resulting \emph{leftover} $L$ of $X$, we have that $L\cup A$ admits a $K_q^r$ decomposition by the definition of omni-absorber. 
\end{enumerate}

Together these steps complete our main result -  a new proof of the Existence Conjecture for Combinatorial Designs (Theorem~\ref{thm:Existence}); see Section~\ref{s:Existence} for a formal proof of Theorem~\ref{thm:Existence} that follows the outline above.

In the next subsections, we provide more details on how to build refiners and we also formally state the requisite ``nibble with reserves'' theorem.

\subsection{Building Refiners}

One key idea to proving Theorems~\ref{thm:Omni} and~\ref{thm:Refiner} is to prove them first in the $r=1$ case and then proceed by induction on the uniformity $r$. Note that a $K_q^1$-divisible $1$-uniform hypergraph always admits a $K_q^1$-decomposition (namely by partitioning the edges also known as `singletons' into sets of size $q$). Hence the empty graph is a $K_q^1$-omni-absorber for any $1$-uniform hypergraph $X$. However, the empty graph need not be a refined absorber.

Indeed, it is not hard to see that Theorem~\ref{thm:Omni} in the $r=1$ case is equivalent to the existence of a $q$-uniform hypergraph $H$ with $X\subseteq V(H)$, $\Delta(H)\le C$, and $v(H)\le C\cdot v(X)$ such that for every subset $L$ of $X$ with $q~|~|L|$, there exists a perfect matching of $L\cup (V(H)\setminus X)$. (Namely the equivalence follows by letting the edges of $H$ be the elements of the decomposition family $\mathcal{H}$ of $A$ and vice versa.) That is, Theorem~\ref{thm:Omni} in the $r=1$ case is equivalent to the existence of a \emph{robustly matchable $q$-uniform hypergraph} (RMH) with constant degree and only a constant proportion of new vertices. 

This is related to \emph{robustly matchable bipartite graphs} (RMBG) as constructed by Montgomery~\cite{M19b} except that the bipartite condition is much more restrictive and hence harder to construct. Thus our task is easier; this being said, we will require some additional properties of our RMH for the purposes of induction as follows.

%cite Ferber-Kwan somewhere around here
Here is the key proposition.

\begin{proposition}[RMH Construction]\label{prop:RMHG}
Let $q\ge 1$ be an integer. If $X$ is a set of vertices, then there exists a $q$-uniform hypergraph $H$ and $X'\subseteq R:=V(H)\setminus X$ such that all of the following hold:
\begin{enumerate}
    \item[(1)] $X\subseteq V(H)$ and $|e\cap X|\le 1$ for all $e\in E(H)$,
    \item[(2)] $v(H)=(q+1)\cdot v(X)$,
    \item[(3)] $\Delta(H)\le 2q$ and hence $e(H)\le 2(q+1)\cdot v(X)$,
    \item[(4)] $|X'|=q-1$ and for every $L\subseteq X$, there exists a matching $M_L$ of $H$ such that $$(L\cup R)\setminus X'\subseteq V(M_L)\subseteq L\cup R.$$
We note that this implies that if $q\mid|L|$, then $M_L$ is a perfect matching of $L\cup R$.
\end{enumerate}
\end{proposition}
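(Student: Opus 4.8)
The plan is to construct $H$ explicitly. Write $X = \{x_1, \dots, x_n\}$ where $n = v(X)$. The reserve set $R$ will have $(q+1)n - n = qn$ vertices; partition $qn = (q-1) + (qn - q + 1)$, so that $X'$ consists of $q-1$ special reserve vertices, and the remaining $qn - q + 1$ reserve vertices are organized to pair up flexibly with the elements of $X$. The intuition is that we want a ``cycle-like'' gadget: each $x_i$ should be coverable either ``by itself together with some reserve vertices'' or ``deferred to the next index,'' so that for any $L \subseteq X$ we can choose, greedily around the cycle, which $x_i$'s get absorbed locally, always being able to fix up the parity at the single location $X'$. Concretely, I would introduce for each $i \in [n]$ a block $B_i$ of $q-1$ fresh reserve vertices, plus one ``linking'' vertex $z_i$; this accounts for $n \cdot (q-1) + n = qn$ reserve vertices if we set $X' = B_0$ for a phantom index, or more cleanly take $X' = \{z_1,\dots\}$ — the exact bookkeeping of which $q-1$ vertices form $X'$ is a detail to be pinned down so that (2) holds exactly.

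The edge set: for each $i$, include an edge $e_i^{\mathrm{in}} = \{x_i\} \cup B_i$ (size $q$, uses $x_i$ ``locally'') and an edge $e_i^{\mathrm{out}}$ that instead routes $x_i$ together with $z_i$ and $q-2$ vertices of the next block, i.e. something like $\{x_i, z_i\} \cup (B_{i+1} \setminus \{\text{one vertex}\})$, indices mod $n$, with a ``cap'' edge handling the leftover reserve vertices when we stop deferring. The design is forced by wanting: (a) $\Delta(H) \le 2q$ — each $x_i$ is in exactly $2$ edges, each reserve vertex in $O(q)$ edges, which should be arrangeable with a constant-degree layout; (b) $|e \cap X| \le 1$ always, immediate since each edge touches only one $x_i$; (c) for $L \subseteq X$, a matching $M_L$ covering $(L \cup R)\setminus X'$ inside $L \cup R$. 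For (c), given $L$, walk around the cycle $1, \dots, n$: maintain a running ``debt'' (a set of not-yet-covered reserve vertices from the previous block); when $x_i \in L$, cover it with $e_i^{\mathrm{in}}$ or $e_i^{\mathrm{out}}$ depending on whether the debt is currently clear; when $x_i \notin L$, its reserve block must be covered entirely by reserve-only edges, which is where the deferral chains terminate. The single place the chain can fail to close up is absorbed into $X'$, which is exactly why $|X'| = q-1$: a block's worth of slack minus one.

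The main obstacle I expect is getting the global matching argument in (4) to actually work for \emph{every} $L \subseteq X$ simultaneously, not just for structured $L$ — in particular ensuring that when many consecutive $x_i$ are absent from $L$ we can still tile their reserve blocks with reserve-only edges while the deferral bookkeeping stays consistent, and that the unique failure mode lands precisely in a set of size $q-1$ independent of $L$. This is the crux: it requires the gadget to have enough ``reserve-only'' edges (a second layer of edges among the $R$-vertices) to mop up unused blocks, while those extra edges must not blow up the maximum degree past $2q$ and must not create a copy of an edge already counted. I would therefore spend most of the effort specifying the reserve-only edges (likely a perfect matching structure on $\bigcup_i B_i \cup \{z_i\}$ arranged in a cycle so that ``cutting'' it at any point yields a near-perfect matching missing exactly $q-1$ vertices) and then verify (2)–(4) by a short case analysis on the pattern of $L$ around the cycle. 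An alternative, possibly cleaner, route: reduce to $q=2$ intuition (where this is essentially a path/cycle of reserve vertices interleaved with the $x_i$, and the $q-1 = 1$ missable vertex is a path endpoint) and then ``thicken'' each vertex of the $q=2$ construction into $q-1$ vertices; I would check whether this thickening preserves the exact counts in (2) and the degree bound in (3), and present whichever of the two constructions makes the verification of (4) most transparent.
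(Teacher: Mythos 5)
Your plan has the right shape---$q$ reserve vertices per element of $X$ arranged along a linear/cyclic order, with each $x_i$ either covered ``locally'' by its own block or deferring to the next, and a slack of exactly $q-1$ vertices accumulating at one designated spot---and this is genuinely the same idea as the paper's construction. But as written there is a real gap, and it is exactly the one you flag yourself: you never specify the reserve-only edges or carry out the case analysis showing that for \emph{every} $L\subseteq X$ the deferral bookkeeping closes up with the uncovered vertices landing precisely in a fixed set $X'$ of size $q-1$. Your ``in/out'' edge pairs $e_i^{\mathrm{in}},e_i^{\mathrm{out}}$ only handle the vertices of $L$; when a long run of consecutive $x_i$'s is absent from $L$, their reserve blocks must be tiled by edges you have not defined, and the offset at which that tiling starts depends on how much debt was carried into the run. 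A fixed matching on $\bigcup_i B_i\cup\{z_i\}$ cannot do this; you need edges at \emph{every} offset. This is a missing construction, not a routine verification.

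The paper closes this by being maximally generous with edges: it places the $X$-vertices at every $(q+1)$-st position of a linear order $y_1,\ldots,y_{(q+1)m}$ and takes as edges \emph{all} windows of $q$ consecutive positions, together with all windows of $q+1$ consecutive positions with the unique $X$-position deleted. This supplies both your ``covering'' edges and all possible ``reserve-only'' edges at every offset, while each position still lies in at most $2q$ windows, so $\Delta(H)\le 2q$. The matching for a given $L$ then needs no local greedy/deferral argument at all: list $L\cup R$ in position order and chop it into consecutive blocks of $q$ starting from the top; each block spans an interval of positions that skips at most one deleted $X$-vertex (since $X$-positions are $q+1$ apart), so each block is one of the two edge types, and the at most $q-1$ leftover vertices at the bottom are the first $q-1$ positions, which is the fixed $X'$. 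If you want to salvage your version, replacing your hand-picked reserve-only edges by ``all length-$q$ intervals of reserve positions'' is the fix; otherwise the cyclic in/out gadget as described does not yet constitute a proof of (4).
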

\begin{proof}
Let $m:= |X|$. Let $x_1,\ldots x_m$ be an enumeration of $X$. Let $y_1,\ldots, y_{(q+1)m}$ be an enumeration of the vertices of $H$ where we assume that $y_{(q+1)i}=x_i$ for all $i\in [m]$.

Now we define the edges of $H$ as
\begin{align*}
E(H):= &\Big\{ \big\{y_{i+j}:j\in \{0,1,\ldots, q-1\}\big\}: i\in [(q+1)m- (q-1)]\Big\}~~\cup\\
&\Big\{ \big\{y_{i+j}:j\in \{0,1,\ldots, q\},~(q+1)\nmid (i+j)\big\}: i\in [(q+1)m-q],~(q+1)\nmid i\Big\}.    
\end{align*}
Note that $H$ is a $q$-uniform hypergraph, $X\subseteq V(H)$ and $|e\cap X|\le 1$ for all $e\in E(H)$ and hence (1) holds. Note that $v(H)=(q+1)\cdot v(X)$ and hence (2) holds. Next note that $\Delta(H)\le 2q$ and hence since $H$ is $q$-uniform, we find that 
$$e(H) \le \frac{\Delta(H)\cdot v(H)}{q} \le \frac{2q\cdot ((q+1)\cdot v(X))}{q} = 2(q+1)\cdot v(X),$$ 
and hence (3) holds.

Let $$X':= \{y_{i}: i\in [q-1]\}.$$ 
Note that $|X'|=q-1$.

Fix $L\subseteq X$. Let   $s_1 < s_2 <\ldots < s_{qm+|L|}$ be integers such that $L\cup R=\{ y_{s_k}:k\in [qm+|L|]\}$. Then we define
$$M_L := \left\{ \big\{ y_{s_{|L\cup R|-q\cdot i-j}}: j\in \{0,\ldots, q-1\} \big\}: i\in \mathbb{Z},~0\le i \le \frac{|L\cup R|}{q} -1 \right\}.$$
Note that $M_L$ is a matching of $H$ such that $X'\cup V(M_L) = L\cup R$. Hence (4) holds.
\end{proof}

As mentioned before, we have the following immediate corollary of Proposition~\ref{prop:RMHG}.

\begin{cor}\label{cor:1Uuniform}
Theorem~\ref{thm:Omni} and Theorem~\ref{thm:Refiner} hold for $r=1$ with $C=2q$.
\end{cor}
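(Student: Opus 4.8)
The plan is to derive both statements directly from \cref{prop:RMHG}, using the equivalence (recorded just above \cref{prop:RMHG}) between the $r=1$ case of \cref{thm:Omni} and the existence of a suitable robustly matchable $q$-uniform hypergraph. For $r=1$ a spanning subhypergraph $X\subseteq G$ is a set of singletons, so I would identify $E(X)$ with a set of vertices and apply \cref{prop:RMHG} to it, obtaining a $q$-uniform hypergraph $H$ with $E(X)\subseteq V(H)$, the set $W:=V(H)\setminus E(X)$ of size $q\cdot e(X)$, $\Delta(H)\le 2q$, and the matching property. Since for $r=1$ we have $\delta(G)=e(G)\ge (1-\varepsilon)n$ while $\Delta(X)=e(X)\le n/C=n/(2q)$, for any $\varepsilon\le (q-1)/(2q)$ (recall $q\ge 2$) there are at least $(1-\varepsilon)n-e(X)\ge q\cdot e(X)=|W|$ edges in $E(G)\setminus E(X)$; I would fix an injection of $W$ into these edges and define $A$ to be the spanning $1$-graph on $V(G)$ whose edge set is the image of $W$, so that $A\subseteq G$ and $A$ and $X$ are edge-disjoint. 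I would then set $\mathcal{H}:=E(H)$, reading each $H'\in E(H)$ as the $q$-element set of singletons of $X\cup A$ that it indexes, i.e.\ a copy of $K_q^1$ in $X\cup A$.

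Next I would verify the omni-absorber axioms. We have $V(A)=V(X)$ and $|H'\cap X|\le 1$ for every $H'\in\mathcal{H}$ by property~(1) of \cref{prop:RMHG}. Given a $K_q^1$-divisible subgraph $L\subseteq X$, equivalently $q\mid e(L)$, I identify $E(L)$ with a subset $L'\subseteq E(X)$ of size divisible by $q$; property~(4) of \cref{prop:RMHG} then yields a perfect matching $M_{L'}$ of $L'\cup W$ in $H$, and I set $\mathcal{Q}_A(L):=M_{L'}\subseteq E(H)=\mathcal{H}$. Its edges are vertex-disjoint in $H$, hence pairwise edge-disjoint as copies of $K_q^1$, and $\bigcup\mathcal{Q}_A(L)=V(M_{L'})=L'\cup W$, which corresponds to $L\cup A$; so $A$ is a $K_q^1$-omni-absorber for $X$. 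It is $C$-refined because $|\mathcal{H}(e)|$ equals the degree of the vertex $e$ in $H$, which is at most $\Delta(H)\le 2q=C$. Finally $\Delta(A)=e(A)=|W|=q\cdot e(X)=q\cdot\Delta(X)\le q\cdot\Delta\le C\cdot\Delta$, proving \cref{thm:Omni} for $r=1$ with $C=2q$.

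For \cref{thm:Refiner} I would observe that this same $A$, viewed as a refiner $R$ with refinement family $\mathcal{H}$, works: every member of $\mathcal{H}$ is a $K_q^1$, hence $K_q^1$-divisible, so the refiner axioms are precisely the omni-absorber axioms already checked; $\mathcal{H}$ is a $C$-refined family since $|\mathcal{H}(e)|\le 2q$ and $\max\{v(H'),e(H')\}=q\le 2q=C$ for each $H'\in\mathcal{H}$; and for $r=1$ the only $S\in\binom{V(R)}{r-1}$ is $S=\emptyset$, so $\Delta(\mathcal{H})=|\mathcal{H}|=e(H)$. To get $\Delta(\mathcal{H})\le C\cdot\Delta$ with the stated constant, I would use the explicit edge set of \cref{prop:RMHG} rather than the crude bound $e(H)\le 2(q+1)v(X)$ recorded there: a direct count of the two edge families (the edges of the second family with index $i\equiv 1\pmod{q+1}$ being exactly those already in the first family) gives $e(H)=2q(v(X)-1)+2\le 2q\cdot v(X)=2q\cdot\Delta(X)\le C\cdot\Delta$.

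I expect no substantive obstacle beyond \cref{prop:RMHG} itself; the work is bookkeeping. The delicate points are: maintaining the dictionary between singletons of $X\cup A$, vertices of $H$, and the subsets $L$, $L'$, $W$ when translating matchings of $H$ into $K_q^1$-decompositions of $L\cup A$; the short counting step ensuring $G$ has enough spare edges to host $A$ as a subgraph (which fixes the admissible range of $\varepsilon$); and the sharper edge count of $H$ needed to obtain exactly $C=2q$ (rather than $C=2(q+1)$) in the refiner bound.
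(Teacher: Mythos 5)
Your proposal is correct and follows exactly the route the paper intends: the paper treats this corollary as immediate from \cref{prop:RMHG} via the equivalence sketched just before that proposition (edges of $H$ become the decomposition/refinement family), which is precisely what you carry out, including the counting step showing $G$ has room to host $W$. Your sharpened count $e(H)=2q(v(X)-1)+2$ is a worthwhile detail the paper does not record, since the stated bound $e(H)\le 2(q+1)\cdot v(X)$ would only yield $\Delta(\mathcal{H})\le 2(q+1)\cdot\Delta$ in the refiner statement rather than the claimed constant $2q$.
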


To build our refined absorbers we employ the hypergraph in Proposition~\ref{prop:RMHG} in two cases: inductively to `link hypergraphs' as the $r=1$ case of these theorems, but also to sets of parallel hyperedges so as to refine them down to sets of bounded multiplicity.

The other key idea is to allow a refiner to only partially refine $X$ by permitting an `unrefined' \emph{remainder} $X'$ of $X\cup R$ (see Definition~\ref{def:Remainder}). We then prove the existence of refiners by gradually restricting the remainder to smaller and smaller vertex sets (similar to how the Cover Down Lemma is used to restrict uncovered edges; we call this restriction procedure the Refine Down Lemma). 

As mentioned in the outline of the paper, we actually prove the existence first of refiner and refined absorbers in the multi-hypergraph setting (which is easier since it permits multi-edges). Only afterwards do we build refiners by replacing the multi-edges by embedding them as gadgets that we call `fake edges' which have the same divisibility properties of an edge. Finally, to construct refined absorbers, we embed private absorbers for each element of the refinement family.

\subsection{Nibble with Reserves}

To finish the proof of Theorem~\ref{thm:Existence}, one uses ``nibble with reserves'', but first some  definitions.

\begin{definition}[$A$-perfect matching]
Let $G$ be a hypergraph and $A\subseteq V(G)$. We say that a matching of $G$ is \emph{$A$-perfect} if every vertex of $A$ is in an edge of the matching.
\end{definition}

\begin{definition}[Bipartite Hypergraph]
We say a hypergraph $G=(A,B)$ is \emph{bipartite with parts $A$ and $B$} if $V(G)=A\cup B$ and  every edge of $G$ contains exactly one vertex from $A$. 
\end{definition}

We are now ready to formally state ``nibble with reserves''. We note this version of nibble is stronger than the standard almost perfect matching version, the more general coloring and list coloring versions, and even our bipartite hypergraph formulation (also known as rainbow matching) from~\cite{DP22}.

\begin{thm}[Nibble with Reserves~\cite{DP22}]\label{thm:NibbleReserves}
For each integer $r \ge 2$ and real $\beta \in (0,1)$, there exist an integer $D_{\beta}\ge 0$ and real $\alpha \in (0,1)$ such that following holds for all $D\ge D_{\beta}$: 
\vskip.05in
Let $G$ be an $r$-uniform (multi)-hypergraph with codegrees at most $D^{1-\beta}$ such that $G$ is the edge-disjoint union of $G_1$ and $G_2$ where $G_2=(A,B)$ is a bipartite hypergraph such that every vertex of $B$ has degree at most $D$ in $G_2$ and every vertex of $A$ has degree at least $D^{1-\alpha}$ in $G_2$, and $G_1$ is a hypergraph with $V(G_1)\cap V(G_2) = A$ such that every vertex of $G_1$ has degree at most $D$ in $G_1$ and every vertex of $A$ has degree at least $D\left(1- D^{-\beta}\right)$ in $G_1$.

Then there exists an $A$-perfect matching of $G$.
\end{thm}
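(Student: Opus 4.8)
\textbf{Proof proposal for Theorem~\ref{thm:NibbleReserves} (Nibble with Reserves).}

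The plan is to reduce the statement to a semi-random ``nibble'' argument by viewing the desired $A$-perfect matching as a rainbow matching problem and then running a carefully controlled random greedy process. First I would reinterpret $G_2=(A,B)$: think of each vertex $a\in A$ as a ``color class'' and the edges of $G_2$ through $a$ as the available ``colored'' edges; likewise the edges of $G_1$ through $a$ are the available ``reserve'' edges at $a$. An $A$-perfect matching of $G=G_1\cup G_2$ is then exactly a choice, for each $a\in A$, of one edge (from $G_1$ or $G_2$) at $a$ such that all chosen edges are pairwise disjoint. Crucially, the codegree condition (codegrees at most $D^{1-\beta}$) is what makes the nibble work: two edges meet in at most $D^{1-\beta}$ edges' worth of ``conflict,'' so the dependency structure is sparse enough for the Rödl nibble / Pippenger–Spencer type machinery.

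Second, I would run the process in two phases. In Phase~1, perform a standard nibble on $G_1$ restricted to $A$: repeatedly select a small random sub-collection of edges of $G_1$ (each edge chosen independently with probability $\approx \varepsilon/D$), keep those that don't conflict with others chosen in the same round, add them to the matching, and delete all edges meeting chosen vertices. Since every $a\in A$ starts with degree $\ge D(1-D^{-\beta})$ in $G_1$ and the codegrees are $\le D^{1-\beta}$, the usual concentration analysis (Chernoff plus a bounded-differences / Azuma argument, exactly as in the standard nibble) shows that after $O(\log D)$ rounds we have matched all but a $D^{-\Omega(\alpha)}$ fraction of $A$, and moreover every still-unmatched $a\in A$ retains degree at least $D^{1-\alpha'}$ in the \emph{surviving} part of $G_2$ (this is where we use that $a$ had degree $\ge D^{1-\alpha}$ in $G_2$ and that only a tiny fraction of $B$-vertices, hence of $G_2$-edges at $a$, have been deleted — here the bound $\deg_{G_2}(b)\le D$ for $b\in B$ controls how much damage deleting one matched vertex does). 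Then in Phase~2, the leftover set $A'\subseteq A$ of unmatched vertices together with the surviving bipartite hypergraph $G_2'=(A',B')$ satisfies: $|A'|$ is tiny, every $a\in A'$ has large degree ($\ge D^{1-\alpha'}$) into $B'$, and every $b\in B'$ has degree $\le D$. A second (shorter, or even one-shot random-greedy) nibble on $G_2'$ then finds an $A'$-perfect matching disjoint from the Phase~1 matching, because $A'$ is so small relative to the available degrees that the expected number of conflicts is negligible — one can even just greedily assign edges to the vertices of $A'$ one at a time, each time having $\ge D^{1-\alpha'} - |A'|\cdot D^{1-\beta} > 0$ valid choices.

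The main obstacle I anticipate is Phase~2 / the handoff between the two phases: one must guarantee that the unmatched vertices of $A$ after the $G_1$-nibble still have \emph{enough} surviving edges in $G_2$ to be finished off, and that these can be chosen disjointly both from each other and from the already-committed Phase~1 matching. The delicate point is that the Phase~1 nibble deletes vertices of $B$ (via the $G_1$-edges that happen to also lie in $B$? — no: $G_1$ and $G_2$ only share the vertex set $A$, so actually $G_1$-edges delete $A$-vertices and $G_1$-private vertices, not $B$-vertices). Re-examining: since $V(G_1)\cap V(G_2)=A$, the Phase~1 process on $G_1$ only removes vertices from $A$ and from $V(G_1)\setminus A$, so it does \emph{not} touch $B$ at all. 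Hence $G_2$ is untouched going into Phase~2, and the only constraint is disjointness on $A'$. This actually makes Phase~2 easy: for the leftover set $A'$ with $|A'|\le D^{-\alpha}|A|$ — wait, we also need $|A'|$ small in absolute terms, which requires $|A|$ not too large relative to $D$; this is implicitly fine because the codegree bound forces the instance to be ``sparse enough.'' The genuine technical heart is therefore the standard but unavoidable concentration analysis of Phase~1: tracking, via a system of differential-equation-method estimates, that $\deg_{G_1}(a)$ decays like $D t^{?}$ while codegrees stay $o(\deg)$, so that the fraction of $A$ left unmatched after $\Theta(\log D)$ rounds is at most $D^{-\beta/2}$, say. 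This is exactly the content of the nibble results of~\cite{DP22}, so I would ultimately cite that machinery rather than re-derive it, organizing the proof as: (i) set up $G_1$ as a near-regular hypergraph on $A$ with small codegrees; (ii) invoke the almost-perfect-matching nibble to match all but $D^{-\Omega(1)}$ of $A$ with good control on the leftover; (iii) finish the small leftover set greedily inside the pristine bipartite hypergraph $G_2$. The lemma as stated is essentially a black-box restatement of the authors' earlier ``nibble with reserves'' theorem, so the ``proof'' in the paper is likely just this reduction plus a pointer to~\cite{DP22}.
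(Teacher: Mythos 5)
The paper does not actually prove Theorem~\ref{thm:NibbleReserves}: it is quoted as a special case of the ``forbidden submatchings with reserves'' theorem of~\cite{DP22}, with only the remark that it follows along the lines of Kahn's proof~\cite{K96} of the asymptotic List Coloring Conjecture for hypergraphs. So your closing guess about the paper's treatment is correct, and your overall two-phase architecture (semi-random nibble on $G_1$ to match almost all of $A$, then complete the matching using the reserve edges of $G_2$, which Phase~1 leaves untouched since $V(G_1)\cap V(G_2)=A$) is the right high-level picture.

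There is, however, a genuine gap in your Phase~2. The greedy completion needs each leftover $a\in A'$ to retain a valid choice after the earlier choices are made; since each previously chosen $G_2$-edge has at most $r-1$ vertices in $B$ and each such vertex lies in at most $D^{1-\beta}$ edges through $a$ (the codegree bound), greedy works only if $|A'|\cdot (r-1)\cdot D^{1-\beta} < D^{1-\alpha}$, i.e.\ $|A'| \lesssim D^{\beta-\alpha}$. You notice this and assert that ``the codegree bound forces the instance to be sparse enough,'' but that is false: nothing in the hypotheses bounds $|A|$ — and hence $|A'|\approx D^{-\Omega(1)}|A|$ — in terms of $D$. In the very application in Section~\ref{s:Existence}, $A=E(J)$ has $\Theta(n^r)$ vertices while $D=\Theta(n^{q-r})$, so for $q=r+1$ the leftover $|A'|$ dwarfs every fixed power of $D$. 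The completion must therefore itself be probabilistic — e.g.\ pick a uniformly random surviving $G_2$-edge at each leftover $a$ and control conflicts via the Lov\'asz Local Lemma, using $\Pr[b\in e_a]\le D^{1-\beta}/D^{1-\alpha}$ together with $d_{G_2}(b)\le D$ — or the reserves must be woven into the nibble throughout, as in Kahn's list-coloring argument. That is exactly the nontrivial content deferred to~\cite{DP22}, and it is where the specific interplay of the parameters $\alpha$ and $\beta$ (which your sketch never actually pins down) is used.
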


We note that this theorem has not appeared in the literature in such a general form (but mostly follows along the lines of Kahn's proof~\cite{K96} that the List Coloring Conjecture holds asymptotically for hypergraphs which Theorem~\ref{thm:NibbleReserves} also implies). Moreover, it is a special case of our new more general theorem about ``forbidden submatchings with reserves'' in~\cite{DP22}. We refer the reader to~\cite{DP22} for further history and proof details.

We also note that the `reserves' portion of the proof of the above theorem (that is the use of $G_2$ to extend an almost $A$-perfect matching to an $A$-perfect one) may be viewed for our proof of the Existence Conjecture as one Cover Down step (though in an edge vortex). Hence the proof of Glock, K\"{u}hn, Lo, and Osthus~\cite{GKLO16} of the Existence Conjecture may be viewed as {\bf Iterated (Vertex) Cover Down} while our new proof may be viewed as {\bf One Step of (Edge) Cover Down+Iterated Refine Down}.

\section{Multi-Refiners}\label{s:Refined}

In order to prove Theorems~\ref{thm:Omni} and~\ref{thm:Refiner}, it will be convenient to first prove analogous theorems in the multigraph setting. 

Here are the definitions for the multigraph versions of omni-absorber and refiner.

\begin{definition}[Multi-Omni-Absorber]
We say a multi-hypergraph $A$ is a \emph{$K_q^r$-multi-omni-absorber} if it meets the requirements of the definition of $K_q^r$-omni-absorber except perhaps that $X\cup A$ is not simple.
\end{definition}

\begin{definition}[Multi-Refiner]
We say a multi-hypergraph $R$ is a \emph{$K_q^r$-multi-refiner} of $X$ if it meets the requirements of the definition of $K_q^r$-refiner except perhaps that $X\cup R$ is not simple.
\end{definition}

Thus to prove our Refiner Theorem, Theorem~\ref{thm:Refiner}, we first build a $K_q^r$-multi-refiner with the desired properties and then replace each edge with a gadget we refer to as a `fake edge' which has the same divisibility properties as an edge. This embedding is done randomly so as to make the fake edges disjoint and such that the result has low maximum degree. We then derive our Refined Omni-Absorber Theorem, Theorem~\ref{thm:Omni}, from Theorem~\ref{thm:Refiner} by embedding private absorbers for each member of the refinement family (again so that the absorbers are edge-disjoint and the result has low maximum degree).

\begin{remark}
Proving the multigraph versions of these theorems is not strictly necessary as one could follow the multigraph proof and then embed fake-edges/absorbers as needed throughout the proof to keep the result simple. However, proving the multigraph version is more convenient as it then permits all those embedding steps to be performed afterwards in one single step.    
\end{remark}

\begin{remark}
We note for the reader that multigraph versions of these theorems - that is, where both $X$ and $A$ (or $R$) are allowed to be multi-hypergraphs - do not imply the original versions or vice versa. This is because allowing $A$ to be a multi-hypergraph makes the problem easier while allowing $X$ to be a multi-hypergraph makes the problem harder.
\end{remark}

It turns out we only need to prove multigraph versions where $X$ has bounded multiplicity. In particular here is the key number.

\begin{definition} For all integers $q>r\ge 1$, 
define 
$$M(q,r):= {\rm lcm}\left( \binom{q-i}{r-i}: i\in \{0,1,\ldots, r-1\}\right).$$    
\end{definition}

The key fact is that $M(q,r)$ edges on the same set of $r$ vertices is a $K_q^r$-divisible graph. Thus we will be able to apply our RMH construction from Proposition~\ref{prop:RMHG} to an arbitrary number of edges on the same set of $r$ vertices so as to reduce the multiplicity. See our Multiplicity Reduction Lemma~\ref{lem:MRL} in Section~\ref{ss:MRL} below.

For inductive purposes, we need to prove the multigraph theorems with a stronger property. Namely, in our proof of one step of the Refine Down Lemma (in particular in the proof of Claim~\ref{claim:Z} in Lemma~\ref{lem:RefineDownOneStep}) when we randomly embed omni-absorbers of lower uniformity, it is necessary to have a certain `flatness' property in order to have the desired expectations. This motivates the following definition.

\begin{definition}
Let $X$ be a hypergraph and $Y\subseteq V(X)$. For a nonnegative integer $i$, we say $X$ is \emph{$i$-flat to $Y$} if $|e\setminus Y|\le i$ for all $e\in E(X)$.
\end{definition}

\noindent Here is our multi-omni-absorber version of Theorem~\ref{thm:Omni}.

\begin{thm}[Refined Multi-Omni-Absorber Theorem]\label{thm:MultiOmni}
For all integers $q > r\ge 1$, there exists an integer $C\ge 1$ such that the following holds: Let $X$ be an $r$-uniform multi-hypergraph of multiplicity at most $M(q,r)$ with $v(X)\ge C$ and let $\Delta:= \max\left\{\Delta(X),~ v(X)^{1-\frac{1}{r}}\cdot \log v(X)\right\}$. If $Y\subseteq V(X)$ with $|Y| \ge v(X)/3$, then there exists a $C$-refined $K_q^r$-multi-omni-absorber $A$ for $X$ such that both of the following hold:
\begin{itemize}
    \item[(1)] $\Delta(A)\le C \cdot \Delta$, and
    \item[(2)] $A$ is $(r-1)$-flat to $Y$. 
\end{itemize}
\end{thm}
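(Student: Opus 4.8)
\textbf{Proof proposal for \cref{thm:MultiOmni}.} The plan is to prove this by induction on the uniformity $r$, with the base case $r=1$ handled essentially by the RMH construction (\cref{prop:RMHG}): when $r=1$ every $K_q^1$-divisible $1$-graph decomposes trivially, and \cref{cor:1Uuniform} together with \cref{prop:RMHG} already gives a $C$-refined omni-absorber with $C = 2q$; the $(r-1)$-flat-to-$Y$ condition for $r=1$ just says $|e\setminus Y|\le 0$, i.e. every absorber edge lies in $Y$, which one arranges by building the RMH on the vertex set $Y$ (legitimate since $|Y|\ge v(X)/3$ and the RMH only needs $(q+1)v(X)$ vertices — here we would instead want a slightly different accounting, perhaps building on a linear-sized subset of $Y$). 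For the inductive step, I would invoke the multigraph Refiner Theorem \cref{thm:MRT} (stated to be proved first, modulo the Refine Down Lemma) to obtain a $C$-refined $K_q^r$-multi-refiner $R$ of $X$ with refinement family $\mathcal H$ satisfying $\Delta(\mathcal H)\le C\cdot\Delta$ and (crucially) $R$ being $(r-1)$-flat to $Y$ — \cref{thm:MRT} should be stated with exactly this flatness conclusion. Then $L\cup R = \bigcup\mathcal Q_R(L)$ decomposes into bounded-size $K_q^r$-divisible graphs, and it remains to upgrade $R$ to an omni-absorber.

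The key step is the \emph{embedding of private absorbers}: for each $H\in\mathcal H$ (a $K_q^r$-divisible graph on at most $C$ vertices), \cref{thm:AbsorberExistence} gives a $K_q^r$-absorber $A_H$ for $H$. We set $A := R \cup \bigcup_{H\in\mathcal H} A_H$, embedding the $A_H$ on fresh vertices (or re-using $Y$ appropriately) so that they are pairwise edge-disjoint, edge-disjoint from $X\cup R$, and — to control $\Delta(A)$ — spread out so no $(r-1)$-set is overloaded. The decomposition family of $A$ as an omni-absorber is then $\mathcal H' := \{ K_q^r\text{'s in the decomposition of } A_H \text{ and of } H\cup A_H : H\in\mathcal H\}$, and for a $K_q^r$-divisible $L\subseteq X$ we take $\mathcal Q_A(L)$ to be, for each $H\in\mathcal Q_R(L)$, the decomposition of $H\cup A_H$, together with, for each $H\notin\mathcal Q_R(L)$, the decomposition of $A_H$; since the $H\in\mathcal Q_R(L)$ are pairwise edge-disjoint with union $L\cup R$ and the $A_H$ are all pairwise edge-disjoint, these $K_q^r$'s are pairwise edge-disjoint with union $L\cup R\cup\bigcup_H A_H = L\cup A$, as required. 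The $C$-refinedness of $A$ (with a larger constant $C'$) follows because each edge $e\in X\cup R$ lies in at most $C$ members of $\mathcal H$, hence in at most $C\cdot(\text{size of the decomposition of }H\cup A_H)\le C\cdot C$ members of $\mathcal H'$, while each edge inside some $A_H$ lies in exactly one (the $A_H$ are disjoint, and $v(A_H), e(A_H)$ are bounded in terms of $C,q,r$). The degree bound $\Delta(A)\le C'\Delta$ comes from $\Delta(R)\le$ (something like) $C\cdot\Delta$ (implied by $\Delta(\mathcal H)\le C\Delta$ since $R\subseteq\bigcup\mathcal H$) plus the contribution of the $A_H$'s, which is $O(|\mathcal H|\cdot C / v)$ per $(r-1)$-set when spread uniformly over $\Omega(v)$ vertices; since $|\mathcal H| = O(\Delta\cdot v^{r-1})$ (again from $\Delta(\mathcal H)\le C\Delta$), this is $O(\Delta)$ provided the $A_H$ are embedded on a linear-sized vertex set — and here the $(r-1)$-flat-to-$Y$ requirement forces us to embed the "new" part of each $A_H$ inside $Y$ (using $|Y|\ge v(X)/3$), checking that $(r-1)$-flatness of the $A_H$ edges to $Y$ is achievable since an absorber for a graph on $\le C$ vertices can itself be taken with all but $\le r-1$ vertices of each edge in the shared vertex set.

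The main obstacle I anticipate is the simultaneous satisfaction of the three competing constraints during the embedding of the $A_H$'s: (i) edge-disjointness of the $A_H$'s from each other and from $X\cup R$, (ii) the degree bound $\Delta(A)\le C'\Delta$ — which requires spreading the $\Theta(\Delta\cdot v^{r-1})$ small absorbers fairly evenly, and so needs a random embedding plus a concentration argument (Chernoff) using the lower bound $\Delta\ge v^{1-1/r}\log v$ precisely to ensure the expected per-$(r-1)$-set load is $\gg \log v$, and (iii) the $(r-1)$-flat-to-$Y$ condition restricting where new vertices may go. This is exactly the kind of random-embedding-with-Chernoff argument that the paper defers to \cref{s:Embedding}; the content is routine but the bookkeeping (matching up the constants $C$ from \cref{thm:MRT}, from \cref{thm:AbsorberExistence}, and the final $C$ here, and verifying the $v(X)^{1-1/r}\log v(X)$ threshold does the work in the concentration bound) is where care is needed. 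A secondary subtlety is ensuring $|H\cap X|\le 1$ is preserved: since $|H\cap X|\le 1$ for $H\in\mathcal H$ and the $A_H$ introduce no new edges of $X$, each member of $\mathcal H'$ meets $X$ in at most one edge automatically.
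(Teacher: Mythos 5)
Your proposal is correct and follows essentially the same route as the paper: apply \cref{thm:MRT} to get a $C'$-refined multi-refiner $R$ that is $(r-1)$-flat to $Y$, attach a private absorber $A_H$ (from \cref{thm:AbsorberExistence}) to each $H$ in the refinement family with its new vertices embedded in $Y$, and take the decomposition family to be the cliques of the decompositions of $A_H$ and $H\cup A_H$, with exactly the decomposition function you describe. The only real difference is that the paper performs the degree-controlled embedding deterministically by greedily avoiding bad sets via \cref{prop:NonUniformTuran} rather than by a random embedding with Chernoff (a substitution the paper itself notes is possible), and in the multi-hypergraph setting the edge-disjointness you worry about is automatic.
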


\noindent
Here is our multi-refiner version of Theorem~\ref{thm:Refiner}.

\begin{thm}[Multi-Refiner Theorem]\label{thm:MRT}
For all integers $q > r\ge 1$, there exists an integer $C \ge 1$ such that the following holds: Suppose Theorem~\ref{thm:MultiOmni} holds for all $r'\in [r-1]$. Let $X$ be an $r$-uniform multi-hypergraph of multiplicity at most $M(q,r)$ with $v(X)\ge C$ and let $\Delta:= \max\left\{\Delta(X),~v(X)^{1-\frac{1}{r}}\cdot \log v(X)\right\}$. If $Y\subseteq V(X)$ with $|Y| \ge v(X)/3$, then there exists a $C$-refined $K_q^r$-multi-refiner $R$ of $X$ with refinement family $\mathcal{H}$ such that both of the following hold:
\begin{enumerate}
    \item[(1)] $\Delta(\mathcal{H}) \le C \cdot \Delta$, and
    \item[(2)] $R$ is $(r-1)$-flat to $Y$. 
\end{enumerate}      
\end{thm}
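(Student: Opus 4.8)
\textbf{Proof proposal for Theorem~\ref{thm:MRT} (Multi-Refiner Theorem).}

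The plan is to build the multi-refiner $R$ of $X$ by an iterative \emph{structural} refinement that gradually shrinks the vertex set on which the ``unrefined remainder'' lives, until that remainder is supported on a constant-size vertex set, at which point it can be handled by brute force (private gadgets). The main engine is the Refine Down Lemma (Lemma~\ref{lem:RefineDown}, proved later via its one-step version Lemma~\ref{lem:RefineDownOneStep}): given $X$ with a designated remainder vertex set $W\subseteq V(X)$ of size $\gg C$, one produces a partial multi-refiner $R_0$ whose remainder lives on a vertex set $W'\subseteq W$ with $|W'|$ a small constant fraction of $|W|$, while $\Delta(R_0)$ and the refinement degree of its family stay within a constant factor of $\Delta$, and the $C$-refinedness and $(r-1)$-flatness to $Y$ are maintained. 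Iterating $O(\log v(X))$ times drives $|W'|$ down to $O(1)$; the geometric decay means the accumulated degree and refinement-degree blow-ups multiply to only a constant factor (this is where one must be slightly careful: each step contributes a constant factor, so one needs the number of steps times $\log$ of that factor to be controlled — in fact the contributions should be set up to telescope or to be summably small, e.g.\ the $i$-th step acting on $\sim v(X)/2^i$ vertices contributes a term that sums geometrically).

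First I would set up the bookkeeping: fix the constant $C$ coming from the Refine Down Lemma and the $r'<r$ instances of Theorem~\ref{thm:MultiOmni} (available by hypothesis), choose $Y$ as given with $|Y|\ge v(X)/3$, and initialize $W := V(X)$ (or, to respect flatness, $W := V(X)\setminus Y$ if the remainder must avoid $Y$ — one takes the remainder inside the complement of $Y$ so that $(r-1)$-flatness to $Y$ is automatic once the remainder is small). Then I would apply Lemma~\ref{lem:RefineDown} to peel off a sequence $R_1, R_2, \dots, R_t$ of edge-disjoint partial refiners (with $t = O(\log v(X))$), taking care at each stage to embed the new edges/gadgets into vertices that keep the union $X\cup (R_1\cup\cdots\cup R_i)$ of bounded multiplicity and keep everything $(r-1)$-flat to $Y$. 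The $r=1$ base case (Corollary~\ref{cor:1Uuniform}, via Proposition~\ref{prop:RMHG}) and the Multiplicity Reduction Lemma (Lemma~\ref{lem:MRL}, using that $M(q,r)$ parallel edges form a $K_q^r$-divisible graph) are used inside each one-step application to handle link hypergraphs and to keep multiplicities bounded. After $t$ steps the remainder is supported on $O(1)$ vertices; I would then close it out by adding, for each $K_q^r$-divisible subgraph $L$ supported on that constant-size set, a private bounded-size gadget realizing a $K_q^r$-divisible decomposition of $L$ together with the final leftover (there are only $O(1)$ such $L$ up to the relevant data, so this costs only a constant in degree), and declare $R := R_1\cup\cdots\cup R_t \cup (\text{final gadgets})$ with $\mathcal{H}$ the union of the refinement families.

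The correctness of $R$ as a $K_q^r$-multi-refiner of $X$ — i.e.\ that for every $K_q^r$-divisible $L\subseteq X$ there is a pairwise edge-disjoint $\mathcal{Q}_R(L)\subseteq\mathcal{H}$ with $\bigcup\mathcal{Q}_R(L)=L\cup R$ — follows by composing the refinement functions stage by stage: the first stage refines $L\cup R_1$ into $K_q^r$-divisible pieces plus a remainder $L^{(1)}$ on $W_1$; the second stage, applied to $L^{(1)}$, refines $L^{(1)}\cup R_2$ similarly; and so on, until the $O(1)$-supported remainder is finished by the private gadgets. Edge-disjointness across stages is guaranteed by the edge-disjointness built into Lemma~\ref{lem:RefineDown}; the $C$-refinedness ($|\mathcal{H}(e)|\le C$ and $\max\{v(H),e(H)\}\le C$ for all $H\in\mathcal{H}$) holds because each stage contributes boundedly to $\mathcal{H}(e)$ and only bounded-size members, and one re-chooses $C$ at the end to absorb the constant number of stages' worth of membership — here one must double-check that $|\mathcal{H}(e)|$ does \emph{not} grow with $t$: each edge $e$ of $X$ (or of the region where $R_i$ lives) is touched by only a bounded number of stages because once a region is ``refined away'' its edges are not revisited, so the membership count stays $O(1)$ independent of $t$.

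\textbf{Main obstacle.} The crux is the Refine Down Lemma itself — in particular the one-step version and its Claim~\ref{claim:Z} — where lower-uniformity multi-omni-absorbers (Theorem~\ref{thm:MultiOmni} for $r'<r$) are randomly embedded on link hypergraphs, and one needs the $(r-1)$-flatness (``$i$-flat to $Y$'') property precisely so that the random embedding has the right expectations and the degrees concentrate; getting the parameters to line up so that the per-step degree blow-up is a genuine constant (not something that degrades with $v(X)$ or accumulates over the $\log$ stages) is the delicate part. A secondary but real difficulty is keeping the union $X\cup R$ of bounded multiplicity throughout while still having enough room to embed all the gadgets with low maximum degree — this is exactly what the Multiplicity Reduction Lemma and the RMH construction of Proposition~\ref{prop:RMHG} are for, but arranging the embeddings to be simultaneously edge-disjoint, multiplicity-bounded, flat to $Y$, and low-degree requires a careful (probabilistic) allocation of vertices at each stage.
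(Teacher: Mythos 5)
Your proposal matches the paper's proof in all essentials: the paper also iterates a refine-down step that shrinks the remainder's vertex support geometrically (formalized as induction on $v(X)$ with $|Y'|=\lceil v(X)/3\rceil$, using the Better Refine Down Lemma, i.e.\ Edge Sparsification followed by Refine Down, so that the remainder's degree halves each round and the contributions sum geometrically), finishes the constant-size remainder with private gadgets, concatenates via Proposition~\ref{prop:Concatenate}, and controls $|\mathcal{H}(e)|$ across stages exactly as you describe, via a strengthened inductive hypothesis exploiting $H\cap X\cap X'=\emptyset$ so that already-refined edges are not revisited. The points you flag as delicate (per-step constant-factor blow-up, non-accumulation of $|\mathcal{H}(e)|$, flatness and multiplicity bookkeeping) are precisely the ones the paper's write-up spends its effort on.
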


\begin{remark}
Again note that Theorem~\ref{thm:MultiOmni} is Theorem~\ref{thm:MRT} with the additional assumption that every element of $H$ is a $K_q^r$. Strictly speaking it is not technically necessary to prove Theorem~\ref{thm:MultiOmni} as Theorem~\ref{thm:Refiner} (and hence Theorem~\ref{thm:Omni}) will be derived from Theorem~\ref{thm:MRT}. However, the proof of Theorem~\ref{thm:MRT} requires induction on the uniformity for the Refine Down steps and it is convenient to assume that the lower uniformity object is an omni-absorber instead of a refiner (in particular, we do this in the proof of Theorem~\ref{thm:MLRT} in Section~\ref{s:RefineDown}). It is possible (via an object we developed that we call a \emph{divisibility fixer}) to work around this assumption in the one place we used it, but it seemed simpler just to prove the multi-omni-absorber theorem instead since that only requires embedding private absorbers, for which we already developed machinery for the purpose of deriving Theorem~\ref{thm:Omni}.    
\end{remark}

In this section, we prove Theorem~\ref{thm:MRT} modulo the proof of the Refine Down Lemma (Lemma~\ref{lem:RefineDown}) which we provide in Section~\ref{s:RefineDown}. 

First we note that the $r=1$ cases of Theorems~\ref{thm:MultiOmni} and~\ref{thm:MRT} follow easily from our RMH construction of a hypergraph $H$ in Proposition~\ref{prop:RMHG}: namely by splitting the multiple copies of `singletons' of $X$ into separate vertices, applying the construction, and then identifying the copies back into single vertices. In particular, we embed the singletons of $A$ into $Y$ in such a way that the vertices of each edge are all distinct. Note we may assume that $C\ge 6q^2$ and hence $|Y|\ge v(X)/3 \ge 2q^2$. Then this embedding is possible via greedily embedding the singletons of $A$ so as to avoid  singletons (either in $X$ or from $A$ that were previously embedded) that are in edges of $H$ with the current singleton being embedded. This is possible since there are at most $2q^2-1$ such singletons as $H$ is $q$-uniform and $\Delta(H)\le 2q$.

\subsection{Multi-Refiners with Remainder}

For inductive purposes, it is useful to allow a refiner to only partially refine by permitting a `remainder' as follows.

\begin{definition}[Multi-Refiner with Remainder]\label{def:Remainder}
Let $q > r\ge 1$ be integers. Let $X$ be an $r$-uniform multi-hypergraph. We say an $r$-uniform multi-hypergraph $R$ is a \emph{$K_q^r$-multi-refiner} of $X$ with \emph{remainder} $X' \subseteq X\cup R$, \emph{refinement family} $\mathcal{H}$ and \emph{refinement function} $\mathcal{Q}_R$, if $X$ and $R$ are edge-disjoint, $R$ is $K_q^r$-divisible, and $\mathcal{H}$ is a family of $K_q^r$-divisible subgraphs of $X\cup R$ such that $|H\cap X|\le 1$ and $H\cap X\cap X'=\emptyset$ for all $H\in\mathcal{H}$, and for every $K_q^r$-divisible subgraph $L$ of $X$, $\mathcal{Q}_R(L) \subseteq \mathcal{H}$ such that $(L\cup R)\setminus X' \subseteq \bigcup \mathcal{Q}_R(L) \subseteq L\cup R$. We call $L':= (L\cup R) \setminus \bigcup \mathcal{Q}_R(L)$ the \emph{$L$-leftover} of the refinement.

Let $C\ge 1$ be a real.  We say $R$ is \emph{$C$-refined} if $\mathcal{H}$ is $C$-refined.

Let $S\in \binom{V(R)}{r-1}$. The \emph{refinement degree} of $S$ in $\mathcal{H}$, denoted $\Delta_{\mathcal{H}}(S)$, is $|\{H\in \mathcal{H}: S\subseteq V(H)\}$. The \emph{refinement degree} of $\mathcal{H}$, denoted $\Delta(\mathcal{H})$, is $\max \left\{ \Delta_{\mathcal{H}}(S): S\in \binom{V(R)}{r-1} \right\}$.
\end{definition}

Note that when $X'=\emptyset$, this reduces to the definition of $K_q^r$-multi-refiner. We remark that $L':= (L\cup R) \setminus \bigcup \mathcal{Q}_R(L)$ (the $L$-leftover) is also $K_q^r$-divisible since $L$ is by assumption, $R$ is by definition, $\bigcup \mathcal{Q}_R(L)$ is by construction and that $\bigcup \mathcal{Q}_R(L) \subseteq L\cup R$. 
Also note that $L'\subseteq X'$.

Here is a useful proposition which shows that refiners with remainder concatenate as follows.

\begin{proposition}[Concatenating Refiners]\label{prop:Concatenate}
Let $q > r \ge 1$ be integers. Let $X_1, X_2, X_3$ be $r$-uniform hypergraphs. For each $i\in \{1,2\}$, suppose that $R_i$ is a $C_i$-refined $K_q^r$-multi-refiner of $X_i$ with remainder $X_{i+1}$,  refinement family $\mathcal{H}_i$ and refinement function $\mathcal{Q}_{R_i}$. If $R_1$ and $R_2$ are edge-disjoint, then $R:= R_1\cup R_2$ is a $(C_1+C_2)$-refined $K_q^r$-multi-refiner of $X_1$ with remainder $X_3$, refinement family $\mathcal{H}:= \mathcal{H}_1\cup \mathcal{H}_2$, refinement function $$\mathcal{Q}_R(L):= \mathcal{Q}_{R_1}(L) \cup \mathcal{Q}_{R_2}\left((L\cup R_1)\setminus \bigcup \mathcal{Q}_{R_1}(L)\right)$$ 
and such that $\Delta(\mathcal{H}) \le \Delta(\mathcal{H}_1) + \Delta(\mathcal{H}_2)$.     
\end{proposition}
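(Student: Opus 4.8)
The plan is to verify directly that $R := R_1 \cup R_2$, together with the stated refinement family $\mathcal{H} = \mathcal{H}_1 \cup \mathcal{H}_2$ and refinement function $\mathcal{Q}_R$, satisfies every clause of Definition~\ref{def:Remainder} for being a $(C_1+C_2)$-refined $K_q^r$-multi-refiner of $X_1$ with remainder $X_3$. First I would dispose of the structural bookkeeping: $X_1$ and $R$ are edge-disjoint because $R_i$ is edge-disjoint from $X_i \supseteq$ (the edges of $X_1$ it shares) — more carefully, $R_1$ is edge-disjoint from $X_1$ by hypothesis, and $R_2$ is edge-disjoint from $X_2 \subseteq X_1 \cup R_1$, and $R_1, R_2$ are edge-disjoint by assumption, so $R_1 \cup R_2$ is edge-disjoint from $X_1$. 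That $R$ is $K_q^r$-divisible follows since $R_1$ and $R_2$ are each $K_q^r$-divisible and they are edge-disjoint, and $K_q^r$-divisibility is additive over edge-disjoint unions (the link-multiplicities add). Each $H \in \mathcal{H}_i$ is a $K_q^r$-divisible subgraph of $X_i \cup R_i \subseteq X_1 \cup R$ with $|H \cap X_i| \le 1$; I would need to check $|H \cap X_1| \le 1$, which is automatic since $H \cap X_1 \subseteq H \cap (X_i \cup R_i) \cap X_1$, and $X_1 \cap R_i = \emptyset$ (for $i=1$ this is the hypothesis; for $i=2$ note $R_2$ is edge-disjoint from $X_1$ as shown), so $H \cap X_1 \subseteq H \cap X_i$, giving $|H \cap X_1| \le 1$. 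The condition $H \cap X_1 \cap X_3 = \emptyset$ needs a short argument: for $H \in \mathcal{H}_1$ this holds because $H \cap X_1 \cap X_2 = \emptyset$ and $X_3 \subseteq X_2 \cup R_2$, while $H \subseteq X_1 \cup R_1$ is disjoint from $R_2$, so $H \cap X_1 \cap X_3 \subseteq H \cap X_1 \cap X_2 = \emptyset$; for $H \in \mathcal{H}_2$ it holds because $H \cap X_2 \cap X_3 = \emptyset$ and $H \cap X_1 \subseteq H \cap X_2$ (same reasoning as above, using $X_1 \subseteq X_2 \cup$ stuff — actually one should note $H \cap X_1 \subseteq X_1 \cap (X_2 \cup R_2) = X_1 \cap X_2$ since $X_1$ is disjoint from $R_2$), so $H \cap X_1 \cap X_3 \subseteq H \cap X_2 \cap X_3 = \emptyset$.

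The heart of the proof is the decomposition property. Fix a $K_q^r$-divisible subgraph $L$ of $X_1$. Apply the refiner $R_1$ to get $\mathcal{Q}_{R_1}(L) \subseteq \mathcal{H}_1$ pairwise edge-disjoint with $(L \cup R_1) \setminus X_2 \subseteq \bigcup \mathcal{Q}_{R_1}(L) \subseteq L \cup R_1$. Set $L_1 := (L \cup R_1) \setminus \bigcup \mathcal{Q}_{R_1}(L)$, the $L$-leftover of the first refinement; as remarked after Definition~\ref{def:Remainder}, $L_1$ is $K_q^r$-divisible and $L_1 \subseteq X_2$. Now apply the refiner $R_2$ to the $K_q^r$-divisible subgraph $L_1$ of $X_2$, obtaining $\mathcal{Q}_{R_2}(L_1) \subseteq \mathcal{H}_2$ pairwise edge-disjoint with $(L_1 \cup R_2) \setminus X_3 \subseteq \bigcup \mathcal{Q}_{R_2}(L_1) \subseteq L_1 \cup R_2$. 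Then $\mathcal{Q}_R(L) := \mathcal{Q}_{R_1}(L) \cup \mathcal{Q}_{R_2}(L_1)$. The cliques (well, $K_q^r$-divisible members) within each of the two sub-families are pairwise edge-disjoint by construction; across the two families, any $H_1 \in \mathcal{Q}_{R_1}(L) \subseteq L \cup R_1$ and $H_2 \in \mathcal{Q}_{R_2}(L_1) \subseteq L_1 \cup R_2$ can only share an edge lying in both $L \cup R_1$ and $L_1 \cup R_2$; but $L_1 \subseteq L \cup R_1$ and $R_2$ is edge-disjoint from $L \cup R_1 \subseteq X_1 \cup R_1$, so the common edges lie in $L_1$. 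An edge $e \in L_1$ is, by definition of $L_1$, \emph{not} in $\bigcup \mathcal{Q}_{R_1}(L)$, hence not in $H_1$ — contradiction. So all of $\mathcal{Q}_R(L)$ is pairwise edge-disjoint.

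It remains to pin down $\bigcup \mathcal{Q}_R(L)$. One computes $\bigcup \mathcal{Q}_R(L) = \bigcup \mathcal{Q}_{R_1}(L) \,\cup\, \bigcup \mathcal{Q}_{R_2}(L_1)$. The right-hand union is contained in $(L \cup R_1) \cup (L_1 \cup R_2) = L \cup R_1 \cup R_2 = L \cup R$ since $L_1 \subseteq L \cup R_1$. For the lower bound, I claim $(L \cup R) \setminus X_3 \subseteq \bigcup \mathcal{Q}_R(L)$. Take $e \in (L \cup R) \setminus X_3$. If $e \in L_1 \cup R_2$, then since $e \notin X_3$ we get $e \in \bigcup \mathcal{Q}_{R_2}(L_1)$ and we are done. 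Otherwise $e \in (L \cup R) \setminus (L_1 \cup R_2) = (L \cup R_1 \cup R_2) \setminus (L_1 \cup R_2) \subseteq (L \cup R_1) \setminus L_1$ (using that $e \notin R_2$); but $(L \cup R_1) \setminus L_1 = \bigcup \mathcal{Q}_{R_1}(L)$ by definition of $L_1$, so $e \in \bigcup \mathcal{Q}_R(L)$. This establishes the remainder property with remainder $X_3$, and incidentally shows the $L$-leftover of the concatenated refinement equals the $L_1$-leftover of $R_2$.

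Finally, the quantitative bounds. For $C$-refinedness: for an edge $e \in X_1 \cup R$, we have $\mathcal{H}(e) = \mathcal{H}_1(e) \cup \mathcal{H}_2(e)$, and $|\mathcal{H}_1(e)| \le C_1$ (applying $C_1$-refinedness of $\mathcal{H}_1$ to $e$ viewed in $X_1 \cup R_1$; if $e \notin X_1 \cup R_1$ then $\mathcal{H}_1(e) = \emptyset$) and similarly $|\mathcal{H}_2(e)| \le C_2$, so $|\mathcal{H}(e)| \le C_1 + C_2$; the size bound $\max\{v(H), e(H)\} \le \max\{C_1, C_2\} \le C_1 + C_2$ is inherited. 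For the refinement degree: for $S \in \binom{V(R)}{r-1}$, $\{H \in \mathcal{H} : S \subseteq V(H)\}$ splits as the union over $\mathcal{H}_1$ and $\mathcal{H}_2$, giving $\Delta_{\mathcal{H}}(S) \le \Delta_{\mathcal{H}_1}(S) + \Delta_{\mathcal{H}_2}(S) \le \Delta(\mathcal{H}_1) + \Delta(\mathcal{H}_2)$, hence $\Delta(\mathcal{H}) \le \Delta(\mathcal{H}_1) + \Delta(\mathcal{H}_2)$. I do not expect any serious obstacle here; the one point requiring genuine care — the ``main obstacle'' such as it is — is the cross-family edge-disjointness and the exact identification of which edges $\bigcup \mathcal{Q}_R(L)$ covers, both of which hinge on the containment $L_1 \subseteq L \cup R_1$ combined with the edge-disjointness of $R_2$ from $X_1 \cup R_1$ and the defining property $L_1 = (L \cup R_1) \setminus \bigcup \mathcal{Q}_{R_1}(L)$.
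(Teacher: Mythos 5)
Your proof is correct and follows essentially the same route as the paper's: define $L_1:=(L\cup R_1)\setminus\bigcup\mathcal{Q}_{R_1}(L)$, feed it to $R_2$, and verify the two containments and the additive degree bounds. You actually spell out a few steps the paper leaves implicit (the cross-family edge-disjointness via $L_1$'s definition, and the $H\cap X_1\cap X_3=\emptyset$ check), so no changes are needed.
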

\begin{proof}
Since $R_1$ and $R_2$ are $K_q^r$-divisible by definition, we find that $R$ is $K_q^r$-divisible. Clearly $R$ is $(C_1+C_2)$-refined and $\Delta(\mathcal{H}) \le \Delta(\mathcal{H}_1) + \Delta(\mathcal{H}_2)$.     

By definition of multi-refiner, we have that $|H_1\cap X_1|\le 1$ for all $H_1\in \mathcal{H}_1$ and that $|H_2\cap X_2|\le 1$ for all $H_2\in \mathcal{H}_2$. Hence it follows that $|H\cap X_1|\le 1$ for all $H\in \mathcal{H}$. Similarly $H_1\cap X_1\cap X_2 =\emptyset$ for all $H_1\in \mathcal{H}_1$ and $H_2\cap X_2\cap X_3=\emptyset$ for all $H_2\in \mathcal{H}_2$. Since $X_1\cap X_3\subseteq X_2$, we find that $X_1\cap X_3=X_1\cap X_2\cap X_3$. It then follows that $H\cap X_1\cap X_3=\emptyset$ for all $H\in \mathcal{H}$.

Now let $L$ be a $K_q^r$-divisible subgraph of $X_1$. By definition, $(L\cup R_1)\setminus X_2 \subseteq \bigcup \mathcal{Q}_{R_1}(L)\subseteq L\cup R_1$. Let $L_1:= (L\cup R_1)\setminus \bigcup \mathcal{Q}_{R_1}(L)$. As remarked above, it follows from the definition that $L_1$ is also $K_q^r$-divisible and that $L_1\subseteq X_2$. Hence by definition, $(L_1\cup R_2)\setminus X_3 \subseteq \bigcup \mathcal{Q}_{R_2}(L_1)\subseteq L_1\cup R_2$. 

Note that $\mathcal{Q}_R(L) = \mathcal{Q}_{R_1}(L)\cup \mathcal{Q}_{R_2}(L_1)$ by definition. First we have that $\bigcup \mathcal{Q}_R(L) \subseteq (L\cup R_1) \cup (L_1\cup R_2)$. Since $L_1\subseteq L\cup R_1$, it follows that $\bigcup \mathcal{Q}_R(L) \subseteq L\cup R_1 \cup R_2 = L\cup R$ as desired. Second we have that $\bigcup \mathcal{Q}_R(L) \supseteq  \bigcup \mathcal{Q}_{R_1}(L) \cup ((L_1\cup R_2)\setminus X_3)$. Since $L_1= (L\cup R_1)\setminus \bigcup \mathcal{Q}_{R_1}(L)$, it follows that $\bigcup \mathcal{Q}_R(L) \supseteq (L\cup R_1 \cup R_2)\setminus X_3 = (L\cup R)\setminus X_3$ as desired.

Finally, since $R_1$ and $R_2$ are edge-disjoint, it follows that $(\bigcup \mathcal{Q}_{R_1}(L))\cap (\bigcup \mathcal{Q}_{R_2}(L_1)) = \emptyset$ as desired. 
\end{proof}

\subsection{Proof Overview of Multi-Refiner Theorem}

Thus Theorem~\ref{thm:MRT} will be proved via repeated applications of the following two lemmas until the remainder is constant sized (in which case it is easy to build a refiner of constant refinement).

First we have the Edge Sparsification Lemma. 

\begin{lem}[Edge Sparsification Lemma]\label{lem:Sparsify}
For all integers $q > r\ge 2$, there exists an integer $C \ge 1$ such that the following holds: Let $X$ be an $r$-uniform multi-hypergraph of multiplicity at most $M(q,r)$ with $v(X)\ge C$ and suppose $\Delta(X)\ge v(X)^{1-\frac{1}{r}}\cdot \log v(X)$. If $Y\subseteq V(X)$ with $|Y|\ge v(X)/3$, then there exists a $C$-refined $K_q^r$-multi-refiner $R$ of $X$ with remainder $X'\subseteq R$ and refinement family $\mathcal{H}$ such that all of the following hold:

\begin{enumerate}
    \item[(1)] $\Delta(\mathcal{H}) \le C \cdot  \Delta(X)$,
    \item[(2)] $R$ is $(r-1)$-flat to $Y$,
    \item[(3)] $\Delta(X')\le v(X)^{1-\frac{1}{r}}$, and
    \item[(4)] $X'$ is an $r$-uniform multi-hypergraph with multiplicity at most $M(q,r)$.
\end{enumerate}      
\end{lem}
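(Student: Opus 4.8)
The plan is to build $R$ and its remainder $X'$ by sampling, for each $(r-1)$-set $S$ whose link in $X$ is not too small, a bounded-size ``core'' $X'_S$ of the link to leave in the remainder and then to refine the rest of the link of $S$ down to this core using the $r=1$ case of the theory applied to the link graphs. More precisely, I would argue that we may assume $\Delta(X) \le v(X)^{1-1/r}\cdot (\log v(X))^2$ or so (if $\Delta(X)$ is larger one first passes to a coarser sparsification in stages, or uses the fact that repeated application of the lemma is permitted in the proof of Theorem~\ref{thm:MRT}); the essential content is a single ``one power of $\log$'' reduction in the $(r-1)$-degree. For each $(r-1)$-set $S$ with $|X(S)|$ large, the link $X(S)$ is a $1$-uniform multi-hypergraph, and $S\uplus X(S)$ is the part of $X$ ``hanging below'' $S$. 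We want to add edges (below $S$, together with auxiliary low-degree structure) so that an arbitrary $K_q^r$-divisible $L\subseteq X$, whose restriction below $S$ is some sub-link $L(S)$, can have $L(S)$ rearranged into bounded-size $K_q^r$-divisible pieces, leaving behind only a small controlled remainder $X'(S)$ below $S$ of degree at most $v(X)^{1-1/r}$. The gadget achieving this is exactly Proposition~\ref{prop:RMHG} (the RMH construction) applied in the $1$-uniform setting to the singletons of $X(S)$: it supplies a $q$-uniform auxiliary matching structure so that any subset $L(S)$ can be matched, up to a fixed remainder $X'$ of size $q-1$, into perfect matchings, i.e.\ decomposed into $K_q^1$'s below $S$; each such $K_q^1$ below $S$ becomes a $K_q^r$-divisible subgraph of $X\cup R$ of bounded size (namely $S$ together with $q$ link-edges, which is $K_q^r$-divisible since $M(q,r)\mid$ appropriate binomials — one has to be slightly careful and actually group into $M(q,r)$-sized blocks so the resulting $S\uplus(\text{block})$ is genuinely $K_q^r$-divisible, not merely the singleton version). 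This is precisely the ``apply the RMH construction to sets of parallel hyperedges to refine them down to bounded multiplicity'' idea flagged in the introduction.

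The key steps, in order, are: (i) reduce to the regime $\Delta(X)\le v(X)^{1-1/r}\cdot\mathrm{polylog}$ by iterating; (ii) for each heavy $(r-1)$-set $S$, designate a remainder core $X'(S)\subseteq X(S)$ of size $O(v(X)^{1-1/r})$ — when $|X(S)| \le v(X)^{1-1/r}$ we simply put all of $X(S)$ into the remainder and do nothing; otherwise we partition $X(S)$ into $M(q,r)$-blocks and apply Proposition~\ref{prop:RMHG} to the set of these blocks (treating each block as a single ``super-singleton''), obtaining an auxiliary $q$-uniform hypergraph $H_S$ on these blocks plus $O(|X(S)|/M(q,r))$ new blocks, with $\Delta(H_S) \le 2q$; (iii) realize the new blocks as genuine $r$-edges of $R$ below $S$, i.e.\ add to $R$, for each new super-singleton vertex of $H_S$, a bunch of $M(q,r)$ parallel $r$-edges on $S\cup\{v\}$ for fresh vertices $v$; (iv) translate each hyperedge of $H_S$ (a $q$-set of blocks) into a member of $\mathcal{H}$, namely $S\uplus(\text{union of the }q\text{ blocks})$, which is a $K_q^r$-divisible subgraph on $r + qM(q,r) \le C$ vertices; (v) take $R := \bigcup_S R_S$, $\mathcal{H} := \bigcup_S \mathcal{H}_S$, $X' := \bigcup_S (S \uplus X'(S))$, where $X'(S)$ is the leftover core guaranteed by part (4) of Proposition~\ref{prop:RMHG} together with any blocks not used; (vi) verify $\Delta(\mathcal{H})\le C\cdot\Delta(X)$ (each $(r-1)$-set $S'$ lies in members of $\mathcal{H}_S$ only for $S$ ``close to'' $S'$, and within each such $S$ the refinement degree is $O(\Delta(H_S)\cdot\ldots) = O(1)$ per unit of link size, giving $O(\Delta(X))$ total), and that $\mathcal{H}$ is $C$-refined (each edge of $X\cup R$ lies below at most $r$ relevant $(r-1)$-sets and is in $\le 2q$ edges of the corresponding $H_S$, so $|\mathcal{H}(e)| \le 2qr \le C$), and that $R$ is $(r-1)$-flat to $Y$ (each new edge of $R$ has exactly one vertex outside $S$, and we embed that vertex anywhere — actually we need it in $Y$; since $|Y|\ge v(X)/3$ and $\Delta(X)$-many fresh vertices per $(r-1)$-set suffice, a greedy/counting argument places all fresh vertices inside $Y$, so in fact each new edge of $R$ has all but at most $r-1$ of its vertices... wait — it has $r-1$ vertices in $S$ and one fresh vertex; to be $(r-1)$-flat to $Y$ we need $|e\setminus Y|\le r-1$, which holds trivially since $|e| = r$; but we should also make sure the refinement-family members behave, which they do since $\mathcal{H}$-members are bounded).

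The main obstacle I expect is a bookkeeping/degree-accounting issue rather than a conceptual one: controlling $\Delta(\mathcal{H})$ and $\Delta(R)$ simultaneously while the remainder $X'$ genuinely drops to degree $v(X)^{1-1/r}$. The subtlety is that a fixed $(r-1)$-set $S'$ can appear as a sub-$(r-1)$-set-below... no: $S'$ itself is $(r-1)$-uniform, so $\Delta_{\mathcal{H}}(S')$ counts members $H\in\mathcal{H}$ with $S'\subseteq V(H)$, and $H = S\uplus(\ldots)$ has $V(H) = S \cup \{q M(q,r) \text{ fresh vertices}\}$; so $S'\subseteq V(H)$ forces $S'$ to be split between $S$ and the fresh vertices — but fresh vertices are private to one $S$, so essentially $S' = S$ or $S'$ shares $r-2$ vertices with $S$ plus one fresh vertex, and in all cases the count is $O(\Delta(X))$ since the number of $H_S$-edges through a given vertex is $\le 2q$ and the number of relevant $S$ is $O(n)$ but weighted correctly it collapses. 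Making this estimate clean — and in particular handling the case $r=2$ where $(r-1)$-sets are single vertices and the link structure is most delicate — is where I would spend the real effort, likely by first proving it for $r=2$ as a model case and then noting the general $r$ argument is the ``same below each $(r-1)$-set'', invoking the $r=1$ (i.e.\ Proposition~\ref{prop:RMHG}) machinery uniformly. The $(r-1)$-flatness and multiplicity-$\le M(q,r)$ conditions on $X'$ are then immediate from the construction (we only ever put into $X'$ whole $M(q,r)$-blocks or the fixed $(q-1)$-element RMH remainder, and we can pad/truncate to keep multiplicity at most $M(q,r)$).
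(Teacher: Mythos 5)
Your construction takes a genuinely different route from the paper's, but it has several gaps that I do not see how to repair. The most basic one is that your members of $\mathcal{H}$, of the form $S\uplus(\text{union of $q$ blocks})$ where one block consists of $M(q,r)$ link-edges of $X(S)$, violate the definition of a refiner in two ways: they contain up to $M(q,r)$ edges of $X$ rather than at most one ($|H\cap X|\le 1$ is required), and they are not $K_q^r$-divisible unless each block consists of $M(q,r)$ \emph{parallel} copies of a single $r$-set (for an $(r-1)$-set $T\cup\{v\}$ with $T\subseteq S$ and $v$ a link vertex, the degree inside $H$ equals the multiplicity of $v$ in its block, which need not be divisible by $q-r+1$). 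Since $X$ has multiplicity only \emph{at most} $M(q,r)$, its link $X(S)$ cannot in general be partitioned into parallel blocks. A second structural problem is that an arbitrary $K_q^r$-divisible $L\subseteq X$ induces a link $L(S)$ that need not respect your block partition of $X(S)$ at all, so the ``super-singleton'' RMH matching cannot represent $L$. A third is that every edge of $X\cup R$ lies in the links of $r$ distinct $(r-1)$-sets, so the link-by-link decompositions you assemble cover each edge $r$ times by members that are not pairwise edge-disjoint; being in at most $2qr$ members of $\mathcal{H}$ is fine for $C$-refinedness, but the refinement function must select \emph{edge-disjoint} members whose union is exactly $(L\cup R)\setminus X'$. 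Finally, step (i) is circular: the lemma must handle all $\Delta(X)\ge v(X)^{1-1/r}\log v(X)$ with no upper bound, and ``first passing to a coarser sparsification in stages'' presupposes exactly the degree-reduction statement being proved.

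More importantly, the key idea of the paper's proof is absent from your proposal. The paper does not work link-by-link; it quasi-randomly partitions $V(X)$ into $k\approx v(X)^{1-1/r}$ parts, assigns to each $r$-multi-subset $I$ of parts a single special $(q-r)$-set $S_I\subseteq Y$ (chosen greedily with small pairwise overlap), and gives each edge $e\in X$ its own private partial clique on $e\cup S_{\phi(e)}$ --- so each member of $\mathcal{H}$ contains exactly one $X$-edge and is a genuine $K_q^r$. The degree reduction comes from the fact that the \emph{many} edges sharing the same index pattern $\phi(e)$ all choose the \emph{same} set $S_{\phi(e)}$: the leftover edges of these cliques are therefore concentrated on only about $k\cdot v(X)^{|T|}/v(X)^{|T|}\approx v(X)^{1-1/r}$ distinct $r$-sets through any fixed $(r-1)$-set, and the Multiplicity Reduction Lemma (which is where Proposition~\ref{prop:RMHG} is actually applied, to parallel edges on a fixed $r$-set, not to links of $(r-1)$-sets) then brings the multiplicity of this focused remainder down to $M(q,r)$. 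Your proposal contains no analogue of this focusing mechanism, which is precisely what delivers conclusion (3).
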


The above lemma sparsifies the maximum degree of the remainder provided the original maximum degree is dense enough. The key idea of its proof is to have each edge receive its own private clique in the refiner but crucially to have many edges each choose the same set of vertices for said clique - this `focuses' the remainder into a small set of edges, whence we apply the Multiplicity Reduction Lemma (Lemma~\ref{lem:MRL}) to obtain a remainder of small maximum degree. The Edge Sparsification Lemma is proved in Section~\ref{ss:EdgeSparsification}.

Second we have the Refine Down Lemma. 

\begin{lem}[Refine Down Lemma]\label{lem:RefineDown}

For all integers $q > r\ge 2$, there exists an integer $C \ge 1$ such that the following holds: Suppose Theorem~\ref{thm:MultiOmni} holds for all $r'\in [r-1]$.  Let $X$ be an $r$-uniform multi-hypergraph of multiplicity at most $M(q,r)$ and let $\Delta:= \max \{\Delta(X),~ v(X)^{1-\frac{1}{r}}\cdot \log v(X)\}$. If $Y\subseteq V(X)$ with $|Y|\ge C$, then there exists a $C$-refined $K_q^r$-multi-refiner $R$ of $X$ with remainder $X'$ and refinement family $\mathcal{H}$ such that all of the following hold:

\begin{enumerate}
    \item[(1)] $\Delta(\mathcal{H}) \le C \cdot  \left(\frac{v(X)}{|Y|}\right)^{\binom{r}{2}} \cdot \Delta$,
    \item[(2)] $R$ is $(r-1)$-flat to $Y$,
    \item[(3)] $\Delta(X')\le C \cdot  \left(\frac{v(X)}{|Y|}\right)^{\binom{r}{2}} \cdot \Delta$, and
    \item[(4)] $X'$ is an $r$-uniform multi-hypergraph with multiplicity at most $M(q,r)$  and $V(X') \subseteq Y$.
\end{enumerate}      
\end{lem}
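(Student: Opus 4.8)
The goal is to take an $r$-uniform multi-hypergraph $X$ (multiplicity at most $M(q,r)$) with a target set $Y$ and produce a $C$-refined $K_q^r$-multi-refiner whose remainder $X'$ lives entirely on $Y$, at the cost of a factor $(v(X)/|Y|)^{\binom{r}{2}}$ in the refinement degree. The natural strategy is to handle the edges of $X$ according to how many vertices they have outside $Y$: write $U := V(X)\setminus Y$ and, for each edge $e$, let its \emph{type} be $|e\setminus Y| = |e\cap U| \in \{0,1,\dots,r\}$. Edges of type $0$ already lie in $Y$ and can be left untouched (they go into the remainder for free). For $j\geq 1$, I would peel off the type-$j$ edges one class at a time, from $j=r$ down to $j=1$, each time producing a partial multi-refiner whose remainder consists of edges of strictly smaller type (all on $Y$-coordinates of the removed edges plus genuinely new vertices). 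Concatenating these partial refiners via Proposition~\ref{prop:Concatenate} gives the full refiner, with the $C$ and $\Delta(\mathcal{H})$ bounds adding up over the $r$ rounds (hence only a constant blow-up in $C$, and the degree bounds controlled as below).

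**The single-type step.** Fix a type $j\geq 1$ and let $X_j$ be the submulti-hypergraph of type-$j$ edges. For each such $e$, write $e = f \cup g$ with $f = e\cap U$ of size $j$ and $g = e\cap Y$ of size $r-j$. Reserve a fresh set of $q-r$ brand-new vertices $W_e$ for $e$ — but, exactly as in the Edge Sparsification Lemma, I would have many edges sharing the same choice of $W_e$ so as to keep the total number of new vertices and the degrees under control. The private clique $K_e$ on $g \cup f \cup W_e$ (a copy of $K_q^r$) together with $\{e\}$ forms a $K_q^r$-divisible graph only after we also account for the divisibility of all sub-links through $f$; the point of the type reduction is that when we "remove" the clique $K_e$ and the edge $e$, what is left to be re-covered at the next stage is the collection of edges of $K_e$ that still touch $U$, and every such edge has type at most $j-1$ (it uses at most $j-1$ vertices of $f$, since $K_e$'s edges meeting $f$ in all $j$ vertices are exactly... no — in fact an edge of $K_e$ can use all of $f$). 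This is the crux: a generic edge of $K_e$ can reuse all $j$ vertices of $f$, so naive clique-replacement does \emph{not} reduce type. The correct move is to decompose $X_j \cup (\text{cliques})$ so that the \emph{only} leftover edges through $f$-vertices are pushed down; concretely one applies the inductive hypothesis — Theorem~\ref{thm:MultiOmni} in uniformities $r' < r$ — to the \emph{link} $K_q^r(f)$, which is a $(q-j)$-uniform-ish object, absorbing the type-$j$ part into a structure whose residue lives on strictly fewer $U$-coordinates. This is precisely why the lemma's hypothesis is "Theorem~\ref{thm:MultiOmni} holds for all $r'\in[r-1]$," and why the flatness ("$(r-1)$-flat to $Y$") conclusion of the lower-uniformity theorem is needed: it guarantees the lower-uniformity omni-absorbers we embed don't themselves create high-type edges.

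**Degree bookkeeping and the $(v(X)/|Y|)^{\binom{r}{2}}$ factor.** Each time we process type $j$, the edges we create on $Y$ (the new remainder) have their vertices in $g \cup (\text{link residue})$, and a given $(r-1)$-set $S\subseteq Y$ can be hit by roughly $\Delta(X_j)$-many type-$j$ edges; but because each type-$j$ edge only contributes $r-j$ vertices to $Y$, the "spreading" of degree from $U$ onto $Y$ multiplies the degree by a factor on the order of $v(X)/|Y|$ \emph{for each of the $j$ coordinates being moved from $U$ to $Y$ during the cascade down from type $r$}. Summing $j$ over $1,\dots,r$ gives a total exponent of $1 + 2 + \dots + (r-1) = \binom{r}{2}$ (one could also see it as: moving a single edge all the way from type $r$ to type $0$ costs $r-1$ "spreads," but the worst-case accounting over all types telescopes to $\binom{r}{2}$), which is exactly the claimed bound. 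Multiplicity at most $M(q,r)$ is preserved throughout because the new edges we introduce on $Y$ come in the combinatorially controlled multiplicities of clique links, and whenever multiplicities would exceed $M(q,r)$ we invoke the Multiplicity Reduction Lemma~\ref{lem:MRL} (this is also what keeps $X'$ well-defined as a multi-hypergraph with the right bound). The $C$-refinedness — every edge of $X\cup R$ in at most $C$ members of $\mathcal{H}$ and every $H\in\mathcal{H}$ of bounded size — follows since each edge is touched by its own private clique plus the $O_q(1)$ cliques/absorber-pieces generated when processing its higher-type ancestors, a number bounded purely in terms of $q$ and $r$.

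**Main obstacle.** The hard part will be the type-reduction step done cleanly: arranging, for each type-$j$ edge, a $K_q^r$-divisible gadget whose "removal residue" provably has type $\leq j-1$ and lies on the prescribed vertex set, while simultaneously (a) invoking Theorem~\ref{thm:MultiOmni} at lower uniformity on the right link object with the right minimum-degree/flatness hypotheses, (b) keeping all the private gadgets edge-disjoint across the (possibly $\Theta(\Delta\cdot v(X))$ many) edges, and (c) not letting the degree blow up by more than the advertised $(v(X)/|Y|)^{\binom{r}{2}}$ factor — which forces the "many edges share the same new vertices" trick and a careful random embedding argument of the kind used in Lemma~\ref{lem:Sparsify}. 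Getting the bookkeeping for (c) to yield exactly $\binom{r}{2}$ rather than something larger is the delicate quantitative point; everything else is assembling Proposition~\ref{prop:Concatenate}, Lemma~\ref{lem:MRL}, and the inductive Theorem~\ref{thm:MultiOmni} in the right order.
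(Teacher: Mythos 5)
Your proposal follows essentially the same route as the paper: the paper also iterates a one-step "refine down" over the flatness parameter $i=|e\setminus Y|$ from $r$ down to $0$, builds each step by applying the inductive Theorem~\ref{thm:MultiOmni} at uniformity $r-i$ to the links $X(S)$ of $i$-sets $S$ outside the target (its Local Multi-Refiner Theorem), embeds these local refiners randomly to control degrees, and assembles everything with Proposition~\ref{prop:Concatenate} and Lemma~\ref{lem:MRL}, with the exponent $\binom{r}{2}$ arising from exactly the telescoping $\sum_{i=1}^{r}(i-1)$ you describe. The only notable technical detail you elide is that the intermediate remainders are sent into a nested sequence $Y_r\subseteq\cdots\subseteq Y_0\subseteq Y$ of progressively larger subsets of $Y$ (rather than onto brand-new vertices), which is what provides the room for the random embedding at each step.
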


The above lemma sparsifies the maximum degree of the remainder by restricting the remainder to a subset $Y$ of the vertices (and applying the Multiplicity Reduction Lemma). This comes at the cost of increasing the maximum degree of the refiner by some power of $\frac{v(X)}{|Y|}$. This lemma is proved in Section~\ref{s:RefineDown}.

We note that the statement (and proof) of the Refine Down Lemma have similarities to the Cover Down Lemma from~\cite{GKLO16}. Here we are \emph{refining} the edges outside of $Y$ via means of a small well-chosen subgraph (the refiner) as opposed to \emph{covering} all edges outside of $Y$ as in the Cover Down Lemma. This is a structural approach which builds an extremely efficient refined absorber in iterative layers.

The two lemmas above combine to make the following Better Refine Down Lemma. Namely, we first apply the Edge Sparsification Lemma and then the Refine Down Lemma - crucially the decrease in maximum degree from the first offsets the increase in maximum degree from the second.

\begin{lem}[Better Refine Down Lemma]\label{lem:BetterRefineDown}

For all integers $q > r\ge 2$, there exists an integer $C \ge 1$ such that the following holds: Suppose Theorem~\ref{thm:MultiOmni} holds for all $r'\in [r-1]$.  Let $X$ be an $r$-uniform multi-hypergraph of multiplicity at most $M(q,r)$ with $v(X)\ge C$ and let $\Delta:= \max \{\Delta(X),~C\cdot v(X)^{1-\frac{1}{r}}\cdot \log v(X)\}$. If $Y\subseteq V(X)$ with $|Y|\ge v(X)/3$, then there exists a $C$-refined $K_q^r$-multi-refiner $R$ of $X$ with remainder $X'$ and refinement family $\mathcal{H}$ such that all of the following hold:

\begin{enumerate}
    \item[(1)] $\Delta(\mathcal{H}) \le C \cdot  \Delta$,
    \item[(2)] $R$ is $(r-1)$-flat to $Y$,
    \item[(3)] $\Delta(X')\le \frac{\Delta}{2}$, and
    \item[(4)] $X'$ is an $r$-uniform multi-hypergraph with multiplicity at most $M(q,r)$  and $V(X') \subseteq Y$.
\end{enumerate}      
\end{lem}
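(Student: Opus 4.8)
The plan is to prove the Better Refine Down Lemma by composing the Edge Sparsification Lemma (Lemma~\ref{lem:Sparsify}) and the Refine Down Lemma (Lemma~\ref{lem:RefineDown}) via the Concatenating Refiners proposition (Proposition~\ref{prop:Concatenate}). The point is a simple case analysis on whether $X$ is already sparse. First suppose $\Delta(X) \ge v(X)^{1-1/r}\cdot \log v(X)$, so that the hypothesis of the Edge Sparsification Lemma is met. Apply Lemma~\ref{lem:Sparsify} with the same set $Y$ (noting $|Y| \ge v(X)/3$ as required) to obtain a $C_1$-refined $K_q^r$-multi-refiner $R_1$ of $X$ with remainder $X_1 \subseteq R_1$, refinement family $\mathcal{H}_1$, such that $\Delta(\mathcal{H}_1) \le C_1\cdot \Delta(X)$, $R_1$ is $(r-1)$-flat to $Y$, $X_1$ is an $r$-uniform multi-hypergraph of multiplicity at most $M(q,r)$, and crucially $\Delta(X_1) \le v(X)^{1-1/r}$. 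If instead $\Delta(X) < v(X)^{1-1/r}\cdot \log v(X)$, then $X$ is already sufficiently sparse and we skip this step, formally taking $R_1$ to be the empty multi-hypergraph with $\mathcal{H}_1 = \emptyset$ and $X_1 := X$; note in this case $\Delta(X_1) = \Delta(X) \le v(X)^{1-1/r}\cdot\log v(X)$.

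Next, apply the Refine Down Lemma (Lemma~\ref{lem:RefineDown}) to $X_1$ with the same set $Y$. We need $|Y| \ge C_2$, which holds since $|Y| \ge v(X)/3 \ge C/3$ provided $C$ is chosen large relative to $C_2$. In the first case we have $\Delta(X_1) \le v(X)^{1-1/r}$; since $v(X_1) \le v(X)$, the quantity $\Delta_1 := \max\{\Delta(X_1),~v(X_1)^{1-1/r}\cdot\log v(X_1)\}$ appearing in Lemma~\ref{lem:RefineDown} satisfies $\Delta_1 \le v(X)^{1-1/r}\cdot \log v(X)$. In the second case the same bound on $\Delta_1$ holds directly. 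Lemma~\ref{lem:RefineDown} yields a $C_2$-refined $K_q^r$-multi-refiner $R_2$ of $X_1$ with remainder $X_2$, refinement family $\mathcal{H}_2$, such that $\Delta(\mathcal{H}_2) \le C_2\cdot (v(X_1)/|Y|)^{\binom{r}{2}}\cdot \Delta_1$, $R_2$ is $(r-1)$-flat to $Y$, $\Delta(X_2) \le C_2\cdot(v(X_1)/|Y|)^{\binom{r}{2}}\cdot \Delta_1$, $X_2$ has multiplicity at most $M(q,r)$, and $V(X_2)\subseteq Y$. Since $|Y| \ge v(X)/3 \ge v(X_1)/3$, we have $(v(X_1)/|Y|)^{\binom{r}{2}} \le 3^{\binom{r}{2}}$, a constant. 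Now, if $R_1$ and $R_2$ happen to share edges we first re-embed $R_2$ on fresh vertices (or simply note the construction in Lemma~\ref{lem:RefineDown} can be taken edge-disjoint from a prescribed bounded-degree graph — but more cleanly, since these are multi-refiners we may just take $R_1, R_2$ edge-disjoint by construction); then Proposition~\ref{prop:Concatenate} gives that $R := R_1 \cup R_2$ is a $(C_1+C_2)$-refined $K_q^r$-multi-refiner of $X$ with remainder $X_2$, refinement family $\mathcal{H} := \mathcal{H}_1\cup\mathcal{H}_2$, and $\Delta(\mathcal{H}) \le \Delta(\mathcal{H}_1) + \Delta(\mathcal{H}_2)$.

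It remains to verify the four conclusions with a suitable choice of the final constant $C$. For (4): $X_2$ has multiplicity at most $M(q,r)$ and $V(X_2)\subseteq Y$ directly from Lemma~\ref{lem:RefineDown}. For (2): $R_1$ and $R_2$ are each $(r-1)$-flat to $Y$, hence so is their union (the defining condition $|e\setminus Y|\le r-1$ is edge-local). For (3): $\Delta(X_2) \le C_2\cdot 3^{\binom{r}{2}}\cdot v(X)^{1-1/r}\cdot\log v(X) \le \frac{1}{2}\cdot C\cdot v(X)^{1-1/r}\cdot\log v(X) \le \frac{\Delta}{2}$, where we use that $\Delta \ge C\cdot v(X)^{1-1/r}\cdot\log v(X)$ by the definition of $\Delta$ in the statement, and we pick $C \ge 2C_2\cdot 3^{\binom{r}{2}}$. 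For (1): $\Delta(\mathcal{H}_1) \le C_1\cdot\Delta(X) \le C_1\cdot\Delta$ (with $\mathcal{H}_1 = \emptyset$ contributing $0$ in the second case), and $\Delta(\mathcal{H}_2) \le C_2\cdot 3^{\binom{r}{2}}\cdot v(X)^{1-1/r}\cdot\log v(X) \le C_2\cdot 3^{\binom{r}{2}}\cdot\Delta$; summing and choosing $C \ge C_1 + C_2\cdot 3^{\binom{r}{2}}$ (and $C \ge C_1 + C_2$) gives $\Delta(\mathcal{H}) \le C\cdot\Delta$. Finally take $C$ to be the maximum of all the lower bounds demanded above (and at least $3C_2$ so $|Y| \ge C/3 \ge C_2$), completing the proof.

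The only genuinely delicate point — and the one I would be most careful about — is the bookkeeping that the exponent $\binom{r}{2}$ coming out of the Refine Down Lemma does no damage: this works precisely because we run Edge Sparsification \emph{first}, so that by the time we invoke Refine Down the relevant degree parameter $\Delta_1$ is already down to the near-optimal $v(X)^{1-1/r}\log v(X)$, and then multiplying by the constant $(v(X)/|Y|)^{\binom{r}{2}} \le 3^{\binom{r}{2}}$ is harmless — it is absorbed into the final constant $C$. If one tried to skip sparsification and apply Refine Down directly to a dense $X$, the bound in (1) would become $C\cdot 3^{\binom{r}{2}}\cdot\Delta(X)$, which is still $O(\Delta)$, but then (3) would only give $\Delta(X')\le C\cdot 3^{\binom{r}{2}}\cdot\Delta(X)$, which need not be at most $\Delta/2$ — so sparsification is what makes the remainder degree genuinely drop. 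A secondary technical nuisance is ensuring $R_1$ and $R_2$ are edge-disjoint so that Proposition~\ref{prop:Concatenate} applies; in the multigraph setting this is cost-free since one can always realize $R_2$'s edges as fresh parallel copies, but it should be stated explicitly.
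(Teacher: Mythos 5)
Your proposal is correct and follows essentially the same route as the paper: run Edge Sparsification first when $\Delta(X)\ge v(X)^{1-1/r}\log v(X)$, then Refine Down, and concatenate via Proposition~\ref{prop:Concatenate}, with the sparse case handled by applying Refine Down directly (your "empty $R_1$" phrasing is just a cosmetic repackaging of the paper's separate case). The bookkeeping of constants and the observation that sparsification is what lets the $3^{\binom{r}{2}}$ factor be absorbed both match the paper's argument.
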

\begin{proof}
Let $C_1$ be as $C$ in Lemma~\ref{lem:Sparsify} and let $C_2$ be as $C$ in Lemma~\ref{lem:RefineDown}. Let $\Delta_1 := \max \{\Delta(X),~v(X)^{1-\frac{1}{r}}\cdot \log v(X)\}$. 

First suppose that $\Delta(X) < \Delta_1$. Then by the Refine Down Lemma (Lemma~\ref{lem:RefineDown}) applied to $X$ and $Y$, there exists a $C_2$-refined $K_q^r$-multi-refiner $R$ of $X$ with remainder $X'$ and refinement family $\mathcal{H}$ satisfying Lemma~\ref{lem:RefineDown}(1)-(4). Since $|Y|\ge v(X)/3$, we find that 
$$\Delta(\mathcal{H}) \le C_2 \cdot  \left(\frac{v(X)}{|Y|}\right)^{\binom{r}{2}} \cdot \Delta_1 \le C_2 \cdot 3^{\binom{r}{2}} \cdot \Delta_1 \le \frac{\Delta}{2} \le C\cdot \Delta$$
since $\Delta \ge 2\cdot C_2\cdot 3^{\binom{r}{2}}\cdot \Delta_1$ since $C\ge 2\cdot 3^{\binom{r}{2}}\cdot C_2$ since $C$ is large enough. Hence (1) holds for $\mathcal{H}$. Similarly we find that $\Delta(X')\le \frac{\Delta}{2}$. Hence (3) holds for $X'$. Then all of (1)-(4) hold for $R,\mathcal{H},$ and $X'$ as desired.

So we assume that $\Delta_1 = \Delta(X)$. Hence $\Delta(X)\ge v(X)^{1-\frac{1}{r}}\cdot \log v(X)$. Since $C\ge C_1$, we have by the Edge Sparsification Lemma (Lemma~\ref{lem:Sparsify}) that there exists a $C_1$-refined $K_q^r$-multi-refiner $R_1$ of $X$ with remainder $X_2\subseteq R_1$ and refinement family $\mathcal{H}_1$ satisfying Lemma~\ref{lem:Sparsify}(1)-(4). By Lemma~\ref{lem:Sparsify}(3), we have that $\Delta(X_2)\le v(X)^{1-\frac{1}{r}}$. 

Let $\Delta_2:= \max \{\Delta(X_2),~v(X)^{1-\frac{1}{r}}\cdot \log v(X)\}$. Since $C_2\ge 1$ and $v(X)\ge C \ge e$, we find that $\Delta_2 > \Delta(X_2)$. Hence $\Delta_2 = v(X)^{1-\frac{1}{r}}\cdot \log v(X)$. By the Refine Down Lemma (Lemma~\ref{lem:RefineDown}) applied to $X_2$ and $Y$, there exists a $C_2$-refined $K_q^r$-multi-refiner $R_2$ of $X_2$ with remainder $X_3$ and refinement family $\mathcal{H_2}$ satisfying Lemma~\ref{lem:RefineDown}(1)-(4).

Let $R:= R_1\cup R_2$ and $\mathcal{H}:=\mathcal{H}_1\cup \mathcal{H}_2$. By Proposition~\ref{prop:Concatenate}, $R$ is a $(C_1+C_2)$-refined $K_q^r$-multi-refiner of $X$ with remainder $X_3$ and refinement family $\mathcal{H}$. Since $C\ge C_1+C_2$, we have that $R$ is $C$-refined. By Lemma~\ref{lem:Sparsify}(1), we have that 
$$\Delta(\mathcal{H}_1)\le C_1\cdot \Delta(X) \le \frac{C}{2}\cdot \Delta,$$
where we used that $\Delta \ge \Delta(X)$. By Lemma~\ref{lem:RefineDown}(1), we have that
$$\Delta(\mathcal{H}_2) \le C_2 \cdot  \left(\frac{v(X)}{|Y|}\right)^{\binom{r}{2}} \cdot \Delta_2 \le C_2 \cdot 3^{\binom{r}{2}} \cdot \Delta_2 \le 3^{\binom{r}{2}} \cdot C_2^2 \cdot v(X)^{1-\frac{1}{r}}\cdot \log v(X) \le \frac{\Delta}{2} \le \frac{C}{2}\cdot \Delta,$$
where we used that $|Y|\ge v(X)/3$, $C\ge 2\cdot 3^{\binom{r}{2}} \cdot C_2^2$ and $\Delta \ge C \cdot v(X)^{1-\frac{1}{r}}\cdot \log v(X)$. Combining, we find that
$$\Delta(\mathcal{H})\le \Delta(\mathcal{H}_1)+\Delta(\mathcal{H}_2) \le \frac{C}{2}\cdot \Delta + \frac{C}{2}\cdot \Delta \le C\cdot \Delta$$
and hence (1) holds for $\mathcal{H}$. Similarly we find that 
$$\Delta(X_3) \le C_2 \cdot  \left(\frac{v(X)}{|Y|}\right)^{\binom{r}{2}} \cdot \Delta_2 \le \frac{\Delta}{2},$$
and hence (3) holds for $X_3$.

By Lemma~\ref{lem:Sparsify}(2), we have that $R_1$ is $(r-1)$-flat to $Y$. By Lemma~\ref{lem:RefineDown}(2), we have that $R_2$ is $(r-1)$-flat to $Y$. Hence $R$ is $(r-1)$-flat to $Y$ and (2) holds for $R$. Finally, by Lemma~\ref{lem:RefineDown}(4), $X_3$ is an $r$-uniform multi-hypergraph with multiplicity at most $M(q,r)$ and $V(X_3)\subseteq Y$. Thus all of (1)-(4) hold for $R$, $\mathcal{H}$ and $X_3$ as desired. 
\end{proof}

\begin{remark}
We remark that strictly speaking the Edge Sparsification Lemma is not necessary to the proof of the Existence Conjecture; indeed, one may simply use the Refine Down Lemma (albeit with worse bounds by induction) to prove versions of our main refiner/omni-absorber theorems with worse bounds, say $\Delta(A)= O\left( \left(\frac{\Delta(X)}{v(X)}\right)^{1/r^2} \cdot v(X)\right)$, which would still suffice for our proof of the Existence Conjecture.  However, since the Edge Sparsification Lemma only requires a couple of pages to prove and yields essentially optimal omni-absorbers, we opted to include it with an eye towards potential future applications.    
\end{remark}

We now are prepared to prove the Multi-Refiner Theorem (Theorem~\ref{thm:MRT}) assuming the Better Refine Down Lemma as follows. Namely, we simply iterate the Better Refine Down Lemma until the remainder has constant size; however to ensure the induction works we require a slightly stronger inductive assumption on the refinement of edges of $X$ so as to ensure that overall the refiner is $C$-refined.

\begin{lateproof}{thm:MRT}
Let $C_0:=\max\left\{6\cdot C',~2\cdot M(q,r)\cdot (C')^{r},~2^{M(q,r)\cdot (C')^{r}} + 1\right\}$ where $C'$ is as in Lemma~\ref{lem:BetterRefineDown} for $q$ and $r$. Let $\Delta_0 := \max \{\Delta(X),~C'\cdot v(X)^{1-\frac{1}{r}}\cdot \log v(X)\}$.

We prove the stronger theorem that if $v(X)\ge C'$ and the other hypotheses of Theorem~\ref{thm:MRT} hold, then there exists a $C_0$-refined $K_q^r$-multi-refiner $R$ of $X$ with refinement family $\mathcal{H}$ such that all of the following hold:
\begin{itemize}
    \item[(1)] $\Delta(\mathcal{H})\le C_0\cdot \Delta_0$,
    \item[(2)] $R$ is $(r-1)$-flat to $Y$.
    \item[(3)] $|\{H\in \mathcal{H}: e\in H\}|\le \frac{C_0}{2}$ for all $e\in E(X)$.
\end{itemize}
Note this then still implies the theorem by letting $C=C_0\cdot C'$ since $\Delta_0\le C'\cdot \Delta$ and $C'\ge 1$.

We proceed by induction on $v(X)$. Note we assume that $\Delta(X)\ge 1$ as otherwise there is nothing to show (i.e.~$X$ is empty and hence the empty refiner satisfies (1)-(3) trivially). This implies that $\Delta_0\ge 1$.

Let $Y' \subseteq Y \subseteq V(X)$ with $|Y'| = \lceil v(X)/3 \rceil$. Note that $|Y'|\le v(X)/2$ since $v(X)\ge C_0 \ge 6$. 

By Lemma~\ref{lem:BetterRefineDown}, there exists a $C'$-refined $K_q^r$-multi-refiner $R_1$ of $X$ with remainder $X'$ and refinement family $\mathcal{H}_1$ such that all of Lemma~\ref{lem:BetterRefineDown}(1)-(4) hold.

By Lemma~\ref{lem:BetterRefineDown}(4) then, $X'$ is an $r$-uniform multi-hypergraph with multiplicity at most $M(q,r)$  and $V(X') \subseteq Y'$. We assume without loss of generality that $V(X')=Y'$. Let $\Delta':= \max\{\Delta(X'),~C'\cdot v(X')^{1-\frac{1}{r}}\cdot \log v(X')\}$. 

\begin{claim}
There exists a $C_0$-refined $K_q^r$-multi-refiner $R_2$ of $X'$ with refinement family $\mathcal{H}_2$ such that all of (1)-(3) hold for $R_2$, $\mathcal{H}_2$, $X'$, $Y'$ and $\Delta'$.
\end{claim}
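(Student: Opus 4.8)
The plan is to prove this claim by a straightforward induction on $v(X)$, applying the inductive hypothesis of the Multi-Refiner Theorem to the remainder $X'$ when $v(X')$ is still large, and handling the base case directly when $v(X')$ is constant-sized. Since $|Y'| = \lceil v(X)/3\rceil \le v(X)/2 < v(X)$, we have $v(X') = |Y'| < v(X)$, so the induction is on a strictly smaller vertex set. First I would split into two cases according to whether $v(X') \ge C'$ or $v(X') < C'$.

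In the case $v(X') \ge C'$: note that $X'$ is an $r$-uniform multi-hypergraph of multiplicity at most $M(q,r)$ with $v(X') < v(X)$, and $Y' = V(X') \subseteq Y$ with $|Y'| \ge v(X')/3$ trivially (indeed $Y' = V(X')$). So the hypotheses of the (inductively assumed) stronger form of Theorem~\ref{thm:MRT} hold for $X'$ with the set $Y'$, and by induction there is a $C_0$-refined $K_q^r$-multi-refiner $R_2$ of $X'$ with refinement family $\mathcal{H}_2$ satisfying (1)--(3) with $\Delta_0$ replaced by $\Delta' := \max\{\Delta(X'),~C'\cdot v(X')^{1-\frac1r}\cdot\log v(X')\}$; this is precisely what the claim asserts. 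In the case $v(X') < C'$: here $X'$ has at most $C'$ vertices and multiplicity at most $M(q,r)$, so $e(X') \le M(q,r)\cdot (C')^r$ is bounded by a constant. One can then build a refiner for $X'$ essentially by brute force: for each of the (at most $2^{M(q,r)(C')^r}$) $K_q^r$-divisible subgraphs $L$ of $X'$, take a private $K_q^r$-absorber (using Theorem~\ref{thm:AbsorberExistence} in the $r\ge 2$ case, or the RMH construction of Proposition~\ref{prop:RMHG} in the $r=1$ case) embedded on fresh vertices inside $Y$, making these edge-disjoint; then $R_2$ is the union of $R_1'$'s, $\mathcal{H}_2$ is the collection of the cliques in these absorbers together with the (at most one edge per clique) incidences with $X'$, and the refinement function maps $L$ to the decomposition of $L$ together with its absorber. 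Since the number of such subgraphs and the size of each absorber are all bounded by constants depending only on $q,r$, the resulting $R_2$ is $C_0$-refined and (1)--(3) hold with room to spare provided $C_0$ is chosen large enough — which is exactly why $C_0$ was defined with the terms $2\cdot M(q,r)\cdot(C')^r$ and $2^{M(q,r)(C')^r}+1$ in the max.

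I would then verify the three conclusions explicitly in the base case: for (1), $\Delta(\mathcal{H}_2)$ is at most the number of absorber-cliques through any $(r-1)$-set, which is bounded by a constant $\le C_0 \le C_0\cdot\Delta'$ since $\Delta'\ge 1$; for (2), $(r-1)$-flatness to $Y$ holds because every new vertex of $R_2$ is placed in $Y$ (and $V(X')\subseteq Y$), so every edge of $R_2$ has at most $r-1$ — in fact at most $\max\{r-1, \text{something}\}$, but really we can arrange all new vertices in $Y$ so edges are $0$-flat or at worst $(r-1)$-flat — vertices outside $Y$; for (3), each edge $e\in E(X')$ lies in at most the constantly-many cliques of the absorbers assigned to divisible subgraphs containing $e$, which is $\le 2^{M(q,r)(C')^r} < C_0/2$. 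The main obstacle I anticipate is the careful bookkeeping in the base case: ensuring that the private absorbers can be embedded edge-disjointly inside $Y$ with all edges avoiding vertices outside $Y$ (so that flatness is preserved and the refiner stays inside the prescribed host), and checking that the $r=1$ sub-case genuinely goes through via Proposition~\ref{prop:RMHG} rather than Theorem~\ref{thm:AbsorberExistence}. Everything else is routine constant-chasing that the definition of $C_0$ has been rigged to absorb.

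\begin{proof}
We consider two cases depending on the size of $X'$.

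\textbf{Case 1: $v(X')\ge C'$.} Since $|Y'|\le v(X)/2 < v(X)$ and $V(X')=Y'$, we have $v(X') < v(X)$. Moreover $X'$ is an $r$-uniform multi-hypergraph of multiplicity at most $M(q,r)$ by Lemma~\ref{lem:BetterRefineDown}(4), and $Y'=V(X')$ satisfies $|Y'|=v(X')\ge v(X')/3$. Hence the hypotheses of the stronger statement we are proving by induction hold for $X'$ with the vertex subset $Y'$. By the inductive hypothesis, there exists a $C_0$-refined $K_q^r$-multi-refiner $R_2$ of $X'$ with refinement family $\mathcal{H}_2$ such that (1)--(3) hold for $R_2$, $\mathcal{H}_2$, $X'$, $Y'$ and $\Delta'$, as desired.

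\textbf{Case 2: $v(X')< C'$.} Then $e(X')\le M(q,r)\cdot (C')^{r}$, so there are at most $2^{M(q,r)\cdot (C')^{r}}$ subgraphs of $X'$, and in particular at most that many $K_q^r$-divisible subgraphs of $X'$. For each such subgraph $L$, apply Theorem~\ref{thm:AbsorberExistence} (when $r\ge 2$) or Proposition~\ref{prop:RMHG} (when $r=1$) to obtain a $K_q^r$-absorber $A_L$ for $L$; each $A_L$ has a bounded number of vertices and edges (depending only on $q$ and $r$). Embed the absorbers $A_L$ on pairwise disjoint sets of fresh vertices inside $Y$ (possible since $|Y|\ge v(X)/3\ge C_0/3$ is large enough to accommodate these constantly-many bounded-size gadgets, using that $C_0$ is sufficiently large), making them pairwise edge-disjoint and edge-disjoint from $X'$; let $R_2$ be the union of these absorbers and let $\mathcal{H}_2$ be the family consisting of the $K_q^r$'s in the decompositions of each $A_L\cup L$. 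Define $\mathcal{Q}_{R_2}(L)$ to be the set of $K_q^r$'s in such a decomposition of $L\cup A_L$ together with, for each $L'\neq L$ among the divisible subgraphs, the $K_q^r$'s of a decomposition of $A_{L'}$; then $\bigcup\mathcal{Q}_{R_2}(L)=L\cup R_2$ and the members are pairwise edge-disjoint. Thus $R_2$ is a $K_q^r$-multi-refiner of $X'$ with refinement family $\mathcal{H}_2$. Since every new vertex of $R_2$ lies in $Y$ and $V(X')\subseteq Y$, $R_2$ is $(r-1)$-flat to $Y$, giving (2). Each $(r-1)$-set lies in at most the total (constant) number of cliques in $\bigcup_L(A_L\cup L)$, which is at most $C_0/2\le C_0\cdot\Delta'$ since $\Delta'\ge 1$ and $C_0$ is large; hence $R_2$ is $C_0$-refined and (1) holds. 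Finally, each edge $e\in E(X')$ lies in at most one clique per decomposition of each $A_L\cup L$ with $e\in L$, hence in at most $2^{M(q,r)\cdot(C')^{r}}< C_0/2$ members of $\mathcal{H}_2$, giving (3).
\end{proof}
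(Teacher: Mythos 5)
Your Case 1 is exactly the paper's argument: apply the inductive (stronger) form of the theorem to $X'$ and $Y'$, using $v(X')=|Y'|<v(X)$. The divergence is in Case 2, where the paper embeds no absorbers at all: it exploits the multi-hypergraph setting by adding, for each $e\in X'$, a bundle $R_e$ of $M(q,r)$ parallel edges on $V(e)$, and takes $\mathcal{H}_2$ to consist of the bundles $R_e$, the sets $e\cup R'_e$ (again $M(q,r)$ parallel edges on one $r$-set, hence $K_q^r$-divisible by the defining property of $M(q,r)$), and one starred copy $\bigcup_{e\in L}e^*$ of each $K_q^r$-divisible subgraph $L$ of $X'$. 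This lives entirely on $V(X')=Y'$ (so it is $0$-flat to $Y'$), needs no new vertices and no appeal to Theorem~\ref{thm:AbsorberExistence}, and each $H\in\mathcal{H}_2$ meets $X'$ in at most one edge by construction.

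Your construction has a genuine gap at exactly that last point. The definition of a $K_q^r$-multi-refiner requires $|H\cap X'|\le 1$ for every $H$ in the refinement family, and this is used downstream in the proof of Theorem~\ref{thm:MRT} (for instance to conclude $|\mathcal{H}(e)|=|\mathcal{H}_2(e)|$ when $e\in X\cap X'$). If $\mathcal{H}_2$ is taken to be the cliques in the $K_q^r$-decompositions of $A_L\cup L$ supplied by Theorem~\ref{thm:AbsorberExistence}, nothing prevents a single clique of that decomposition from containing two or more edges of $L\subseteq X'$ (e.g.\ if $L$ itself contains a copy of $K_q^r$, the decomposition may use it whole; more generally any clique meeting $V(L)$ in more than $r$ vertices can do this). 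Theorem~\ref{thm:AbsorberExistence} gives no control over this; one would need something like Lemma~\ref{lem:BetterAbsorber}, or the ``give each edge its own private partial clique first'' trick from the Edge Sparsification Lemma, to enforce it. There are also secondary, repairable issues with constants: $C_0=\max\{6C',\,2M(q,r)(C')^r,\,2^{M(q,r)(C')^r}+1\}$ was rigged for the paper's parallel-edge construction, and it neither dominates the (uncontrolled) sizes and numbers of cliques of the absorbers $A_L$ nor makes your inequality $2^{M(q,r)(C')^r}<C_0/2$ for (3) true when $C_0=2^{M(q,r)(C')^r}+1$; you would have to redefine $C_0$ to depend on the absorber sizes. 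The $|H\cap X'|\le 1$ issue is the substantive one: as written, your Case 2 does not produce a multi-refiner.
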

\begin{proofclaim}
We assume that $\Delta(X') \ge 1$ as otherwise there is nothing to show (since then $X'$ is empty and the empty refiner satisfies (1)-(3) trivially). 

First suppose that $v(X')\ge C'$. Since $v(X)=|Y'| \le v(X)/2 < v(X)$, the claim follows by the induction of Theorem~\ref{thm:MRT} applied to $X'$ and $Y'$.

So we assume $v(X') < C'$. Let $R_2$ be the $r$-uniform multi-hypergraph edge-disjoint from $X'$ with $V(X)=V(R_2)$ and where $|R_2(S)|= M(q,r)\cdot |X'(S)|$ for all $S\in \binom{V(X')}{r}$. For each $e\in X'$, we choose $R_e$ to be a set of $M(q,r)$ edges of $R_2$ where $V(f)=V(e)$ for all $f\in R_e$ and in such a way that $R_e\cap R_{e'}=\emptyset$ for all $e\ne e'\in X$. For each $e\in X'$, choose one element of $R_e$ and denote it by $e^*$ and let $R'_e := R_e\setminus \{e^*\}$.

Now we define the refinement family 
$$\mathcal{H}_2 := \{R_e:e\in X'\}~\cup~\{e\cup R'_e: e\in X'\}~\cup~\left\{ \bigcup_{e\in L} e^*: L \text{ is a $K_q^r$-divisible subgraph of $X'$ }\right\}.$$
Moreover by construction, we have $|H\cap X'|\le 1$ for all $H\in \mathcal{H}_2$. Thus, we have that $R_2$ is a $K_q^r$-multi-refiner for $X'$ with refinement family $\mathcal{H}$ and refinement function
$$\mathcal{Q}_R(L) := \{R_e: e\in X'\setminus L\} \cup \{e\cup R'_e: e\in L\} \cup \left\{\bigcup_{e\in L} e^*\right\}.$$
First, we calculate that $\Delta(\mathcal{H}_2)\le 2^{M(q,r)\cdot \binom{v(X')}{r}} + 1 \le 2^{M(q,r)\cdot (C')^{r}} + 1 \le C_0$ and hence (1) holds.

Since $R_2\subseteq Y'$, we have that  $R_2$ is $0$-flat to $Y'$ and hence $R_2$ is $(r-1)$-flat to $Y$ and thus (2) holds.

Every element of $\mathcal{H}_2$ has at most $v(X')$ vertices and at most $M(q,r)\cdot \binom{v(X')}{r}$ edges. Hence $R_2$ is a $M(q,r)\cdot v(X')^r$-refined $K_q^r$-multi-refiner of $X'$ and hence is $\frac{C_0}{2}$-refined (since $C_0\ge 2\cdot M(q,r)\cdot (C')^{r}$) and hence (3) holds as desired.  
\end{proofclaim}

Let $R:=R_1\cup R_2$. Let $\mathcal{H}:= \mathcal{H}_1 \cup \mathcal{H}_{2}$. We claim that $R$ and $\mathcal{H}$ are as desired.

Recall that $\Delta_0= \max \{\Delta(X),~C'\cdot v(X)^{1-\frac{1}{r}}\cdot \log v(X)\}$. By Lemma~\ref{lem:BetterRefineDown}(3), we have that $\Delta(X')\le \frac{\Delta_0}{2}$. Since $v(X')\le |Y'|\le v(X)/2$ and $r\ge 2$, we find that 
$$\Delta'\le \left(\frac{1}{2}\right)^{1-\frac{1}{r}}\cdot \Delta_0$$

Now we find by Lemma~\ref{lem:BetterRefineDown}(1) for $\mathcal{H}_1$ and by (1) for $\mathcal{H}_2$ that
\begin{align*}
\Delta(\mathcal{H}) &\le \Delta(\mathcal{H}_1)+\Delta(\mathcal{H}_2) \le C' \cdot \Delta_0 + C_0 \cdot \Delta' \\
&\le \left(C'+C_0\cdot \left(\frac{1}{2}\right)^{1-\frac{1}{r}}\right)\cdot \Delta_0 \le C_0\cdot \Delta_0,
\end{align*}
where we used that $C_0\ge C'\cdot \frac{1}{1-\left(\frac{1}{2}\right)^{1-\frac{1}{r}}}$ since $C_0\ge 6\cdot C'$. Hence (1) holds for $\mathcal{H}$. 

Since $R_1$ is $(r-1)$-flat to $Y'$ by Lemma~\ref{lem:BetterRefineDown}(2); we have that $R_1$ is $(r-1)$-flat to $Y$ since $Y'\subseteq Y$. Similarly $R_2\subset Y'\subseteq Y$; hence $R_2$ is $0$-flat to $Y$ and so is also $(r-1)$-flat to $Y$. Thus we find that $R$ is $(r-1)$-flat to $Y$ and hence (2) holds for $R$. 

Let $e\in X$. If $e\in X\setminus X'$, then $$|\mathcal{H}(e)| = |\mathcal{H}_1(e)| \le C'\le \frac{C_0}{2},$$
since $R_1$ is $C'$-refined and $C_0\ge 2C'$. If $e\in X\cap X'$, then 
$$|\mathcal{H}(e)| = |\mathcal{H}_2(e)| \le \frac{C_0}{2},$$
where we used (3) for $R_2$ and that for all $H\in \mathcal{H}_1$, we have that $H\cap X\cap X'=\emptyset$ by definition of $K_q^r$-multi-refiner. Combining, we find that (3) holds for $\mathcal{H}$. 

Let $e\in R_1$. Then since $R_1$ is $C'$-refined and by (3) for $R_2$ and $X'$, we find that
$$|\mathcal{H}(e)| = |\mathcal{H}_1(e)| + |\mathcal{H}_2(e)| \le C' + \frac{C_0}{2} \le C_0,$$
since $C_0\ge 2C'$. 

Let $e\in R_2$. Then since $R_2$ is $C_0$-refined, we find that
$$|\mathcal{H}(e)| = |\mathcal{H}_2(e)| \le C_0.$$

Thus we find that $|\mathcal{H}(e)|\le C_0$ for all $e\in X\cup R$. Note that $\max\{e(H),v(H)\} \le \max\{C',C_0\} = C_0$ for all $H\in \mathcal{H}$. Hence we have that $R$ is $C_0$-refined.

Finally by Proposition~\ref{prop:Concatenate}, we find that $R$ is a $K_q^r$-multi-refiner with refinement family $\mathcal{H}$. Since all of (1)-(3) hold for $R$ and $\mathcal{H}$, we have that $R$ and $\mathcal{H}$ are as desired.
\end{lateproof}

\subsection{Multiplicity Reduction}\label{ss:MRL}

It is useful for proving the above lemmas and theorems to be able to reduce the multiplicity of a remainder to bounded size via the following lemma (which uses our RMH construction from Proposition~\ref{prop:RMHG}).

\begin{lem}[Multiplicity Reduction Lemma]\label{lem:MRL}
Let $q > r\ge 1$ be integers. Let $X$ be an $r$-uniform multi-hypergraph. Let $R$ be the $r$-uniform multi-hypergraph edge-disjoint from $X$ with $V(X)=V(R)$ and where $|R(S)|= M(q,r)\cdot |X(S)|$ for all $S\in \binom{V(X)}{r}$. Let $X'\subseteq R$ where $V(X')=V(R)$ and for each $S\in \binom{V(X)}{r}$, we have $|X'(S)| = M(q,r)-1$ if $|R(S)|\ne 0$ and $0$ otherwise. Then $R$ is a $2\cdot M(q,r)$-refined $K_q^r$-multi-refiner of $X$ with remainder $X'$ and refinement family $\mathcal{H}$ and  $\Delta(\mathcal{H}) \le 2\cdot M(q,r)\cdot \Delta(X)$. % and for each $H\in \mathcal{H}$, $|E(H)\cap E(X)|\le 1$.
\end{lem}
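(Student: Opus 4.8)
The plan is to show that $R$, built by taking $M(q,r)$ parallel copies over each edge-class of $X$, serves as a multi-refiner with the designated remainder $X'$ by exhibiting an explicit refinement family $\mathcal{H}$. First I would fix an $r$-set $S \in \binom{V(X)}{r}$ with $|X(S)| = m_S \ge 1$ and work class-by-class: over $S$ we have $m_S$ edges of $X$ and $M(q,r)\cdot m_S$ edges of $R$. I would apply the RMH construction of~\cref{prop:RMHG} with $q$ there set to $M(q,r)$ (call it $M$ for brevity) to the ``vertex set'' consisting of the $m_S$ copies of $S$ in $X$: this yields an $M$-uniform auxiliary hypergraph $H_S$ on $(M+1)m_S$ vertices, of which $m_S$ are the $X$-edges and $R_S := M\cdot m_S$ are ``new'' vertices which we identify with the edges of $R$ over $S$, together with a distinguished set $X'_S$ of $M-1$ of these new vertices which we identify with the $M(q,r)-1$ edges $X'(S)\subseteq R(S)$. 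Each edge of $H_S$ is a set of $M$ parallel copies of $S$, hence (since $M$ parallel copies of an $r$-set form a $K_q^r$-divisible graph, the key fact noted just before the lemma) corresponds to a $K_q^r$-divisible subgraph of $X\cup R$; this collection of subgraphs (ranging over all edges of all $H_S$) is the refinement family $\mathcal{H}$.

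Next I would verify the defining properties of a multi-refiner with remainder. Edge-disjointness of $X$ and $R$ and $K_q^r$-divisibility of $R$ are immediate from the construction (each class of $R$ has multiplicity a multiple of $M(q,r)$, so all the divisibility ratios $\binom{q-i}{r-i}$ divide the link sizes). For $H\in\mathcal{H}$: each such $H$ is a bundle of $M$ parallel edges over a single $S$, at most one of which lies in $X$, giving $|H\cap X|\le 1$; and by the choice identifying $X'(S)$ with $X'_S$ and because~\cref{prop:RMHG}(4) guarantees the matching $M_L$ avoids $X'$, the $X$-edge appearing in $H$ is never one that was put into $X'$, so $H\cap X\cap X' = \emptyset$. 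For the refinement function: given a $K_q^r$-divisible subgraph $L\subseteq X$, for each $S$ the restriction $L(S)$ is a subset of the $m_S$ copies of $S$, so~\cref{prop:RMHG}(4) provides a matching $M_{L(S)}$ of $H_S$ with $(L(S)\cup R_S)\setminus X'_S \subseteq V(M_{L(S)}) \subseteq L(S)\cup R_S$. Translating back, the union of these matchings over all $S$ is a set $\mathcal{Q}_R(L)\subseteq\mathcal{H}$ of pairwise edge-disjoint $K_q^r$-divisible subgraphs (edge-disjoint since distinct edges of a single $H_S$ are vertex-disjoint there, i.e. edge-disjoint in $X\cup R$, and different $S$'s involve disjoint edge-classes) with $(L\cup R)\setminus X' \subseteq \bigcup\mathcal{Q}_R(L)\subseteq L\cup R$, exactly as required.

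Finally I would check the quantitative bounds. For $C$-refinedness: each $H\in\mathcal{H}$ has $v(H)=r\le M(q,r)$ vertices and $e(H)=M(q,r)$ edges, so the size condition holds with $C = 2M(q,r)$ (indeed with $C=M(q,r)$). For $|\mathcal{H}(e)|$ with $e$ over class $S$: if $e\in X$ it is a single vertex of $H_S$, lying in at most $\Delta(H_S)\le 2M(q,r)$ edges by~\cref{prop:RMHG}(3); if $e\in R$ it is likewise a single vertex of $H_S$ and the same bound applies. Hence $\mathcal{H}$ is $2M(q,r)$-refined. For the refinement degree $\Delta(\mathcal{H})$: a fixed $(r-1)$-set $T$ is contained in $V(H)$ for $H$ over class $S$ only if $S\supseteq T$, and for each such $S$ (there are at most $\Delta(X)$ of them with $m_S\ge 1$, by definition of $\Delta(X)$ as the maximum $(r-1)$-degree) the number of edges of $H_S$ is $M(q,r)$, so $\Delta_{\mathcal{H}}(T)\le M(q,r)\cdot\Delta(X) \le 2M(q,r)\cdot\Delta(X)$. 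I do not anticipate a serious obstacle here; the one point needing care is the bookkeeping that identifies the ``new vertices'' $R_S$ of each $H_S$ with the correct edges of $R$ and $X'_S$ with the prescribed $X'(S)$, and confirming that the union over $S$ of the matchings $M_{L(S)}$ really does cover $(L\cup R)\setminus X'$ and nothing outside $L\cup R$ — i.e. that the class-by-class gluing is consistent. That gluing is clean because the edge-classes partition $E(X\cup R)$, so the main ``work'' is just invoking~\cref{prop:RMHG} once per class and transporting its conclusions back.
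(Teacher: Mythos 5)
Your proposal is correct and matches the paper's proof essentially verbatim: both apply \cref{prop:RMHG} with uniformity $M(q,r)$ once per edge-class $S$, identify the vertices of the auxiliary hypergraph $H_S$ with the parallel edges of $X\cup R$ over $S$ (and its set $X'_S$ with $X'(S)$), take the edges of the $H_S$ as the refinement family, and glue the matchings from \cref{prop:RMHG}(4) class-by-class. One small slip in your $\Delta(\mathcal{H})$ computation: the number of edges of $H_S$ is not $M(q,r)$ but $\Theta(M(q,r)\cdot m_S)$ (it is $M(q,r)$-uniform on $(M(q,r)+1)m_S$ vertices of maximum degree at most $2M(q,r)$), so the correct count is $\Delta_{\mathcal{H}}(T)\le\sum_{S\supseteq T}e(H_S)\le 2(M(q,r)+1)\sum_{S\supseteq T}m_S=O(M(q,r))\cdot|X(T)|$, which still yields the claimed order of magnitude.
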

\begin{proof}%[Proof of Multiplicity Reduction Lemma (Lemma~\ref{lem:MRL})]
Let $M:= M(q,r)$ for brevity. For each $S\in \binom{V(X)}{r}$ with $|X(S)|\ne 0$, let $X_S :=S\uplus X(S)$, $R_S:= S\uplus R(S)$, $X'_S := S\uplus X'(S)$.

By Proposition~\ref{prop:RMHG}, for each $S\in \binom{V(X)}{r}$, there exists an $M$-uniform hypergraph $H_S$ with vertex set $X_S\cup R_S$ such that all of Proposition~\ref{prop:RMHG}(1)-(4) hold for $X_S, R_S, H_S, X'_S$. For each $T\in E(H_S)$, we let $\mathcal{H}_{S,T} := \bigcup_{e\in V(T)} S\uplus e$ (that is, the subgraph of $S\uplus (X\cup R)(S)$ that corresponds to $T$). We let
$$\mathcal{H}_S := \{ \mathcal{H}_{S,T}: T\in E(H_S)\},$$
and
$$\mathcal{H} := \bigcup_{S \in \binom{V(X)}{r}} \mathcal{H}_S. $$

By Proposition~\ref{prop:RMHG}(1), we have that for each $H\in\mathcal{H}$, $|H\cap X|\le 1$. Moreover since $X'\subseteq R$ and $X$ and $R$ are edge-disjoint, it follows that $x\cap X'=\emptyset$ and hence $H\cap (X\cap X')=\emptyset$ for all $H\in \mathcal{H}$. By Proposition~\ref{prop:RMHG}(3), we find that for each $e\in X$, we have that $d\cdot |\mathcal{H}(e)|\le  2\cdot M$. Since $H$ is $M$-uniform, we find that $e(H)\le M$ for all $H \in \mathcal{H}$ (and of course $v(H)=r$ by construction). 

Let $L$ be a $K_q^r$-divisible subgraph of $X$. We define
$$\mathcal{H}_L:= \bigcup_{S\in \binom{V(X)}{r}:~X(S)\ne \emptyset} \{ \mathcal{H}_{S,T}: T\in M_{S,L}\}$$
where $M_{S,L}$ is the matching in $H_S$ for $L(S)$ guaranteed by Proposition~\ref{prop:RMHG}(4).
Note that elements of $\mathcal{H}_L$ are pairwise edge disjoint. Letting $H_L := \bigcup_{H\in \mathcal{H}_L} H$, we find by Proposition~\ref{prop:RMHG}(4) that $(L\cup R)\setminus X' \subseteq H_L \subseteq L\cup R$. Hence $R$ is a $2M$-refined $K_q^r$-multi-refiner of $X$ with remainder $X'$.
\end{proof}

\subsection{Proof of Edge Sparsification Lemma}\label{ss:EdgeSparsification}

In this subsection, we prove Lemma~\ref{lem:Sparsify}. Here is a brief overview. First we (quasi)-randomly partition $X$ into $k\approx v(X)^{(r-1)/r}$ parts. See Proposition~\ref{prop:QuasiRandomPartition}. Then for each $r$-multi-subset of parts, we choose a special set of $q-r$ vertices disjoint from those parts (in a small degree manner). See Lemma~\ref{lem:SpecialSets}. Finally we prove Lemma~\ref{lem:Sparsify} by letting $R$ be obtained by first for each edge $e\in X$ embedding $M(q,r)$ copies of a private clique on $e\cup S$ where $S$ is the special set corresponding to the parts of the vertices of $e$ and then second applying the Multiplicity Reduction Lemma to reduce the multiplicity of the remainder.

First we need the following proposition about quasi-random partitions of the vertex set of $X$.

\begin{proposition}\label{prop:QuasiRandomPartition}
Let $X$ be an $r$-uniform hypergraph. If $k$ is a positive integer such that $\Delta(X) \ge 3k(\log (2k)+(r-1)\log v(X))$, then there exists a partition $V_1,\ldots, V_k$ of $V(X)$ such that $|V_i|\le 2\cdot \frac{v(X)}{k}$ and for every $(r-1)$-subset $S$ of $V(X)$, $|X(S)\cap V_i|\le 2\cdot \frac{\Delta(X)}{k}$ for all $i\in [k]$.
\end{proposition}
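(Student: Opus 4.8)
The plan is to take $V_1, \ldots, V_k$ to be a uniformly random partition of $V(X)$ — equivalently, assign each vertex $v \in V(X)$ independently and uniformly at random to one of the $k$ parts — and show that with positive probability both desired properties hold, via a union bound over the relevant bad events and Chernoff bounds. I would first record the two families of bad events: for each $i \in [k]$, the event $B_i$ that $|V_i| > 2 v(X)/k$; and for each $(r-1)$-subset $S$ of $V(X)$ and each $i \in [k]$, the event $B_{S,i}$ that $|X(S) \cap V_i| > 2\Delta(X)/k$. (Here $X(S)$ is the link graph, whose vertex set consists of the vertices $w$ with $S \cup \{w\} \in E(X)$, so $|X(S)| \le \Delta(X)$.)

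For the size events: $|V_i|$ is a sum of $v(X)$ independent indicator variables each with mean $1/k$, so $\mathbb{E}|V_i| = v(X)/k$, and by the Chernoff bound $\Prob{|V_i| > 2 v(X)/k} \le \exp(-v(X)/(3k))$. For the link events: fix $S$; the vertices of $X(S)$ are assigned independently, so $|X(S) \cap V_i|$ is a sum of $|X(S)|$ independent indicators of mean $1/k$, with mean $|X(S)|/k \le \Delta(X)/k$. Since we want an upper tail at twice $\Delta(X)/k \ge \mathbb{E}|X(S)\cap V_i|$, the Chernoff bound gives $\Prob{|X(S)\cap V_i| > 2\Delta(X)/k} \le \exp(-\Delta(X)/(3k))$ — one should be slightly careful that the threshold $2\Delta(X)/k$ may exceed twice the actual mean, but that only makes the event less likely, so the bound still holds (formally, compare with an auxiliary sum having exactly $\Delta(X)$ indicators, or just note the Chernoff bound $\Prob{Z > t} \le \exp(-(t-\mu)^2/(2t))$ type estimate with $\mu \le \Delta(X)/k$, $t = 2\Delta(X)/k$).

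Now union bound. There are $k$ size events, each of probability at most $\exp(-v(X)/(3k))$, and at most $k \cdot v(X)^{r-1}$ link events (at most $\binom{v(X)}{r-1} \le v(X)^{r-1}$ choices of $S$ and $k$ choices of $i$), each of probability at most $\exp(-\Delta(X)/(3k))$. Since $\Delta(X) \le v(X)$ we have $\exp(-v(X)/(3k)) \le \exp(-\Delta(X)/(3k))$, so the total failure probability is at most
\[
\left(k + k\cdot v(X)^{r-1}\right) \exp\!\left(-\frac{\Delta(X)}{3k}\right) \le 2k\, v(X)^{r-1} \exp\!\left(-\frac{\Delta(X)}{3k}\right).
\]
The hypothesis $\Delta(X) \ge 3k(\log(2k) + (r-1)\log v(X))$ is exactly what makes $\exp(-\Delta(X)/(3k)) \le \frac{1}{2k\, v(X)^{r-1}}$, so the total failure probability is strictly less than $1$; hence a good partition exists. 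I do not anticipate a serious obstacle here — the only mild subtlety is the bookkeeping in the Chernoff estimate for the link events when the prescribed threshold overshoots the true mean, and making sure the hypothesis is applied with the right constant; both are routine. (One should also double-check the edge case where $\Delta(X) = 0$ or $X(S)$ is empty, where the link condition holds trivially.)
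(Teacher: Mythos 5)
Your proposal is correct and follows essentially the same route as the paper's proof: independent uniform assignment of vertices to parts, Chernoff bounds for the part-size events and the link events, and a union bound that succeeds precisely because of the hypothesis $\Delta(X)\ge 3k(\log(2k)+(r-1)\log v(X))$ (together with $v(X)\ge\Delta(X)$ for the size events). The minor points you flag — the threshold possibly exceeding twice the true mean, and the degenerate cases — are handled implicitly in the paper and do not affect the argument.
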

\begin{proof}
For each vertex $v$ of $X$, put $v$ in a part $V_i$ independently uniformly at random. For each $i\in[k]$, let $A_i$ be the event that $|V_i| > 2\cdot \frac{v(X)}{k}$. For each $(r-1)$-subset $S$ of $X$ and $i\in [k]$, let $A_{S,i}$ be the event that $|X(S)\cap V_i| > 2\cdot \frac{\Delta(X)}{k}$.

Note that $\Expect{|V_i|} = \frac{v(X)}{k}$ for all $i\in [k]$. Hence by the Chernoff bound~\cite{AS16}, we find that
$$\Prob{A_i} \le e^{-\frac{v(X)}{3k}} < \frac{1}{2k},$$
since $v(X) \ge \Delta(X) \ge 3k\cdot \log (2k)$. Similarly we note that $\Expect{|X(S)\cap V_i|} \le \frac{\Delta(X)}{k}$. Hence by the Chernoff bound, we find that
$$\Prob{A_{S,i}} \le e^{-\frac{\Delta(X)}{3k}} < \frac{1}{2k\cdot v(X)^{r-1}},$$
since $\Delta(X)\ge 3k(\log (2k)+(r-1)\log v(X))$.
Hence by the union bound, we find that
$$\bigcup_{i\in [k]} \Prob{A_i}~\cup \bigcup_{S\in \binom{V(X)}{r-1}, i\in [k]} A_{S,i} < \frac{1}{2} + \frac{1}{2} < 1.$$
Thus there exists a partition $V_1,\ldots, V_k$ as desired. 
\end{proof}

Next we need a lemma that assigns to each $r$-multi-subset of $[k]$, a special set of $q-r$ vertices of $X$ without too much overlap. But first we need the following easy proposition.

\begin{proposition}\label{prop:NonUniformTuran}
Let $q\ge 1$ be an integer. Let $Z$ be a $q$-bounded hypergraph with at least $q$ vertices. For each $i\in [q]$, let $\alpha_i := \frac{|\{e\in Z: |e|=i\}|}{\binom{v(Z)}{i}}$.
Let $\alpha := \sum_{i\in [q]} \alpha_i \cdot \binom{q}{i}$.
Then the number of $q$-subsets of $V(Z)$ that span an edge of $Z$ is at most $\alpha \cdot \binom{v(Z)}{q}$ and hence if $\alpha < 1$, then there exists a $q$-subset of $V(Z)$ that does not span an edge of $Z$.
\end{proposition}
\begin{proof}
For each $i\in [q]$, let $A_i:= \{ S\in \binom{V(Z)}{q}: \exists~e\in Z,~|e|=i,~e\subseteq S\}$. Thus we have that
$$|A_i| \le |\{e\in Z: |e|=i\}|\cdot \binom{v(Z)-i}{q-i}= \alpha_i \cdot \binom{v(Z)}{i} \cdot \binom{v(Z)-i}{q-i} = \alpha_i \cdot \binom{q}{i} \cdot \binom{v(Z)}{q}.$$
Combining, we find that the number of $q$-subsets of $Z$ that span an edge of $Z$ is at most $\sum_{i\in [q]} |A_i| \le \alpha\cdot \binom{v(Z)}{q}$ as desired. And hence if $\alpha < 1$, there exists a $q$-subset of $V(Z)$ that does not span an edge of $Z$ as desired.
\end{proof}

\begin{lem}\label{lem:SpecialSets}
For all integers $q>r\ge 1$, there exists an integer $C\ge 1$ such that the following holds: Let $X$ be an $r$-uniform hypergraph with $v(X)\ge C$. Let $k$ be an integer satisfying $24r\le k \le \frac{1}{C}\cdot v(X)^{(r-1)/r}$ and let $\mathcal{I}$ be the set of all $r$-multi-subsets of $[k]$. If $V_1,\ldots,V_k$ is a partition of $V(X)$ as in Proposition~\ref{prop:QuasiRandomPartition} and $Y\subseteq V(X)$ with $|Y|\ge v(X)/3$, then there exists a collection $\mathcal{S} = (S_I \subseteq V(X): I \in \mathcal{I})$ such that all of the following hold:
\begin{enumerate}
    \item[(1)] for all $I\in \mathcal{I}$, $|S_I|=q-r$ and $S_I\subseteq Y\setminus \bigcup_{i\in I} V_i$,
    \item[(2)] for all distinct $I_1, I_2 \in \mathcal{I}$, $|S_{I_1}\cap S_{I_2}| \le r-2$, and
    \item[(3)] for every multi-subset $J$ of $[k]$ with $|J|\in [r-1]$ and $T\subseteq V(X)$ with $|T|\ = r-1-|J|$, we have that $|\{S_I \in \mathcal{S}: T\subseteq S_I,~J\subseteq I\}| \le \lceil C\cdot \frac{k^{|T|+1}}{v(X)^{|T|}} \rceil$.
\end{enumerate}
\end{lem}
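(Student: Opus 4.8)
The plan is to construct the special sets $S_I$ greedily, processing the index set $\mathcal{I}$ in some fixed order and choosing each $S_I$ to be a $(q-r)$-subset of $Y \setminus \bigcup_{i \in I} V_i$ that avoids the "bad" configurations created by the previously chosen sets. The natural tool is Proposition~\ref{prop:NonUniformTuran} applied to a $(q-r)$-bounded auxiliary hypergraph $Z_I$ on the vertex set $Y \setminus \bigcup_{i \in I} V_i$ whose edges encode exactly the constraints (2) and (3): for constraint (2), each previously chosen $S_{I'}$ contributes all its $(r-1)$-subsets as edges of size $r-1$ (forbidding a new set from sharing $r-1$ vertices with an old one); for constraint (3), we need to forbid, for each multiset $J$ with $|J| \in [r-1]$ and $J \subseteq I$ and each $T \subseteq V(X)$ with $|T| = r-1-|J|$ that has already been "used up" to its quota $\lceil C \cdot k^{|T|+1}/v(X)^{|T|}\rceil$ by previously chosen sets, the $(r-1-|J|)$-subset $T$ — so $Z_I$ also gets edges of size $r-1-|J|$ for each such saturated $(T,J)$ pair. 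Choosing $S_I$ to avoid all edges of $Z_I$ then guarantees (1)--(3) for $S_I$ against everything chosen so far.

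The bulk of the work will be bounding $\alpha = \sum_i \alpha_i \binom{q-r}{i}$ for $Z_I$ and showing $\alpha < 1$ so that Proposition~\ref{prop:NonUniformTuran} applies. The vertex set of $Z_I$ has size at least $|Y| - \sum_{i\in I}|V_i| \ge v(X)/3 - r \cdot 2v(X)/k \ge v(X)/4$ (using $k \ge 24r$ and $v(X)$ large), so all the $\binom{v(Z_I)}{i}$ denominators are $\Theta(v(X)^i)$. For the size-$(r-1)$ edges from constraint (2): there are at most $|\mathcal{I}| \cdot \binom{q-r}{r-1} = O(k^r)$ of them, and $k^r \le v(X)^{r-1}/C^{r} \ll v(X)^{r-1}$, so their contribution to $\alpha$ is $O(k^r / v(X)^{r-1}) = O(1/C)$. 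For the size-$(r-1-|J|)$ edges from constraint (3): for a fixed $J$ with $|J| = j$, the number of sets $S_{I'}$ with $J \subseteq I'$ is at most the number of $r$-multisets of $[k]$ containing $J$, which is $O(k^{r-j})$; each such set contributes at most $\binom{q-r}{r-1-j}$ subsets $T$ of size $r-1-j$; so the total number of (set, $T$) incidences for this $J$ is $O(k^{r-j})$, meaning the number of saturated $T$'s is $O(k^{r-j} / (C \cdot k^{r-1-j+1}/v(X)^{r-1-j})) = O(v(X)^{r-1-j}/C)$ — and this divided by $\binom{v(Z_I)}{r-1-j} = \Theta(v(X)^{r-1-j})$ is $O(1/C)$. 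Summing over the $O(1)$ choices of $j$ and $J \subseteq I$ (only $J$ with $J \subseteq I$ matter, and there are at most $2^r$ of these), the total $\alpha$ is $O(1/C) < 1$ for $C$ large enough, so a valid $S_I$ exists.

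The one subtlety is that constraint (3) is a global counting constraint (a quota on each pair $(T,J)$), not a pairwise one, so I need to verify that greedily maintaining it works: whenever I pick a new $S_I$ avoiding all currently-saturated $(T,J)$ pairs, each pair $(T,J)$ with $T \subseteq S_I$, $J \subseteq I$ gets its count incremented by exactly one, and since I only skip a pair once it has reached its ceiling, the final count never exceeds $\lceil C \cdot k^{|T|+1}/v(X)^{|T|}\rceil$. So the invariant is maintained throughout. The main obstacle — and the part that needs the most care — is getting the bookkeeping in the $\alpha$ estimate exactly right, in particular making sure the exponents of $k$ and $v(X)$ balance so that the quota $\lceil C \cdot k^{|T|+1}/v(X)^{|T|}\rceil$ is large enough to accommodate all the demands (this is why the ceiling and the factor $C$ appear, and why we need $k \le v(X)^{(r-1)/r}/C$, which forces $k^r \le v(X)^{r-1}/C^r$); conversely it must be small enough that the saturated $T$'s don't overwhelm $Z_I$. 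I would also double-check the edge cases $|J| = r-1$ (so $|T| = 0$, $T = \emptyset$, and the quota is $\lceil C \cdot k / 1 \rceil = O(k)$ which must exceed the $O(k)$ sets containing a fixed $(r-1)$-multiset $J$ — tight, handled by the constant $C$) and confirm that constraint (2) with $r = 1$ is vacuous, consistent with $S_I$ being a singleton or empty in low uniformity.
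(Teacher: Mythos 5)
Your proposal is correct and follows essentially the same route as the paper's proof: both construct the $S_I$ one at a time (your greedy ordering is equivalent to the paper's maximal-partial-collection phrasing), both encode the constraint-(2) forbidden $(r-1)$-sets and the constraint-(3) saturated pairs $(T,J)$ as a non-uniform bounded hypergraph, and both invoke Proposition~\ref{prop:NonUniformTuran} with exactly the counting you describe (the $O(k^r/v(X)^{r-1})$ bound for (2) and the $O(v(X)^{|T|}/C)$ bound on saturated $T$'s for (3)). The quota-invariant observation you flag as a subtlety is handled identically in the paper via the remark that $d_{\mathcal{S}''}(J,T)=d_{\mathcal{S}'}(J,T)+1$ only when $J\subseteq I$ and $T\subseteq S_I$.
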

\begin{proof}
Let $C$ be chosen large enough as needed throughout the proof. For $\ell\in [r-2]$, we let $m_{\ell}:= \left\lceil C\cdot \frac{k^{\ell+1}}{v(X)^{\ell}} \right\rceil$. Let $\mathcal{I}'\subseteq \mathcal{I}$ and $\mathcal{S}' = (S_I\subseteq V(X): I \in \mathcal{I}')$ such that (1)-(3) hold for $\mathcal{I}'$ and $\mathcal{S'}$ and subject to that $|\mathcal{I}'|$ is maximized. Note such a choice exists since $\mathcal{I}':=\emptyset$ satisfies (1)-(3) trivially. If $|\mathcal{I}'|=|\mathcal{I}|$, then $\mathcal{S}:=\mathcal{S}'$ is as desired.

So we assume that $|\mathcal{I}'| < |\mathcal{I}|$. Hence there exists $I\in \mathcal{I}\setminus \mathcal{I}'$. Let $Y':= Y\setminus \bigcup_{i\in I} V_i$. Note that since $|V_i| \le 2\cdot \frac{v(X)}{k}$ and $k\ge 24r$, we find that $|\bigcup_{i\in I} V_i|\le \frac{v(X)}{12}$. Hence $|Y'|\ge |Y|-\frac{v(X)}{12} \ge \frac{v(X)}{4}$ since $|Y|\ge \frac{v(X)}{3}$. 

Let $B_{r-1} := \bigcup_{I'\in \mathcal{I}'} \binom{S_{I'}\cap Y'}{r-1}$. Since $|\mathcal{I}'| \le |\mathcal{I}|\le k^r$, we find that 
$$|B_{r-1}| \le \binom{q-r}{r-1} \cdot |\mathcal{I}'| \le \binom{q-r}{r-1}\cdot k^r \le \binom{q-r}{r-1} \cdot \frac{v(X)^{r-1}}{C^r} \le \frac{q^r\cdot 8^r}{C^r} \binom{|Y'|}{r-1},$$
where we used that $|Y'|-r\ge \frac{v(X)}{4} - r \ge \frac{v(X)}{8}$ since $v(X)\ge C \ge 8r$. For brevity, we let $d_{\mathcal{S}'}(J,T):= |\{S_I \in \mathcal{S}': T\subseteq S_I,~J\subseteq I\}|$ for $J\subseteq I$ and $T\subseteq Y$. For $\ell \in [r-2]$, let
$$B_{\ell} := \left\{ T\in \binom{Y'}{\ell}: \exists~J \subseteq I \text{ with } |J|=r-1-\ell \text{ such that } d_{\mathcal{S}'}(J,T) = m_{\ell} \right\}.$$
Note that for $J\subseteq I$ with $|J|=r-1-\ell$,
\begin{align*}
\left|\left\{ T\in \binom{Y'}{\ell}: d_{\mathcal{S}'}(J,T)= m_{\ell}\right\}\right| &\le \frac{|\{I' \in \mathcal{I}': J\subseteq I'\}| \cdot \binom{q-r}{\ell}}{m_{\ell}} \le \frac{k^{\ell+1}\cdot \binom{q-r}{\ell}}{m_{\ell}} \\
&\le \frac{\binom{q-r}{\ell}}{C} \cdot v(X)^{\ell} \le \frac{q^r\cdot 8^r}{C}\cdot \binom{|Y|}{\ell},
\end{align*}
where we used that $|Y|\ge \frac{v(X)}{4}-r\ge \frac{v(X)}{8}$ since $v(X)\ge C \ge 8r$. Note there are at most $\binom{r}{r-1-\ell}$ multi-$\ell$-subsets $J$ of $I$. 

Note that $\sum_{\ell \in [r-1]} \frac{1}{C}\cdot \binom{r}{r-1-\ell} \cdot q^r\cdot 8^r \cdot \binom{q-r}{\ell} < 1$ since $C$ is large enough. Hence by Proposition~\ref{prop:NonUniformTuran}, there exists $S_{I}\subseteq Y'$ with $|S_{I}| = q-r$ such that $S_{I}$ contains no element of $\bigcup_{\ell \in [r-1]} B_{\ell}$. Let $\mathcal{I}'':=\mathcal{I}'\cup \{I\}$ and $\mathcal{S}'':=(S_I: I\in \mathcal{I}'')$. Note that $|S_I|=q-r$ and $S_I\subseteq Y'$ and hence (1) holds for $\mathcal{I}''$ and $\mathcal{S}''$. Since $S_I$ does not contain an element of $B_{r-1}$, it follows that (2) holds for $\mathcal{I}''$ and $\mathcal{S}''$. Finally note that for a multi-subset $J$ of $[k]$ with $|J|\in [r-1]$ and $T\subseteq V(X)$ with $|T|=\ell$ and $|J|=r-1-|T|$, we have that $d_{\mathcal{S}''}(J,T) \le d_{\mathcal{S}'}(J,T)+1$ and that equality holds only if $J\subseteq I$ and $T\subseteq S_I$. Since $S_I$ contains no element of $\bigcup_{\ell\in [r-2]} B_{\ell}$, it follows that $d_{\mathcal{S}'}(J,T)\le m_{\ell}$ for all such $J$ and $T$ and hence (3) holds for $\mathcal{I}''$ and $\mathcal{S}''$. Thus $\mathcal{I}''$ and $\mathcal{S}''$ satisfy outcomes (1)-(3), contradicting the choice of $\mathcal{I}'$ and $\mathcal{S}'$.
\end{proof}

We note for the reader's sake that in Lemma~\ref{lem:SpecialSets} the ceiling in outcome (3) is important as we must allow a value of $1$ even if the number inside the ceiling is some small fraction.  In that sense, the lemma will be more useful for bounds when $k$ is near the large end of its allowed range (where the values inside the ceiling are at least $1$ and hence the ceiling is mostly negligible).

\begin{remark}
We note that the proof above is deterministic and similar in spirit to the proof of Lemma 5.20 in~\cite{GKLO16} by Glock, K\"{u}hn, Lo, and Osthus. We use this style of proof of `deterministically avoiding the bad sets' in the proofs of two of our other embedding results (Theorem~\ref{thm:MultiOmni} and Lemma~\ref{lem:EmbedMinDegree}). We note though that other proof approaches also yield these types of results such as: a random greedy process, random sparsification, or using the Lov\'asz Local Lemma to find an $A$-perfect matching of some appropriate auxiliary bipartite hypergraph. We chose the above method for our current paper for its simplicity.    
\end{remark}

We are now prepared to prove Lemma~\ref{lem:Sparsify} as follows.

\begin{proof}[Proof of Edge Sparsification Lemma (Lemma~\ref{lem:Sparsify}).]
Let $C$ be chosen large enough as needed throughout the proof. Let $$k:=\left\lfloor \frac{1}{8q\cdot 2^r\cdot M(q,r)\cdot C'}\cdot v(X)^{1-\frac{1}{r}}\right\rfloor$$ 
where $C'$ is as in Lemma~\ref{lem:SpecialSets}. Since $\Delta(X)\ge v(X)^{(r-1)/r}\cdot \log v(X)$ by assumption, we find that $\Delta(X) \ge 3k(\log(2k)+(r-1)\log v(X))$. Hence by Proposition~\ref{prop:QuasiRandomPartition}, there exists a partition $V_1,\ldots, V_k$ of $V(X)$ as in Proposition~\ref{prop:QuasiRandomPartition}. 

Since $v(X)\ge C$ by assumption and $C$ is large enough, we find that $k \ge 24r$. By Lemma~\ref{lem:SpecialSets} applied to $X$ and $Y$, there exists a collection $\mathcal{S}=(S_I \subseteq V(X): I \text { is an $r$-multi-subset of $[k]$ })$ as in Lemma~\ref{lem:SpecialSets}. For a subset $U=\{v_1,\ldots, v_b\}$ of $X$ with $b\le r$, let $\phi(U)$ denote the $b$-multi-set $\{i: v_j\in V_i,~j \in [b]\}$. Let $R_1$ be obtained as follows: For each edge $e = \{v_1,\ldots v_r\}\in X$, add $M(q,r)$ edge-disjoint copies $P_{1,e},\ldots P_{M(q,r),e}$ of $\binom{e\cup S_{\phi(e)}}{r} \setminus e$.  

Note that $R_1$ is $K_q^r$-divisible. Let $X_2:=R_1$, let $\mathcal{H}_1$ be the family $\{ e\cup P_{1,e}: e\in X\}$ and let $\mathcal{Q}_{R_1}$ be the function $\mathcal{Q}_{R_1}(L) := \{ e\cup P_{1,e}: e\in L\}$. Let $C_1:= \max\{\binom{q}{r},q\}$. Now $R_1$ is a $C_1$-refined $K_q^r$-multi-refiner of $X$ with refinement family $\mathcal{H}_1$ and refinement function $\mathcal{Q}_{R_1}$.

Let $R_2$ be the $r$-uniform multi-hypergraph edge-disjoint from $X_2\cup X$ with $V(X_2)=V(X)=V(R_2)$ and where $|R_2(S)|= M(q,r)\cdot |X_2(S)|$ for all $S\in \binom{V(X)}{r}$. Let $X_3\subseteq R$ where $V(X_3)=V(R_2)$ and for each $S\in \binom{V(X)}{r}$, we have $|X_3(S)| = M(q,r)-1$ if $|R_2(S)|\ne 0$ and $0$ otherwise. Let $C_2:= 2\cdot M(q,r)$. By the Multiplicity Reduction Lemma (Lemma~\ref{lem:MRL}), we have that $R_2$ is a $C_2$-refined $K_q^r$-multi-refiner of $X$ with remainder $X_3$ and refinement family $\mathcal{H}_2$ and $\Delta(\mathcal{H}_2) \le 2\cdot M(q,r)\cdot \Delta(X_2)$.

Let $R=R_1\cup R_2$. By Proposition~\ref{prop:Concatenate}, $R$ is a $C$-refined $K_q^r$-multi-refiner of $X$ with remainder $X_3$ and refinement family $\mathcal{H}:=\mathcal{H}_1\cup \mathcal{H}_2$ (since $C\ge C_1+C_2$ as $C$ is large enough).

Now we prove that (1)-(4) hold for $\mathcal{H}$ and $X'$. First let us define some useful constants. For $i\in [r-1]_0$, let $m_i:= C'\cdot \frac{k^{i+1}}{v(X)^{i}}$. Since $k\le v(X)$, we have that $m_i\ge m_{r-2}$ for all $i\in [r-1]_0$. Using that $v(X)\ge C$ and $C$ is large enough, it follows that $m_i\ge 1$ for all $i\in [r-2]_0$. Meanwhile, using the definition of $k$ and that $C$ is large enough, it follows that $m_{r-1}\ge \frac{1}{\sqrt{C}}$. 

For $S\in \binom{V(X)}{r-1}$ and $T\subseteq S$, define 
$$\mathcal{I}_{S,T} := \{I: T\subseteq S_I,~\phi(S\setminus T)\subseteq I\}.$$
It follows from Lemma~\ref{lem:SpecialSets}(2) that if $|T|=r-1$, then $|\mathcal{I}_{S,T}|\le 1 \le \sqrt{C}\cdot m_{|T|}$. By Lemma~\ref{lem:SpecialSets}(3), we have that if $|T| < r-1$, then $|\mathcal{I}_{S,T}|\le \left\lceil m_{|T|} \right\rceil \le 2\cdot m_{|T|}$ since $m_{|T|}\ge 1$ for $|T|\in [r-2]_0$. In either case then since $C$ is large enough, we have that $|\mathcal{I}_{S,T}|\le \sqrt{C}\cdot m_{|T|}$.

Now we calculate $\Delta(R_1)$ as follows. Fix $S\in \binom{V(X)}{r-1}$. 
Then
$$|R_1(S)| \le \sum_{T\subseteq S} |\{e\in X: S\setminus T\subseteq e,~\phi(e)\in \mathcal{I}_{S,T}\}|.$$
Now we calculate the terms on the right side as follows. For $I\in \mathcal{I}_{S,T}$, let $I=\phi(S\setminus T)\cup \{i_1,\ldots, i_{|T|+1}\}$ and we calculate using the properties of the partition $V_1,\ldots V_k$ that
\begin{align*}
|\{e\in X: S\setminus T\subseteq e,~\phi(e)\in I\}| &\le \sum_{U\in \binom{V(X)\setminus S}{|T|}: \phi(U)= \{i_1,\ldots, i_{|T|}\}} |X(U)\cap V_{i_{|T|+1}}|,\\
&\le \left(\prod_{a=1}^{|T|} |V_{i_a}|\right) \cdot \left(2\cdot \frac{\Delta(X)}{k}\right) \le \left(2\cdot \frac{v(X)}{k}\right)^{|T|} \cdot \left(2\cdot \frac{\Delta(X)}{k}\right)\\
&= 2^{|T|+1}\cdot \frac{v(X)^{|T|}}{k^{|T|+1}} \cdot \Delta(X) = 2^{|T|+1}\cdot \frac{C'}{m_{|T|}} \cdot \Delta(X).
\end{align*}
Hence we find that
\begin{align*}
|\{e\in X: S\setminus T\subseteq e,~\phi(e)\in \mathcal{I}_{S,T}\}| &\le 2^{|T|+1}\cdot \frac{C'}{m_{|T|}}\cdot \Delta(X)\cdot |\mathcal{I}_{T}|\\
&\le 2^{|T|+1}\cdot C'\cdot \sqrt{C} \cdot \Delta(X),
\end{align*}
where we used that $|\mathcal{I}_{S,T}|\le \sqrt{C}\cdot m_{|T|}$. Combining we find that
$$|R_1(S)| \le \sum_{T\subseteq S}  |\{e\in X: S\setminus T\subseteq e,~\phi(e)\in \mathcal{I}_{S,T}\}| \le 4^{r}\cdot C'\cdot \sqrt{C}\cdot \Delta(X),$$
where we used that $|T|\le r-1$ and $|\{T: T\subseteq S\}|\le 2^r$. Hence 
$$\Delta(R_1) \le 4^{r}\cdot C'\cdot \sqrt{C}\cdot \Delta(X).$$

We note that $\Delta(\mathcal{H}_1)\le \Delta(R_1)+\Delta(X)$. Moreover, $\Delta(\mathcal{H}_2)\le 2\cdot M(q,r) \cdot \Delta(X_2) = 2\cdot M(q,r)\cdot \Delta(R_1)$. Hence
$$\Delta(\mathcal{H}) \le \Delta(\mathcal{H}_1)+\Delta(\mathcal{H}_2)\le (2\cdot M(q,r)+1)\cdot \Delta(R_1)+\Delta(X) \le C\cdot \Delta(X),$$
since $C$ is large enough. Hence (1) holds.

By Lemma~\ref{lem:SpecialSets}, we have that $S_I\subseteq Y\setminus \bigcup_{i\in I} V_i$ for all $r$-multi-subsets $I$ of $[k]$. For every edge $e\in X$ and $i\in [M(q,r)]$, we have that every edge of $P_{i,e}$ contains a vertex of $S_{\phi(e)}$ by construction and hence a vertex of $Y$. It follows that $R_1$ is $(r-1)$-flat to $Y$. Since $|R_2(S)|\ne 0$ only if $|R_1(S)|\ne 0$ for $S\in \binom{V(X)}{r}$, it then follows that $R_2$ is also $(r-1)$-flat to $Y$. Combining, we find that $R$ is $(r-1)$-flat to $Y$ and hence (2) holds. 

Next we calculate $\Delta(X_3)$ as follows. Fix $S\in \binom{V(X)}{r-1}$. Now using that $X_3$ has multiplicity at most $M(q,r)$, we calculate that
\begin{align*}
|X_3(S)|\le M(q,r)\cdot \bigg( &\sum_{T\subseteq S: T\ne \emptyset} |\{v\in V(X)\setminus S: \exists I\text{ such that } T\subseteq S_I,~\phi(\{v\}\cup (S\setminus T)) \subseteq I\}| \\
&+ \sum_{T\subseteq S} |\{v\in V(X)\setminus S: \exists I\text{ such that } T\cup \{v\}\subseteq S_I,~\phi(S\setminus T) \subseteq I\}| \bigg).
\end{align*}
Note crucially that the case $T=\emptyset$ in the first term is excluded since for every $e\in X$ and $i\in [M(q,r)]$, $P_{i,e}$ contains at least one vertex of $S_{\phi(e)}$ by construction.

Using the properties of the partition $V_1,\ldots,V_k$, we find that
\begin{align*}
|X_3(S)| &\le M(q,r)\cdot \bigg( \sum_{T\subseteq S: T\ne\emptyset}~~\sum_{I\in \mathcal{I}_{S,T}}~~\sum_{a\in I\setminus \phi(S\setminus T)} |V_a|~~+~~\sum_{T\subseteq S}~~\sum_{I\in \mathcal{I}_{S,T}} (q-r-|T|)\bigg)\\
&\le M(q,r)\cdot \bigg( \sum_{T\subseteq S: T\ne\emptyset}~~\sum_{I\in \mathcal{I}_{S,T}}~ (|T|+1)\cdot  \left(2\cdot\frac{v(X)}{k}\right)~~+~~\sum_{T\subseteq S}~~q\cdot |\mathcal{I}_{S,T}|\bigg)\\
&\le 2q\cdot M(q,r)\cdot \bigg( \sum_{T\subseteq S: T\ne\emptyset}~~|\mathcal{I}_{S,T}|\cdot\frac{v(X)}{k}~~+~~\sum_{T\subseteq S}~~|\mathcal{I}_{S,T}|\bigg)\\
\end{align*}
Recall that $|\mathcal{I}_{S,T}|\le 1$ if $|T|=r-1$ and $|\mathcal{I}_{S,T}| \le 2\cdot m_{|T|}$ if $|T|\in [r-2]_0$. Also note that since $k\le v(X)$, we have $m_i \le m_1\le m_0$ for all $i\in [r-2]$. Further note that $m_1\ge 1$. Hence we find that
\begin{align*}
|X_3(S)|&\le 2q\cdot M(q,r)\cdot \bigg( \sum_{T\subseteq S: T\ne\emptyset}~~2\cdot m_1\cdot\frac{v(X)}{k}~~+~~\sum_{T\subseteq S}~~2\cdot m_0\bigg)\\
&\le 4q\cdot 2^r \cdot M(q,r) \cdot \left( m_1\cdot \frac{v(X)}{k} + m_0\right).
\end{align*}
However by definition, we have that $m_1\cdot \frac{v(X)}{k} = m_0 = C'\cdot k$. Substituting, we find that
\begin{align*}
|X_3(S)|&\le 8q\cdot 2^r \cdot M(q,r) \cdot C'\cdot k \le v(X)^{1-\frac{1}{r}}, 
\end{align*}
where we used the definition of $k$. Hence (3) holds.

Finally we note that $X_3$ is an $r$-uniform multi-hypergraph with multiplicity at most $M(q,r)-1$. Hence (4) holds.
\end{proof}

\section{Proof of the Refine Down Lemma}\label{s:RefineDown}

In this section, we prove Lemma~\ref{lem:RefineDown}. Here we provide a brief overview. It is useful to break up the proof of Lemma~\ref{lem:RefineDown} further into steps where each step we only refine edges with a fixed intersection size with $Y$ (see Lemma~\ref{lem:RefineDownOneStep}). Again, this is similar to both the proof of the Cover Down Lemma (but refining instead of covering) and to the construction of Transformers achieved in~\cite{GKLO16} (but going one direction instead of symmetrically).

Then it is useful to further break up the proof by refining only the edges containing a fixed $i$-set $S$ of $V(X)\setminus Y$. To that end, we define a \emph{local refiner} which is a refiner (with remainder) for `link' hypergraphs (see Definition~\ref{def:Local}). Crucially any link hypergraph of a divisible hypergraph is also divisible with lower uniformity (see Proposition~\ref{prop:LinkDiv}). Hence we prove a Local Multi-Refiner Theorem (Theorem~\ref{thm:MLRT}) whose proof uses the inductive assumption that Theorem~\ref{thm:MultiOmni} holds for smaller values of $r$. 

Lemma~\ref{lem:RefineDownOneStep} will then follow by randomly embedding the local multi-refiners from Theorem~\ref{thm:MLRT} so as to have small degree. Note to ensure the expected degree is small, this part requires a set of new vertices for embedding the rest of each local refiner. This is the key place where we require the flatness assumption(s) that we have to carry throughout the other lemmas. Lemma~\ref{lem:RefineDown} will then follow by repeated applications of Lemma~\ref{lem:RefineDownOneStep} - where we need larger and larger sets of new vertices for each step of refine down.

\subsection{Proof Overview for Refine Down Lemma}

Here is one step of our Refine Down Lemma.

\begin{lem}[Refine Down Lemma - One Step]\label{lem:RefineDownOneStep}
For all integers $q > r\ge 2$, there exists an integer $C > 1$ such that the following holds: Suppose that Theorem~\ref{thm:MultiOmni} holds for $r'\in [r-1]$. Let $X$ be an $r$-uniform multi-hypergraph of multiplicity at most $M(q,r)$ and let $\Delta:=\max\{\Delta(X),~v(X)^{1-\frac{1}{r}}\cdot \log v(X)\}$. Let $Y\subseteq V(X)$ with $|Y|\ge C$ and $i\in [r]$ such that $X$ is $i$-flat to $Y$. If $Z\subseteq V(X)$, with $Y\subseteq Z$ and $|Z|\ge 2\cdot C\cdot |Y|$ then there exists a $C$-refined $K_q^r$-multi-refiner $R$ of $X$ with remainder $X'$ and refinement family $\mathcal{H}$ such that all of the following hold:

\begin{enumerate}
    \item[(1)] $\Delta_{\mathcal{H}}(T) \le C \cdot \Delta$ if $T\in \binom{V(X)}{r-1}$ with $|T\setminus Y|=i$,
    \item[(2)] $\Delta_{\mathcal{H}}(T) \le C \cdot \max\left\{\left(\frac{v(X)}{|Y|}\right)^{i-1} \cdot \Delta,~\left(\frac{v(X)}{|Y|}\right)^{\min\{i,r-1\}}\right\}$ if $T\in \binom{V(X)}{r-1}$ with $|T\setminus Y|<i$,
    % could put \min{r-1,i} in exponent instead
    \item[(3)] $R$ is $(r-1)$-flat to $Y$ and for each $H\in \mathcal{H}$, we have that $|V(H)\setminus Z| \le i$, 
    \item[(4)] $X'$ is $(i-1)$-flat to $Z$,
    \item[(5)] $\Delta(X') \le \Delta(X) + C \cdot \max\left\{\left(\frac{v(X)}{|Y|}\right)^{i-1} \cdot \Delta,~\left(\frac{v(X)}{|Y|}\right)^{\min\{i,r-1\}}\right\}$,
    \item[(6)] $X'$ is an $r$-uniform multi-hypergraph with multiplicity at most $M(q,r)$,
\end{enumerate}          
\end{lem}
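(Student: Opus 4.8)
The plan is to reduce the proof of Lemma~\ref{lem:RefineDownOneStep} to a ``local'' version. For each $i$-set $S \subseteq V(X) \setminus Y$, consider the link hypergraph $X(S)$, which is $(r-i)$-uniform and (since $X$ is $K_q^r$-divisible) is $K_{q-i}^{r-i}$-divisible by the analogue of Proposition~\ref{prop:LinkDiv}. By the flatness assumption, every edge of $X$ containing such an $S$ has all its remaining $r-i$ vertices in $Y$, so $V(X(S)) \cap$ (the relevant part) $\subseteq Y$. The idea is to build, for each such $S$, a $K_{q-i}^{r-i}$-refiner (in fact a multi-omni-absorber, invoking the inductive hypothesis that Theorem~\ref{thm:MultiOmni} holds for all $r' \in [r-1]$, applied with $r' = r-i$ when $i < r$) of $X(S)$ using a private set of new vertices drawn from $Z \setminus Y$; then ``cone'' it back by adding $S$ to every edge and every clique. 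The unrefined remainder of each local object will have its edges (coned back with $S$) forming the new remainder $X'$; since those remainders live on $Y$-vertices (plus $S$), and $S$ has $i$ vertices outside $Y$ but inside $Z$, the coned-back remainder is $(i-1)$-flat to $Z$ only after we arrange that the local remainder is itself ``$0$-flat'', i.e.\ we push one coordinate of $S$ into the refined part — this is precisely what the local refiner machinery (Definition~\ref{def:Local}, Theorem~\ref{thm:MLRT}) is set up to deliver. The $r=1$ base case uses Proposition~\ref{prop:RMHG} directly.

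**Key steps in order.** First, state and prove (or cite, if it appears later) the Local Multi-Refiner Theorem, Theorem~\ref{thm:MLRT}: for a ``link-type'' input it produces a $C$-refined local refiner whose remainder lives on the original vertex set, using $O(\text{(new vertices)})$ extra vertices, with controlled refinement degree. Second, for each $i$-set $S \subseteq V(X)\setminus Y$ with $X(S) \neq \emptyset$, apply Theorem~\ref{thm:MLRT} to $X(S)$ to get a local refiner $R_S$ with refinement family $\mathcal{H}_S$ and remainder $X'_S$. Third, \emph{randomly embed} the new vertices of each $R_S$ into $Z \setminus Y$ (which has size at least $(2C-1)|Y| \ge$ enough room): choose the embedding of the private vertex set of each $R_S$ uniformly at random among injections into $Z \setminus Y$, independently over $S$. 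Fourth, set $R := \bigcup_S (S \uplus R_S)$, $\mathcal{H} := \bigcup_S (S \uplus \mathcal{H}_S)$, and $X' := (X \setminus \{\text{refined edges}\}) \cup \bigcup_S (S \uplus X'_S)$, and verify that this is a $C$-refined $K_q^r$-multi-refiner of $X$ with remainder $X'$ — the refinedness of $\mathcal{H}$ is inherited locally since the coning is injective and the $S$'s are distinct, plus we fold in $X \setminus (\text{refined edges})$ into $X'$ rather than refining it here. Fifth, bound the degrees: use a union bound / Chernoff over the random embedding to show that with positive probability every $(r-1)$-set $T$ has few coned cliques through it. For $T$ with $|T \setminus Y| = i$ (so $T \setminus Y$ is one of the $S$'s, or $T \subseteq V(H)$ forces $S \subseteq T$), the count is dominated by the single local refiner $R_{T\setminus Y}$, giving $O(\Delta)$ as in (1); for $|T \setminus Y| < i$, the randomness spreads the load: each of the $\approx (v(X)/|Y|)^{i - |T\setminus Y|}$ choices of $S \supseteq T \setminus Y$ contributes, and within each the new (randomly placed) vertices land on $T$'s $Y$-vertices with probability $\approx (|Y|/|Z|)^{\cdot}$, yielding the $\max\{(v(X)/|Y|)^{i-1}\Delta,\,(v(X)/|Y|)^{\min\{i,r-1\}}\}$ bound of (2) and (5). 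Analogous estimates bound $\Delta(X')$. Finally, check flatness (3)/(4) and the multiplicity bound (6): (4) holds because $X'_S$ is arranged to be $0$-flat to $V(X(S)) \subseteq Y$ plus the fact that, after coning, we have pushed the extra $S$-coordinate handling into $\mathcal{H}$; (6) follows from applying the Multiplicity Reduction Lemma (Lemma~\ref{lem:MRL}) to the remainder as a last clean-up step.

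**Main obstacle.** The hard part will be the degree estimate (2)/(5) — controlling $\Delta_{\mathcal{H}}(T)$ for $(r-1)$-sets $T$ with $|T\setminus Y| < i$, where the relevant cliques come from \emph{many} different local refiners $R_S$ (all $S \supseteq T\setminus Y$, of which there are up to $\binom{|V(X)\setminus Y|}{\,i - |T\setminus Y|\,} \approx (v(X)/|Y|)^{i - |T \setminus Y|}$ many, times a $\Delta$-factor from each), and whose contribution to $T$ depends on the random embedding placing new vertices onto the $Y$-part of $T$. One must set up the right auxiliary random variables, compute expectations that come out to the claimed bound (the $\min\{i,r-1\}$ in the exponent is there precisely to handle the boundary case $i = r$ and the case where $T \setminus Y$ is empty), and apply Chernoff plus a union bound over all $\binom{v(X)}{r-1}$ sets $T$ — which is why the lemma needs $\Delta \ge v(X)^{1-1/r}\log v(X)$, so the tail bounds beat the union bound. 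The bookkeeping of which coordinate of $S$ is ``absorbed'' versus ``left in the remainder'' (so that (3) gives $|V(H)\setminus Z| \le i$ and (4) gives $(i-1)$-flatness to $Z$, not merely $i$-flatness) is the other delicate point, and is exactly the reason the local refiner is stated with a remainder living on a prescribed vertex subset rather than as a plain omni-absorber.
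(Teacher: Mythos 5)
Your proposal matches the paper's proof in all essentials: the paper likewise reduces to the Local Multi-Refiner Theorem (Theorem~\ref{thm:MLRT}, built on the inductive use of Theorem~\ref{thm:MultiOmni} for uniformity $r-i$), applies it once for each $i$-set $S$ of vertices outside $Z$ (you index by $i$-sets outside $Y$, an immaterial variant), randomly permutes the new vertices within $Z\setminus Y$, takes unions, verifies the flatness conditions deterministically, establishes (2) and (5) by an expectation computation plus Chernoff and a union bound over all $T\in\binom{V(X)}{r-1}$, and invokes the Multiplicity Reduction Lemma for (6). The two delicate points you flag --- the load-balancing estimate for $T$ with $|T\setminus Y|<i$ and the bookkeeping of which $S$-coordinate is absorbed versus left in the remainder --- are exactly where the paper's proof spends its effort (Claim~\ref{claim:Z} and Theorem~\ref{thm:MLRT}(3)--(4)).
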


Here is the definition of local multi-refiner with remainder.

\begin{definition}[Local Multi-Refiner with Remainder]\label{def:Local}
Let $q > r > i\ge 1$ be integers. Let $X$ be an $r$-uniform hypergraph and $S\subseteq V(X)$ with $|S|=i$ such that $S\subseteq e$ for all $e\in E(X)$.

We say an $r$-uniform hypergraph $R$ is a \emph{local $K_q^r$--multi-refiner} of $X$ at $S$ with \emph{remainder} $X' \subseteq X\cup R$ and \emph{refinement family} $\mathcal{H}$, if $X$ and $R$ are edge-disjoint, $R$ is $K_q^r$-divisible, and $\mathcal{H}$ is a family of $K_q^r$-divisible subgraphs of $X\cup R$ such that $|H\cap X|\le 1$ and $H\cap X\cap X'=\emptyset$ for all $H\in\mathcal{H}$, and for every subgraph $L$ of $X$ where $L(S)$ is $K_{q-i}^{r-i}$-divisible, there exists a collection of edge-disjoint subgraphs $H_1,\ldots, H_k \in \mathcal{H}$ such that letting $H_L := \bigcup_{i\in [k]} H_i$, we have $(L\cup R)\setminus X' \subseteq H_L \subseteq L\cup R$. We call $L':= (L\cup R) \setminus H_L$ the \emph{leftover} of the refinement.

Let $C\ge 1$ be a real. We say $R$ is \emph{$C$-refined} if $\mathcal{H}$ is $C$-refined.
\end{definition}

Here is an easy proposition about the divisibility of link hypergraphs.

\begin{proposition}\label{prop:LinkDiv}
Let $L$ be a $K_q^r$-divisible $r$-uniform (multi)-hypergraph. If $i\in [r-1]$ and $S \in \binom{V(L)}{i}$, then $L(S)$ is a $K_{q-i}^{r-i}$-divisible (multi)-hypergraph.   
\end{proposition}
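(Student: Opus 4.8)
The plan is to unwind the definition of $K_q^r$-divisibility and exploit the fact that the link of a link is again a link. Recall that an $r$-uniform (multi)-hypergraph $G$ is $K_q^r$-divisible precisely when $\binom{q-j}{r-j} \mid |G(T)|$ for all $j$ with $0 \le j \le r-1$ and all $T \subseteq V(G)$ with $|T| = j$. Since $|S| = i \le r-1$, the hypergraph $L(S)$ is a genuine $(r-i)$-uniform (multi)-hypergraph on vertex set $V(L)\setminus S$, so to prove the proposition I would verify that $\binom{(q-i)-j}{(r-i)-j} \mid |L(S)(T')|$ for every $j$ with $0 \le j \le (r-i)-1$ and every $T' \subseteq V(L)\setminus S$ with $|T'|=j$.

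First I would establish the identity $L(S)(T') = L(S\cup T')$, valid whenever $S$ and $T'$ are disjoint. This is immediate from the definition of the link operation: an edge of $L(S)(T')$ has the form $(e\setminus S)\setminus T'$ for some $e\in E(L)$ with $S\subseteq e$ and $T'\subseteq e\setminus S$, and since $S$ and $T'$ are disjoint this equals $e\setminus(S\cup T')$ for some $e\in E(L)$ with $S\cup T'\subseteq e$ --- that is, an edge of $L(S\cup T')$ --- and this correspondence is a bijection between the two edge multisets; in particular $|L(S)(T')| = |L(S\cup T')|$.

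With this identity in hand the argument finishes quickly. Given $j$ and $T'$ as above, set $T := S\cup T'$, so $|T| = i+j$; because $i \ge 1$ and $j \le (r-i)-1$ we have $i \le |T| \le r-1$, so $T$ is an admissible set in the divisibility hypothesis for $L$. Applying $K_q^r$-divisibility of $L$ to $T$ gives $\binom{q-(i+j)}{r-(i+j)} \mid |L(T)|$, and combining this with $|L(T)| = |L(S)(T')|$ and the elementary identity $\binom{q-(i+j)}{r-(i+j)} = \binom{(q-i)-j}{(r-i)-j}$ yields exactly the required divisibility for $L(S)$. There is no real obstacle here: the only thing to watch is the index bookkeeping, namely confirming that $T = S\cup T'$ remains in the admissible range $0 \le |T| \le r-1$, which holds since $i \ge 1$ and $j \le r-i-1$. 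The statement is essentially a bookkeeping consequence of the definitions.
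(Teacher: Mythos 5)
Your proposal is correct and follows essentially the same route as the paper: both reduce the claim to the identity $d_{L(S)}(T') = |L(S\cup T')|$ (the link of a link is a link) and then invoke the $K_q^r$-divisibility of $L$ at the set $S\cup T'$, together with $\binom{q-(i+j)}{r-(i+j)}=\binom{(q-i)-j}{(r-i)-j}$. The only difference is that you spell out the bijection between edge multisets and explicitly include the $j=0$ case, which the paper leaves implicit.
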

\begin{proof}
Let $j\in [r-1-i]$ and let $T\in \binom{V(L)\setminus S}{j}$. Since $L$ is a $K_q^r$-divisible, we find that $\binom{q-(i+j)}{r-(i+j)}\mid d_L(S\cup T)$. Since $d_L(S\cup T) = |L(S\cup T)| = d_{L(S)}(T)$, it follows that $\binom{(q-i)-j}{(r-i)-j}~|~d_{L(S)}(T)$ and hence $L(S)$ is a $K_{q-i}^{r-i}$-divisible hypergraph as desired.    
\end{proof}

Here then is a Local Multi-Refiner Theorem.

\begin{thm}[Local Multi-Refiner Theorem]\label{thm:MLRT}
For all integers $q > r \ge 2$, there exists an integer $C > 1$ such that the following holds: Let $i\in [r-1]$ and suppose that Theorem~\ref{thm:MRT} holds for $r':=r-i$. Let $X$ be an $r$-uniform multi-hypergraph of multiplicity at most $M(q,r)$ and let $\Delta:= \max\{\Delta(X),~v(X)^{1-\frac{1}{r}}\cdot \log v(X)\}$. Suppose that there exists $S\subseteq V(X)$ with $|S|=i$ such that $S\subseteq e$ for all $e\in X$. If there exists $Y\subseteq V(X)$ with $|Y|\le  v(X)/C$ such that $e\subseteq Y$ for all $e\in X$, then there exists a $C$-refined local $K_q^r$-multi-refiner $R$ of $X$ at $S$ with remainder $X'\subseteq X\cup R$ and refinement family $\mathcal{H}$ such that all of the following hold: 
\begin{enumerate}
    \item[(1)] $\Delta_{\mathcal{H}}(T) \le C \cdot \Delta$ if $S\subseteq T$ and $T\in \binom{V(X)}{r-1}$,
    \item[(2)] $\Delta_{\mathcal{H}}(T) \le C$ if $S\setminus T\ne \emptyset$ and $T\in \binom{V(X)}{r-1}$,
    \item[(3)] $R$ is $(r-1)$-flat to $V(R)\setminus (S\cup Y)$; $R$ is $(r-i)$-flat to $V(R)\setminus Y$; for all $e\in E(R)$, if $|e\cap Y|=r-i$, then $(e\cap Y)\cup S = V(f)$ for some $f\in X$; 
    \item[(4)] $X'$ is $(i-1)$-flat to $V(R)\setminus S$,
    \item[(5)] $\Delta(X')\le C$.
\end{enumerate}      
\end{thm}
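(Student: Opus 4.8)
The plan is to reduce the $r$-uniform local refiner problem at $S$ to a genuine $(r-i)$-uniform refiner problem on the link hypergraph $X(S)$, apply Theorem~\ref{thm:MRT} at uniformity $r' = r-i$ (which we are allowed to assume), and then ``lift'' the resulting refiner back to uniformity $r$ by gluing $S$ back onto every edge. Since $S\subseteq e$ for all $e\in X$, the map $e\mapsto e\setminus S$ identifies $X$ with a subgraph of the $(r-i)$-uniform hypergraph $X(S)$ on vertex set $V(X)\setminus S$; by Proposition~\ref{prop:LinkDiv}, a subgraph $L$ of $X$ has $L(S)$ being $K_{q-i}^{r-i}$-divisible precisely when $L(S)$ is divisible in the lower-uniformity sense, so the divisibility hypotheses match up. First I would check the hypotheses of Theorem~\ref{thm:MRT} for $X(S)$: its multiplicity is that of $X$, which is at most $M(q,r)\le M(q-i,r-i)$ (or we absorb the discrepancy into $C$), $v(X(S)) = v(X) - i$, and crucially we need a set $Y^{\flat}\subseteq V(X(S))$ with $|Y^{\flat}|\ge v(X(S))/3$ to play the role of the flatness target; here we take $Y^\flat := (V(R)\setminus Y)\setminus S$ or, more simply, the complement of $Y$ inside $V(X)\setminus S$, which is large because $|Y|\le v(X)/C$. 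This gives a $C'$-refined $K_{q-i}^{r-i}$-multi-refiner $R^\flat$ of $X(S)$ with refinement family $\mathcal H^\flat$, $\Delta(\mathcal H^\flat)\le C'\cdot \Delta^\flat$ where $\Delta^\flat := \max\{\Delta(X(S)), v(X(S))^{1-\frac{1}{r-i}}\log v(X(S))\}$, and $R^\flat$ being $(r-i-1)$-flat to $Y^\flat$.

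Next I would perform the lift: set $R := S\uplus R^\flat$ (add the $i$ vertices of $S$ to every edge of $R^\flat$) on vertex set $(V(R^\flat))\cup S = V(X)$, and set $\mathcal H := \{ S\uplus H : H\in \mathcal H^\flat\}$. One checks routinely that $K_{q-i}^{r-i}$-divisibility of a link hypergraph $H$ is equivalent to $K_q^r$-divisibility of $S\uplus H$ (the degrees of $(r-j)$-sets in $H$ become degrees of $r$-sets containing $S$ in $S\uplus H$, and lower-degree conditions follow since $S$ is in every edge), so the members of $\mathcal H$ are $K_q^r$-divisible; $|H\cap X|\le 1$ is inherited; and the refinement function $\mathcal Q_R$ for a subgraph $L$ of $X$ with $L(S)$ divisible is just $S\uplus$ applied to the refinement $\mathcal Q_{R^\flat}(L(S))$, with remainder $X' := S\uplus (X(S))'$. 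The $C$-refinedness and size bound on members of $\mathcal H$ transfer directly (adding $i$ bounded-many vertices). For the degree bounds (1) and (2): an $(r-1)$-set $T$ with $S\subseteq T$ corresponds to an $(r-1-i)$-set $T\setminus S$ in the link, and $\Delta_{\mathcal H}(T) = \Delta_{\mathcal H^\flat}(T\setminus S)\le \Delta(\mathcal H^\flat)\le C'\cdot \Delta^\flat$; since $\Delta^\flat \le \Delta$ up to adjusting constants (because $v(X(S))\le v(X)$ and the $\log$ term dominates appropriately, and $\Delta(X(S)) = \Delta(X)$ as $S$ is in every edge), this gives (1). For $T$ with $S\setminus T\ne\emptyset$, no edge of $\mathcal H$ (each containing $S$) has all of $T$ in its vertex set unless $S\subseteq V(S\uplus H)$, which always holds, so actually $\Delta_{\mathcal H}(T)$ counts members $S\uplus H$ with $T\subseteq S\cup V(H)$, i.e.\ with $T\setminus S\subseteq V(H)$; this is at most the number of members of $\mathcal H^\flat$ through the at-most-$(r-2-i)$-set $T\setminus S$ — but since we need it bounded by an absolute constant $C$ and not by $\Delta$, I would instead argue this via the $C$-refinedness together with a counting bound, or more carefully restrict which members of $\mathcal H^\flat$ can contain a given small set; in the worst case $T\setminus S$ is empty or very small and one uses that $v(H)\le C'$ bounds how ``spread'' the family is — this needs the observation that for $T$ not containing $S$, a vertex of $T\setminus S$ lies in only $O(\Delta(\mathcal H^\flat))$ members, so this really does need a separate argument and is a candidate for the main obstacle.

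The flatness claims in (3) and (4) I would derive as follows: $R^\flat$ is $(r-i-1)$-flat to $Y^\flat = V(X)\setminus(S\cup Y)$, meaning $|e^\flat \setminus Y^\flat|\le r-i-1$ for edges of $R^\flat$; lifting, $|e\setminus (Y^\flat\cup S)| = |e^\flat\setminus Y^\flat|\le r-i-1$, and since $Y^\flat\cup S = V(R)\setminus Y$... wait, rather $Y^\flat\cup S$ is the complement of $Y$ together with $S$, so $V(R)\setminus(S\cup Y) = Y^\flat$ gives $(r-1)$-flatness of $R$ to $V(R)\setminus(S\cup Y)$ after adding the $i$ vertices of $S$ which are not in that set: $|e\setminus (V(R)\setminus(S\cup Y))| = |e\cap (S\cup Y)| \le i + |e^\flat \cap Y|$, and since every edge $e^\flat$ of $R^\flat$ has $|e^\flat\setminus Y^\flat| \le r-i-1$, i.e. $|e^\flat\cap Y|\le r-i-1$, we get $\le i + (r-i-1) = r-1$, giving the first part of (3). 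Similarly $R$ is $(r-i)$-flat to $V(R)\setminus Y$ since $|e\cap Y| = |e^\flat\cap Y|\le r-i-1 < r-i$; actually I'd want to be careful to get exactly $r-i$ and would trace through whether the bound should be $r-i-1$ or $r-i$ and adjust the target set in Theorem~\ref{thm:MRT} accordingly. The condition in (3) that $|e\cap Y|=r-i$ forces $(e\cap Y)\cup S = V(f)$ for some $f\in X$ is the statement that the only edges of $R^\flat$ entirely inside $V(X)\setminus S$... hmm, actually entirely inside $Y$ of size $r-i$ must coincide with edges of $X(S)$ — this would need the refiner $R^\flat$ from Theorem~\ref{thm:MRT} to have the property that its edges ``flat to $Y^\flat$'' that happen to lie in the complement $Y$ are among the original edges; I'd obtain this by choosing the target set in Theorem~\ref{thm:MRT} to be exactly $Y^\flat$ so that $R^\flat$ is flat to it, then any $R^\flat$-edge with all $r-i$ vertices in $Y$ would violate flatness unless it equals an $X(S)$-edge — but $X(S)$-edges aren't in $R^\flat$ since they're edge-disjoint, so in fact this case should be vacuous or handled by how fake structure is later embedded; I'd resolve this by tracking which edges of $R$ actually arise and may need to slightly strengthen what's extracted from Theorem~\ref{thm:MRT}. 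Finally (5), $\Delta(X')\le C$, follows since $X' = S\uplus(X(S))'$ and Theorem~\ref{thm:MRT} applied with its remainder-type conclusion (the ``Better Refine Down''-style output, or rather the plain Theorem~\ref{thm:MRT} which has no remainder) — here I realize Theorem~\ref{thm:MRT} produces a refiner with \emph{no} remainder, so $(X(S))' = \emptyset$ and $X' = \emptyset$, making (5) trivial; the remainder language in the local definition is then just carried for uniformity with the concatenation framework. I expect the genuinely delicate points to be (a) matching the flatness/target-set bookkeeping so that all five flatness assertions in (3)--(4) come out with the stated uniformity indices, and (b) getting the \emph{constant} (not $\Delta$-dependent) bound in conclusion (2) for $(r-1)$-sets $T$ with $S\not\subseteq T$, which requires a genuinely different counting argument than (1) since such $T$ ``misses'' the structure that $S$ provides.
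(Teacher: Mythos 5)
Your high-level plan (pass to the link $X(S)$, invoke the inductive hypothesis at uniformity $r-i$, lift back by re-attaching $S$) matches the paper's, but two of the steps you flag as delicate are in fact where your route breaks, and the paper resolves both by making a different choice of inductive input. First, the claimed equivalence between $K_{q-i}^{r-i}$-divisibility of $H$ and $K_q^r$-divisibility of $S\uplus H$ is false: Proposition~\ref{prop:LinkDiv} only gives one direction. The lift $S\uplus H$ satisfies the divisibility conditions for sets containing $S$, but not for $j$-sets with $S\not\subseteq$ them (already the global condition $\binom{q}{r}\mid e(S\uplus H)$ fails in general; e.g.\ for $r=2$, $i=1$, $q=3$ a star with $2$ edges is the lift of a $K_2^1$-divisible link but is not $K_3^2$-divisible). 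So your $\mathcal{H}$ is not a legitimate refinement family. The paper repairs this by completing each lifted clique to a genuine $K_q^r$: for every $r$-set $T\subseteq V(H)\cup S$ with $S\setminus T\neq\emptyset$ it adds $M(q,r)$ or $M(q,r)+1$ parallel copies of $T$, and the \emph{remainder} $X'$ is exactly $R\setminus A$, the collection of these added edges not used by the chosen refinement. In particular $X'$ is emphatically nonempty — your conclusion that $X'=\emptyset$ is a symptom of the divisibility error, and it would also break the downstream induction in Lemma~\ref{lem:RefineDownOneStep}, which relies on the remainder carrying forward the $(i-1)$-flat-to-$V(R)\setminus S$ edges (conclusion (4) holds precisely because every added edge misses a vertex of $S$).

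Second, the constant bound in conclusion (2), which you correctly identify as the main obstacle, cannot be obtained from a general multi-refiner of the link: the members of $\mathcal{H}^\flat$ produced by Theorem~\ref{thm:MRT} are arbitrary bounded divisible subgraphs, and the number of them whose vertex set contains a fixed $(r-i)$-set is only bounded by $\Delta(\mathcal{H}^\flat)=O(\Delta)$, not by a constant. The paper instead invokes Theorem~\ref{thm:MultiOmni} for uniformity $r-i$ (this is what the inductive hypothesis is actually used for; see the remark following Theorem~\ref{thm:MRT}), so that every $H\in\mathcal{H}'$ is a $K_{q-i}^{r-i}$. Then for $T$ with $S\setminus T\neq\emptyset$ one has $|T\setminus S|\ge r-i$, so any clique $H$ with $T\setminus S\subseteq V(H)$ contains an \emph{edge} on some $(r-i)$-subset of $T\setminus S$, and $C'$-refinedness caps the number of cliques through any single edge by $C'$; summing over the at most $\binom{r-1}{r-i}$ such subsets gives the constant bound. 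The clique structure is likewise what yields the "equality only if $(e\cap Y)\cup S=V(f)$ for some $f\in X$" clause in (3), via $|H\cap X(S)|\le 1$ and the $(r-1-i)$-flatness of the link absorber. Without replacing your inductive input by the omni-absorber and adding the completion edges, conclusions (2), (3) and the well-definedness of the refiner all fail.
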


Theorem~\ref{thm:MLRT} will follow by extending a refined $K_{q-i}^{r-i}$-multi-omni-absorber of $X(S)$ to a local $K_q^r$-refiner by adding more edges (not containing $S$) so as to make the subgraphs used by the refiner $K_q^r$-divisible (in particular they will be $K_q^r$'s). Again, this construction is where we require the inductive assumption that Theorem~\ref{thm:MultiOmni} holds for smaller values of $r$. 

\subsection{Proof of Local Multi-Refiner Theorem}

Now we proceed to prove Lemma~\ref{lem:RefineDown} by proving the various intermediate lemmas in reverse. To that end, we first prove the Local Multi-Refiner Theorem.

\begin{proof}[Proof of Local Multi-Refiner Theorem (Theorem~\ref{thm:MLRT})]
Let $C:= \max\{2^{r-1},~M(q,r)\cdot \binom{q}{r}\cdot C'\} \cdot C'$ where $C'$ is the maximum over all $i\in[r-1]$ of the value of $C_{\ref{thm:MultiOmni}}$ for $q-i, r-i$. 

Let $\Delta':=\max\{\Delta(X(S)),~v(X(S))^{1-\frac{1}{r}}\cdot \log v(X(S))\}$. Note $\Delta(X(S))=\Delta(X)$ and $v(X(S)) = v(X)-|S| \le v(X)$; hence $\Delta' \le \Delta$. By assumption, Theorem~\ref{thm:MultiOmni} holds for $r' := r-i$. Hence, there exists a $C'$-refined $K_{q-i}^{r-i}$-multi-omni-absorber $A$ of $X(S)$ with decomposition family $\mathcal{H}'$ such that $\Delta(A)\le C' \cdot \Delta' \le C'\cdot \Delta$, and $A$ is $(r-1-i)$-flat to $V(X)\setminus (S\cup Y)$. Note that for each $H\in\mathcal{H}'$, we have $|H\cap X(S)|\le 1$ by the definition of multi-omni-absorber. 

Since the empty set is $K_{q-i}^{r-i}$-divisible, it follows that there exists a $K_{q-i}^{r-i}$-decomposition $\mathcal{Q}$ of $A$ such that $\mathcal{Q}\subseteq \mathcal{H}'$.

Let $R$ be the $r$-uniform multi-hypergraph obtained from $A$ by adding the following edges: For each $H\in \mathcal{Q}$ and $T \in \binom{V(H)\cup S}{r}$ where $S\setminus T\ne \emptyset$, add a set $R_{H,T}$ of $M(q,r)+1$ edges with vertex set $T$. For each $H\in \mathcal{H}'\setminus \mathcal{Q}$ and $T \in \binom{V(H)\cup S}{r}$ where $S\setminus T\ne \emptyset$, add a set $R_{H,T}$ of $M(q,r)$ edges with vertex set $T$. 

Let $X':= R\setminus A$. For each $H\in\mathcal{H}'$, let $\phi(H)$ be obtained from $H$ by adding the first edge in $R_{H,T}$ for all $T \in \binom{V(H)\cup S}{r}$ with $S\setminus T\ne \emptyset$. Let $\mathcal{H} := \{\phi(H): H\in \mathcal{H}'\}$. 

We prove that $R$ is a $C$-refined local $K_q^r$-multi-refiner of $X$ as $S$ with remainder $X'$ and refinement family $\mathcal{H}$ such that all of (1)-(5) hold for $R$, $\mathcal{H}$ and $X'$.

First we show $R$ is a $C$-refined. Since each $H\in\mathcal{H}$ is a $K_q^r$, we have that $\max\{v(H),e(H)\} \le \max\{q,\binom{q}{r}\} \le \binom{q}{r} \le C$. Let $e\in X\cup R$. If $e\in X$, then $|\mathcal{H}(e)| = |\mathcal{H}'(e)|$ which is at most $C'$ since $A$ is $C'$-refined. If $e\in R$, then $|\mathcal{H}(e)|\le 1$ by construction. Hence in either case, $|\mathcal{H}(e)|\le C' \le C$ since $C\ge C'$. Thus $R$ is $C$-refined.

Next we show that $R$ is a local $K_q^r$-multi-refiner of $X$ at $S$. First note that $R$ is $K_q^r$-divisible by construction. Similarly by construction, we have that $|\phi(H')\cap X|\le 1$ for all $H'\in \mathcal{H}'$ and hence $|H\cap X|\le 1$ for all $H\in \mathcal{H}$. Also note that $X'\subseteq R$; hence since $X$ and $R$ are edge-disjoint, we find that $X\cap X'=\emptyset$ and thus $H\cap X\cap X'=\emptyset$ for all $H\in \mathcal{H}$.

Now let $L$ be a subgraph of $X$ such that $L(S)$ is $K_{q-i}^{r-i}$-divisible. By definition of refined multi-omni-absorber, there exists a $K_{q-i}^{r-i}$ decomposition $\mathcal{H}'_L \subseteq \mathcal{H}'$ of $L\cup A$. We let $\mathcal{H}_L := \{ \phi(H): H\in \mathcal{H}'_L\}$. Let $H_L := \bigcup_{H\in \mathcal{H}_L} H$. Since $\phi(H)\supseteq H$ for all $H\in\mathcal{H}'$, it follows that $H_L \supseteq L\cup A = (L\cup R)\setminus X'$ since $X' = R\setminus A$. Furthermore, $H_L \subseteq L\cup R$ by construction. Hence $R$ is a local $K_q^r$-multi-refiner of $X$ at $S$.

Now we proceed to show that all of (1)-(5) hold for $R$, $\mathcal{H}$ and $X'$. Let $T\in \binom{V(X)}{r-1}$ such that $S\subseteq T$. Then 
$\Delta_{\mathcal{H}}(T) = \Delta_{\mathcal{H'}}(T\setminus S)$. Yet 
$$\Delta_{\mathcal{H'}}(T\setminus S) \le \Delta(\mathcal{H}')\le C'\cdot M(q,r)\cdot \Delta(A) \le (C')^2 \cdot M(q,r)\cdot \Delta,$$ 
since $X$ has multiplicity at most $M(q,r)$ by assumption, every $H\in\mathcal{H}'$ is a $K_{q-i}^{r-i}$ and every $e\in X(S)$ is in at most $C'$ elements of $\mathcal{H}'$ since $A$ is $C'$-refined. Thus we find that
$$\Delta_{\mathcal{H}}(T) \le (C')^2 \cdot M(q,r)\cdot \Delta \le C\cdot \Delta$$
since $C\ge (C')^2\cdot M(q,r)$. Hence (1) holds. 

Let $T\in \binom{V(X)}{r-1}$ such that $S\setminus T \ne \emptyset$. Then $\Delta_{\mathcal{H}}(T) = |\{H\in\mathcal{H}': V(T\setminus S) \subseteq V(H)\}|$. However, since every $H\in\mathcal{H}'$ is a $K_{q-i}^{r-i}$ and $|T\setminus S|\ge r-i$ as $S\setminus T\ne \emptyset$, we have that  
$$\Delta_{\mathcal{H}}(T)\le \sum_{e\in X(S): e \subseteq V(T\setminus S)}  |\mathcal{H}'(e)|.$$ 
Since $A$ is $C'$-refined, we have that $|\mathcal{H}'(e)|\le C'$ for all $e\in X(S)$. Hence we find that
$$\Delta_{\mathcal{H}}(T) \le \binom{|T\setminus S|}{r-i} \cdot C' \le \binom{r-1}{r-i}\cdot C'\le C$$
since $C \ge 2^{r-1}\cdot C'$. Hence (2) holds.

For $H\in \mathcal{H}'$, we have that $H$ is a $K_{q-i}^{r-i}$ with $|H\cap X'|\le 1$. Since $A$ is $(r-1-i)$-flat to $V(X)\setminus (S\cup Y)$, it follows that $|V(H)\cap Y|\le r-i$ with equality only if $V(H)\cap Y = V(f)$ for some $f\in X(S)$. Hence, for $T \in \binom{V(H)\cup S}{r}$ where $S\setminus T\ne \emptyset$, we find that $|T\cap Y|\le r-i$ with equality only if $T\cap Y = V(f)$ for some $f\in X(S)$. Hence, we find $R$ is $(r-i)$-flat to $V(R)\setminus Y$. Moreover, $R$ is also $(r-1)$-flat to $V(X)\setminus (S\cup Y)$ by construction. Hence (3) holds. 

Note that for all $e\in X'$, we have that $S\setminus e\ne \emptyset$ by construction. Hence (4) holds. 

Since $X'$ is $(i-1)$-flat to $V(R)\setminus S$ by (4), it follows that $|X'(T)|=0$ for all $T\in \binom{V(X)}{r-1}$ with $S\subseteq T$. Meanwhile for $T\in \binom{V(X)}{r-1}$ with $S\setminus T\ne \emptyset$, we find that $|X'(T)| \le (q-r-1)\cdot \Delta_{\mathcal{H}}(T)$ since every element of $\mathcal{H}$ is a $K_q^r$. Since $\Delta_{\mathcal{H}}(T)\le \binom{r-1}{r-i}\cdot C'$ as shown above, we find that $|X'(T)| \le (q-r-1)\cdot \binom{r-1}{r-i}\cdot C' \le C$ as $C$ is large enough. Thus $\Delta(X')\le C$ and hence (5) holds.

Hence, we have that $R$ is a $C$-refined $K_q^r$-multi-refiner of $X$ with remainder $X'$ and refinement family $\mathcal{H}$ and that all of (1)-(5) hold as desired.

%Alternative method using divisibility fixers:
%By assumption, Theorem~\ref{thm:MRT} holds for $r' := r-i$. Hence, there exists $K_{q-i}^{r-i}$-refiner $R$ of $X(S)$ with refinement family $\mathcal{H}$. Fix a $K_{q-i}^{r-i}$-refinement $\mathcal{H}'$ of $R$ (which exists since there is one for $L=\emptyset$) and note $\mathcal{H'}\subseteq \mathcal{H}$. For each $H\in \mathcal{H}$, apply Lemma~\ref{lem:DivFixer} to $H$ extended by $S$ to find $H'$. 

%Randomly embed the remaining vertices (i.e.~$V(H')\setminus (V(H)\cup S)$) of each $H'$ into $V(X)\setminus (V(H)\cup S)$ so in total there is not too much codegree. Finally, blow up the new edges with $M$ total copies for $H'$ where $H\not\in \mathcal{H}'$ and $M+1$ total copies for $H\in \mathcal{H}'$.

%Argue this is a refiner with flatter remainder (can also use Multiplicity Reduction here if required by lemma)...
\end{proof}

\subsection{Proof of One Step Refine Down Lemma}

We are now prepared to prove the One Step Refine Down Lemma as follows.

\begin{lateproof}{lem:RefineDownOneStep}
Let $C:= \max\{2(C')^{r},r\}$ where $C'$ is as $C_{\ref{thm:MLRT}}$ in Theorem~\ref{thm:MLRT} for $q$ and $r$.

It follows from the Multiplicity Reduction Lemma (Lemma~\ref{lem:MRL}) that it suffices to show only that (1)-(5) hold and not (6) (namely, this would follow by applying the Multiplicity Reduction Lemma to the remainder $X'$ to obtain a refiner $R'$ of $X'$ with a new remainder $X''$ of bounded multiplicity and taking the union of $R\cup R'$ and checking that the conditions (1)-(6) still hold albeit with a worse constant).

Let $S \subseteq X\setminus Z$ with $|S|=i$. Since $X$ is $i$-flat to $Y$, it follows that for each $e\in X$ with $S\subseteq e$, we have the $e\setminus S \subseteq Y$. Since $|Y| \le \frac{|Z|}{C} \le \frac{|Z|}{C'}$, we have by Theorem~\ref{thm:MLRT} that there exists a $C'$-refined local $K_q^r$-multi-refiner $R_S$ of $X_S:= S\uplus X(S)$ at $S$ with remainder $X'_S\subseteq X_S\cup R_S$ and refinement family $\mathcal{H}_S$ such that $V(R_S)\subseteq S\cup Z$ and all of Theorem~\ref{thm:MLRT}(1)-(5) hold.  

For a permutation $\sigma$ of $Z\setminus Y$, let $H_{S,\sigma}$ be the graph obtained from $R_S$ by permuting the vertices of $Z\setminus Y$ by $\sigma$. Note that for any permutation $\sigma$, $R_{S,\sigma}$ is still a local $K_q^r$-multi-refiner of $X_S$ at $S$ with remainder $X'_{S,\sigma_S}$ and refinement family $\mathcal{H}_{S,\sigma_S}$ (obtained by permuting $X'$ and elements of $\mathcal{H}_S$ respectively by $\sigma_S$) and such that all of Theorem~\ref{thm:MLRT}(1)-(5) hold.

For each $i$-set $S$ of $X\setminus Z$, choose a random permutation $\sigma_S$ of $Z\setminus Y$. Let 
$$R:= \bigcup_{S\in \binom{X\setminus Z}{i}} R_{S,\sigma_S},~~\mathcal{H}:= \bigcup_{S\in \binom{X\setminus Z}{i}} \mathcal{H}_{S,\sigma_S}, ~~ X':= \left\{U\in X: |U\setminus Z| < i\right\}~\cup \bigcup_{S\in \binom{X\setminus Z}{i}} X'_{S,\sigma_S}.$$ 
Since each $H_{S,\sigma_S}$ is $K_q^r$-divisible and the $H_{S,\sigma_S}$ are pairwise edge-disjoint by construction, we find that $R$ is $K_q^r$-divisible.

We prove that $R$ is a $C$-refined $K_q^r$-multi-refiner of $X$ with remainder $X'$ and refinement family $\mathcal{H}$ such that all of (1)-(5) hold for $R$, $\mathcal{H}$ and $X'$.

First we show $R$ is a $C$-refined $K_q^r$-multi-refiner of $X$. Note that since the $X_S\cup R_S$ are edge-disjoint, it follows that $R$ is $C$-refined since each $R_S$ is.

Now we show $R$ is a $K_q^r$-multi-refiner of $X$. First note that for every $H\in \mathcal{H}$, we have that $|H\cap X|\le 1$ and  $H\cap (X\cap X')=\emptyset$ since these properties hold for $H_{S,\sigma_S}$ as $R_{S,\sigma_S}$ is a local $K_{q}^{r}$-multi-refiner of $X_S$ at $S$ (and the $X_S\cup R_S$ are all edge-disjoint).

Now let $L$ be a $K_q^r$-divisible subgraph of $X$. By Proposition~\ref{prop:LinkDiv}, we find that $L(S)$ is $K_{q-i}^{r-i}$-divisible for each $i$-set $S$ in $X\setminus Z$. By definition of local multi-refiner, we have that there exists a collection of edge-disjoint subgraphs $\mathcal{H}_{L,S} \subseteq \mathcal{H}_{S,\sigma}$ such that letting $H_{L,S} := \bigcup_{H\in \mathcal{H}_{L,S}} H$, we have that $(L(S)\cup R_S)\setminus X'_S \subseteq H_{L,S} \subseteq L(S)\cup R_S$.

Now let $\mathcal{H}_L := \bigcup_S \mathcal{H}_{L,S}$ and note this is a collection of edge-disjoint subgraphs of $\mathcal{H}$ since the $X_S\cup R_S$ are edge-disjoint. Similarly let $H_L := \bigcup_{H\in \mathcal{H}_{L}} H$ and note this equals $\bigcup_S H_{L,S}$ since the $X_S\cup R_S$ are edge-disjoint. Thus $H_L \subseteq \bigcup_S (L(S)\cup R(S)) = L\cup R$. Similarly we have that $H_L \supseteq \bigcup_S  (L(S)\cup R_S)\setminus X'_S = (L\cup R)\setminus X'$. Hence $R$ is a $K_q^r$-multi-refiner of $X$ with remainder $X'$ and refinement family $\mathcal{H}'$.

Next we show that all of (1)-(5) hold for $R$, $\mathcal{H}$ and $X'$. To do that, we first show that (1), (3) and (4) hold for any choice of the $\sigma_S$, and then we show that (2) and (5) hold for some choice of the $\sigma_S$.

Let $T\in \binom{V(X)}{r-1}$ with $|T\setminus Y|=i$. Let $S:= T\setminus Y$. Let $\Delta_S := \max\{\Delta(X(S)),~|Z|^{1-\frac{1}{r}}\cdot \log |Z|\}$. Since $|Z|\le v(X)$ and $\Delta(X(S))\le \Delta(X)$, we find that $\Delta_S\le \Delta$. By Theorem~\ref{thm:MLRT}(1), we have that 
$$\Delta_{\mathcal{H}_{S,\sigma_{S}}}(T) \le C' \cdot \Delta_S \le C\cdot \Delta$$
since $C'\le C$ and $\Delta_S \le \Delta$. Since $|T\setminus Y|=i$, it follows that
$$\Delta_{\mathcal{H}}(T) = \Delta_{\mathcal{H}_{S,\sigma_{S}}}(T)\le C\cdot \Delta,$$
and hence (1) holds. 

Note that by construction, we have that $R$ is $(r-1)$-flat to $Y$ and for all $H\in\mathcal{H}$, we have that $|V(H)\setminus Z|\le i$. Hence (3) holds. %check this...

Also, $X'$ is an $r$-uniform multi-hypergraph and $X'$ is $(i-1)$-flat to $Z$ by construction. Hence (4) holds. Note this implies that $|X'(T)|=0$ for $T\in\binom{V(X)}{r-1}$ with $|T\setminus Z|\ge i$.%Also (6) holds?!

Thus we have shown (1),(3),(4) hold for all choice of $\sigma_S$ (and no longer need to show (6)), it remains to show that (2) and (5) hold for some choice of permutations $\sigma_S$.

Let $T\in \binom{V(X)}{r-1}$ with $|T\setminus Y|<i$ and let $j:= |T\setminus Z|$. Let 
$$Z_T:= \left\{S\in \binom{X\setminus Z}{i}: \Delta_{H_{S,\sigma_S}}(T\setminus S)\ne 0\right\}.$$
For an $i$-set $S$ in $X\setminus Z$, since $|T\setminus Y| < i$, we find that $|T\setminus S|\ge r-i$ and hence by Theorem~\ref{thm:MLRT}(3), $\Delta_{\mathcal{H}_{S,\sigma_S}}(T\setminus S)\le C'$. Thus we have that
$$\Delta_{\mathcal{H}}(T) \le C'\cdot |Z_T|.$$
Similarly by Theorem~\ref{thm:MLRT}(5), it follows that
$$|X'(T)|\le |X(T)|+C'\cdot |Z_T|.$$

Next we need the following claim.

\begin{claim}\label{claim:Z}
$$\Expect{|Z_T|} \le \mu:= (C')^{r-1} \cdot \max\left\{\left(\frac{v(X)}{|Y|}\right)^{i-1} \cdot \Delta,~\left(\frac{v(X)}{|Y|}\right)^{\min\{i,r-1\}}\right\}.$$
\end{claim}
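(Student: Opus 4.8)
The plan is to expand $\Expect{|Z_T|}=\sum_{S}\Prob{S\in Z_T}$, the sum over $i$-subsets $S$ of $V(X)\setminus Z$ with $X(S)\neq\emptyset$ (the others contribute nothing, as then $R_S$ and $\mathcal H_S$ are empty), and to bound separately the number of $S$ that can have $\Prob{S\in Z_T}>0$ and the value of $\Prob{S\in Z_T}$ for each such $S$. Two easy reductions set things up. Since $V(R_{S,\sigma_S})\subseteq S\cup Z$ and $T\setminus S$ is disjoint from $S$, the event $S\in Z_T$ forces $T\setminus Z\subseteq S$; hence $T\cap S=T\setminus Z$ has size $j$ and $|T\setminus S|=r-1-j$. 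Writing $a,b,j$ for the sizes of $T\cap Y$, $T\cap(Z\setminus Y)$ and $T\setminus Z$, we have $a+b+j=r-1$ and, since $|T\setminus Y|=b+j<i$, also $a\ge r-i$.

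The main step is structural: determining which cliques of the refinement family can cover $T\setminus S$. Recall from the proof of Theorem~\ref{thm:MLRT} that $R_S$ is built from a refined $K_{q-i}^{r-i}$-multi-omni-absorber $A_S$ of $X(S)$ by lifting each clique $H'$ of its decomposition family $\mathcal H'_S$ to the $K_q^r$ on $V(H')\cup S$, where $V(H')$ is a $(q-i)$-set inside $Z$; thus $S\in Z_T$ exactly when there is $H'\in\mathcal H'_S$ with $\sigma_S(V(H')\cup S)\supseteq T\setminus S$, and since $\sigma_S$ fixes $S\cup Y$ pointwise, any such $H'$ has $T\cap Y\subseteq V(H')$. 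I would now combine three facts: every edge of $X(S)$ lies in $Y$ (because $X$ is $i$-flat to $Y$, so $e\in X$ with $S\subseteq e$ has $e\setminus S=e\cap Y$); every edge of $A_S$ has at most $r-1-i$ vertices in $Y$ (the flatness of $A_S$ provided in that proof); and $|H'\cap X(S)|\le 1$. Since $H'$ is a complete $(r-i)$-graph, every $(r-i)$-subset of $V(H')\cap Y$ is an edge of $H'$ contained in $Y$, hence an edge of $X(S)$, so $\binom{|V(H')\cap Y|}{r-i}\le 1$, giving $|V(H')\cap Y|\le r-i$. With $a\ge r-i$ this forces $a=r-i$, $V(H')\cap Y=T\cap Y$, and $T\cap Y\in X(S)$ — equivalently $S\cup(T\cap Y)\in X$. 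In particular, if $|T\setminus Y|<i-1$ then $a>r-i$, no such $H'$ exists, and $Z_T=\emptyset$; so from now on $a=r-i$ and $b=i-1-j$.

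Two routine estimates finish the argument. Counting: any $S$ with $\Prob{S\in Z_T}>0$ has $S\cup(T\cap Y)$ an edge of $X$ containing the fixed $(r-1-b)$-set $(T\cap Y)\cup(T\setminus Z)$, and this edge determines $S$; so the number of such $S$ is at most the codegree $\Delta_{r-1-b}(X)$, which is at most $\Delta(X)\cdot v(X)^{b}$. Probability: the cliques $H'\in\mathcal H'_S$ with $T\cap Y\subseteq V(H')$ all contain the edge $T\cap Y$ of $X(S)$, so by refinedness of $A_S$ there are only boundedly many ($\le C'\cdot M(q,r)$) of them, and for each one the event $\sigma_S(V(H')\cup S)\supseteq T\setminus S$ requires $\sigma_S$ to map the fixed $(q-r)$-subset $V(H')\cap(Z\setminus Y)$ of $Z\setminus Y$ onto a superset of the fixed $b$-set $T\cap(Z\setminus Y)$, an event of probability at most $(2q/N)^{b}$ where $N:=|Z\setminus Y|\ge 2q$. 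Multiplying, $\Expect{|Z_T|}\le \Delta(X)\,v(X)^{b}\cdot C'M(q,r)\,(2q/N)^{b}$.

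It remains to absorb the constants, which is where the hypothesis $|Z|\ge 2C|Y|$ (hence $N\ge C|Y|\ge (C')^{r}|Y|$) is used: together with $b\le i-1$ and $v(X)\ge|Y|$ this gives $v(X)^{b}/N^{b}\le (v(X)/|Y|)^{i-1}/(C')^{rb}$, and since $C'=C_{\ref{thm:MLRT}}$ is large enough to swallow the bounded prefactor $C'M(q,r)(2q)^{b}/(C')^{rb}$, we obtain $\Expect{|Z_T|}\le(C')^{r-1}\,\Delta(X)\,(v(X)/|Y|)^{i-1}\le\mu$. The one genuinely delicate point is the structural step of the second paragraph: it is precisely the flatness of the lower-uniformity omni-absorber that forces $V(H')\cap Y=T\cap Y$ and so collapses the whole estimate to a codegree of $X$; everything after that is bookkeeping plus a single permutation calculation.
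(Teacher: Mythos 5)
Your proposal is correct and follows essentially the same route as the paper's proof: both reduce to the structural fact (Theorem~\ref{thm:MLRT}(3) in the paper, which you re-derive by unwinding the construction of $R_S$ from the lower-uniformity omni-absorber) that a contributing $H'$ forces $(T\cap Y)\cup S$ to be an edge of $X$, then count the at most $v(X)^{i-1-j}\cdot\Delta(X)$ admissible $S$ and multiply by the probability that the random permutation $\sigma_S$ covers $T\cap(Z\setminus Y)$. The only cosmetic differences are that you correctly observe the paper's second case $|T\cap Y|<r-i$ is vacuous under the hypothesis $|T\setminus Y|<i$, and that in the subcase $b=0$ you should bound $\Prob{S\in Z_T}$ by $1$ rather than by $C'M(q,r)$ so that the prefactor is absorbed by $(C')^{r-1}$ even when $r=2$.
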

\begin{proofclaim}
We consider two cases as follows. First suppose $|T\cap Y|=r-i$. Then $\Delta_{\mathcal{H}_{S,\sigma_S}}(T\setminus S)=0$ unless $T\setminus Z \subseteq S$ and $(T\cap Y)\cup S=V(f)$ for some $f\in X$. It follows that the number of $S$ for which this value is nonzero is at most $v(X)^{i-j-1} \cdot \Delta(X)$. Furthermore, for this to be nonzero, we also need $T\cap (Z\setminus Y)$ (which has size $i-j-1$) to be the image of the at most $C'$ many elements counted by $\Delta_{\mathcal{H}_S}(T\cap Y)$. 

Thus we find that
$$\Expect{|Z_T|} \le v(X)^{i-j-1} \cdot \Delta(X)\cdot \left(\frac{C'}{|Z|-|Y|}\right)^{i-j-1} \le (C')^{i-1} \cdot \left(\frac{v(X)}{|Y|}\right)^{i-1} \cdot \Delta(X),$$
as desired, where we used that $j\ge 0$ and $|Z|\ge 2 |Y|$.

So we assume that $|T\cap Y| < r-i$ since $\Delta_{\mathcal{H}}(T)=0$ if $|T\cap Y|>r-i$ since $|V(H)\cap Y|\le r-i$ for all $H\in\mathcal{H}$. Note then we assume that $i<r$ as otherwise there is nothing to show. The number of $i$-sets $S$ in $X\setminus Z$ with $T\setminus Z \subseteq S$ is at most $v(X)^{i-j}$. Once again for $\Delta_{\mathcal{H}_{S,\sigma_S}}(T\setminus S)$ to be nonzero, we also need $T\cap (Z\setminus Y)$ (which has size at least $i-j$) to be the image of the at most $C'$ many elements counted by $\Delta_{\mathcal{H}_S}(T\cap Y)$. 

Thus we find that
$$\Expect{\Delta_{\mathcal{H}}(T)} \le v(X)^{i-j} \cdot \left(\frac{C'}{|Z|-|Y|}\right)^{i-j} \le (C')^{r-1} \cdot \left(\frac{v(X)}{|Y|}\right)^{\min\{i,r-1\}},$$
as desired, where we used that $i\le r-1$, $j\ge 0$ and $|Z|\ge 2 |Y|$.
\end{proofclaim}

We note that $|Z_T|$ is the sum of independent Bernoulli random variables. Hence by the Chernoff bounds, we find that

$$\Prob{|Z_T| > 2\mu} \le e^{-\frac{\mu}{3}} < \frac{1}{2\cdot v(X)^{r-1}},$$
since $\mu \ge C'\cdot \Delta \ge 6(r-1)\log v(X)$ as $C'\ge 1$, $r\ge 2$ and $v(X)\ge |Y|$.

Let $A_T$ be the event that $\Delta_{\mathcal{H}}(T) > C \cdot \max\left\{\left(\frac{v(X)}{|Y|}\right)^{i-1} \cdot \Delta,~\left(\frac{v(X)}{|Y|}\right)^{\min\{i,r-1\}}\right\}$. 
Using that $C\ge 2(C')^{r}$, % since $C$ is large 
we find that $\Prob{A_T} \le \Prob{|Z_T|>2\mu} < \frac{1}{2\cdot v(X)^{r-1}}$. 

Similarly let $B_T$ be the event that $|X'(T)| > C \cdot \max\left\{\left(\frac{v(X)}{|Y|}\right)^{i-1} \cdot \Delta,~\left(\frac{v(X)}{|Y|}\right)^{\min\{i,r-1\}}\right\}+|X(T)|$. 
Using that $C\ge 2(C')^{r}$, % since $C$ is large 
we find that $\Prob{B_T} \le \Prob{|Z_T|>2\mu} < \frac{1}{2\cdot v(X)^{r-1}}$. 

Since the number of $T$ is at most $\binom{v(X)}{r-1} \le v(X)^{r-1}$, we find by the union bound that the probability that none of the $A_T$ and none of the $B_T$ happen is positive. Hence there exists a choice of permutations $\sigma_S$ such that (2) and (5) hold as desired.
\end{lateproof}

\subsection{Proof of Refine Down Lemma}

Now we prove the Refine Down Lemma.

\begin{lateproof}{lem:RefineDown}
%[Proof of Refine Down Lemma (Lemma~\ref{lem:RefineDown})]
Let $C:= \left(3\cdot C'\right)^{3^r\cdot (r+2)}$ where $C'$ is as $C_{\ref{lem:RefineDownOneStep}}$ in Lemma~\ref{lem:RefineDownOneStep} for $q$ and $r$. 
 For $i\in [r]_0$, let $C_i := \left(3\cdot C'\right)^{3^{r-i}\cdot (r+2)}$. 

Choose subsets $Y_r\subseteq Y_{r-1} \subseteq Y_{r-2} \subseteq \ldots \subseteq Y_0\subseteq Y$ such that $|Y_r| \ge C'$, $|Y_0| \ge \frac{|Y|}{2C'}$ and $|Y_i|= 2\cdot C'\cdot |Y_{i+1}|$ for all $i\in [r-1]_0$. Note that such a choice exists since $|Y|\ge C \ge (2\cdot C')^{r+2}$.

Choose $R$ to be a $C_i$-refined $K_q^r$-multi-refiner of $X$ with remainder $X'$ and refinement family $\mathcal{H}$ such that $i\in [r]_0$ and all of the following hold:

\begin{enumerate}
    \item[$(1)_i$] $\Delta_{\mathcal{H}}(T) \le C_i \cdot \left(\frac{v(X)}{|Y|}\right)^{\binom{r}{2}-\binom{i}{2}} \cdot \Delta+ C_i \cdot |Y_i|$ if $T\in \binom{V(X)}{r-1}$ with $|T\setminus Y_i| > i$,
    \item[$(2)_i$] $\Delta_{\mathcal{H}}(T) \le C_i \cdot \left(\frac{v(X)}{|Y|}\right)^{\binom{r}{2}-\binom{i}{2}} \cdot \Delta$ if $T\in \binom{V(X)}{r-1}$ with $|T\setminus Y_i|\le i$,
    \item[$(3)_i$] $R$ is $(r-1)$-flat to $Y_i$ %and for each $H\in \mathcal{H}$, we have that $|E(H)\cap E(X)|\le 1$,
%    \item $\Delta(X')\le C_i \cdot \left(\frac{v(X)}{|Y_0|}\right)^{r(r-i)} \cdot \Delta(X)?? +\Delta(X)??$,
    \item[$(4)_i$]   $X'$ is $i$-flat to $Y_{i}$,    
    \item[$(5)_i$] $\Delta(X')\le C_i\cdot \left(\frac{v(X)}{|Y|}\right)^{\binom{r}{2}-\binom{i}{2}} \cdot \Delta$
    \item[$(6)_i$] $X'$ is an $r$-uniform multi-hypergraph with multiplicity at most $M(q,r)$,
\end{enumerate}
and subject to all of the above such that $i$ is minimized, and subject to that $e(X'\cap R)$ is minimized. Note such a choice exists since $i=r$, $R:=\emptyset$ and $X':=X$ satisfy the conditions. Since $e(X'\cap R)$ is minimized, we find that for every $e\in X'\cap R$, there exists $H\in\mathcal{H}$ such that $e\in H$, for otherwise, $R\setminus \{e\}$, $\mathcal{H}$, and $X'\setminus \{e\}$ contradict the choice of $R$, $\mathcal{H}$ and $X'$.

First suppose $i=0$. We claim that $R$ is as desired. To see this, note that (1) follows $(1)_0$ and $(2)_0$; (2) follows from $(3)_0$; and (3) follows from $(5)_0$ and (4) follows from $(4)_0$ and $(6)_0$.

So we assume that $i>0$. Let $\Delta':= \max\{\Delta(X'),~v(X')^{1-\frac{1}{r}}\cdot \log v(X')\}$. Hence by Lemma~\ref{lem:RefineDownOneStep} (applied to $X',Y_i,Y_{i-1})$, there exists a $C'$-refined $K_q^r$-multi-refiner $R'$ of $X'$ with remainder $X''$ and refinement family $\mathcal{H}'$ such that all of the following hold:

\begin{enumerate}
    \item[($1^{\prime}$)] $\Delta_{\mathcal{H'}}(T) \le C' \cdot |Y_{i-1}|$ if $T\in \binom{V(X)}{r-1}$ with $|T\setminus Y_i|=i$,
    \item[($2^{\prime}$)] $\Delta_{\mathcal{H'}}(T) \le C' \cdot \max\left\{\left(\frac{v(X)}{|Y_i|}\right)^{i-1} \cdot \Delta',~\left(\frac{v(X)}{|Y_i|}\right)^{\min\{i,r-1\}}\right\}$ if $T\in \binom{V(X)}{r-1}$ with $|T\setminus Y_i|<i$,
    \item[($3^{\prime}$)] $R'$ is $(r-1)$-flat to $Y_{i-1}$ and for each $H\in \mathcal{H}'$, we have that %$|E(H)\cap E(X')|\le 1$ and 
    $|V(H)\setminus Y_{i-1}| \le i$,
%    \item[$6^{\prime}$.] $\Delta(X'')\le ??$
    \item[($4^{\prime}$)]   $X''$ is $(i-1)$-flat to $Y_{i-1}$,
    \item[($5^{\prime}$)] $\Delta(X'')\le \Delta(X')+C' \cdot \max\left\{\left(\frac{v(X)}{|Y_i|}\right)^{i-1} \cdot \Delta',~\left(\frac{v(X)}{|Y_i|}\right)^{\min\{i,r-1\}}\right\}$,
    \item[($6^{\prime}$)] $X''$ is an $r$-uniform multi-hypergraph with multiplicity at most $M(q,r)$. 
\end{enumerate}          

Note we assume that $R$ and $R'$ are edge-disjoint by construction (this is possible as they are multi-hypergraphs). Let $R'':= R\cup R'$ and $\mathcal{H}'':=\mathcal{H}\cup\mathcal{H}'$.
By Proposition~\ref{prop:Concatenate}, we find that $R''$ is a $(C_i+C')$-refined $K_q^r$-multi-refiner of $X$ with remainder $X''$ and refinement family $\mathcal{H}''$. Since $C_{i-1}\ge C_i+C'$, we find that $R''$ is $C_{i-1}$-refined.

We will show that all of $(1)_{i-1}$-$(6)_{i-1}$ hold for $R''$, $\mathcal{H}''$ and $X''$, contradicting the choice of $R$, $\mathcal{H}$ and $X'$. First note that $(4)_{i-1}$ and $(6)_{i-1}$ hold for $X''$ by $(4')$ and $(6')$ respectively. Similarly $(3)_{i-1}$ holds for $R''$ and $\mathcal{H}''$ since $(3)_i$ holds for $R$ and $\mathcal{H}$ and $(3')$ holds for $R'$ and $\mathcal{H}'$.

So it remains to show that $(1)_{i-1}$, $(2)_{i-1}$ and $(5)_{i-1}$ hold for $R''$, $\mathcal{H}''$ and $X''$. 

First we show that $(5)_{i-1}$ holds for $X''$. First note that since $\Delta(X')\le C_i\cdot \left(\frac{v(X)}{|Y|}\right)^{\binom{r}{2}-\binom{i}{2}} \cdot \Delta$ by $(5)_i$ and $v(X)=v(X')$, we find that 
$$\Delta' \le C_i\cdot \left(\frac{v(X)}{|Y|}\right)^{\binom{r}{2}-\binom{i}{2}} \cdot \Delta.$$
Thus
$$\Delta(X'')\le C_i \cdot \left(\frac{v(X)}{|Y|}\right)^{\binom{r}{2}-\binom{i}{2}} \cdot \Delta+ C' \cdot \max\left\{\left(\frac{v(X)}{|Y_i|}\right)^{i-1} \cdot \Delta',~\left(\frac{v(X)}{|Y_i|}\right)^{\min\{i,r-1\}}\right\}.$$
Substituting the upper bound for $\Delta'$, we find that
\begin{align*}
\Delta(X'') \le~~&C_i \cdot \left(\frac{v(X)}{|Y|}\right)^{\binom{r}{2}-\binom{i}{2}} \cdot \Delta+\\
&C' \cdot \max\left\{\left(\frac{v(X)}{|Y_i|}\right)^{i-1} \cdot C_i \cdot \left(\frac{v(X)}{|Y|}\right)^{\binom{r}{2}-\binom{i}{2}} \cdot \Delta,~\left(\frac{v(X)}{|Y_i|}\right)^{\min\{i,r-1\}}\right\} \\
\le~~&C_{i-1} \cdot \left(\frac{v(X)}{|Y|}\right)^{\binom{r}{2}-\binom{i-1}{2}} \cdot \Delta(X),
\end{align*}
where we used that $v(X)\ge |Y|$, $|Y_i|\ge \frac{|Y|}{(2C')^{i+1}}$ and $C_{i-1} \ge C_i + C' \cdot (2\cdot C')^{i+1}\cdot C_i$ (since $C_{i-1} \ge (2\cdot C')^{r+2} \cdot C_i^2$). Note we also used that $\max\{\binom{r}{2}-\binom{i}{2}+(i-1),~\min\{i,r-1\}\} \le \binom{r}{2}-\binom{i-1}{2}$. Hence $(5)_{i-1}$ holds for $X''$.

Next we show that $(1)_{i-1}$ and $(2)_{i-1}$ hold for $\mathcal{H}''$. To that end, let $T\in \binom{V(X)}{r-1}$. First suppose that $|T\setminus Y_i| > i$. Then $\Delta_{\mathcal{H'}}(T)=0$ since by (3') we have for all $H\in\mathcal{H}'$ that $|V(H)\setminus Y_{i-1}| \le i$. Thus by $(1)_i$ for $\mathcal{H}$, we have that
\begin{align*}\Delta_{\mathcal{H}''}(T) &\le \Delta_{\mathcal{H}}(T) + \Delta_{\mathcal{H'}}(T) = \Delta_{\mathcal{H}}(T) \le  C_i \cdot \left(\frac{v(X)}{|Y|}\right)^{\binom{r}{2}-\binom{i}{2}} \cdot \Delta+ C_i \cdot |Y_i|\\
&\le C_{i-1} \cdot \left(\frac{v(X)}{|Y|}\right)^{\binom{r}{2}-\binom{i-1}{2}} \cdot \Delta+ C_{i-1} \cdot |Y_{i-1}|,
\end{align*}
where we used that $C_i\le C_{i-1}$, $|Y_i|\le |Y_{i-1}|$ and $|Y|\le v(X)$.

Next suppose that $|T\setminus Y|=i$. Then by $(2)_i$ for $\mathcal{H}$ and ($1^{\prime}$) for $\mathcal{H}'$, we find that
\begin{align*}\Delta_{\mathcal{H}''}(T) &\le \Delta_{\mathcal{H}}(T) + \Delta_{\mathcal{H'}}(T) \le  C_i \cdot \left(\frac{v(X)}{|Y|}\right)^{\binom{r}{2}-\binom{i}{2}} \cdot \Delta+ C' \cdot |Y_{i-1}|\\
&\le C_{i-1} \cdot \left(\frac{v(X)}{|Y|}\right)^{\binom{r}{2}-\binom{i-1}{2}} \cdot \Delta+ C_{i-1} \cdot |Y_{i-1}|,
\end{align*}
where we used that $C_{i-1}\ge \max\{C_{i}, C'\}$, $|Y_i|\le |Y_{i-1}|$ and $|Y|\le v(X)$. Combining the two cases above, we find that $(1)_{i-1}$ holds for $\mathcal{H}''$. 

Finally suppose that $|T\setminus Y_i| < i$. Then by $(2)_i$ for $\mathcal{H}$ and by (2') for $\mathcal{H}'$, we find that \begin{align*}
\Delta_{\mathcal{H}''}(T) &\le \Delta_{\mathcal{H}}(T) + \Delta_{\mathcal{H'}}(T) \\
&\le  C_i \cdot \left(\frac{v(X)}{|Y|}\right)^{\binom{r}{2}-\binom{i}{2}} \cdot \Delta+ C' \cdot \max\left\{\left(\frac{v(X)}{|Y_i|}\right)^{i-1} \cdot \Delta',~\left(\frac{v(X)}{|Y_i|}\right)^{\min\{i,r-1\}}\right\}\\
&\le C_{i-1} \cdot \left(\frac{v(X)}{|Y|}\right)^{\binom{r}{2}-\binom{i-1}{2}} \cdot \Delta(X),
\end{align*}
where we have the same calculation as we had before for $\Delta(X'')$. Hence $(2)_{i-1}$ holds for $\mathcal{H}''$.
\end{lateproof}

\section{Embedding Proofs}\label{s:Embedding}

In this section, we conclude the proofs of the main theorems about refiners and omni-absorbers by embedding absorbers and fake edges as follows. As mentioned, these proofs will proceed by the deterministic `avoid the bad sets' style of proof we already used in the proof of Lemma~\ref{lem:SpecialSets}.

\subsection{Proof of the Refined Multi-Omni-Absorber Theorem}

First we prove the Refined Multi-Omni-Absorber Theorem (Theorem~\ref{thm:MultiOmni}).

\begin{lateproof}{thm:MultiOmni}
Let $C$ be chosen large enough as needed throughout the proof. Let $C'$ be is as in Theorem~\ref{thm:MRT} for $q$ and $r$. By Theorem~\ref{thm:MRT} applied to $X$ and $Y$, there exists a $C'$-refined $K_q^r$-multi-refiner $R$ of $X$ with refinement family $\mathcal{H}'$ such that both of the following hold:
\begin{enumerate}
    \item[($1^{\prime}$)] $\Delta(\mathcal{H}') \le C' \cdot \Delta$, and
    \item[($2^{\prime}$)] $R$ is $(r-1)$-flat to $Y$. %and for each $H\in \mathcal{H}'$, $|E(H)\cap E(X)|\le 1$.
\end{enumerate}

By Theorem~\ref{thm:AbsorberExistence}, there exists a $K_q^r$-absorber $A_H$ for each $H\in \mathcal{H}'$ and $K_q^r$ decompositions $\mathcal{Q}_{1,H}$ of $A_H$ and $\mathcal{Q}_{2,H}$ of $A_H\cup H$. Let $k: = \max\{ v(A_H): H\in \mathcal{H}'\}$. 

\begin{claim}\label{claim:A}
There exists a collection $\mathcal{A}=(A_H': H\in \mathcal{H}')$ where for each $H\in \mathcal{H}'$, $A_H'$ is a copy of $A_H$ containing $H$ such that $A_H'\setminus V(H) \subseteq Y\setminus V(H)$, and letting $A:= R~\cup~ \bigcup_{H\in \mathcal{H}'} A_H'$, we have that $\Delta(A)\le C\cdot \Delta$.    
\end{claim}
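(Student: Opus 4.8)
The plan is to embed the private absorbers $A_H$ one at a time, greedily, by a ``deterministically avoid the bad sets'' argument of the kind already used in the proof of \cref{lem:SpecialSets}. Fix an ordering $H_1, H_2, \dots, H_m$ of $\mathcal{H}'$ (where $m = |\mathcal{H}'| = O(\Delta(\mathcal{H}')\cdot v(X))$ by $(1')$ and the $C'$-refinedness, since each edge lies in at most $C'$ members). For each $H_j$ in turn, I want to choose an embedding $A'_{H_j}$ of $A_{H_j}$ that agrees with $H_j$ on $V(H_j)$, places all the remaining $\le k$ new vertices of $A_{H_j}$ inside $Y$, is edge-disjoint from $X$, $R$, and from all previously embedded $A'_{H_1},\dots,A'_{H_{j-1}}$, and moreover keeps the running maximum $(r-1)$-degree under control. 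The point is that because the refinement family is $C'$-refined (in particular $\Delta(\mathcal{H}')\le C'\Delta$ and every edge of $X\cup R$ is in $\le C'$ members of $\mathcal{H}'$, and each $v(H),e(H)\le C'$), the total number of absorber-edges to be embedded is $O(m)=O(\Delta\cdot v(X))$, which spread over $\binom{v(X)+O(k\,v(X))}{r-1}$ many $(r-1)$-sets gives an average degree contribution of $O(\Delta)$; so we only need to avoid a bounded number of ``overloaded'' $(r-1)$-sets at each step.

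Concretely, I would set up a target degree bound $D := C\cdot\Delta$ and call an $(r-1)$-set $T$ \emph{full} at stage $j$ if its current degree in $R\cup\bigcup_{j'<j}A'_{H_{j'}}$ already exceeds $D - \binom{q}{r}$ (so that adding one more absorber cannot blow past $C\cdot\Delta$ only if $T$ is not full). Since $\sum_T (\text{degree of }T) = (r-1)\cdot(\text{number of edges}) = O(\Delta\cdot v(X))$ and $D=C\Delta$, there are at most $O(v(X)/C)$ full sets, which for $C$ large is a tiny fraction of all $(r-1)$-sets available in $Y$. At stage $j$, I must choose the at-most-$k$ new vertices of $A_{H_j}$ inside $Y$ (there are $|Y|\ge v(X)/3$ choices, far more than $k+$ the at most $r$ vertices of $V(H_j)$) so that no $(r-1)$-set of $A'_{H_j}$ is full and so that $A'_{H_j}$ is edge-disjoint from everything built so far. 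The edge-disjointness with $X\cup R$ and previous absorbers is a ``few forbidden configurations'' condition: each absorber edge involves some vertices of $V(H_j)$ (a bounded set) and some new vertices; a new vertex is forbidden only if completing some absorber-edge through already-fixed vertices would collide with an existing edge, and there are only $O(k\cdot\Delta)$ such constraints per new vertex, negligible compared to $|Y|$. So a greedy choice of the new vertices one at a time, avoiding (i) vertices creating collisions and (ii) vertices that would put an absorber edge into a full $(r-1)$-set, always succeeds.

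Having chosen all $A'_{H_j}$, set $A := R\cup\bigcup_{H\in\mathcal{H}'}A'_H$. By construction $A$ and $X$ are edge-disjoint, $A'_H\setminus V(H)\subseteq Y\setminus V(H)$, and $\Delta(A)\le C\cdot\Delta$ because every $(r-1)$-set stays below the target $D$. This is exactly the statement of \cref{claim:A}; the subsequent verification that $A$ is a $C$-refined multi-omni-absorber (with decomposition family $\mathcal{H}$ obtained from $\mathcal{H}'$ by replacing each $H$ by the cliques of $\mathcal{Q}_{2,H}$, using the absorber decomposition functions) and that $A$ is $(r-1)$-flat to $Y$ would be done after the claim, not inside it.

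I expect the main obstacle to be bookkeeping rather than anything deep: one must be careful that the ``full'' count stays a genuinely small fraction of the usable $(r-1)$-sets throughout \emph{all} $m$ stages simultaneously (not just at the end), which is why it is cleanest to phrase it as a global counting bound $\sum_T \deg(T) = O(\Delta\,v(X))$ that holds for the final configuration and hence a fortiori at every intermediate stage, giving $\le O(v(X)/C)$ full sets at every stage; and one must check that when embedding $A_{H_j}$ the vertices of $V(H_j)$ are already placed (they are, since $H_j\subseteq X\cup R$ and those vertex sets are fixed from the outset) so only the $\le k$ interior vertices of the absorber are free. A secondary subtlety is ensuring the new vertices of distinct absorbers may freely overlap while still keeping edges distinct — this is fine since the absorbers need only be \emph{edge}-disjoint, and the greedy collision-avoidance handles exactly that. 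None of this requires new ideas beyond those already deployed for \cref{lem:SpecialSets} and \cref{prop:QuasiRandomPartition}-style counting.
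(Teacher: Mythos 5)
Your overall strategy (greedy embedding of the private absorbers, deterministically avoiding overloaded configurations in the style of \cref{lem:SpecialSets}) is the same as the paper's, but the counting you propose has a genuine gap. First, a quantitative error: $|\mathcal{H}'|$ is not $O(\Delta\cdot v(X))$ in general. From $\Delta(\mathcal{H}')\le C'\Delta$ one only gets $|\mathcal{H}'|=O(v(X)^{r-1}\Delta)$, and this order is attainable (e.g.\ when $\mathcal{H}'$ has roughly one member per edge of $X$ and $e(X)$ is of order $v(X)^{r-1}\Delta(X)$), so the total number of absorber edges is $O(v(X)^{r-1}\Delta)$ and your count of ``full'' $(r-1)$-sets is $O(v(X)^{r-1}/C)$, not $O(v(X)/C)$. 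More seriously, even though that is still an $O(1/C)$-fraction of all $(r-1)$-sets, a purely \emph{global} bound on the number of full sets does not let the greedy step go through. When you embed $A_{H_j}$, the $(r-1)$-sets you must keep non-full have the form $J\cup U$ with $J\subseteq V(H_j)$ \emph{fixed} and $U$ among the new vertices; for $|J|=r-2$ you need most of the $\Theta(|Y|)$ sets $J\cup\{u\}$ to be non-full, and nothing in the global count prevents the full sets from being concentrated precisely on the $(r-1)$-sets containing this $J$. Your remark that the global bound holds ``a fortiori at every intermediate stage'' addresses monotonicity in time, not this concentration in space, which is the actual difficulty.

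This is exactly why the paper's proof tracks the refined quantities $d_{A'}(J,U)$ --- the number of already-embedded absorbers rooted at some $H$ with $J\subseteq V(H)$ whose free vertices contain $U$ --- and caps each of these at $m=\frac{C}{k\cdot 2^r}\Delta$ separately. The per-$J$ count of bad completions $U$ is then bounded via $|\{H\in\mathcal{H}'':J\subseteq V(H)\}|\le \Delta(\mathcal{H}')\cdot v(G)^{\ell}$, i.e.\ by the refinement degree localized at $J$, and \cref{prop:NonUniformTuran} produces a good $k$-set; the final bound $\Delta(A)\le C\Delta$ is then \emph{derived} from $\Delta(\mathcal{H}')\le C'\Delta$ together with $d_{A'}(J,U)\le m$, rather than enforced directly as a degree cap. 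A secondary point: since \cref{thm:MultiOmni} lives in the multi-hypergraph setting, the absorbers are simply added as new multi-edges and there is no host graph to collide with, so all of your collision-avoidance bookkeeping is unnecessary here (it is needed later, in \cref{lem:EmbedMinDegree}, where the embedding must land edge-disjointly inside a simple hypergraph $G$).
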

\begin{proof}
Choose $\mathcal{H}''\subseteq \mathcal{H}'$ and $\mathcal{A}' = (A_H': H\in \mathcal{H}'')$ where for each $H\in \mathcal{H}''$, $A_H'$ is a copy of $A_H$ containing $H$ such that $A_H'\setminus V(H) \subseteq Y\setminus V(H)$ as follows: Define $A':= R~\cup~ \bigcup_{H\in \mathcal{H}''} A_H'$. For all $\ell\in [r-1]$, $U\in \binom{V(X)}{\ell}$ and $J\in \binom{V(X)\setminus U}{r-1-\ell}$, define 
$$d_{A'}(J,U) := |\{ H\in \mathcal{H}'': U\subseteq V(A_H')\setminus V(H),~J\subseteq V(H)\}|.$$
Now choose $\mathcal{H}''$ and $\mathcal{A}'$ such that for all such $U$ and $J$,
$$d_{A'}(J,U)\le m:=\frac{C}{k\cdot 2^r}\cdot \Delta,$$
and subject to those conditions, $|\mathcal{H}''|$ is maximized. Note such a choice exists since $\mathcal{H}'':=\emptyset$ satisfies the conditions trivially. 

First suppose that $|\mathcal{H}''|=|\mathcal{H}'|$. It follows that for $S\in \binom{V(X)}{r-1}$
$$|A'(S)| \le k\cdot \Delta(\mathcal{H}') + \sum_{J\subsetneq S} k\cdot d_{A'}(J,S\setminus J) \le k\cdot C'\cdot \Delta + k\cdot (2^r-1) \cdot \left(\frac{C}{k\cdot 2^r}\cdot \Delta\right) \le C\cdot \Delta,$$
where we used that $C\ge k\cdot 2^r\cdot C'$ since $C$ is large enough. Hence $\Delta(A')\le C\cdot \Delta$ and so $\mathcal{A}'$ is as desired.

So we assume that $|\mathcal{H}''| < |\mathcal{H}'|$. Hence there exists $H\in \mathcal{H}'\setminus \mathcal{H}''$. Let $Z:= Y\setminus V(H)$. Since $v(X)\ge C$ and $C$ is large enough, we have that $m\ge 2$ and hence $m-1\ge \frac{m}{2} = \frac{C}{2\cdot k \cdot 2^r} \cdot \Delta$.  For $\ell \in [r-1]$, let
$$B_{\ell} := \left\{ U\in \binom{Z}{\ell}: \exists~J \subseteq V(H) \text{ with } |J|=r-1-\ell \text{ such that } d_{A'}(J,U) \ge  m-1 \right\}.$$
Note that for $J\subseteq V(H)$ with $|J|=r-1-\ell$,
\begin{align*}
\left|\left\{ U\in \binom{Z}{\ell}: d_{A'}(J,U)\ge m-1\right\}\right| &\le \frac{|\{H \in \mathcal{H}'': J\subseteq V(H)\}| \cdot \binom{k}{\ell}}{m-1} \le \frac{\Delta(\mathcal{H}')\cdot v(G)^{\ell}\cdot \binom{k}{\ell}}{m-1} \\
&\le \frac{C'\cdot \Delta\cdot \binom{k}{\ell}}{\frac{C}{2\cdot k\cdot 2^r}\cdot \Delta} \cdot v(G)^{\ell} \le \frac{k^r\cdot 8^r\cdot C'}{C}\cdot \binom{|Z|}{\ell},
\end{align*}
where we used that $\Delta(\mathcal{H}'')\le \Delta(\mathcal{H}')\le C'\cdot \Delta$ and that $|Z|\ge \frac{v(X)}{2}-r\ge \frac{v(X)}{4}$ since $v(G)\ge C \ge 4r$. Note there are at most $\binom{v(H)}{r-1-\ell}$ subsets $J$ of $V(H)$ of size $r-1-\ell$ (which is at most $\binom{C'}{r-1-\ell}$ since $R$ is $C'$-refined and hence $v(H)\le C'$). 

Note that 
$$\sum_{\ell \in [r-1]} \frac{1}{C}\cdot \binom{C'}{r-1-\ell} \cdot k^r\cdot 8^r \cdot C'\cdot \binom{k}{\ell} < 1,$$ 
since $C$ is large enough. Hence by Proposition~\ref{prop:NonUniformTuran}, there exists $S\subseteq Z$ with $|S| = k$ such that $S$ contains no element of $\bigcup_{\ell \in [r-1]} B_{\ell}$. Let $A_H'$ be a copy of $A_H$ containing $H$ such that $V(A_H')\subseteq V(H)\cup S$. Let $\mathcal{H}_0:=\mathcal{H}''\cup \{H\}$ and $\mathcal{A}'':=(A_H': H\in \mathcal{H}_0)$. Define $A'':= R\cup \bigcup_{H\in\mathcal{H}_0} A_H'$.

We note that for all $\ell\in [r-1]$, $U\in \binom{V(X)}{\ell}$ and $J\in \binom{V(X)\setminus U}{r-1-\ell}$, we have that $d_{A''}(J,U)=d_{A'}(J,U)$ unless $J\subseteq V(H)$ and $U\subseteq S$ and in that case we have $d_{A''}(J,U)= d_{A'}(J,U)+1$; in the former case, we have that $d_{A'}(J,U)\le m$ by assumption and in the latter case we find that $d_{A'}(J,U)\le m-1$ since $S$ does not contain an element of $\bigcup_{\ell \in [r-1]} B_{\ell}$. It follows that in either case $d_{A''}(J,U)\le m$, contradicting the choice of $\mathcal{H}''$ and $\mathcal{A}'$.
\end{proof}

Let $\mathcal{A}$ and $A$ be as in the claim above. We show that $A$ is a $C$-refined $K_q^r$-multi-omni-absorber for $X$ such that (1) and (2) hold as desired. 

First we define a decomposition family $\mathcal{H}$. Note that $A$ is an $r$-uniform hypergraph with $V(A)=V(X)$. Further note that $A$ and $X$ are edge-disjoint (which is possible since they are multi-hypergraphs). For each $H\in \mathcal{H}'$, let $\mathcal{Q}_{1,H}'$ and $\mathcal{Q}_{2,H}'$ be the $K_q^r$ decompositions of $A_H'$ corresponding to $\mathcal{Q}_{1,H}$ and $\mathcal{Q}_{2,H}$ respectively. Let $\mathcal{H} := \bigcup_{H\in \mathcal{H}'} \mathcal{Q}_{1,H}'\cup \mathcal{Q}_{2,H}'$. 

Now we prove that $A$ is $C$-refined. To that end, let $e\in A\cup X$. First suppose that $e\in A\setminus R = \bigcup_{H\in \mathcal{H}'} A_H'$. Since the $A_H'$ are pairwise edge-disjoint, we have that $e$ is in $A_H'$ for only one $H\in \mathcal{H}'$. Since $A_H'$ is edge-disjoint from $R\cup X$, we find that $|\mathcal{H}(e)|\le 2$ since $e$ is only in $\mathcal{Q}_{1,H}'$ and $\mathcal{Q}_{2,H}'$.

So we assume that $e\in R\cup X$. Since $R$ is $C'$-refined, we find that $|\mathcal{H}'(e)|\le C'$. But then $|\mathcal{H}(e)|\le C'$ since $e$ is only in $\mathcal{Q}_{2,H}'$ for each such $H$. Since $C\ge \max\{2,C'\}$ since $C$ is large enough, we have that $A$ is $C$-refined.

Next we prove that $A$ is a $K_q^r$-multi-omni-absorber. First note that by construction, $A_H\cap X = \emptyset$ for all $H\in \mathcal{H}'$. Then by definition of $K_q^r$-multi-refiner, we have that $|H\cap X|\le 1$ for all $H\in \mathcal{H}'$. Hence $|Q\cap X|\le 1$ for all $Q\in \mathcal{H}$.

Now let $L$ be a $K_q^r$-divisible subgraph of $X$. Since $R$ is a $K_q^r$-refiner of $X$, there exists $\mathcal{H}_L \subseteq \mathcal{H}'$ that decomposes $L\cup R$.
We let
$$\mathcal{Q}_L:= \bigcup_{H\in \mathcal{H}_L} \mathcal{Q}_{2,H}'~~\cup \bigcup_{H\in \mathcal{H'}\setminus \mathcal{H}_L} \mathcal{Q}_{1,H}'.$$
It follows that $\mathcal{Q}_L$ is a $K_q^r$-decomposition of $L\cup A$. Hence $A$ is a $K_q^r$-multi-omni-absorber for $X$ with decomposition family $\mathcal{H}$.

Next we prove that (1) and (2) hold for $A$. By Claim~\ref{claim:A}, (1) holds for $A$. So finally we show that (2) holds for $A$. By $(2^{\prime})$, $R$ is $(r-1)$-flat to $Y$. Recall that for each $H\in \mathcal{H}'$, we have that $V(H)$ is independent in $A_H$ by definition of $K_q^r$-absorber and hence $|e\cap V(H)|\le r-1$ for all $e\in E(A_H)$. Thus we find that $|e\setminus Y| \le r-1$ for all $e\in E(A_H)$. Hence, it follows that $A$ is $(r-1)$-flat to $Y$. Thus (2) holds for $A$. 
\end{lateproof}

\subsection{A Partial Clique Embedding Lemma}\label{ss:PartialClique}

It remains to prove Theorems~\ref{thm:Refiner} and~\ref{thm:Omni}. The first will follow from Theorem~\ref{thm:MRT} by embedding `fake edges'. The second will then follow by embedding private absorbers. Both proofs will follow by proving we may embed disjoint `partial cliques' (wherein we may embed either a fake edge or an absorber). This motivates the following definition.

\begin{definition}
Let $k> r\ge 1$ be integers. Let $G$ be an $r$-uniform hypergraph. If $S\subseteq V(G)$ with $k > |S|\ge r$, then a \emph{partial $K_k^r$ rooted at $S$} in $G$ is a subgraph $T$ of $G$ with $S\subseteq V(T)$, $|T|=k$ and $\binom{V(T)}{r}\setminus \binom{S}{r} \subseteq E(G)$.     
\end{definition}

Using Proposition~\ref{prop:NonUniformTuran}, we prove the following embedding lemma (which allows us to embed the `fake edges' but also absorbers for refiners in the next subsection as well). The proof is similar to that given for Theorem~\ref{thm:MultiOmni} except that we require the partial cliques to actually be edge-disjoint (since the setting is simple hypergraphs rather multi-hypergraphs). Note that in the following lemma $G$ is a simple hypergraph.

\begin{lem}[Partial Clique Embedding Lemma]\label{lem:EmbedMinDegree}
For all integers $h \ge r \ge 1$ and $k\ge 1$, there exist an integer $C\ge 1$ and reals $\varepsilon, \gamma \in (0,1)$ such that the following holds: Let $G$ be an $r$-uniform hypergraph with $v(G)\ge C$ and $\delta(G)\ge (1-\varepsilon)\cdot v(G)$. If $Y$ is an $h$-bounded multi-hypergraph with $V(Y)=V(G)$ and edge-disjoint from $G$ with $\Delta_{r-1}(Y)\le \gamma \cdot v(G)$, and $|e|\geq r$ for all $e \in E(Y)$, then there exist edge-disjoint subgraphs $(T_e: e\in E(Y))$ of $G$ such that $T_e$ is a partial $K_{k+|e|}^r$ rooted at $V(e)$, and letting $T:= \bigcup_{e\in E(Y)} T_e$, we have that $\Delta(T) \le C\cdot \Delta_{r-1}(Y)$.
\end{lem}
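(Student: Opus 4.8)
The plan is to build the partial cliques $T_e$ one at a time, in a greedy fashion, deterministically avoiding bad configurations — exactly the ``avoid the bad sets'' style already used for Lemma~\ref{lem:SpecialSets} and in Claim~\ref{claim:A} of the proof of Theorem~\ref{thm:MultiOmni}. Concretely, I would choose $\mathcal{E}'\subseteq E(Y)$ and edge-disjoint partial cliques $(T_e : e\in \mathcal{E}')$ subject to a degree-control invariant, and subject to that choice take $|\mathcal{E}'|$ maximal. If $\mathcal{E}'=E(Y)$ we are done; otherwise we pick some $f\in E(Y)\setminus\mathcal{E}'$ and show there is room to embed one more $T_f$ without violating the invariant, contradicting maximality.

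**The degree invariant and why $\delta(G)$ enters.** For each $\ell\in[r-1]$, each $\ell$-set $U$ and each $(r-1-\ell)$-set $J$ disjoint from $U$, let $d(J,U)$ count the partial cliques $T_e$ already embedded for which $U\subseteq V(T_e)\setminus V(e)$ and $J\subseteq V(e)$ (together with a term counting the contribution of the roots themselves, which is bounded by $\Delta_{r-1}(Y)$ since $|V(e)|\le h$). Requiring $d(J,U)\le m := \tfrac{C}{2^r (k+h)^r}\,\Delta_{r-1}(Y)$ for all such $J,U$ forces, for every $(r-1)$-set $S$, that $|T(S)|\le (k+h)^r\big(\Delta_{r-1}(Y)+(2^r-1)m\big)\le C\,\Delta_{r-1}(Y)$, so the final degree bound is immediate once the invariant is maintained throughout. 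To add $T_f$: the root $V(f)$ has size $\le h$; I need to pick the remaining $\le k$ vertices from $Y$-independent... no — from $V(G)$, avoiding (i) vertices that would cause any $d(J,U)$ to exceed $m$ (a ``bad set'' count controlled by Markov as in Claim~\ref{claim:A}, using $\Delta(\mathcal{E}'\text{-so-far})\le C\Delta_{r-1}(Y)$), and simultaneously (ii) I must guarantee that the new clique is genuinely a subgraph of $G$, i.e.\ every $r$-subset of $V(T_f)$ not inside $V(f)$ is an edge of $G$. For (ii) I use $\delta(G)\ge(1-\varepsilon)v(G)$: missing edges of $G$ are rare, so the $q$-subsets (here $q=k+|f|$) of $V(G)$ that span a non-edge of $G$ are a small $\alpha$-fraction of all $q$-subsets by Proposition~\ref{prop:NonUniformTuran} (with $Z$ the complement hypergraph of $G$ restricted to edge-sizes $\le r$, whose density is $O(\varepsilon)$). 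Choosing $\varepsilon,\gamma$ small and $C$ large makes the total forbidden fraction $<1$, so Proposition~\ref{prop:NonUniformTuran} produces a valid $(k+|f|)$-set extending $V(f)$, avoiding all bad sets and all non-edges, which yields $T_f$ and contradicts maximality of $\mathcal{E}'$.

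**Main obstacle.** The real subtlety is handling conditions (i) and (ii) simultaneously with a single application of Proposition~\ref{prop:NonUniformTuran}: I must encode ``avoid the degree-bad sets'' and ``avoid non-edges of $G$'' as a single bounded hypergraph $Z$ on $V(G)$ (or on $V(G)\setminus V(f)$) whose edges have size $\le k+|f|$, and verify $\alpha(Z)<1$. The non-edge part contributes edges of size $\le r$ with density $O(\varepsilon)$; the degree-bad part contributes edges of size $\le r-1$ with density $O(C'/C)$ where $C'$ bounds the current degree — and here one must be slightly careful that $\Delta_{r-1}(Y)\le\gamma v(G)$ keeps $m\ge 2$ (hence $m-1\ge m/2$, as in Claim~\ref{claim:A}) and that $v(G)\ge C$ is large enough that the $\binom{v(G)}{\ell}$ vs $\binom{v(G)-O(k+h)}{\ell}$ comparisons lose only a constant factor. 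A minor point: $Y$ is a multi-hypergraph, so parallel edges $e,e'$ on the same vertex set each get their own $T_e$; since the $T_e$ must be pairwise edge-disjoint, one simply treats parallel copies as distinct elements of $E(Y)$ throughout the greedy process — nothing changes because each partial clique only needs its ``new'' $r$-subsets (those meeting $V(T_e)\setminus V(e)$) to be fresh, and the maximality argument already enforces edge-disjointness via the invariant together with choosing the new vertices outside all previously-used vertex sets when necessary. Modulo these bookkeeping points the argument is a direct adaptation of Claim~\ref{claim:A}.
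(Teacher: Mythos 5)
Your proposal is essentially the paper's proof: the same greedy choice of a maximal sub-family subject to the invariant $d(J,U)\le m$ with $m=\Theta\!\left(C\cdot\Delta_{r-1}(Y)\right)$, the same computation showing the invariant forces $\Delta(T)\le C\cdot\Delta_{r-1}(Y)$, and the same single application of Proposition~\ref{prop:NonUniformTuran} to a bounded forbidden hypergraph to extend by one more partial clique and contradict maximality.

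The one place where your description would fail as literally stated is the handling of edge-disjointness. You propose to enforce it by ``choosing the new vertices outside all previously-used vertex sets when necessary,'' but after many cliques have been embedded the used vertices can exhaust $V(G)$ (their number can be as large as $k\cdot e(Y)$), so this is not viable; nor does the degree invariant by itself prevent two partial cliques from sharing an $r$-set. The correct fix --- which is what the paper does, and which costs nothing in your framework --- is to fold edge-disjointness into the Tur\'an step: for each $J\subseteq V(f)$ with $|J|=r-\ell$ one forbids every $\ell$-set $U$ with $J\cup U\notin G\setminus T'$, i.e.\ one avoids $r$-sets that are either non-edges of $G$ \emph{or already used by some previously embedded $T_{e'}$}. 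The relevant density is then governed by $\Delta\!\left(K_n^r\setminus(G\setminus T')\right)\le(\varepsilon+C\gamma)\cdot v(G)$ rather than $O(\varepsilon)\cdot v(G)$; this is precisely why the hypothesis $\Delta_{r-1}(Y)\le\gamma\cdot v(G)$ is needed with $\gamma$ small in terms of $C$, a dependence your write-up gestures at but does not tie to the edge-disjointness requirement. With that adjustment your argument coincides with the paper's.
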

\begin{proof}
Let $C$ be chosen large enough as needed throughout the proof. Choose $Y'\subseteq Y$ and $\mathcal{T}' = (T_e\subseteq G: e \in E(Y'))$ such that $T_e$ is a partial $K_{k+|e|}^r$ rooted at $V(e)$ as follows: Define $T':= \bigcup_{e\in E(Y')} T_e$. For all $\ell\in [r-1]$, $U\in \binom{V(G)}{\ell}$ and $J\in \binom{V(G)\setminus U}{r-1-\ell}$, define 
$$d_{T'}(J,U) := |\{ e'\in Y': U\subseteq V(T_{e'})\setminus V(e'),~J\subseteq e'\}|.$$
Now choose $Y'$ and $\mathcal{T}'$ such that for all such $U$ and $J$,
$$d_{T'}(J,U)\le m:=\frac{C}{k\cdot 2^r}\cdot \Delta_{r-1}(Y),$$
and subject to those conditions, $e(Y')$ is maximized. Note such a choice exists since $Y':=\emptyset$ satisfies the conditions trivially. 

First suppose that $e(Y')=e(Y)$. It follows that for $S\in \binom{V(G)}{r-1}$
$$|T'(S)| \le k\cdot \Delta_{r-1}(Y) + \sum_{J\subsetneq S} k\cdot d_{T'}(J,S\setminus J) \le k\cdot \Delta_{r-1}(Y) + k\cdot (2^r-1) \cdot \left(\frac{C}{k\cdot 2^r}\cdot \Delta_{r-1}(Y)\right) \le C\cdot \Delta_{r-1}(Y),$$
where we used that $C\ge k\cdot 2^r$ since $C$ is large enough. Hence $\Delta(T')\le C\cdot \Delta_{r-1}(Y)$ and so $\mathcal{T}'$ is as desired.

So we assume that $e(Y') < e(Y)$. Hence there exists $e\in Y\setminus Y'$. Let $Z:= V(G)\setminus V(e)$. For $\ell \in [r]$, let 
$$A_{\ell} := \left\{U\in \binom{Z}{\ell}: \exists~J\subseteq e \text{ with } |J|=r-\ell \text{ such that } J\cup U \not \in G\setminus T'\right\}.$$
Let $n:= v(G)$. Note that $\Delta(K_n^r\setminus G) \le \varepsilon n$. Hence $\Delta(K_n^r\setminus (G\setminus T')) \le (\varepsilon+C\cdot \gamma)n$. Hence for $J\subseteq e$ with $|J|=r-\ell$, we find that 
\begin{align*}
\left|\left\{ U\in \binom{Z}{\ell}: J\cup U \not\in G\setminus T'\right\}\right| &\le |\{ e' \in K_n^r\setminus (G\setminus T'): J\subseteq e'\}| \le \Delta(K_n^r\setminus (G\setminus T'))\cdot v(G)^{\ell-1} \\
&\le (\varepsilon+C\cdot \gamma) \cdot v(G)^{\ell} \le (\varepsilon+C\cdot \gamma)\cdot 4^r\cdot \binom{|Z|}{\ell},
\end{align*}
where we used that $|Z|\ge \frac{v(G)}{2}-r\ge \frac{v(G)}{4}$ since $v(G)\ge C \ge 4r$. Note there are at most $\binom{|e|}{r-\ell}$ subsets $J$ of $e$ of size $r-\ell$ (which is at most $\binom{h}{r-\ell}$ since $|e|\le h$).

Note we assume that $\Delta_{r-1}(Y)\ge 1$ as otherwise the outcome of the lemma holds trivially. Since $C\ge 2k\cdot 2^r$ as $C$ is large enough, it follows that $m\ge 2$ and hence $m-1\ge \frac{m}{2} = \frac{C}{2\cdot k\cdot 2^r}\cdot \Delta_{r-1}(Y)$.  For $\ell \in [r-1]$, let
$$B_{\ell} := \left\{ U\in \binom{Z}{\ell}: \exists~J \subseteq e \text{ with } |J|=r-1-\ell \text{ such that } d_{T'}(J,U) \ge  m-1 \right\}.$$ 
Note that for $J\subseteq e$ with $|J|=r-1-\ell$,
\begin{align*}
\left|\left\{ U\in \binom{Z}{\ell}: d_{T'}(J,U)\ge m\right\}\right| &\le \frac{|\{e' \in Y': J\subseteq e'\}| \cdot \binom{k}{\ell}}{m-1} \le \frac{\Delta_{r-1}(Y)\cdot v(G)^{\ell}\cdot \binom{k}{\ell}}{m-1} \\
&\le \frac{2\cdot k\cdot 2^r\cdot \binom{k}{\ell}}{C} \cdot v(G)^{\ell} \le \frac{k^r\cdot 8^r}{C}\cdot \binom{|Z|}{\ell},
\end{align*}
where we used that $|Z|\ge \frac{v(X)}{2}-r\ge \frac{v(X)}{4}$ since $v(G)\ge C \ge 4r$. Note there are at most $\binom{|e|}{r-1-\ell}$ subsets $J$ of $e$ of size $r-1-\ell$ (which is at most $\binom{h}{r-1-\ell}$). 

Note that 
$$\sum_{\ell \in [r]} (\varepsilon +C\cdot \gamma)\cdot 4^r\cdot \binom{k}{\ell} + \frac{1}{C}\cdot \binom{h}{r-1-\ell} \cdot k^r\cdot 8^r \cdot \binom{k}{\ell} < 1,$$ 
since $C$ is large enough and $\gamma$ and $\varepsilon$ are small enough. Hence by Proposition~\ref{prop:NonUniformTuran}, there exists $S\subseteq Z$ with $|S| = k$ such that $S$ contains no element of $\bigcup_{\ell \in [r-1]} A_{\ell}\cup B_{\ell}$. Let $T_e := \binom{e\cup S}{r} \setminus \binom{e}{r}$, let $Y'':=Y'\cup e$ and $\mathcal{T}'':=(T_e: e\in Y'')$. Note that $T_e$ is a partial $K_{k+|e|}^r$ rooted at $V(e)$. Since $S$ does not contain an element of $\bigcup_{\ell \in [r-1]} A_{\ell}$, it follows that $T_e$ is a subgraph of $G$ that is edge-disjoint from all elements of $\mathcal{T}'$. Let $T'' := \bigcup_{e\in E(Y'')} T_e$.

We note that for all $\ell\in [r-1]$, $U\in \binom{V(G)}{\ell}$ and $J\in \binom{V(G)\setminus U}{r-1-\ell}$, we have that $d_{T''}(J,U)=d_{T'}(J,U)$ unless $J\subseteq e$ and $U\subseteq S$ and in that case we have $d_{T''}(J,U)= d_{T'}(J,U)+1$; in the former case, we have that $d_{T'}(J,U)\le m$ by assumption and in the latter case we find that $d_{T'}(J,U)\le m-1$ since $S$ does not contain an element of $\bigcup_{\ell \in [r-1]} B_{\ell}$. It follows that in either case $d_{T''}(J,U)\le m$, contradicting the choice of $Y'$ and $\mathcal{T}'$.
\end{proof}

\subsection{Proof of the Refiner Theorem via Embedding Fake Edges}\label{ss:Basic}

Now that we have proved Theorem~\ref{thm:MLRT}, we next aim to use it to prove Theorem~\ref{thm:Refiner}. We need to replace the multiedges in the multi-refiner with `fake edges'. To that end, we need the following basic gadgets.

\begin{definition}\label{def:BasicGadget}
Let $q>r\ge 1$ be integers. Let $S$ be a set of vertices of size $r$.
\begin{itemize}
    \item An \emph{anti-edge on $S$}, denoted ${\rm AntiEdge}_q^r(S)$, is a set of new vertices $x_1,\ldots, x_{q-r}$ together with edges $\binom{S\cup \{x_i:i\in[q-r]\} }{r} \setminus \{S\}$.
    \item A \emph{fake edge on $S$}, denoted ${\rm FakeEdge}_q^r(S)$, is a set of new vertices $x_1,\ldots, x_{q-r}$ together with  $\{ {\rm AntiEdge}_q^r(T): T\in \binom{S\cup \{x_i:i\in[q-r]\} }{r} \setminus \{S\} \}$.
\end{itemize}    
\end{definition}

Note that all the graphs in Definition~\ref{def:BasicGadget} have at most $(q-r)\cdot \binom{q}{r} \le q^{r+1}$ vertices. Next we collect basic facts about the divisibility of the degrees in these basic gadgets as follows.

\begin{fact}\label{fact:Div}
Let $q> r \ge 1$ be integers. The following hold:
\begin{itemize}
    \item[(1)] If $F={\rm AntiEdge}_q^r(S)$, then for every $0\le i \le r-1$ and $S'\subseteq S$ with $|S'|=i$, we have $d_F(S') \equiv -1 \mod \binom{q-i}{r-i}$.
    \item[(2)] If $F={\rm FakeEdge}_q^r(S)$, then for every $0\le i \le r-1$ and $S'\subseteq S$ with $|S'|=i$, we have $d_F(S') \equiv +1 \mod \binom{q-i}{r-i}$.
\end{itemize}
\end{fact}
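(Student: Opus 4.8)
\textbf{Proof plan for Fact~\ref{fact:Div}.}
The plan is to prove the two items by a direct counting argument on the link degrees, using induction on $r$ for part~(1) and then deriving part~(2) from part~(1) by a second counting argument (or a second induction). I would set $F = {\rm AntiEdge}_q^r(S)$ with new vertices $x_1,\dots,x_{q-r}$ and write $W := S \cup \{x_1,\dots,x_{q-r}\}$, so $|W| = q$ and $E(F) = \binom{W}{r}\setminus\{S\}$. Fix $0\le i\le r-1$ and $S'\subseteq S$ with $|S'|=i$. The key observation is that the edges of $F$ containing $S'$ are exactly the $r$-subsets of $W$ containing $S'$, \emph{except} possibly for $S$ itself; since $S'\subseteq S$ and $|S'|=i\le r-1<r=|S|$, the set $S$ does contain $S'$ and $S$ is the unique excluded edge. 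Therefore
\[
d_F(S') = \binom{q-i}{r-i} - 1 \equiv -1 \pmod{\binom{q-i}{r-i}},
\]
which is exactly~(1). So part~(1) is essentially immediate once one observes that ${\rm AntiEdge}_q^r(S)$ is just the complete $r$-graph on $q$ vertices with the single edge $S$ removed; no induction is actually needed.

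For part~(2), let $F = {\rm FakeEdge}_q^r(S)$ with new vertices $y_1,\dots,y_{q-r}$, let $W := S\cup\{y_1,\dots,y_{q-r}\}$, and recall $F = \bigcup\{ {\rm AntiEdge}_q^r(T) : T\in\binom{W}{r}\setminus\{S\}\}$, where the anti-edges on distinct $T$'s use disjoint sets of new vertices (so $F$ is a genuine disjoint-at-the-new-vertices union, and there is no double counting of edges coming from two different anti-edges — each anti-edge's edges, other than those inside $W$, live on its own private new vertices; the edges inside $\binom{W}{r}$ are shared, and I need to be slightly careful about those). I would split the count of $d_F(S')$ into contributions from each anti-edge ${\rm AntiEdge}_q^r(T)$. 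For an anti-edge on $T\in\binom{W}{r}\setminus\{S\}$, its vertex set is $T$ together with $q-r$ private new vertices; an edge of this anti-edge contains $S'$ iff it is an $r$-subset of $T\cup(\text{its new vertices})$ containing $S'$ and is not equal to $T$. Since $S'\subseteq S$, whether $S'\subseteq T$ depends on $T$. Summing $d_{{\rm AntiEdge}_q^r(T)}(S')$ over all $T$ and then correcting for the edges of $F$ that lie inside $\binom{W}{r}$ (which get counted once by $F$ but appear in several anti-edges' vertex sets) gives the total. The cleanest way to organize this is: count, for each $j$ with $i\le j$, how many $T\in\binom{W}{r}\setminus\{S\}$ satisfy $S'\subseteq T$, and for each such $T$ use part~(1) applied to that anti-edge, i.e. $d_{{\rm AntiEdge}_q^r(T)}(S') \equiv -1 \pmod{\binom{q-i}{r-i}}$ when $i\le r-1$. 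I expect the arithmetic to collapse mod $\binom{q-i}{r-i}$ to a count of the relevant $T$'s, and the number of $T\in\binom{W}{r}$ with $S'\subseteq T$ is $\binom{q-i}{r-i}$, one of which is $S$; so there are $\binom{q-i}{r-i}-1 \equiv -1$ such $T$, each contributing $\equiv -1$, giving $\equiv (-1)(-1) = +1$, plus a contribution $\equiv 0$ from the edges living inside $\binom{W}{r}$ (there are $\binom{q-i}{r-i}-1$ of those too — all $r$-subsets of $W$ containing $S'$ except $S$ — which is $\equiv -1$, not $0$; so I will need to track this term and see it combine correctly).

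The main obstacle, as this last parenthetical indicates, is bookkeeping the edges that lie inside $\binom{W}{r}$: these are genuinely edges of $F$ (each is an edge of some anti-edge), they should be counted exactly once in $d_F(S')$, but a naive sum of $d_{{\rm AntiEdge}_q^r(T)}(S')$ over $T$ would count them multiple times (once for each $T$ whose vertex set contains them), while the edges on private new vertices are counted exactly once. So the correct identity is
\[
d_F(S') \;=\; \Big(\sum_{T\in\binom{W}{r}\setminus\{S\}} \big(\text{edges of }{\rm AntiEdge}_q^r(T)\text{ containing }S'\text{ that use a private new vertex}\big)\Big) \;+\; \big|\{\,e\in\tbinom{W}{r} : S'\subseteq e,\ e\neq S\,\}\big|,
\]
and I would compute each piece mod $\binom{q-i}{r-i}$ separately, the second piece being $\binom{q-i}{r-i}-1\equiv -1$, and the first piece: for each of the $\binom{q-i}{r-i}-1$ relevant $T$, the number of edges of ${\rm AntiEdge}_q^r(T)$ containing $S'$ and using a private new vertex is $\big(\binom{q-i}{r-i}-1\big) - \big(\text{number of }r\text{-subsets of }T\text{ containing }S',\text{ other than }T\big)$; the quantity subtracted is $\binom{r-i}{r-i}-1 = 0$ if $S'\subseteq T\neq S$ contributes the full subset $T$... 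I will work out that this first piece is $\equiv (\binom{q-i}{r-i}-1)\cdot(\text{something}\equiv -1) \equiv +1$ depending on exactly which $T$ actually contain $S'$, and combined with the $-1$ from the interior edges and the remaining $T$ not containing $S'$ contributing $0$ mod $\binom{q-i}{r-i}$, the total lands on $\equiv +1$. Getting these residues to line up — in particular correctly characterizing which $T\in\binom{W}{r}\setminus\{S\}$ contain $S'$ versus which don't, and which edges are "interior" to $W$ — is the one place the proof needs genuine care; everything else is routine. An alternative, perhaps cleaner route I would fall back on if the direct count gets messy: prove~(2) by observing that a fake edge is built from anti-edges exactly so that the "$-1$"s square to a "$+1$", and make this precise by induction on the number of new vertices or by an inclusion–exclusion identity expressing $d_F(S')$ as an alternating sum of complete-hypergraph degrees, each divisible by $\binom{q-i}{r-i}$ except for a single $+1$ term.
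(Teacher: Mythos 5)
The paper states this as a \emph{Fact} and gives no proof, so there is nothing to compare your argument against; I evaluate it on its own terms. Your proof of (1) is correct and complete: ${\rm AntiEdge}_q^r(S)$ is the complete $r$-graph on the $q$-set $W=S\cup\{x_1,\dots,x_{q-r}\}$ minus the single edge $S$, and since $S'\subseteq S$, exactly one of the $\binom{q-i}{r-i}$ $r$-subsets of $W$ containing $S'$ is deleted, giving $d_F(S')=\binom{q-i}{r-i}-1\equiv -1$.

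For (2), however, your main route contains a genuine error, and it is exactly at the spot you flagged. You treat the $r$-subsets of $W$ as potential edges of ${\rm FakeEdge}_q^r(S)$ that need a separate ``interior'' count and a multiple-counting correction. But no edge of the fake edge lies inside $W$: the edge set of ${\rm FakeEdge}_q^r(S)$ is, by definition, just the union of the edge sets of the anti-edges ${\rm AntiEdge}_q^r(T)$ for $T\in\binom{W}{r}\setminus\{S\}$ (the sets $T$ themselves are vertex sets of gadgets, not edges), and every edge of ${\rm AntiEdge}_q^r(T)$ contains at least one of that anti-edge's private new vertices, since the only $r$-subset of $T$ is $T$ itself and it is excluded. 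Consequently the anti-edges are automatically edge-disjoint and
\[
d_F(S')\;=\;\sum_{T\in\binom{W}{r}\setminus\{S\},\;S'\subseteq T} d_{{\rm AntiEdge}_q^r(T)}(S')\;=\;\left(\binom{q-i}{r-i}-1\right)\cdot\left(\binom{q-i}{r-i}-1\right)\;\equiv\;(-1)(-1)\;=\;+1,
\]
with no correction term. Your proposed identity adds a second summand of size $\binom{q-i}{r-i}-1\equiv -1$ for the ``edges living inside $\binom{W}{r}$''; if you carry your arithmetic through as written you get $(+1)+(-1)=0$ rather than $+1$, which contradicts the Fact. Your fallback intuition (``the $-1$'s square to a $+1$'') is precisely the right proof; the fix is simply to recognize that the interior term is vacuous, so the clean two-line computation above is the whole argument.
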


Thus Fact~\ref{fact:Div}(1) says that an anti-edge $F$ on $S$ has the negative divisibility properties of an actual edge on $S$. Similarly Fact~\ref{fact:Div}(2) says that the a fake edge $F$ on $S$ has the same divisibility properties as an actual edge on $S$.  Furthermore, we note all of the above basic gadgets defined in Definition~\ref{def:BasicGadget} have at most $(q-r)\cdot \binom{q}{r} \le q^{r+1}$ vertices and hence at most $q^{r(r+1)}$ edges.

%rooted degeneracy at $S$ at most $q-1$ and depth at $S$ at most $(q-r)\cdot \binom{q}{r} \le q^{r+1}$.

\begin{fact}\label{fact:Div2}
Let $q> r \ge 1$ be integers. The following hold:
\begin{itemize}
    \item[(1)] $M(q,r)$ edge-disjoint copies of ${\rm AntiEdge}_q^r(S)$ is a $K_q^r$-divisible hypergraph. 
    \item[(2)] $M(q,r)$ edge-disjoint copies of ${\rm FakeEdge}_q^r(S)$ is a $K_q^r$-divisible hypergraph. 
\end{itemize}
\end{fact}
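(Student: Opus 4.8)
The plan is to reduce both parts to Fact~\ref{fact:Div} together with the defining property of $M(q,r)$, namely that $\binom{q-i}{r-i}\mid M(q,r)$ for every $0\le i\le r-1$. First I would fix notation: let $F_1,\ldots,F_{M(q,r)}$ be the $M(q,r)$ edge-disjoint copies in question (all rooted at the same $r$-set $S$ in the cases where that matters, though in fact the argument localizes), and let $F := \bigcup_{j=1}^{M(q,r)} F_j$. To check that $F$ is $K_q^r$-divisible we must verify, for every $0\le i\le r-1$ and every $S'\subseteq V(F)$ with $|S'|=i$, that $\binom{q-i}{r-i}\mid d_F(S')$. Since the copies are edge-disjoint, $d_F(S') = \sum_{j=1}^{M(q,r)} d_{F_j}(S')$.

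The main case split is on the location of $S'$ relative to the root $S$. If $S'\subseteq S$, then by Fact~\ref{fact:Div}(1) (respectively Fact~\ref{fact:Div}(2)) each term satisfies $d_{F_j}(S')\equiv -1$ (resp. $+1$) $\bmod \binom{q-i}{r-i}$, so the sum is $\equiv \mp M(q,r)\equiv 0 \bmod \binom{q-i}{r-i}$ by the divisibility property of $M(q,r)$. If $S'\not\subseteq S$, then I would argue that $S'$ meets the private new vertices of at most one copy $F_j$ (the copies share only the root vertices $S$), so $d_F(S') = d_{F_j}(S')$ for that single copy, and it suffices to show each individual anti-edge/fake-edge is itself $K_q^r$-divisible at sets not contained in its root. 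For an anti-edge $F_j = {\rm AntiEdge}_q^r(S)$ this is immediate: $F_j$ together with the missing edge $S$ forms a complete $K_q^r$, which is trivially $K_q^r$-divisible, and removing the single edge $S$ changes only the degrees of subsets of $S$; hence at any $S'\not\subseteq S$ the degree $d_{F_j}(S')$ equals the corresponding degree in $K_q^r$, which is $\binom{q-i}{r-i}$-divisible. For a fake edge, one observes $F_j$ is a union of anti-edges on the various $r$-subsets $T\neq S$ of $S\cup\{x_1,\ldots,x_{q-r}\}$, and I would either iterate the anti-edge computation or note directly that a fake edge is built to have the divisibility of a genuine edge plus an honest $K_q^r$-divisible remainder.

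The step I expect to require the most care is the bookkeeping in the $S'\not\subseteq S$ case: one must be precise that the only vertices shared among the copies are those of $S$ (this is what "edge-disjoint copies" together with the construction of anti-edges/fake-edges as sets of \emph{new} vertices guarantees), so that any $S'$ touching a non-root vertex is confined to a single copy. Once that is pinned down, everything else is a one-line congruence. Given that Fact~\ref{fact:Div} is already available, I would in fact expect the cleanest writeup to simply say: an anti-edge is $K_q^r$-divisible except for a deficiency of $-1$ at subsets of $S$, a fake edge similarly has a surplus of $+1$ at subsets of $S$, summing $M(q,r)$ edge-disjoint copies clears these deficiencies/surpluses by the definition of $M(q,r)$, and away from $S$ each copy was already divisible — hence $F$ is $K_q^r$-divisible.
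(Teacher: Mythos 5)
Your proof is correct; the paper states Fact~\ref{fact:Div2} without proof, and your argument (reduce subsets of $S$ to Fact~\ref{fact:Div} plus $\binom{q-i}{r-i}\mid M(q,r)$, and check that each gadget is already divisible at sets not contained in its root) is exactly the routine verification being omitted. The only tiny point to tidy in a writeup is that an $S'\not\subseteq S$ may contain private vertices of \emph{two or more} copies rather than exactly one, but then $d_F(S')=0$ and divisibility is trivial, so nothing breaks.
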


We are now prepared to prove the Refiner Theorem (Theorem~\ref{thm:Refiner}) by using Theorem~\ref{thm:MRT} and then embedding fake edges using the Partial Clique Embedding Lemma (Lemma~\ref{lem:EmbedMinDegree}) as follows.

\begin{lateproof}{thm:Refiner}

 Note that $X$ is a simple $r$-uniform hypergraph. Let $k:= q^{r+1}$ and $h:= r$ and let $C',\varepsilon',\gamma$ be as in  $C,\varepsilon,\gamma$ for $h,r,k$ in Lemma~\ref{lem:EmbedMinDegree}. Let $C''$ be as Theorem~\ref{thm:MRT} for $q$ and $r$. Let $C:= \max\left\{ \left(\frac{(C')^2}{\gamma}\right)^{r^2},~(1+C''\cdot C') \cdot C'',~ q^{r(r+1)}\cdot C'',~\frac{2}{\varepsilon'}\right\}$ and $\varepsilon := \varepsilon' - \frac{1}{C}$. Note that as $C\ge \frac{2}{\varepsilon'}$ we have that $\varepsilon \ge \frac{\varepsilon'}{2} > 0$. Further note that $$\delta(G\setminus E(X)) \ge \delta(G) - \Delta(X) \ge (1-\varepsilon')\cdot v(G)$$ 
 since $\Delta(X) \le \frac{v(X)}{C}$ and $\varepsilon = \varepsilon'-\frac{1}{C}$.

Note we assume $\Delta(X)\ge 1$ as otherwise there is nothing to show (as otherwise $X$ is empty and the empty refiner satisfies the theorem trivially). Since $\Delta(X)\le \frac{v(X)}{C}$, this implies that $v(X)\ge C$.

Let $\Delta' := \max\left\{\Delta(X),~v(X)^{1-\frac{1}{r}}\cdot \log v(X)\right\}$. Since $v(X)\ge C\ge C''$, we have by Theorem~\ref{thm:MRT} that there exists a $C''$-refined $K_q^r$-multi-refiner $R'$ of $X$ with refinement family $\mathcal{H}'$ such that $\Delta(\mathcal{H}') \le C'' \cdot \Delta'$, and $R'$ is $(r-1)$-flat to $Y$. Note that by definition of multi-refiner, we have for each $H\in \mathcal{H}'$ that $|H\cap X|\le 1$.

 Note since $R$ is $C''$-refined, it follows that
 $$\Delta(R') \le C''\cdot \Delta(\mathcal{H}') \le (C'')^2 \cdot \Delta'\le (C'')^2 \cdot \Delta,$$ 
as $\Delta\ge \Delta'$. Since $v(X)\ge C\cdot \Delta(X)$ and $C$ is large enough, we find that $\Delta \le v(G)$. Then since $C\ge \frac{(C'')^2}{\gamma}$ we find that $\Delta(R')\le \gamma\cdot v(G)$.

By Lemma~\ref{lem:EmbedMinDegree} applied to $Y:=R'$, there exist edge-disjoint subgraphs $(T_e: e\in E(Y))$ of $G\setminus E(X)$ such that $T_e$ is a partial $K_{k+|e|}^r$ rooted at $V(e)$, and letting $T:= \bigcup_{e\in E(Y)} T_e$, we have that $\Delta(T) \le C'\cdot \Delta(Y)$. 

For each $e\in E(Y)$, embed the hypergraph ${\rm FakeEdge}_q^r(V(e))$ inside $T_e$ and denote it by $U_e$. Let $R:= \bigcup_{e\in E(Y)} U_e$. Note that $R$ is a simple $r$-uniform hypergraph and $R\subseteq G\setminus E(X)$. 

For each $H\in\mathcal{H}'$, let $$\phi(H) := (H\cap X) ~\cup \bigcup_{e\in E(H)\setminus X} U_e.$$ Let $\mathcal{H}:= \{\phi(H):H\in\mathcal{H}'\}$. Now we prove that $R$ is a $K_q^r$-refiner of $X$ with refinement family $\mathcal{H}$ such that $\Delta(\mathcal{H})\le C\cdot \Delta$ as desired.

First we show $R$ is $C$-refined. By definition of $C''$-refined, we find that $v(H)\le C''$ for all $H\in\mathcal{H}'$. Moreover, since $v(U_e)\le q^{r+1}$ for all $e\in E(Y)$, it follows that $v(\phi(H)) \le q^{r+1}\cdot C''$ for all $H\in \mathcal{H'}$. Since $C\ge q^{r(r+1)}\cdot C''$, we find that $\max \{v(H),e(H)\} \le C$ for all $H\in\mathcal{H}$. 

Let $f\in X\cup R$. First suppose $f\in R$. Since the $U_e$ are pairwise edge-disjoint by construction, it follows that $f$ is only in one $U_e$ for some $e\in E(Y)$. But then  $f\in \phi(H)$ if and only if $e\in H$. Hence $|\mathcal{H}(f)|\le |\mathcal{H}'(e)| \le C$ since $R'$ is $C$-refined. 

So we assume $f\in X$. Since $R'$ is $C$-refined, we have that $|\mathcal{H}'(f)|\le C$. Thus we find that $|\mathcal{H}(f)|= |\mathcal{H}'(f)|\le C$. Altogether, we have that $R$ is $C$-refined. 

Next we show that $R$ is a $K_q^r$-refiner of $X$ with refinement family $\mathcal{H}$. First note that since $H$ is $K_q^r$-divisible, it follows from Fact~\ref{fact:Div} that $\phi(H)$ is $K_q^r$-divisible. Furthermore, by construction and since $R'$ is a $K_q^r$-multi-refiner of $X$, we find that $|\phi(H)\cap X|=|H\cap X|\le 1$ for all $H\in \mathcal{H}'$. Hence $|H\cap X|\le 1$ for all $H\in \mathcal{H}$.

Let $L\subseteq X$ be $K_q^r$-divisible. By definition, there exists $\mathcal{H}'_L \subseteq \mathcal{H}'$ whose elements are edge-disjoint and decomposes $L\cup R'$. Let $\mathcal{H}_L := \bigcup_{H\in\mathcal{H}'_L} \phi(H)$. It follows that $\mathcal{H}_L$ decomposes $L\cup R$. Hence $R$ is a $K_q^r$-refiner of $X$ with refinement family $\mathcal{H}$. 

Thus it remains to show that $\Delta(\mathcal{H})\le C\cdot \Delta$. If $S \in \binom{V(\phi(H))}{r-1}$ for some $H\in \mathcal{H}'$, then either $S\in \binom{V(H)}{r-1}$ or $S\in E(T_H)$. Hence, it follows that
\begin{align*}
\Delta(\mathcal{H}) &\le \Delta(\mathcal{H}') + \Delta(T) \le (1+C''\cdot C')\cdot \Delta(\mathcal{H}') \le (1+C''\cdot C') \cdot C'' \cdot \Delta'\\
&\le C \cdot \Delta,
\end{align*}
as desired since $C\ge (1+C''\cdot C') \cdot C''$. 
\end{lateproof}

\subsection{Proof of the Refined Omni-Absorber Theorem via Embedding Absorbers}

Finally, we are ready to prove the Refined Omni Absorber Theorem (Theorem~\ref{thm:Omni}) by using Theorem~\ref{thm:Refiner} and then embedding absorbers using the Partial Clique Embedding Lemma (Lemma~\ref{lem:EmbedMinDegree}) as follows.

\begin{lateproof}{thm:Omni}
Note that $X$ is a simple $r$-uniform hypergraph. Let $h, \varepsilon'$ be as $C,\varepsilon$ for $q$ and $r$ in Theorem~\ref{thm:Refiner}. By Theorem~\ref{thm:AbsorberExistence}, it follows that there exists $k$ such that for every $K_q^r$-divisible graph $H$ with $v(H)\le h$, there exists an $K_q^r$-absorber $A_H$ with $\max\{e(A_H),v(A_H)\}\le k$. Let $C',\varepsilon'',\gamma$ be as $C, \varepsilon, \gamma$  for $h,r,k$ in Lemma~\ref{lem:EmbedMinDegree}. Let $\Delta' := \max\left\{\Delta(X), v(X)^{1-\frac{1}{r}}\cdot \log v(X)\right\}$. Let $C:= \max \left\{ \left(\frac{h}{\gamma}\right)^{r^2},~h(h+C'),~k, \left(\frac{4\cdot h^2}{\varepsilon''}\right)^{r^2} \right\}$ and $\varepsilon := \min \left\{ \varepsilon',~\varepsilon'' - \frac{1}{C}-h^2\cdot \Delta'\right\}$. Note that $\varepsilon \ge \min\left\{\varepsilon',\frac{\varepsilon''}{4}\right\} > 0$ since $C\ge \frac{2}{\varepsilon''}$ and $C\ge \left(\frac{4\cdot h^2}{\varepsilon''}\right)^{r^2} $. 

Since $C\ge h$ and $\varepsilon\le \varepsilon'$ we have by Theorem~\ref{thm:Refiner} that there exists an $h$-refined $K_q^r$-refiner $R$ of $X$ such that $\Delta(\mathcal{H}) \le h \cdot \Delta'$. Since $R$ is $h$-refined, it follows that $\Delta(R)\le h\cdot \Delta(\mathcal{H}) \le h^2 \cdot \Delta'$. Hence we have that
$$\delta(G\setminus E(X\cup R)) \ge \delta(G) - \Delta(X) - \Delta(R) \ge (1-\varepsilon'')\cdot v(G)$$ 
 since $\Delta(X) \le \frac{v(X)}{C}$ and $\varepsilon \le \varepsilon''-\frac{1}{C} - h^2\cdot \Delta'$.

Let $\mathcal{H}'$ be the refinement family of $R$. By definition of $h$-refined, we find that $v(H)\le h$ for all $H\in\mathcal{H}$. Let $Y$ be the $h$-bounded multi-hypergraph with $V(Y) :=V(G)$ and $E(Y) := \{ V(H): H \in \mathcal{H}'\}$. Note that  $\Delta_{r-1}(Y)=\Delta(\mathcal{H})$. Hence it follows that $\Delta_{r-1}(Y)\le h\cdot \Delta'$. Since $C$ is large enough, it follows that $\Delta'\le v(X)$. Then since $\Delta(X) \le \frac{v(X)}{C}$ and $C\ge\frac{h}{\gamma}$, we find that $\Delta_{r-1}(Y)\le \gamma\cdot v(G)$.

By Theorem~\ref{thm:AbsorberExistence}, there exists a $K_q^r$-absorber $A_H$ for each $H\in \mathcal{H}'$ and $K_q^r$ decompositions $\mathcal{Q}_{1,H}$ of $A_H$ and $\mathcal{Q}_{2,H}$ of $A_H\cup H$. By definition of $k$, we have that $\max\{ v(A_H): H\in \mathcal{H}'\} \le k$. By Lemma~\ref{lem:EmbedMinDegree}, there exist edge-disjoint subgraphs $(T_e: e\in E(Y))$ of $G\setminus (X\cup R)$ such that $T_e$ is a partial $K_{k+|e|}^r$ rooted at $V(e)$, and letting $T:= \bigcup_{e\in E(Y)} T_e$, we have that $\Delta(T) \le C'\cdot \Delta_{r-1}(Y)$.

For each $H\in E(Y)$, embed the hypergraph $A_H$ inside $T_H$ and denote it by $U_H$. Let $A:= R\cup \bigcup_{H\in E(Y)} U_H$. Note that $A$ is a simple $r$-uniform hypergraph edge-disjoint from $X$. Let $\mathcal{H} := \bigcup_{H\in E(Y)} \mathcal{Q}_{1,H}\cup \mathcal{Q}_{2,H}$.

Now we prove that $A$ is a $C$-refined $K_q^r$-omni-absorber for $X$ such that $\Delta(A)\le C\cdot \Delta$.

First we prove that $A$ is $C$-refined. To that end,  let $e\in X\cup A$. First suppose that $e\in A\setminus R = \bigcup_{H\in E(Y)} U_H$. Since the $U_H$ are pairwise edge-disjoint, we have that $e$ is in $U_H$ for only one $H\in E(Y)$. But then $|\mathcal{H}(e)|\le 2$ since $e$ is only in $\mathcal{Q}_{1,H}$ and $\mathcal{Q}_{2,H}$.

So we assume that $e\in R\cup X$. Since $R$ is $h$-refined, we find that $|\mathcal{H}'(e)|\le h$. But then $|\mathcal{H}(e)|\le h$ since $e$ is only in $\mathcal{Q}_{2,H}$ for each such $H$. Since $C\ge \max\{2,h\}$, we have that $A$ is $C$-refined.

Next we prove that $A$ is a $K_q^r$-omni-absorber. First note that every $H\in \mathcal{H}$ is isomorphic to $K_q^r$. Next note that since $R$ is a $K_q^r$-refiner and $A$ is edge-disjoint from $X$, we find that $|H\cap X| \le 1$ for every $H\in \mathcal{H}$.

Now let $L$ be a $K_q^r$-divisible subgraph of $X$. Since $R$ is a $K_q^r$-refiner of $X$, there exists $\mathcal{H}_L \subseteq \mathcal{H}'$ that decomposes $L\cup R$.
We let
$$\mathcal{Q}_L:= \bigcup_{H\in \mathcal{H}_L} \mathcal{Q}_{2,H}~~\cup \bigcup_{H\in \mathcal{H'}\setminus \mathcal{H}_L} \mathcal{Q}_{1,H}.$$
We note that elements of $\mathcal{Q}_L$ are edge-disjoint by construction. It follows that $\mathcal{Q}_L$ is a $K_q^r$-decomposition of $L\cup A$. Hence $\mathcal{Q}_L$ is a decomposition function for $A$. Hence $A$ is a $K_q^r$-omni-absorber with decomposition family $\mathcal{H}$.

Thus it remains to show $\Delta(A)\le C\cdot \Delta$. We calculate that 
\begin{align*}
\Delta(A) &\le \Delta(R)+\Delta(T) \le h^2 \cdot \Delta'+C'\cdot \Delta_{r-1}(Y) \le h(h+C') \cdot \Delta' \\
&\le C\cdot \Delta    
\end{align*}
as desired since $C\ge h(h+C')$ and $\Delta\ge \Delta'$ as $C\ge h$.
\end{lateproof}

\section{Proof of the Existence Conjecture}\label{s:Existence}

We are now almost prepared to provide our new alternate proof of the Existence Conjecture (Theorem~\ref{thm:Existence}).

\subsection{More Preliminaries}

First we prove Lemma~\ref{lem:RandomX} via a standard application of the Chernoff bound~\cite{AS16}.

\begin{lateproof}{lem:RandomX}
We choose $\varepsilon > 0$ small enough as needed throughout the proof. Let $X$ be the spanning subhypergraph of $G$ obtained by choosing each edge of $G$ independently with probability $p$.

For an $(r-1)$-set $S\in \binom{v(G)}{r-1}$, we have that $|G(S)|\le v(X)$ and hence
$$\Expect{|X(S)|}=p\cdot |G(S)|\le p\cdot v(X).$$
Let $A_S$ be the event that $|X(S)| > 2p\cdot v(X)$.
By the Chernoff bound~\cite{AS16}, we find that 
$$\Prob{A_S} \le e^{-p\cdot v(X) / 3} < \frac{1}{2\cdot v(G)^{r-1}}$$
since $p\ge 3r\cdot \left(\frac{\log v(G)}{v(G)}\right)$ as $p\ge v(G)^{-\varepsilon}$ and $v(G)\ge \frac{1}{\varepsilon}$ and $\varepsilon$ is small enough. Hence $$\Prob{ \bigcup_{S\in\binom{V(G)}{r-1}} A_S} < \frac{1}{2}.$$

For $T\subseteq V(G)$ with $r-1\le |T|\le q-1$, define 
$$N_G(T):= \left\{ v\in V(G)\setminus T: v\cup S\in G \text{ for all } S\in \binom{T}{r-1}\right\}.$$ 
Since $\delta(G)\ge (1-\varepsilon)\cdot v(G)$, it follows that 
$$|N_G(T)| \ge \left(1-\binom{i}{r-1}\cdot \varepsilon\right)\cdot v(G) \ge \frac{v(G)}{2}$$
since $\varepsilon < \frac{1}{2\cdot \binom{q}{r-1}}$ as $\varepsilon$ is small enough. 
Similarly let
$$N_X(T):= \left\{ v\in V(G)\setminus T: v\cup S\in X \text{ for all } S\in \binom{T}{r-1}\right\}.$$
For $i\in \{r-1,\ldots, q-1\}$, define
$$\mu_i:= p^{\binom{i}{r-1}}\cdot \frac{v(G)}{2}.$$
Since $p\ge v(G)^{-\varepsilon}$ and $v(G)\ge \frac{1}{\varepsilon}$ and $\varepsilon$ is small enough, we find for all $i\in \{r-1,\ldots,q-1\}$ that 
$$\mu_i \ge 8 \log(2q\cdot v(G)^{q-1}).$$
For $v\in N_G(T)$, we have that $\Prob{v\in N_X(T)} = p^{\binom{|T|}{r-1}}$. Thus by Linearity of Expectation, we find that
$$\Expect{|N_X(T)|} = \sum_{v\in N_G(T)} \Prob{v\in N_G(T)} = p^{\binom{|T|}{r-1}}\cdot |N_G(T)|\ge \mu_{|T|}.$$
Note then that $|N_X(T)|$ is the sum of independent Bernoulli $\{0,1\}$-random variables. Let $B_T$ be the event that $|N_X(T)| < \frac{\mu_{|T|}}{2}$.  By the Chernoff bound, we find that
$$\Prob{B_T} \le e^{-\mu_{|T|}/ 8} < \frac{1}{2q\cdot v(G)^{q-1}}.$$
Hence 
$$\Prob{ \bigcup_{T} B_T} < \frac{1}{2}.$$

Hence it follows that with some positive probability that none of the $A_S$ or $B_T$ happen. Since none of the $A_S$ happen, it follows that $\Delta(X)\le 2p\cdot v(G)$. For $e\in G$, since none of the $B_T$ happen it follows that the number of $K_q^r$s in $X\cup \{e\}$ containing $e$ is at least
$$\frac{1}{(q-r)!}\cdot \prod_{i=r}^{q-1} \frac{\mu_i}{2} \ge \frac{1}{4^{q-r}\cdot (q-r)!} \cdot p^{\binom{q}{r}-1}\cdot v(G)^{q-r} \ge \varepsilon\cdot p^{\binom{q}{r}-1}\cdot v(G)^{q-r},$$
as desired since $\varepsilon \le \frac{1}{4^{q-r}\cdot (q-r)!}$ as $\varepsilon$ is small enough.
\end{lateproof}

\subsection{Proof of the Existence Conjecture}

Here we prove the stronger minimum degree version of the $\lambda=1$ case of the Existence Conjecture as follows (which was also already proved for all constant $\lambda$ earlier by Keevash~\cite{K14} and Glock, K\"{u}hn, Lo, and Osthus~\cite{GKLO16}).

\begin{thm}[Existence Conjecture - Minimum Degree Version - $\lambda=1$ case]\label{thm:ExistenceMinDegree}
For all integers $q > r \geq 2$, there exist reals $n_0$ and $\varepsilon > 0$ such that for all $K_q^r$-divisible $r$-uniform hypergraphs $G$ with $v(G)\ge n_0$ and $\delta(G) \ge (1-\varepsilon)\cdot v(G)$, there exists a $K_q^r$-decomposition of $G$.
\end{thm}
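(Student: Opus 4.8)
The plan is to follow the four-step outline from Section~\ref{s:ProofOverview}, instantiated with $G$ the given $K_q^r$-divisible hypergraph of large minimum degree (in the actual Existence Conjecture one sets $G=K_n^r$, but the minimum-degree version is what we prove). First I would fix a small probability $p$; the natural scaling is $p = n^{-\delta}$ for a tiny $\delta>0$ chosen so that $p$ is above the threshold $v(G)^{-\varepsilon}$ forced by Lemma~\ref{lem:RandomX} while the quantity $2p\cdot v(G)$ stays well below the degree cap $\frac{n}{C}$ needed by Theorem~\ref{thm:Omni}. Apply Lemma~\ref{lem:RandomX} to extract a spanning $X\subseteq G$ with $\Delta(X)\le 2p\cdot v(G)$ and, for every $e\in G\setminus X$, at least $\varepsilon\cdot p^{\binom{q}{r}-1}\cdot v(G)^{q-r}$ many $K_q^r$'s in $X\cup\{e\}$ through $e$. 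Then apply the Refined Omni-Absorber Theorem (Theorem~\ref{thm:Omni}) to $X\subseteq G$: since $\delta(G)\ge(1-\varepsilon)n$ and $\Delta(X)\le\frac nC$, this yields a $C$-refined $K_q^r$-omni-absorber $A\subseteq G$ for $X$ with decomposition family $\mathcal{H}$ and $\Delta(A)\le C\cdot\Delta$, where $\Delta=\max\{\Delta(X),n^{1-1/r}\log n\}$; in particular $\Delta(A) = o(n)$ by our choice of $p$.

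Next I would set up the nibble. Let $J:=G\setminus(X\cup A)$. Since $G$ is $K_q^r$-divisible, $X\cup A$ is $K_q^r$-divisible (as $A$ admits a decomposition, being an omni-absorber applied to the empty divisible subgraph, and... more carefully, one checks $X\cup A$ has the right codegree congruences; actually the cleanest route is: $G$ is divisible, $A$ is divisible since it decomposes, hence $J$ need not be divisible directly, so instead I apply the Boost Lemma to a divisible subgraph). The correct move here is to observe $\delta(J)\ge(1-\varepsilon')n$ still holds since $\Delta(X)+\Delta(A)=o(n)$, and to apply Lemma~\ref{lem:RegBoost} to (a $K_q^r$-divisible spanning subhypergraph close to) $J$ to obtain $H\subseteq{\rm Design}_{K_q^r}(J)$ that is nearly regular: $d_H(v)=(1\pm v(J)^{-(q-r)/3})\cdot\frac{1}{2(q-r)!}\cdot v(J)^{q-r}$ for all $v$. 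I then feed $H$ together with the ``reserve'' cliques — the $K_q^r$'s having exactly one edge in $G\setminus(X\cup A)$ and all remaining edges in $X$, of which Lemma~\ref{lem:RandomX} guarantees $\gtrsim p^{\binom qr-1}n^{q-r}$ per edge of $G\setminus(X\cup A)$ — into Theorem~\ref{thm:NibbleReserves}. Here $G_1$ is the design-hypergraph $H$ on vertex set $E(J)$ with the near-regularity giving the degree hypotheses, $A$ (in the nibble's notation, the ``$A$-side'') is $E(G\setminus(X\cup A))=E(J)$, and $G_2$ is the bipartite reserve hypergraph between $E(J)$ and $E(X)$; one must check codegrees are $\le D^{1-\beta}$ and the reserve degrees into $A$ are $\ge D^{1-\alpha}$, which follows from $\Delta(X)\le 2pn$ giving small codegrees while the reserve count $p^{\binom qr-1}n^{q-r}$ is a large power of $D\approx n^{q-r}$. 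The output is an $E(J)$-perfect matching of $G_1\cup G_2$, i.e.\ a $K_q^r$-packing of $G\setminus A$ covering every edge of $J=G\setminus(X\cup A)$.

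Finally, let $L$ be the \emph{leftover}: the subgraph of $X$ consisting of edges of $X$ not covered by the packing. Since the packing together with $A$-free cliques decomposes all of $G\setminus A$ except $L$, and $G$ is $K_q^r$-divisible while each removed clique is, $L$ is $K_q^r$-divisible. By the defining property of the omni-absorber $A$, there is $\mathcal{Q}_A(L)\subseteq\mathcal{H}$ of pairwise edge-disjoint $K_q^r$'s with $\bigcup\mathcal{Q}_A(L)=L\cup A$. Combining this decomposition of $L\cup A$ with the packing of $G\setminus A$ (restricted to the edges it actually covers, namely $G\setminus(A\cup L)$) gives a $K_q^r$-decomposition of all of $G$. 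The main obstacle I anticipate is the bookkeeping in Step (4): verifying precisely that the hypotheses of Theorem~\ref{thm:NibbleReserves} are met — correctly identifying $G_1$, $G_2=(A,B)$, the parameter $D$, and checking the codegree bound $D^{1-\beta}$ and the two lower-degree bounds $D^{1-\alpha}$ (for $A$ in $G_2$) and $D(1-D^{-\beta})$ (for $A$ in $G_1$) simultaneously — together with the minor but real point of massaging $J$ into a $K_q^r$-divisible hypergraph of almost the same minimum degree so that the Boost Lemma applies; the rest is routine once $p$ is chosen to balance ``$\Delta(X)$ small enough for Theorem~\ref{thm:Omni}'' against ``$p$ large enough for Lemma~\ref{lem:RandomX} and enough reserves''.
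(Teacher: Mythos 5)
Your proposal is correct and follows essentially the same route as the paper's own proof: reserve a random $X$ via Lemma~\ref{lem:RandomX} with $p=v(G)^{-\sigma}$, build the refined omni-absorber via Theorem~\ref{thm:Omni}, regularity-boost $J=G\setminus(X\cup A)$ via Lemma~\ref{lem:RegBoost}, run Theorem~\ref{thm:NibbleReserves} with $G_1$ the boosted design hypergraph and $G_2$ the bipartite reserve hypergraph to cover all of $J$, and then absorb the $K_q^r$-divisible leftover $L=X\setminus\bigcup\mathcal{Q}_1$ using the omni-absorber. The parameter bookkeeping you flag is exactly what Section~\ref{s:Existence} carries out (with $\sigma=\min\{\varepsilon_2,\alpha/(r\binom{q}{r})\}$ balancing the degree cap against the reserve lower bound), and the paper applies the Boost Lemma to $J$ directly without the divisibility massaging you anticipate.
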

\begin{proof}
We choose $n_0$ large enough as needed throughout the proof. Let $\varepsilon_1$, $\varepsilon_2$, $\varepsilon_3$ be as $\varepsilon$ in Theorem~\ref{thm:Omni} and Lemmas~\ref{lem:RandomX} and~\ref{lem:RegBoost}, respectively. Let $\varepsilon := \min \left\{\varepsilon_1,\varepsilon_2,\frac{\varepsilon_3}{2}\right\}$.

Let $C$ be as in Theorem~\ref{thm:Omni} for $q$ and $r$. Let $\beta:=\frac{1}{4(q-r)}$. Let $D_{\beta}$ and $\alpha$ be as in Theorem~\ref{thm:NibbleReserves} for $\binom{q}{r}$ and $\beta$. Let $\sigma = \min\left\{ \varepsilon_2, \frac{\alpha}{r\cdot \binom{q}{r}}\right\}$ and let $p:=v(G)^{-\sigma}$. Let $\Delta:= 2p\cdot v(G)$. 

Since $\varepsilon\le \varepsilon_2$, we have that $\delta(G)\ge (1-\varepsilon_2)\cdot v(G)$. Since $\sigma \le \varepsilon_2$, we have that $p\ge v(G)^{-\varepsilon_2}$. We also have that $v(G)\ge n_0\ge \frac{1}{\varepsilon_2}$ since $n_0$ is large enough. Thus by Lemma~\ref{lem:RandomX}, there exists a spanning subhypergraph  $X\subseteq G$ such that $\Delta(X)\le 2p\cdot v(G) = \Delta$ and for all $e\in G$, there exist at least $\varepsilon_2 \cdot p^{\binom{q}{r}-1}\cdot v(G)^{q-r}$ $K_q^r$'s in $X\cup \{e\}$ containing $e$.

Since $\varepsilon\le \varepsilon_1$, we have that $\delta(G) \ge (1-\varepsilon_1)\cdot v(G)$. Note that $\Delta \le \frac{v(X)}{C}$ since $v(G)\ge n_0$ is large enough. Similarly note that $\Delta \ge v(X)^{1-\frac{1}{r}}\cdot \log v(X)$ since $\Delta \ge v(X)^{1-\frac{1}{2r}}$ and $v(X)=v(G)\ge n_0$ is large enough. Thus by Theorem~\ref{thm:Omni}, there exists a $C$-refined $K_q^r$-omni-absorber $A$ for $X$ such that $A\subseteq G$ and $\Delta(A)\le C \cdot \Delta$.

Let $J:= G\setminus (X\cup A)$. Now $$\delta(J) \ge \delta(G) - \Delta(A) - \Delta(X) \ge (1-\varepsilon)\cdot v(G) - (1+C)\cdot \Delta \ge (1-\varepsilon_3)\cdot v(J),$$
where we used that $\varepsilon\le \frac{\varepsilon_3}{2}$ and that $v(J)=v(G)\ge n_0$ is large enough. Thus by Lemma~\ref{lem:RegBoost}, there exists a subhypergraph $\mathcal{D}'$ of ${\rm Design}_{K_q^r}(J)$ such that $d_{\mathcal{D}'}(v) = (1 \pm v(J)^{-(q-r)/3}) \cdot \frac{1}{2(q-r)!} \cdot v(J)^{q-r}$ for all $v\in V(\mathcal{D}')$.

Let $G_1:= \mathcal{D}'$ and let $G_2$ be such that 
$$V(G_2) := E(J)\cup E(X),~~E(G_2) := \{ S\subseteq E(J)\cup E(X): |S\cap E(J)|=1,~ S \text{ is isomorphic to } K_q^r\}.$$ Let $$G:= G_1\cup G_2.$$ 
Note that $G_1$ and $G_2$ are $\binom{q}{r}$-uniform and hence so is $G$. Let $A:=E(J)$ and $B:=E(X)$ and note that $G_2$ is a bipartite hypergraph with parts $A$ and $B$.
Also note that $G_1$ and $G_2$ are edge-disjoint, that $V(G_1)=E(J)=A$ and hence that $V(G_1)\cap V(G_2)=E(J)=A$. 

We seek to apply Theorem~\ref{thm:NibbleReserves} to find a $V(G_1)$-perfect matching of $G$. To that end, let 
$$D:= (1 + v(J)^{-(q-r)/3}) \cdot \frac{1}{2(q-r)!} \cdot v(J)^{q-r}.$$  
Since $v(J)$ is large enough as $n_0$ is large enough, we have that $D\ge D_{\beta}$. Now we check the codegrees of $G$ and the degrees of $G_1$ and $G_2$.

First we check the codegrees of $G$. Note that the codegrees of $G$ are at most 
$$v(J)^{q-(r+1)} \le D^{1-\beta},$$
where the inequality follows since $\beta < \frac{1}{q-r}$ and $v(J)\ge n_0$ is large enough. 

Next we check the degrees of $G_1$. Now every vertex of $G_1$ has degree at most $D$ in $G_1$. Similarly every vertex of $G_1$ has degree at least 
$$(1 - 2\cdot v(J)^{-(q-r)/3})\cdot D \ge D\cdot (1-D^{-\beta}),$$ 
where the inequality follows since $\frac{q-r}{3} > \frac{1}{4} \ge \beta$ and $v(J)\ge n_0$ is large enough. 

Finally we check the degrees of $G_2$. Note that every edge of $G_2$ contains at least two vertices of $B$ since $\binom{q}{r}\ge 3$ as $q> r\ge 2$. It follows from that fact that every vertex in $G_2$ has degree in $G_2$ at most 
$$v(X)^{q-r-1}\cdot \Delta(X) \le D$$ 
where the inequality follows since $\Delta(X) \le \frac{v(X)}{2(q-r)!}$ by the choice of $p$. Meanwhile, every vertex of $G_1$ has degree in $G_2$ at least 
$$\varepsilon_2\cdot p^{\binom{q}{r}-1} \cdot v(G)^{q-r} \ge D^{1-\alpha},$$
where the inequality follows since $\sigma \le \frac{\alpha}{r\binom{q}{r}}$ and hence $p^{\binom{q}{r}-1} \le v(G)^{-\alpha/2}$ (and we used that $v(G)$ is large enough).

By Theorem~\ref{thm:NibbleReserves}, we have that there exists a $V(G_1)$-perfect matching $\mathcal{Q}_1$ of $G$. Note that $\mathcal{Q}_1$ is also then a $K_q^r$-packing of $G$. Let $Q_1 := \bigcup \mathcal{Q}_1$. Thus we have that $E(J)\subseteq E(Q_1)$. Note that $Q_1$ is $K_q^r$-divisible since it admits the $K_q^r$ decomposition $\mathcal{Q}_1$. 

Let $L := X\setminus Q_1$. Note that $A$ is $K_q^r$-divisible as it admits a $K_q^r$-decomposition by the definition of refined omni-absorber. Since $G$ is $K_q^r$-divisible by assumption and $A$ and $Q_1$ are edge-disjoint $K_q^r$-divisible subgraphs of $G$, it follows that $L = G\setminus (Q_1\cup A)$ is $K_q^r$-divisible . 

By the definition of refined omni-absorber, it follows that $L\cup A$ admits a $K_q^r$-decomposition $\mathcal{Q}_2$. But then $\mathcal{Q}_1\cup\mathcal{Q}_2$ is a $K_q^r$ decomposition of $G$ as desired.
\end{proof}

We also note the proof of Theorem~\ref{thm:ExistenceMinDegree} given above easily generalizes to prove the existence of designs with parameters $(n,q,r,\lambda)$ provided $n$ is large enough (that is for all constant $\lambda$ and not just $\lambda=1$). Namely, for constant $\lambda$, we may still take $X$, $A$ and $A\cup X$ to be simple. Then when we invoke nibble we do so for the remaining multi-hypergraph where we also delete from the edges of the design hypergraph any $q$-sets used by the decomposition family $\mathcal{H}$ of $A$. This concludes our proof of the Existence Conjecture.

\subsection{Required Ingredients for Glock, K\"{u}hn, Lo, and Osthus' Absorber Proof}

The informed reader might point out the proof of the existence of absorbers by Glock, K\"{u}hn, Lo, and Osthus (namely of Theorem~\ref{thm:AbsorberExistence}) requires not only that the Existence Conjecture hold for smaller uniformities but also some additional stronger results. While they prove the existence of absorbers inside a `supercomplex', which is a harder embedding theorem, even their implicit proof of `abstract'/`unembedded' absorbers requires more. In particular, they inductively require that the following holds for smaller uniformities (i.e.~for all $r' < r$): 

\begin{thm}\label{thm:CogirthAbsorber}
Let $q>r\ge 1$ be integers. If $L$ is a $K_q^r$-divisible $r$-uniform hypergraph, then there exists a $K_q^r$-absorber $A$ of $L$ and $K_q^r$ decompositions $\mathcal{Q}_1$ of $A$ and $\mathcal{Q}_2$ of $A\cup L$ such that for all $Q_1\in \mathcal{Q}_1$ and $Q_2\in\mathcal{Q}_2$, we have $|V(Q_1)\cap V(Q_2)|\le r$.
\end{thm}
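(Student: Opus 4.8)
\textbf{Proof proposal for Theorem~\ref{thm:CogirthAbsorber}.}

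The plan is to deduce this ``low-overlap'' version of the absorber existence theorem from Theorem~\ref{thm:AbsorberExistence} together with a pigeonhole/averaging argument, much in the spirit of how one upgrades an unstructured decomposition to a structured one. First I would apply Theorem~\ref{thm:AbsorberExistence} to obtain some $K_q^r$-absorber $A_0$ for $L$, with $V(L)$ independent in $A_0$ and with $K_q^r$ decompositions $\mathcal{Q}_1^0$ of $A_0$ and $\mathcal{Q}_2^0$ of $A_0\cup L$. The issue is that two cliques $Q_1\in\mathcal{Q}_1^0$ and $Q_2\in\mathcal{Q}_2^0$ may share up to $q-1$ vertices. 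The standard trick to kill large overlaps is to take disjoint copies and randomly relabel: replace $A_0$ by a union of $t$ vertex-disjoint copies $A_0^{(1)},\dots,A_0^{(t)}$ of $A_0$ living on a common, much larger vertex set $W$ (with $V(L)$ still independent), and then, for one of the two decompositions, randomly permute the non-root vertices of each copy inside $W$. If $|W|$ is large enough relative to $q$, then with positive probability every clique of the first decomposition meets every clique of the second decomposition in at most $r$ vertices, since the expected number of bad pairs is $O\!\big((\text{number of clique pairs})\cdot (q/|W|)^{r+1}\big)$, which is $o(1)$ once $|W|\gg q\cdot(\text{size of }A_0)^{2/(r+1)}$.

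The main obstacle — and where some care is needed — is that randomly relabelling one decomposition will generically destroy the property that it \emph{is} a decomposition of a \emph{fixed} hypergraph that interacts correctly with the other decomposition. Concretely, one cannot independently relabel the cliques of $\mathcal{Q}_2$; one must relabel the \emph{edge set} $A\cup L$ as a whole (a single permutation of $W$ fixing $V(L)$ pointwise), which relabels both $\mathcal{Q}_2$ and the underlying graph $A$, and then re-derive $\mathcal{Q}_1$ on the relabelled $A$. So the correct setup is: fix $A := A_0^{(1)}\cup\cdots\cup A_0^{(t)}$ with its decomposition $\mathcal{Q}_1$ into cliques; then for a uniformly random permutation $\pi$ of $W$ fixing $V(L)$, consider $\pi(A\cup L) = \pi(A)\cup L$ and its decomposition $\pi(\mathcal{Q}_2)$. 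But then we need $\pi(A)=A$ so that $\mathcal{Q}_1$ is still a decomposition of the relabelled absorber. This forces us instead to restrict to permutations in the automorphism group of $A$; the disjoint-copies construction is exactly what makes this automorphism group large enough (it contains, for each copy, the full symmetric group on the $v(A_0)-v(L)$ non-root vertices of that copy acting within that copy's vertex set). Averaging over $\pi$ in this group, each pair $(Q_1,Q_2)$ of cliques with $Q_1\in\mathcal{Q}_1$, $Q_2\in\pi(\mathcal{Q}_2)$ and $|V(Q_1)\cap V(Q_2)|>r$ contributes probability at most $O((q^2/v(A_0))^{r+1-r})=O(q^2/v(A_0))$ of being bad (the relevant shared vertices being forced to land in a prescribed small set), so taking $v(A_0)$ large — i.e. first blowing up $A_0$ by padding with extra vertex-disjoint copies or simply with isolated vertices absorbed into extra trivial cliques — drives the expected number of bad pairs below $1$.

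Thus the key steps, in order, are: (i) invoke Theorem~\ref{thm:AbsorberExistence} for $L$; (ii) enlarge the absorber by taking enough vertex-disjoint copies (and, if needed, inflating with extra vertices) so that $v(A)$ is large compared to $q$ and the number of clique pairs, while keeping $V(L)$ independent and keeping both decompositions valid; (iii) observe that the automorphism group of $A$ acts transitively enough on each copy's non-root vertices that a random automorphism $\pi$ sends each fixed pair of cliques to one with intersection of size $>r$ with probability $O(q^2/v(A))$; (iv) apply linearity of expectation and the union bound over all $O(e(A)^2)$ clique pairs to find a $\pi$ with no bad pair; (v) set $\mathcal{Q}_2 := \pi(\mathcal{Q}_2^{\mathrm{old}})$ and $\mathcal{Q}_1 := \mathcal{Q}_1^{\mathrm{old}}$, and verify that $A$, $\mathcal{Q}_1$, $\mathcal{Q}_2$ satisfy all the requirements. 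I expect step (iii) — pinning down precisely which permutations preserve $A$ and checking that this group still acts with enough freedom to beat the union bound — to be the main technical point; everything else is bookkeeping, and the calculation in (iii)–(iv) is a routine first-moment estimate of exactly the type used elsewhere in the paper (e.g. in Claim~\ref{claim:Z} and in the proof of Lemma~\ref{lem:EmbedMinDegree}).
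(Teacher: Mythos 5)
There is a genuine gap, and it sits exactly where you predicted: step (iii). The group of permutations of $V(A)$ that fix $V(L)$ pointwise and preserve $A$ as a hypergraph is \emph{not} large. Taking $t$ copies of $A_0$ glued along $V(L)$ gives you automorphisms that permute whole copies (roughly a wreath product of $\mathrm{Aut}(A_0)$ rel $V(L)$ with $S_t$), but it does \emph{not} give you the full symmetric group on the non-root vertices within a copy: an arbitrary permutation of those vertices destroys the edge set of $A_0$, and $\mathrm{Aut}(A_0)$ fixing the roots may well be trivial. With only copy-permuting automorphisms available, a random $\pi$ sends each $Q_2\in\mathcal{Q}_2$ to the corresponding clique of a uniformly random copy; cross-copy pairs already intersect in at most $r-1$ vertices (inside the independent set $V(L)$), so the only bad pairs are same-copy pairs, and those are mapped onto isomorphic same-copy pairs with probability $1$. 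The averaging gains nothing. The deeper obstruction is that $\mathcal{Q}_1$ and $\mathcal{Q}_2$ both decompose essentially the same edge set $A$, so they cannot be randomized independently: any relabelling that keeps $\mathcal{Q}_2$ a decomposition of $A\cup L$ must carry $A$ to itself, and then $\mathcal{Q}_1$ is dragged along by the same map. Note also how strong the conclusion is: since a clique $Q_2$ with $|V(Q_1)\cap V(Q_2)|\le r$ contains at most one edge of $Q_1$, the $\binom{q}{r}$ edges of each $Q_1\in\mathcal{Q}_1$ must land in $\binom{q}{r}$ \emph{distinct} cliques of $\mathcal{Q}_2$. This is a global "cogirth" condition between the two decompositions, not something reachable by a local perturbation of one fixed absorber.

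For comparison, the paper does not attempt any symmetrization. It deduces Theorem~\ref{thm:CogirthAbsorber} from Theorem~\ref{thm:MinCogirthSteiner}, a strengthening of the minimum-degree Existence theorem which, given any $K_q^r$-packing $\mathcal{Q}_1$ of a dense divisible $G$, produces a decomposition $\mathcal{Q}_2$ of $G$ all of whose cliques meet each clique of $\mathcal{Q}_1$ in at most $r$ vertices. That theorem is proved by rerunning the entire machinery of Theorem~\ref{thm:ExistenceMinDegree} while forbidding every $q$-set that meets some member of $\mathcal{Q}_1$ in more than $r$ vertices — both in the absorber-embedding step of Theorem~\ref{thm:Omni} (a modified Lemma~\ref{lem:EmbedMinDegree}) and in the nibble step — which is feasible because such $q$-sets form a lower-order fraction of all candidates; it also needs the improved absorber of Lemma~\ref{lem:BetterAbsorber}, whose decomposition of $A\cup L$ meets $V(L)$ only in edges of $L$. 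So the freedom you were hoping to find inside $\mathrm{Aut}(A)$ is instead obtained by re-choosing the cliques of $\mathcal{Q}_2$ from scratch inside a large ambient host, subject to the avoidance constraint.
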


We note that Theorem~\ref{thm:CogirthAbsorber} would follow from proving the minimum degree version of the Existence Conjecture with the following additional outcome about the decomposition of $G$.

\begin{thm}\label{thm:MinCogirthSteiner}
For all integers $q > r \geq 1$, there exist reals $n_0$ and $\varepsilon > 0$ such that the following holds: Let $G$ be a $K_q^r$-divisible $r$-uniform hypergraphs $G$ with $v(G)\ge n_0$ and $\delta(G) \ge (1-\varepsilon)\cdot v(G)$. If $\mathcal{Q}_1$ is a $K_q^r$-packing of $G$, then there exists a $K_q^r$-decomposition $\mathcal{Q}_2$ of $G$ such that for all $Q_1\in \mathcal{Q}_1$ and $Q_2\in\mathcal{Q}_2$, we have $|V(Q_1)\cap V(Q_2)|\le r$.
\end{thm}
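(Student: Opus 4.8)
The plan is to follow the proof of \cref{thm:ExistenceMinDegree} essentially verbatim---reserve a random subhypergraph $X$, build an omni-absorber $A$ for $X$, regularity-boost $G\setminus(X\cup A)$, and finish with nibble with reserves---while carrying a ``low-intersection'' constraint through every step, and to proceed by induction on $r$ (the base case $r=1$ being elementary: a $K_q^1$-decomposition is a partition into $q$-sets, and the required condition is that the partition be rainbow with respect to the partial partition $\mathcal{Q}_1$, which is possible once $v(G)$ is large). Call a copy $Q$ of $K_q^r$ in $G$ \emph{bad} if $|V(Q)\cap V(Q_1)|\ge r+1$ for some $Q_1\in\mathcal{Q}_1$; the single genuinely new combinatorial ingredient is the following counting bound, with a short proof: for every $e\in E(G)$, the number of bad copies of $K_q^r$ in $G$ through $e$ is $O_{q,r}\!\left(v(G)^{q-r-1}\right)$. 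One proves this by fixing $Q_1$ and splitting on $j:=|V(e)\cap V(Q_1)|$: for $j=r$ there is a unique such $Q_1$ and $O(v(G)^{q-r-1})$ ways to complete the clique; for $j\le r-1$ one needs at least $r+1-j$ of the $q-r$ new vertices of $Q$ to land in $V(Q_1)$, giving $O(v(G)^{q-2r-1+j})$ completions per $Q_1$, while---and this is the only place the packing hypothesis is used---a fixed $j$-subset of $V(e)$ lies in only $O(v(G)^{r-j})$ cliques of $\mathcal{Q}_1$ (each using $\binom{q-j}{r-j}$ of the $r$-sets through it, disjointly), so the two powers of $v(G)$ cancel and summing over $j$ gives $O(v(G)^{q-r-1})$.

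Given this bound, the reserve/boost/nibble steps of \cref{thm:ExistenceMinDegree} all survive after deleting the bad copies. Indeed, bad copies through an edge are $o(v(G)^{q-r})$ and through a pair of edges are $O(v(G)^{q-r-1})$, so (applying \cref{lem:RegBoost} first and then deleting bad copies from the resulting near-regular $\mathcal{D}'$) every degree changes by only $o(D)$ and every codegree stays below $D^{1-\beta}$; likewise the reserve bipartite hypergraph, after removing bad copies, still has every vertex of $E(J)$ of degree at least $D^{1-\alpha}$, because the number of bad reserve cliques through an edge of $J$ is $O(v(G)^{q-r-1})=o\!\left(p^{\binom{q}{r}-1}v(G)^{q-r}\right)$ when $p=v(G)^{-\sigma}$ with $\sigma$ a small enough constant. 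Hence \cref{thm:NibbleReserves} applied to the reduced hypergraph produces a $K_q^r$-packing $\mathcal{Q}_1'$ of $G\setminus A$ covering $G\setminus(X\cup A)$ \emph{all of whose cliques are non-bad}.

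The remaining point---and the main obstacle---is that the absorbing step must also output only non-bad cliques: for the leftover $L:=X\setminus\bigcup\mathcal{Q}_1'$ we decompose $L\cup A$ by some $\mathcal{Q}_A(L)\subseteq\mathcal{H}$, so we need every member of the decomposition family $\mathcal{H}$ of $A$ to be a non-bad copy of $K_q^r$. I would therefore prove a ``low-intersection'' strengthening of \cref{thm:Omni}: an omni-absorber for $X$ in $G$ whose decomposition family avoids the (by the counting bound, very sparse) set of bad cliques. This is obtained by re-running the proof of \cref{thm:Omni}---hence of \cref{thm:Refiner,thm:MRT,thm:MultiOmni} and of \cref{lem:RefineDown,lem:Sparsify}---with ``avoid bad intersections with $\mathcal{Q}_1$'' adjoined to the list of configurations avoided in every embedding step: in \cref{prop:QuasiRandomPartition,lem:SpecialSets,lem:EmbedMinDegree}, in the random-permutation argument inside \cref{lem:RefineDownOneStep}, and in the deterministic ``avoid the bad sets'' argument in the proof of \cref{thm:MultiOmni}. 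Since the extra forbidden configurations are negligible per edge and per $(r-1)$-set, and each of these arguments already tolerates an $\varepsilon\cdot v(G)$- or $O(v(G)^{\ell}/C)$-sized family of forbidden configurations, the new constraint is absorbed with no change to the quantitative bounds. The one delicate ingredient is the private absorbers $A_H$ embedded at the end of the proofs of \cref{thm:Refiner,thm:Omni}: their own decompositions must also be non-bad, so there one builds the refiner so that each member $H'$ of its refinement family satisfies $|V(H')\cap V(Q_1)|\le r$ for every $Q_1$ (which the above embedding modifications supply), one invokes the cogirth absorber existence \cref{thm:CogirthAbsorber}---whose construction following Glock, K\"{u}hn, Lo, and Osthus~\cite{GKLO16} uses exactly the minimum-degree Existence Conjecture with the cogirth outcome, i.e.\ \cref{thm:MinCogirthSteiner}, at uniformities $r'<r$---and one embeds each $A_H$ with a sufficiently generic placement of its fresh vertices via \cref{lem:EmbedMinDegree} (avoiding, for the constantly many cliques of $\mathcal{Q}_1$ meeting $V(H')$, the relevant vertices of those cliques). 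This inductive dependence of \cref{thm:CogirthAbsorber} only on \cref{thm:MinCogirthSteiner} at strictly smaller uniformities is why the whole argument is an induction on $r$ and why it dovetails with Glock, K\"{u}hn, Lo, and Osthus's inductive construction of absorbers.

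The hard part is thus this threading of the low-intersection constraint through all the layers of the omni-absorber construction, and confirming that the induction on $r$ closes without circularity. Once the low-intersection omni-absorber is in hand, the conclusion is verbatim the end of \cref{thm:ExistenceMinDegree}: $L\cup A$ is $K_q^r$-divisible (as $G$, $A$, and the nibble output are), so $\mathcal{Q}_A(L)$ exists, and $\mathcal{Q}_2:=\mathcal{Q}_1'\cup\mathcal{Q}_A(L)$ is a $K_q^r$-decomposition of $G$ every clique of which is non-bad, i.e.\ meets every $Q_1\in\mathcal{Q}_1$ in at most $r$ vertices.
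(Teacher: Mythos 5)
Your overall skeleton matches the paper's: run the proof of \cref{thm:ExistenceMinDegree} after deleting from the design hypergraph the ``bad'' cliques (those meeting some $Q_1\in\mathcal{Q}_1$ in at least $r+1$ vertices), and arrange that the decomposition family of the omni-absorber consists only of non-bad cliques. Your counting bound --- $O_{q,r}(v(G)^{q-r-1})$ bad cliques through any fixed edge, obtained by bounding the number of packing cliques through a fixed $j$-set by $O(v(G)^{r-j})$ via edge-disjointness --- is correct, and it is exactly what makes the boost/nibble/reserve modifications go through (the paper's own count of ``$O(v(X)^{q-r+1})$'' is evidently a typo for your exponent).

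The gap is in the absorbing step, which you correctly identify as the crux but then handle by a route that is both more than is needed and not clearly executable. You propose to force every refinement-family member $H'$ to satisfy $|V(H')\cap V(Q_1)|\le r$ by adjoining this constraint to every embedding step of the refiner construction. Two concrete obstructions: in \cref{lem:SpecialSets} a single special set $S_I$ serves all $\Theta(e(X)/k^r)$ edges of profile $I$ simultaneously, so the constraint is a union of forbidden configurations over a very large collection of packing cliques whose negligibility you assert but do not verify (for $r=2$ and a near-perfect packing it forbids a constant fraction of pairs unless the constants are tracked carefully); and at the bottom of the vortex in the proof of \cref{thm:MRT} the remainder lives on an essentially fixed constant-sized set $Y'$ and the refinement family contains $\bigcup_{e\in L}e^*$ for arbitrary divisible $L\subseteq X'$, so there is no embedding freedom left with which to avoid bad intersections --- one would need a separate argument choosing the whole vortex so that no $Q_1$ has $r+1$ vertices in $Y_r$. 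More fundamentally, controlling $V(H')$ against $\mathcal{Q}_1$ is unnecessary: the cliques of the final decomposition family come only from the private absorbers' decompositions of $A_H$ and $A_H\cup H$. The missing idea is \cref{lem:BetterAbsorber}: by first gluing $M(q,r)$ partial cliques rooted at each edge of $H$ and absorbing the two complementary pieces separately, one gets an absorber whose decomposition of $A_H\cup H$ meets $V(H)$ in at most $r$ vertices, and then only in an edge of $H$. Every clique of the decomposition family then has at most $r$ ``old'' vertices, and the low-intersection requirement reduces to a single generic-placement condition on the fresh vertices in one modified application of \cref{lem:EmbedMinDegree}, where your own counting argument shows the forbidden $k$-sets have density $O(1/v(G))$. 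This localizes the entire difficulty, leaves the refiner construction untouched, and removes the need to route through \cref{thm:CogirthAbsorber} (which controls intersections between the absorber's two decompositions, not intersections with $\mathcal{Q}_1$, and is not the relevant property here); in particular no induction on $r$ is needed beyond what is already hidden in the black box \cref{thm:AbsorberExistence}.
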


It is relatively straightforward to adapt our proof of Theorem~\ref{thm:ExistenceMinDegree} to guarantee a $K_q^r$-decomposition $\mathcal{Q}_2$ as desired. Namely, there are two important modifications but first we require a better construction of absorbers which follows easily from Theorem~\ref{thm:AbsorberExistence}.

\begin{lem}\label{lem:BetterAbsorber}
Let $q > r\ge 1$ be integers. If $L$ is a $K_q^r$-divisible hypergraph, then there exists a $K_q^r$-absorber $A$ for $L$ such that $A\cup L$ admits a $K_q^r$ decomposition $Q$ such that $|V(Q)\cap V(L)|\le r$ for all $Q\in \mathcal{Q}$ and with equality only if $V(Q)\cap V(L) = V(e)$ for some $e\in L$.
\end{lem}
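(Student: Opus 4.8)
The plan is to derive Lemma~\ref{lem:BetterAbsorber} from Theorem~\ref{thm:AbsorberExistence} by a simple padding/cloning argument, so that the private absorber for $L$ is built on a fresh copy of the vertex set of $L$ together with a fresh set of vertices for the absorber structure. First I would apply Theorem~\ref{thm:AbsorberExistence} to obtain \emph{some} $K_q^r$-absorber $A_0$ for $L$, together with a $K_q^r$ decomposition $\mathcal{Q}_0$ of $A_0\cup L$. The decomposition $\mathcal{Q}_0$ need not have the bounded-intersection property: a single clique $Q\in\mathcal{Q}_0$ could contain many vertices of $V(L)$ (indeed $Q$ could contain up to $q$ vertices of $V(L)$ as long as at most one edge of $Q$ lies in $L$, since $V(L)$ is independent in $A_0$ but not in $L\cup A_0$). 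The point of the lemma is precisely to control this.

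The key idea is to ``lift'' each clique of $\mathcal{Q}_0$ off of $V(L)$ except along the one edge it actually covers. Concretely, for each $Q\in\mathcal{Q}_0$, if $Q$ contains an edge $e\in L$, I keep the $r$ vertices $V(e)$ fixed and replace the remaining $q-r$ vertices of $V(Q)$ — including any of them lying in $V(L)\setminus V(e)$ — with brand new private vertices; if $Q$ contains no edge of $L$ at all, I replace \emph{all} of $V(Q)$ with new private vertices (this makes $Q$ into an isolated clique not touching $V(L)$, which is harmless). Doing this for every $Q\in\mathcal{Q}_0$ and taking the union of the modified cliques, minus $L$, defines the new absorber $A$; by construction $A\cup L$ is decomposed by the modified family $\mathcal{Q}$, and $V(L)$ is independent in $A$ because the only way a vertex of $V(L)$ ends up in a modified clique is as part of $V(e)$ for the unique edge $e\in L$ it is covering, and $L$ itself is removed from $A$. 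One then checks that $A$ is still a valid $K_q^r$-absorber: $A$ admits a $K_q^r$ decomposition because we can take the ``$A_0$ side'' decomposition $\mathcal{Q}_0'$ of $A_0$ (guaranteed by the definition of absorber) and lift it by the same vertex relabelling — here we must be a little careful to use a \emph{consistent} relabelling of the non-$V(L)$ vertices across the two decompositions, which is possible since the relabelling only ever fixes $V(L)$ and moves everything else to fresh coordinates, and a consistent such map can be chosen once and for all on $V(A_0)\setminus V(L)$ after adding enough new vertices.

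With $A$ and $\mathcal{Q}$ in hand, the bounded-intersection property is immediate from the construction: for $Q\in\mathcal{Q}$ we have $V(Q)\cap V(L)\subseteq V(e)$ for the unique edge $e\in L$ covered by $Q$ (or $V(Q)\cap V(L)=\emptyset$ if $Q$ covers no edge of $L$), so $|V(Q)\cap V(L)|\le r$ always, with equality forcing $V(Q)\cap V(L)=V(e)$ for that $e\in L$, exactly as the lemma demands.

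The main obstacle I expect is bookkeeping rather than mathematics: one must verify that ``relabel the non-root vertices to fresh coordinates'' genuinely preserves the two properties of an absorber simultaneously (both $A$ and $A\cup L$ decompose), which requires choosing the relabelling coherently on the shared vertex set $V(A_0)\setminus V(L)$ of $\mathcal{Q}_0$ and $\mathcal{Q}_0'$. The cleanest way to phrase this is: let $A_0$ be an absorber for $L$ with decompositions $\mathcal{Q}_0'$ of $A_0$ and $\mathcal{Q}_0''$ of $A_0\cup L$; introduce a fresh vertex for \emph{every} (clique, non-root-slot) pair appearing in $\mathcal{Q}_0''$ and reuse the corresponding new vertex whenever the same $A_0$-vertex plays a non-root role; define $A$ by applying the induced vertex map to $A_0$ (which fixes $V(L)$, so $V(L)$ stays independent), and push both $\mathcal{Q}_0'$ and $\mathcal{Q}_0''$ through the map. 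Once this setup is stated carefully, every verification is a one-line check, and the vertex count of $A$ is at most $q\cdot e(A_0\cup L)$, which is fine since we only need existence.
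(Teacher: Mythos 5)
Your construction has a fatal gap: after the per-clique lift, the hypergraph $A$ you define generally admits no $K_q^r$-decomposition at all, so it is not an absorber. Concretely, for each $Q\in\mathcal{Q}_0$ covering an edge $e\in L$, the modified clique minus $e$ is a partial $K_q^r$ rooted at $V(e)$ on otherwise fresh private vertices, i.e.\ a copy of $K_q^r$ minus one edge; your $A$ is an edge-disjoint union of one such gadget per edge of $L$ (plus some harmless isolated cliques). Take $q=3$, $r=2$: then $A$ is a union of cherries $u\,$--$\,w_{uv}$--$\,v$ with $uv\notin A$, which is triangle-free, so no relabelling of $\mathcal{Q}_0'$ (or anything else) can decompose it. The "consistent map chosen once and for all on $V(A_0)\setminus V(L)$" that you invoke to rescue this is self-contradictory: a genuine vertex map fixing $V(L)$ is just an isomorphism of $A_0$ and cannot remove the vertices of $V(L)\setminus V(e)$ from a clique of $\mathcal{Q}_0''$ (those vertices are fixed), whereas the per-clique map that does remove them is not a function on $V(A_0)$ and destroys both the hypergraph $A_0$ and the decomposition $\mathcal{Q}_0'$. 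A secondary issue: a clique of $\mathcal{Q}_0''$ may contain \emph{several} edges of $L$ (the definition of absorber only forces $V(L)$ to be independent in $A_0$, not in $L\cup A_0$), in which case your lift either fails to cover some of those edges or keeps more than $r$ vertices of $V(L)$.

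The paper's proof avoids this by never trying to modify an existing decomposition. It first pads $L$ to $L'$ by attaching $M(q,r)$ partial $K_q^r$'s rooted at each edge $e\in L$, uses one partial clique per edge to form a packing $\mathcal{Q}'$ that covers $L$ with the desired intersection property, and then invokes Theorem~\ref{thm:AbsorberExistence} twice as a black box: once for $L_1:=L'\setminus L$ and once for $L_2:=L'\setminus\bigcup\mathcal{Q}'$ (both are $K_q^r$-divisible, which is exactly why the multiplicity $M(q,r)$ is needed). Setting $A:=L_1\cup A_1\cup A_2$, the decomposition $\mathcal{Q}_{1,2}\cup\mathcal{Q}_{2,1}$ handles $A$ and $\mathcal{Q}'\cup\mathcal{Q}_{1,1}\cup\mathcal{Q}_{2,2}$ handles $L\cup A$; the intersection bound holds because cliques of $\mathcal{Q}_{1,1}$ and $\mathcal{Q}_{2,2}$ meet the independent sets $V(L_1)\supseteq V(L)$, resp.\ $V(L_2)\supseteq V(L)$, in at most $r-1$ vertices. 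If you want to salvage your approach, the lesson is that the ``lifted'' gadgets must be made divisible and then absorbed by fresh black-box absorbers, rather than decomposed by a transported copy of $\mathcal{Q}_0'$.
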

\begin{proof}
Let $L'$ be obtained from $L$ as follows: for each edge $e\in L$, add $M(q,r)$ partial $K_q^r$'s rooted at $V(e)$ labeled $P_{1,e},\ldots, P_{M(q,r),e}$ (in such a way that all new vertices are distinct from each other and disjoint from $V(L)$).

Let $\mathcal{Q}'$ be the $K_q^r$-packing of $L'$ defined as $\{ \{e\}\cup P_{1,e}: e\in L\}$. Let $L_1 := L'\setminus L$ and $L_2:= L'\setminus \bigcup \mathcal{Q}'$. 

By Theorem~\ref{thm:AbsorberExistence}, there exists an absorber $A_1$ of $L_1$ with $K_q^r$ decompositions $\mathcal{Q}_{1,1}$ of $A_1$ and $\mathcal{Q}_{1,2}$ of $A_1\cup L_1$. Similarly by Theorem~\ref{thm:AbsorberExistence}, there exists an absorber $A_2$ of $L_2$ with $K_q^r$ decompositions $\mathcal{Q}_{2,1}$ of $A_2$ and $\mathcal{Q}_{2,2}$ of $A_2\cup L_2$.

Now let $A:= L_1\cup A_1\cup A_2$. Note that $V(L)\subseteq V(A)$ is independent in $A$ since $V(L)$ is independent in $L_1$ and $V(L)$ is independent in $A_1\cup A_2$ by the definition of absorber.

Now $\mathcal{Q}_{1,2}\cup \mathcal{Q}_{2,1}$ is a $K_q^r$ decomposition of $A$. Meanwhile, $\mathcal{Q} = \mathcal{Q}'\cup \mathcal{Q}_{1,1}\cup \mathcal{Q}_{2,2}$ is a $K_q^r$ decomposition of $L\cup A$ such that $|V(Q)\cap V(L)|\le r$ for all $Q\in \mathcal{Q}$ and with equality only if $V(Q)\cap V(L) = V(e)$ for some $e\in L$, as desired.
\end{proof}

Now here are the two modifications to prove Theorem~\ref{thm:MinCogirthSteiner}. First, when embedding private absorbers to prove Theorem~\ref{thm:Omni}, here we have to avoid partial cliques that use $q$-sets that intersect elements of $\mathcal{Q}_1$ in more than $r$ vertices. Using the better absorbers of Lemma~\ref{lem:BetterAbsorber}, it follows that there are only $O(v(X)^{k-1})$ such $k$-sets for a given $e\in Y$ to further avoid. So we simply use a modified form of Lemma~\ref{lem:EmbedMinDegree} for the proof of Theorem~\ref{thm:Omni} whose partial cliques avoid these as $k$-sets as well. 

Second, in the use of nibble we have to avoid using $q$-sets that intersect elements of $\mathcal{Q}_1$ in more than $r$ vertices. Since there are only $O(v(X)^{q-r+1})$ such $q$-sets containing a fixed edge of $J$, we may just delete all such $q$-sets from $\mathcal{D}'$ (that is, even after regularity boosting $J$). We omit the details.

%The interested reader might also be pleased to know that Lemma~\ref{lem:BetterAbsorber} follows rather easily by a simple construction that combines the existence of absorbers (Theorem~\ref{thm:AbsorberExistence}) and the existence of $K_q^r$ boosters of cogirth $3$, that is the existence of two $(n,q,r)$-Steiner systems with cogirth at least $3$. The latter is actually the cogirth $3$ case of Theorem~\ref{thm:HighCogirthSteiner}, which we prove in~\cite{DPII}. 

\begin{remark}
The interested reader might find it helpful to know that implicit in Keevash's work~\cite{K14} is an alternate proof of the existence of `abstract' absorbers (Theorem~\ref{thm:AbsorberExistence}). Of course, this is not surprising since such a theorem follows easily from the minimum degree version of the Existence Conjecture (Theorem~\ref{thm:ExistenceMinDegree}) which as we mentioned Keevash also proved via his method of random algebraic constructions. But in fact only a small portion of Keevash's work is required to prove Theorem~\ref{thm:AbsorberExistence}, roughly equal in length to that of the implicit proof of Theorem~\ref{thm:AbsorberExistence} in the work of Glock, K\"{u}hn, Lo, and Osthus. Thus there are two different approaches (one combinatorial, one algebraic) to proving Theorem~\ref{thm:AbsorberExistence} which we use as the base of our approach.    
\end{remark}

\section{Concluding Remarks and Further Directions}\label{s:Conclusion}

To summarize, in this paper, we introduce our novel method of refiners and refined absorbers. We prove key theorems about these objects, in particular our Refined Omni-Absorber Theorem, Theorem~\ref{thm:Omni}. We use this theorem to provide a new proof of the Existence Conjecture for Combinatorial Designs modulo the proof of the existence of absorbers. The extreme efficiency of our omni-absorbers allows us to use a one-level edge vortex. 

As mentioned in the introduction, our new proof structure is quite robust in its use of Theorem~\ref{thm:Omni}. In particular, since the omni-absorber is $C$-refined, it will be possible to ``boost'' the cliques in the decomposition family for other applications.

We proceed with some of these applications in a series of follow-up papers as follows. In one follow-up paper~\cite{DPII}, we construct \emph{girth boosters} and use these with Theorem~\ref{thm:Omni} to prove the existence of high girth omni-absorbers. We combine those with our high girth nibble from~\cite{DP22} to prove the High Girth Existence Conjecture, the common generalization of the Existence Conjecture and Erd\H{o}s' Conjecture on the Existence of High Girth Steiner Triple Systems (which was only recently proved by Kwan, Sah, Sawhney, and Simkin in~\cite{KSSS22}). 

In another follow-up paper~\cite{DKPIII} - joint with Tom Kelly - we apply our new proof structure and Theorem~\ref{thm:Omni} to make significant progress on conjectures about the existence of designs in $G(n,p)$ and random-regular graphs. This requires the existence of sparser absorbers but also embedding them and fake edges into $G(n,p)$ or $G_{n,d}$ respectively.

In a separate follow-up paper~\cite{DKPIV} - also joint with Tom Kelly -  we apply our new proof structure and Theorem~\ref{thm:Omni} to make significant progress on a conjecture on the threshold for $(n,q,2)$-Steiner systems. This requires proving the existence of a spread distribution on omni-absorbers which is accomplished via the use of \emph{spread boosters}.

Altogether, we believe this new proof structure but also the ideas used in the construction of refiners herald much potential for future applications.

\section*{Acknowledgements}

The authors would like to thank Tom Kelly for helpful comments when preparing this manuscript that improved the exposition; in particular, we thank Tom for pointing out that our construction in Proposition~\ref{prop:RMHG} could be viewed as a robustly matchable hypergraph somewhat akin to Montgomery's RMBG. 

\bibliographystyle{plain}
\bibliography{mdelcourt, ref}

\begin{thebibliography}{10}

\bibitem{AS16}
Noga Alon and Joel~H Spencer.
\newblock {\em The Probabilistic Method}.
\newblock John Wiley \& Sons, 2016.

\bibitem{CH63}
Kereszt{\'e}ly Corradi and Andr{\'a}s Hajnal.
\newblock On the maximal number of independent circuits in a graph.
\newblock {\em Acta Mathematica Hungarica}, 14(3-4):423--439, 1963.

\bibitem{DKPIII}
Michelle Delcourt, Tom Kelly, and Luke Postle.
\newblock Clique {D}ecompositions in {R}andom {G}raphs via {R}efined
  {A}bsorption.
\newblock {\em Preprint}, 2024.

\bibitem{DKPIV}
Michelle Delcourt, Tom Kelly, and Luke Postle.
\newblock Thresholds for $(n,q,2)$-{S}teiner {S}ystems via {R}efined
  {A}bsorption.
\newblock {\em Preprint}, 2024.

\bibitem{DP22}
Michelle Delcourt and Luke Postle.
\newblock Finding an almost perfect matching in a hypergraph avoiding forbidden
  submatchings.
\newblock {\em arXiv:2204.08981}, 2022.

\bibitem{DPII}
Michelle Delcourt and Luke Postle.
\newblock Proof of the {H}igh {G}irth {E}xistence {C}onjecture via {R}efined
  {A}bsorption.
\newblock {\em Preprint}, 2024.

\bibitem{EH63}
Paul Erd{\H{o}}s and Haim Hanani.
\newblock On a limit theorem in combinatorial analysis.
\newblock {\em Publ. Math. Debrecen}, 10:10--13, 1963.

\bibitem{GKLO16}
Stefan Glock, Daniela K{\"u}hn, Allan Lo, and Deryk Osthus.
\newblock The existence of designs via iterative absorption: {H}ypergraph
  ${F}$-designs for arbitrary ${F}$.
\newblock {\em Memoirs of the American Mathematical Society}, 284(1406), 2023.

\bibitem{HS70}
Andr{\'a}s Hajnal and Endre Szemer\'edi.
\newblock Proof of a conjecture of {P}. {E}rd{\H{o}}s.
\newblock {\em Combinatorial Theory and its Applications}, 2:601--603, 1970.

\bibitem{K96}
Jeff Kahn.
\newblock Asymptotically good list-colorings.
\newblock {\em Journal of Combinatorial Theory, Series A}, 73(1):1--59, 1996.

\bibitem{K14}
Peter Keevash.
\newblock The existence of designs.
\newblock {\em arXiv preprint arXiv:1401.3665}, 2014.

\bibitem{K47}
T.P. Kirkman.
\newblock On a problem in combinatorics.
\newblock {\em Cambridge Dublin Math. J}, 2:191--204, 1847.

\bibitem{KKO15}
Fiachra Knox, Daniela K{\"u}hn, and Deryk Osthus.
\newblock Edge-disjoint {H}amilton cycles in random graphs.
\newblock {\em Random Structures \& Algorithms}, 46(3):397--445, 2015.

\bibitem{KO13}
Daniela K{\"u}hn and Deryk Osthus.
\newblock Hamilton decompositions of regular expanders: a proof of {K}elly’s
  conjecture for large tournaments.
\newblock {\em Advances in Mathematics}, 237:62--146, 2013.

\bibitem{KSSS22}
Matthew Kwan, Ashwin Sah, Mehtaab Sawhney, and Michael Simkin.
\newblock High-girth {S}teiner triple systems.
\newblock {\em arXiv preprint arXiv:2201.04554}, 2022.

\bibitem{M19b}
Richard Montgomery.
\newblock Spanning trees in random graphs.
\newblock {\em Advances in Mathematics}, 356:106793, 2019.

\bibitem{R85}
Vojt{\v{e}}ch R{\"o}dl.
\newblock On a packing and covering problem.
\newblock {\em European Journal of Combinatorics}, 6(1):69--78, 1985.

\bibitem{RRS06}
Vojt{\v{e}}ch R{\"o}dl, Andrzej Ruci{\'n}ski, and Endre Szemer{\'e}di.
\newblock A {D}irac-type theorem for 3-uniform hypergraphs.
\newblock {\em Combinatorics, Probability and Computing}, 15(1-2):229--251,
  2006.

\bibitem{WI}
Richard~M Wilson.
\newblock An existence theory for pairwise balanced designs {I}. {C}omposition
  theorems and morphisms.
\newblock {\em Journal of Combinatorial Theory, Series A}, 13(2):220--245,
  1972.

\bibitem{WII}
Richard~M Wilson.
\newblock An existence theory for pairwise balanced designs {II}. {T}he
  structure of {PBD}-closed sets and the existence conjectures.
\newblock {\em Journal of Combinatorial Theory, Series A}, 13(2):246--273,
  1972.

\bibitem{WIII}
Richard~M Wilson.
\newblock An existence theory for pairwise balanced designs, {III}: {P}roof of
  the existence conjectures.
\newblock {\em Journal of Combinatorial Theory, Series A}, 18(1):71--79, 1975.

\bibitem{W03}
Robin Wilson.
\newblock The early history of block designs.
\newblock {\em Rend. Sem. Mat. Messina Ser. II}, 9(25):267--276, 2003.

\end{thebibliography}

\end{document}